\tikzset{make origin horizontal center of bounding box/.style={%
execute at end picture={%
\path let \p1=(current bounding box.west),\p2=(current bounding box.east)
in ({-max(-1*\x1,\x2)},\y1) ({max(-1*\x1,\x2)},\y1);
}}}
\newcommand\m[1]{\boldsymbol{|}#1\boldsymbol{\rangle}}
\newcommand{\I}{\mathbb{I}}
\newcommand{\N}{\mathbb{N}}
\newcommand{\fp}{\mathfrak{p}}
\newcommand{\fq}{\mathfrak{q}}
\newcommand{\fr}{\mathfrak{r}}
\newcommand{\fz}{\mathfrak{z}}
\newcommand{\soc}{\operatorname{soc}}
\newcommand{\Inf}{\operatorname{Inf}}
\newcommand{\odd}{\operatorname{odd}}
\newcommand{\car}{\operatorname{car}}
\newcommand{\aff}{\operatorname{aff}}
\newcommand{\link}{\operatorname{link}}
\newcommand{\ot}{\otimes}
\newcommand{\BV}{\mathfrak{B}}
\theoremstyle{plain}
\newtheorem{theorem}{Theorem}[section]
\newtheorem{lemma}[theorem]{Lemma}
\newtheorem{definition-theorem}[theorem]{Definition-Theorem}
\newtheorem{proposition}[theorem]{Proposition}
\newtheorem{corollary}[theorem]{Corollary}
\theoremstyle{definition}
\newtheorem{definition}[theorem]{Definition}
\newtheorem{example}[theorem]{Example}
\newtheorem{remark}[theorem]{Remark}
\newcommand{\Z}{\mathbb Z}
\newcommand{\ku}{\Bbbk}
\newcommand{\cG}{\mathcal{G}} 
\newcommand{\cC}{\mathcal{C}} 
\newcommand{\cD}{\mathcal{D}} 
\newcommand{\CC}{\mathbb C}
\newcommand{\cJ} {{\mathcal{J}}}
\newcommand{\cR} {{\mathcal{R}}}
\newcommand\cM{\mathcal{M}}
\newcommand\cW{\mathcal{W}}
\newcommand{\cX}{\mathcal{X}}
\newcommand{\cH} {{\mathcal{H}}}
\newcommand{\bk} {\mathbf{k}}
\newcommand{\bA} {\mathbf{A}}
\newcommand \id { {\mathrm{id}} }
\newcommand \rank { {\mathrm{rank}} }
\newcommand \ch   { {\mathrm{ch}} }
\newcommand{\Aut}{\operatorname{Aut}}
\newcommand{\Alg}{\operatorname{Alg}}
\DeclareMathOperator \ord { {\mathrm{ord}} }
\newcommand\ad{\operatorname{ad}}
\DeclareMathOperator \Ker { {\mathrm{Ker}} }
\newcommand{\Hom}{\operatorname{Hom}}
\DeclareMathOperator \Ext { {\mathrm{Ext}} }
\renewcommand \Im { {\mathrm{Im}} }
\newcommand\fip{f_i}
\newcommand\gip{g_i}
\begin{document}

\title[Linkage principle for  small quantum groups]{Linkage principle for  small quantum groups}

\author[Vay]{Cristian Vay}

\address{\newline\noindent Facultad de Matem\'atica, Astronom\'\i a, F\'\i sica y Computaci\'on,
Universidad Nacional de C\'ordoba. CIEM--CONICET. Medina Allende s/n, 
Ciudad Universitaria 5000 C\'ordoba, Argentina}
\email{cristian.vay@unc.edu.ar}

\thanks{\noindent \emph{2020 Mathematics Subject Classification.} 16T05, 17B37, 22E47, 17B10, 20G05. 
\\
\noindent \emph{Key words.} Quantum groups, Nichols algebras, Weyl groupoid, Root system, Representation theory}

\thanks{This work is partially supported by  CONICET PIP 11220200102916CO, Foncyt PICT 2020-SERIEA-02847 and Secyt (UNC)}

\begin{abstract}

We consider small quantum groups with root systems of Cartan, super and modular type, among others. These are constructed as Drinfeld doubles of finite-dimensional Nichols algebras of diagonal type. We prove a linkage principle for them by adapting techniques from the work of Andersen, Jantzen and Soergel in the context of small quantum groups at roots of unity. Consequently we characterize the blocks of the category of modules. We also find a notion of (a)typicality similar to the one in the representation theory of Lie superalgebras. The typical simple modules turn out to be the simple and projective Verma modules. Moreover, we deduce a character formula for $1$-atypical simple modules.

\end{abstract}

\maketitle

\section{Introduction}

The small quantum groups at roots of unity introduced by Lusztig are beautiful examples of finite-dimensional Hopf algebras with relevant applications in several subjects. They are instrumental in Lusztig's program concerning the representation theory of algebraic groups in positive characteristic \cite{L-ModRepThe,L-qgps-at-roots,L-findim,L-onqg}. In this framework, he conjectured a character formula for their simple modules, and that this formula implies an analogous one for restricted Lie algebras in positive characteristic. The validity of the ``quantum formula'' was confirmed by the works of Kazhdan--Lusztig \cite{KL1,KL2} and Kashiwara--Tanisaki \cite{KT}. Andersen, Jantzen and Soergel \cite{AJS} then proved that the ``quantum formula'' implies the ``modular formula'' for large enough characteristics (although, we have to mention that 20 years later Williamson \cite{williamson} showed that this ``enough'' could be very big). More recently, Fiebig \cite{fiebig} gave a new proof for the ``quantum formula'' building up on certain categories defined in \cite{AJS} and, joint to Lanini, they have carried out a further development on these categories, cf. \cite{FL}.

\begin{figure}[h]
\begin{tikzpicture}

\node[label=above:{$u_q(\mathfrak{g})$},align=center] (uq) at (0,0) {Small quantum group \\ at a root of unity};

\node[label=above:{$U^{[p]}(\mathfrak{g}_k)$},align=center] (uq) at (5,0) {Restricted \\ enveloping algebra};

\end{tikzpicture}
\caption{Algebras involved in Lusztig's conjectures. Given a finite root system $\Delta$, we write $\mathfrak{g}$ and $\mathfrak{g}_k$ for the associated Lie algebras over $\mathbb{C}$ and over 
an algebraically closed field $k$ of characteristic $p$ odd ($\neq3$ if $\mathfrak{g}$ has a component of type $G_2$). We write $q$ for a $p$-th root of unity in $\mathbb{C}$.}
\label{fig:uqg}
\end{figure}

Here, we adapt tools and techniques from \cite{AJS} to investigate the representation theory of a more general class of small quantum groups:  the Drinfeld doubles of bosonizations of finite-dimensional Nichols algebras of diagonal type over abelian groups, see Figure \ref{fig:uq}. The interest in Nichols algebras have been continually increasing since Andruskiewitsch and Schneider assigned them a central role in the classification of pointed Hopf algebras \cite{AS-cartantype}. Notably, the classification and the structure of the Nichols algebras of diagonal type are governed by Lie-type objects introduced by Heckenberger \cite{Hweyl}, see also \cite{HY08,AHS,HS}. In particular, they have associated (generalized) root systems and, besides those of Cartan type, we find among the examples root systems of finite-dimensional contragredient Lie superalgebras in characteristic $0$, $3$ and $5$, and root systems of finite-dimensional contragredient Lie algebras in positive characteristic, as it was observed by Andruskiewitsch, Angiono and Yamane \cite{AAYamane,AA-diag-survey}. Consequently, we get small quantum groups with these more general root systems.

\begin{figure}[h]
\begin{tikzpicture}[every text node part/.style={align=center}]

\node[inner sep=10pt, minimum width=5pt] (q) at (-2.5,0) {$\fq\in\mathbb{C}^{\theta\times\theta}$};

\node[inner sep=10pt, minimum width=5pt,label={below: Nichols algebra}] (BV) at (0,0) {$\BV_\fq$};

\node[inner sep=10pt, minimum width=5pt,label={below:Bosonization}] (BVH) at (3.5,0) {$\BV_\fq\#\mathbb{C}\Gamma$};

\node[inner sep=10pt, minimum width=5pt,label={below:Drinfeld double}] (D) at (7,0) {$\cD(\BV_\fq\#\mathbb{C}\Gamma)$};

\node[inner sep=10pt, minimum width=5pt,label={above:Small quantum \\ group}] (uq) at (9.25,0) {$u_\fq$\vphantom{$\cD(\BV_\fq\#\mathbb{C}\Gamma)$}};


\draw [->,decorate,decoration={snake,amplitude=.05cm}] (q) -- (BV);

\draw [->,decorate,decoration={snake,amplitude=.05cm}] (BV) -- (BVH);

\draw [->,decorate,decoration={snake,amplitude=.05cm}] (BVH) -- (D);

\draw[transform canvas={yshift=-1pt},-] (uq) -- (D);
\draw[transform canvas={yshift=1pt},-] (uq) -- (D);

\end{tikzpicture}
\caption{We construct small quantum groups from matrices with finite-dimensional Nichols algebras of diagonal type; $\Gamma$ is a group quotient of $\Z^\theta$. For instance, the positive part of $u_q(\mathfrak{g})$ is the Nichols algebra of $\fq=(q^{d_ic_{ij}})_{i,j}$ where $C=(c_{ij})_{i,j}$ is the Cartan matrix of $\mathfrak{g}$ and $(d_ic_{ij})_{i,j}$ is symmetric. Thus, the corresponding bosonization is the Borel subalgebra and $u_q(\mathfrak{g})$ is a quotient of $u_{\fq}$ by a central group subalgebra.}
\label{fig:uq}
\end{figure}


\subsection{Main results}

Let $u_\fq$ be as in Figure \ref{fig:uq}. Then it admits a triangular decomposition $u_\fq=u_\fq^-u_\fq^0u_\fq^+$ which gives rise to Verma type modules $M(\pi)$ for every algebra map $\pi:u_\fq^0\longrightarrow\mathbb{C}$. Moreover, every $M(\pi)$ has a unique simple quotient $L(\pi)$ and any simple $u_\fq$-module can be obtained in this way. For instance, all these were computed for $\fq$ of type $\mathfrak{ufo}(7)$ in \cite{AAMRufo}. The linkage principle gives us information about the composition factors of the Verma modules as we explain after introducing some notation. 

Let $\{\alpha_1, ..., \alpha_\theta\}$ be the canonical $\Z$-basis of $\Z^\I$ with $\I=\{1, ..., \theta\}$. The matrix $\fq$ defines a bicharacter $\Z^\I\times\Z^\I\longrightarrow\CC^\times$ which we denote also $\fq$. Given $\beta\in\Z^\I$, we set $q_\beta=\fq(\beta,\beta)$ and $b^\fq(\beta)=\ord q_\beta$.  We denote $\rho^\fq:\Z^\I\longrightarrow\mathbb{C}^\times$ the group homomorphism such that $\rho^\fq(\alpha_i)=q_{\alpha_i}$ for all $i\in\I$. We notice $u_\fq^0$ is an abelian group algebra generated by $K_i,L_i$, $i\in\I$ (in Figure \ref{fig:uq}, $\Gamma$ is generated by the $K_i$'s) and unlike $u_q(\mathfrak{g})$, they yield two copies of the (finite) torus. If $\alpha=n_1\alpha_1+\cdots+n_\theta\alpha_\theta\in\Z^\I$, we denote $K_\alpha=K_1^{n_1}\cdots K_\theta^{n_\theta}$ and $L_\alpha=L_1^{n_1}\cdots L_\theta^{n_\theta}$. The algebra map $\pi\widetilde\mu:u_\fq^0\longrightarrow\mathbb{C}$ is defined by $\pi\widetilde\mu(K_\alpha L_\beta)=\frac{\fq(\alpha,\mu)}{\fq(\mu,\beta)}\pi(K_\alpha L_\beta
)$, $\alpha,\beta\in\Z^\I$. Let $\Delta^\fq_+\subset\Z_{\geq0}^\I$ be the set of positive roots of the Nichols algebra $\BV_\fq$. If $\beta\in\Delta^\fq_+$, we define
\begin{align*}
\beta\downarrow\mu=\mu-n^\pi_\beta(\mu)\beta
\end{align*}
where $n^\pi_\beta(\mu)$ is the unique $n\in\{1, ..., b^\fq(\beta)-1\}$ such that 
$q_{\beta}^{n}-\rho^\fq(\beta)\,\pi\widetilde{\mu}(K_{\beta}L_{\beta}^{-1})=0$, if it exists, and otherwise $n^\pi_\beta(\mu)=0$.

\begin{theorem}[Strong linkage principle]\label{teo:main teo}

Let $L$ be a composition factor of $M(\pi\widetilde{\mu})$, $\mu\in\Z^\I$. Then $L\simeq L(\pi\widetilde{\mu})$ or $L\simeq L(\pi\widetilde{\lambda})$ with $\lambda=\beta_r\downarrow\cdots\beta_1\downarrow\mu$ for some $\beta_1, ..., \beta_r\in\Delta_+^\fq$.
\end{theorem}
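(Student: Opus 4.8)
The plan is to follow the Andersen--Jantzen--Soergel strategy, adapted to the diagonal setting. The engine of the classical linkage principle is a Jantzen-type filtration on Verma modules together with a sum formula controlling it; what makes this work is the existence of a ``generic'' deformation parameter and a Shapovalov-type determinant factoring into linear factors indexed by positive roots. So the first step is to set up the right deformation: replace $\fq$ by a formal multiparameter version (or base-change $u_\fq$ over a DVR or a polynomial ring in deformation variables), obtaining deformed Verma modules $M_A(\pi\widetilde\mu)$ over a base ring $A$ carrying a contravariant bilinear form. One then defines the Jantzen filtration $M(\pi\widetilde\mu)=M^0\supseteq M^1\supseteq\cdots$ by the usual device of pulling back powers of the maximal ideal of $A$ through the form, and notes $M^1$ is exactly the radical of the form, hence $M^0/M^1\simeq L(\pi\widetilde\mu)$.

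Next I would establish the Shapovalov/Jantzen sum formula in this context. Because $\BV_\fq$ is a Nichols algebra of diagonal type with a PBW basis indexed by $\Delta^\fq_+$, the contravariant form on each weight space of $M(\pi\widetilde\mu)$ factors — after choosing PBW bases built from the root vectors — and its determinant is, up to units, a product of factors of the shape $q_\beta^{n}-\rho^\fq(\beta)\,\pi\widetilde\mu(K_\beta L_\beta^{-1})$ over $\beta\in\Delta^\fq_+$ and $1\le n\le b^\fq(\beta)-1$. This is precisely the quantity whose vanishing defines $n^\pi_\beta(\mu)$. Applying $\nu_A$ (the valuation/order-of-vanishing along the deformation) to this determinant and summing over weights yields
\begin{align*}
\sum_{i>0}\operatorname{ch} M^i=\sum_{\beta\in\Delta^\fq_+}\sum_{n}\nu_A\!\left(q_\beta^{n}-\rho^\fq(\beta)\,\pi\widetilde\mu(K_\beta L_\beta^{-1})\right)\operatorname{ch} M(\pi\widetilde{\beta\downarrow\mu}),
\end{align*}
the point being that when the factor indexed by $(\beta,n)$ vanishes at the specialization, the corresponding contribution is a genuine Verma character $\operatorname{ch}M(\pi\widetilde\lambda)$ with $\lambda=\beta\downarrow\mu$, because $n^\pi_\beta(\mu)=n$ forces $\mu-n\beta=\beta\downarrow\mu$. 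The combinatorial identity relating the weight multiplicities of $M(\pi\widetilde\mu)$ to those of $M(\pi\widetilde{\beta\downarrow\mu})$ — i.e. that the relevant determinant really is this product — is the technical heart; it requires that the root vectors behave well under the form, which is where the diagonal-type hypothesis and the known structure of $\BV_\fq$ (PBW basis, the convex order on $\Delta^\fq_+$) are used.

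Granting the sum formula, the strong linkage principle follows by the standard induction. If $L(\pi\widetilde\nu)$ is a composition factor of $M(\pi\widetilde\mu)$ distinct from the head $L(\pi\widetilde\mu)$, then it is a composition factor of $M^1$, so by the sum formula it is a composition factor of some $M(\pi\widetilde{\beta\downarrow\mu})$ with $\beta\in\Delta^\fq_+$ and the relevant factor vanishing, i.e. with $n^\pi_\beta(\mu)\ne 0$. One then induces on a suitable height/length function (for instance the sum of coordinates of $\mu-\nu$, or the number of weight spaces between $\nu$ and $\mu$): the Verma module $M(\pi\widetilde{\beta\downarrow\mu})$ is ``smaller'' in this order, so by the inductive hypothesis its composition factors are $L(\pi\widetilde{\beta\downarrow\mu})$ or $L(\pi\widetilde{\beta_r\downarrow\cdots\beta_2\downarrow(\beta\downarrow\mu)})$, and prepending $\beta=\beta_1$ gives the claim for $M(\pi\widetilde\mu)$.

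The main obstacle I anticipate is two-fold and both parts live in Step 2. First, one must exhibit a deformation of $u_\fq$ that is flat, still admits a triangular decomposition and a contravariant form, and specializes correctly — unlike the classical $u_q(\mathfrak g)$ there is no single auxiliary torus to lean on, so the deformation has to be engineered directly at the level of the bicharacter $\fq$ (deforming the diagonal entries $q_{\alpha_i}$ while keeping the Nichols algebra relations, or working with a ``half-braiding'' parameter), and one must check that at the generic point the Verma modules are simple — equivalently that the form is nondegenerate generically — which is the analogue of genericity of the quantum parameter. Second, the explicit factorization of the Shapovalov determinant for arbitrary diagonal type is not a routine transcription of the Cartan-type computation: it relies on the convexity of the PBW order and on commutation relations among root vectors that hold only up to lower terms, so controlling the leading term of the determinant in each weight space — and in particular seeing that no extra factors beyond the advertised $q_\beta^{n}-\rho^\fq(\beta)\,\pi\widetilde\mu(K_\beta L_\beta^{-1})$ appear — will require the detailed structural results on Nichols algebras of diagonal type cited in the introduction.
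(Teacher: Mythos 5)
Your route (a Jantzen filtration on a deformed Verma module plus a Shapovalov-type sum formula) is genuinely different from the paper, which never uses Jantzen filtrations: the paper works with the twisted Verma modules $Z^w_\bk(\mu\langle w\rangle)$ built from the Lusztig isomorphisms $T_w$ of Heckenberger for $w$ in the Weyl groupoid, constructs a generator $\Phi=\varphi_n\cdots\varphi_1$ of the one-dimensional Hom space $Z_\bk(\mu)\to Z^{w_0}_\bk(\mu-\beta^\fq_{top})$ along a reduced expression of $w_0$, identifies $\Im\Phi\simeq L_\bk(\mu)$, and shows that each $\Ker\varphi_s$ is a homomorphic image of a twisted Verma module whose character equals $\ch Z_\bk(\beta_s\downarrow\mu)$. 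Unfortunately, as written your argument has two genuine gaps, both in the step you yourself flag. First, the deformation: you propose to deform the bicharacter $\fq$ itself, but finite-dimensional Nichols algebras of diagonal type are not flat in $\fq$ --- the ideal $\cJ_\fq$, the root system $\Delta^\fq_+$ and the truncation orders $b^\fq(\beta)$ all jump under a generic perturbation of the diagonal entries, so there is no one-parameter family specializing to $\BV_\fq$ with constant PBW data. The deformation that does work in this setting is of the weight, i.e.\ of the $U^0_\fq$-character $\pi$ over a base ring $\bA$ (this is exactly what the AJS framework over Noetherian $U^0$-algebras $\bA$ is designed for), with generic simplicity supplied by the typicality criterion; your proposal does not set this up.

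Second, and more seriously, the displayed sum formula is false, because the PBW basis of $U^-_\fq$ is truncated ($0\le m_\nu<b^\fq(\beta_\nu)$). Already in rank one, if the factor indexed by $(\beta,t_0)$ vanishes to order one, then $\sum_{i>0}\ch M^i$ is the character of the submodule generated by $F^{t_0}\m{\mu}$, of dimension $b^\fq(\beta)-t_0$, whereas your right-hand side is (a multiple of) the full baby Verma character of dimension $b^\fq(\beta)$; the two sides do not match, so one cannot read off from your formula that every composition factor of $M^1$ occurs in some $M(\pi\widetilde{\beta\downarrow\mu})$. To repair this one has to identify the offending subquotients as images of genuine homomorphisms $Z_\bk(\beta\downarrow\mu)\to Z_\bk(\mu)$ (or of twisted Vermas with the same character), i.e.\ precisely the maps $\psi$, $\varphi$ with $\Im\psi=\Ker\varphi$ and the character identity $\ch Z^{w_s}_\bk(\mu\langle w_s\rangle-t_s\beta_s)=\ch Z_\bk(\beta_s\downarrow\mu)$ that constitute the technical core of Sections \ref{sec:vermas}--\ref{sec:morphisms} of the paper; the Heckenberger--Yamane Shapovalov determinant (which you correctly invoke, and which is already known) gives the simplicity criterion but not, by itself, this containment of composition factors. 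The final induction on $\mu-\lambda$, bounded below by $\mu-\beta^\fq_{top}$, is fine and agrees with the paper.
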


A first consequence of this principle is that it defines an equivalence relation which completely characterizes the blocks of the category. It is also the starting point to imagine character formulas for the simple modules. In this direction, we  deduce that $M(\pi\widetilde{\mu})=L(\pi\widetilde{\mu})$ is simple if and only if
\begin{align*}
\prod_{\substack{\beta\in\Delta_+^\fq\\ 1\leq t<b^\fq(\beta)}} \left(q_{\beta}^{t}-\rho^\fq(\beta)\,\pi\widetilde{\mu}(K_{\beta}L_{\beta}^{-1})\right)\neq0;
\end{align*}
this was also proved in \cite[Proposition 5.16]{HY}. Moreover, we give a character formula for $L(\pi\widetilde{\mu})$ if the above expression is zero for a unique $\beta\in\Delta_+^\fq$ and for $t=n_\beta^\pi(\mu)$. Explicitly,
\begin{align*}
\ch L(\pi\widetilde\mu)=\quad\frac{1-e^{-n_\beta^\pi(\mu)\beta}}{1-e^{-\beta}} 
\prod_{\gamma\in \Delta_+^\fq\setminus\{\beta\}}\frac{1-e^{-b^\fq(\gamma)\gamma}}{1-e^{-\gamma}}.
\end{align*}
By analogy with the theory of Lie superalgebras \cite{kac,ser}, we say that the number of zeros of the former product measures the degree of atypicality, see \S\ref{subsec:typical}. Like in \cite{Y} where Yamane  gives a Weyl--Kac character formula for typical simple modules over quantum groups of  Nichols algebras of diagonal type with finite root systems (when the Nichols algebra is finite-dimensional, his typical modules coincides with ours).

When I discussed a preliminary version of Theorem \ref{teo:main teo} with Nicol\'as Andruskiewitsch and Simon Riche, they respectively asked me about the Weyl groupoid and the affine Weyl group, how these objects come into play. In fact, the linkage principle is usually encoded in the dot action of the affine Weyl group, and the Weyl groupoid is its replacement  in the theory of Nichols algebras \cite{Hweyl}. We partially answer their questions in the next corollary. We discover also a phenomenon similar to what occurs in the setting of Lie superalgebras. There, the action of the affine Weyl group generated by the even reflections as well as the translations by odd roots take part in the linkage principle, see {\it e.~g.} \cite{chenwang}.

Let $\Delta_{+,\car}^\fq$ be the set of positive Cartan roots of $\fq$ and $s_\beta$ the reflection associated to $\beta\in\Delta_{+,\car}^\fq$. We set $\varrho^\fq=\frac{1}{2}\sum_{\beta\in \Delta^\fq_+}(b^\fq(\beta)-1)\beta$. For $\mu\in\Z^\I$, $\beta\in\Delta_{+,\car}^\fq$ and $m\in\Z$, we define
\begin{align*}
s_{\beta,m}\bullet\mu=s_\beta(\mu+mb^\fq(\beta)\beta-\varrho^\fq)+\varrho^\fq.
\end{align*}
We denote $\cW_{\link}^\fq$ the group generated by all the affine reflections $s_{\beta,m}$. We recall that $\fq$ is of Cartan type if $\Delta_+^\fq=\Delta_{+,\car}^\fq$, and $\fq$ is of super type if its root system is isomorphic to the root system of a finite-dimensional contragredient Lie superalgebra
in characteristic 0. If $\fq$ is of super type, then $\Delta_{+,\odd}^\fq:=\Delta^\fq_+\setminus\Delta_{+,\car}^\fq$ is not empty and $\ord q_\beta=2$ for every root $\beta\in\Delta_{+,\odd}^\fq$; in this case $\beta\downarrow\mu=\mu$ or $\mu-\beta$. 

\begin{corollary}[Linkage principle]\label{cor:main cor}
Assume $\pi$ is the trivial algebra map. Let $L(\pi\widetilde{\lambda})$ be a composition factor of $M(\pi\widetilde{\mu})$. Then
\begin{enumerate}
 \item $\lambda\in\cW_{\link}^\fq\bullet\mu$ if $\fq$ is of Cartan type.
\smallskip
\item $\lambda\in\cW^\fq_{\link}\bullet(\mu+\Z\Delta_{+,\odd}^\fq)$ if $\fq$ is of super type.
\end{enumerate}
\end{corollary}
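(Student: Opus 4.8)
The plan is to deduce Corollary \ref{cor:main cor} from the Strong Linkage Principle (Theorem \ref{teo:main teo}) by translating the elementary operations $\beta\downarrow(-)$, $\beta\in\Delta_+^\fq$, into the language of the reflections $s_{\beta,m}$ when $\pi$ is trivial. The first step is to compute $\pi\widetilde\mu(K_\beta L_\beta^{-1})$ explicitly for the trivial $\pi$: by definition $\pi\widetilde\mu(K_\alpha L_\gamma)=\frac{\fq(\alpha,\mu)}{\fq(\mu,\gamma)}$, so $\pi\widetilde\mu(K_\beta L_\beta^{-1})=\fq(\beta,\mu)\fq(\mu,\beta)$. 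Writing $\fq(\beta,\mu)\fq(\mu,\beta)=q_\beta^{\,\pr{\beta^\vee}{\mu}}$ with a suitable pairing coming from the diagonal bicharacter (using that $\beta$ is a root, so $\fq(\beta,\alpha_i)\fq(\alpha_i,\beta)=q_\beta^{c_{\beta i}}$ for integers $c_{\beta i}$), one sees that the defining equation $q_\beta^n-\rho^\fq(\beta)\,\fq(\beta,\mu)\fq(\mu,\beta)=0$ for $n^\pi_\beta(\mu)$ becomes a congruence of the form $n\equiv \pr{\beta^\vee}{\mu}+ (\text{shift})\pmod{b^\fq(\beta)}$, where the shift is read off from $\rho^\fq(\beta)$ and records exactly the contribution of $\varrho^\fq$. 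The key identity to verify here is that $\rho^\fq(\beta)=q_\beta^{\,\pr{\beta^\vee}{\varrho^\fq}+\text{something}}$ up to the normalization built into $\varrho^\fq=\frac12\sum_{\gamma\in\Delta_+^\fq}(b^\fq(\gamma)-1)\gamma$; this is the Nichols-algebra analogue of the classical fact that $\rho$ is the half-sum of positive roots and $\langle\rho,\alpha_i^\vee\rangle=1$.

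**Cartan case.** When $\fq$ is of Cartan type, every $\beta\in\Delta_+^\fq=\Delta_{+,\car}^\fq$ is a Cartan root, so $\beta\downarrow\mu=\mu-n^\pi_\beta(\mu)\beta$ and the congruence above shows that $\mu-n^\pi_\beta(\mu)\beta$ differs from $s_\beta(\mu+mb^\fq(\beta)\beta-\varrho^\fq)+\varrho^\fq$ by an integer multiple of $b^\fq(\beta)\beta$ for an appropriate $m\in\Z$, because $s_\beta(\mu-\varrho^\fq)+\varrho^\fq=\mu-(\pr{\beta^\vee}{\mu}+\text{shift})\beta$ and reducing the coefficient mod $b^\fq(\beta)$ is exactly absorbed by the translation part $mb^\fq(\beta)\beta$. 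Hence each $\beta\downarrow(-)$ lands in the $\cW^\fq_{\link}\bullet$-orbit, and since $\cW^\fq_{\link}$ is a group, iterating $\beta_r\downarrow\cdots\beta_1\downarrow\mu$ stays in $\cW^\fq_{\link}\bullet\mu$. By Theorem \ref{teo:main teo}, $\lambda\in\{\mu\}\cup(\cW^\fq_{\link}\bullet\mu)=\cW^\fq_{\link}\bullet\mu$ (note $\mu=s_{\beta,0}\bullet(s_{\beta,0}\bullet\mu)$, so $\mu$ itself is in the orbit), giving part (1).

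**Super case.** Here $\Delta_+^\fq$ splits as $\Delta_{+,\car}^\fq\sqcup\Delta_{+,\odd}^\fq$. For $\beta\in\Delta_{+,\car}^\fq$ the previous analysis applies verbatim; for $\beta\in\Delta_{+,\odd}^\fq$ we have $\ord q_\beta=2$, so $n^\pi_\beta(\mu)\in\{0,1\}$ and $\beta\downarrow\mu\in\{\mu,\mu-\beta\}$, i.e. $\beta\downarrow(-)$ changes $\mu$ by an element of $\Z\Delta_{+,\odd}^\fq$. Iterating an arbitrary word $\beta_r\downarrow\cdots\beta_1\downarrow\mu$, I would argue by induction on $r$: each Cartan step applies an element of $\cW^\fq_{\link}\bullet$, each odd step adds a vector in $\Z\Delta_{+,\odd}^\fq$, and the point is that these two kinds of operations together keep us inside $\cW^\fq_{\link}\bullet(\mu+\Z\Delta_{+,\odd}^\fq)$ — one needs that $w\bullet(\nu+\Z\Delta_{+,\odd}^\fq)\subseteq (\cW^\fq_{\link}\bullet\nu)+\Z\Delta_{+,\odd}^\fq$ for $w\in\cW^\fq_{\link}$, which holds because the linear parts $s_\beta$ ($\beta$ Cartan) permute the odd roots up to sign (this is a structural property of the root system of a contragredient Lie superalgebra, or of the Weyl groupoid in super type), so $s_\beta(\Z\Delta_{+,\odd}^\fq)=\Z\Delta_{+,\odd}^\fq$. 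Combining with Theorem \ref{teo:main teo} gives part (2).

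**The main obstacle** I expect is the bookkeeping in the first step: pinning down the precise relationship between the multiplicative datum $\rho^\fq(\beta)$, the scalar $\pi\widetilde\mu(K_\beta L_\beta^{-1})=\fq(\beta,\mu)\fq(\mu,\beta)$, and the additive datum $\pr{\beta^\vee}{\mu}$ together with the half-sum $\varrho^\fq$, and checking that the resulting congruence for $n^\pi_\beta(\mu)$ is exactly compatible with the dot action $s_{\beta,m}\bullet$. This requires care about normalizations (the distinction between $b^\fq(\beta)=\ord q_\beta$ and the actual order entering the pairing when $q_\beta$ is a root of unity of even versus odd order) and about the fact that for Cartan roots the relevant Cartan integers $c_{\beta i}$ are the ones of the Weyl-groupoid root system at the appropriate point. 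The secondary technical point is the structural claim in the super case that reflections in Cartan roots stabilize $\Z\Delta_{+,\odd}^\fq$; this I would extract from the classification of super-type root systems (or cite the relevant property of the Weyl groupoid), rather than reprove it.
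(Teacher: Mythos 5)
Your proposal follows essentially the same route as the paper: with $\pi$ trivial one has $\pi\widetilde\mu(K_\beta L_\beta^{-1})=\fq(\beta,\mu)\fq(\mu,\beta)$, and for a \emph{Cartan} root $\beta=w\alpha_i$ the identity \eqref{eq:rho = ...} together with the Cartan-vertex property turns the defining equation of $n^\pi_\beta(\mu)$ into the congruence $n\equiv\langle\beta^\vee,\mu\langle w\rangle\rangle+1 \pmod{b^\fq(\beta)}$, which is exactly the paper's Lemma \ref{le:linkage via dot} showing $\beta\downarrow\mu=s_{\beta,m}\bullet\mu$; the super case is then handled, as you do, by noting odd steps are translations by odd roots and that $s_\beta$ ($\beta$ Cartan) preserves $\Z\Delta^\fq_{\odd}$ (the paper cites \cite[Lemma 3.6]{AARB} for this). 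Your ``bookkeeping'' caveat is the real content of that lemma (note the pairing is against $w\varrho^{w^{-*}\fq}$, not $\varrho^\fq$, and the multiplicative identity you invoke holds only for Cartan roots, which is all you use), so the proposal is correct and matches the paper's argument.
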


This corollary holds more generally for matrices of standard type, those with constant bundle of root system. However, there are matrices which are not, as the matrices of modular type. The assumption on $\pi$ is not particularly restrictive as we deal with all the simple modules of highest-weight $\widetilde{\mu}:u_\fq^0\longrightarrow\CC$, $K_\alpha L_\beta\mapsto\frac{\fq(\alpha,\mu)}{\fq(\mu,\beta)}$, for all $\mu\in\Z^\I$. In the case of $u_q(\mathfrak{q})$, these form the category of modules of type $1$ in the sense of Lusztig, cf. \cite[\S2.4]{AJS}.

\subsection{Sketch of the proof}

As we mentioned at the beginning, we imitate the ideas of \cite[\S1-\S7]{AJS}. There, the authors consider $\Z^\I$-graded  algebras admitting a triangular decomposition $U=U^-U^0U^+$ with $U^0$ commutative and satisfying additional conditions which are fulfilled by $u_q(\mathfrak{g})$ and $U^{[p]}(\mathfrak{g}_k)$. Then, given a Noetherian commutative algebra $\bA$ and an algebra map $\pi:U^0\longrightarrow\bA$, they define certain categories $\cC_\bA$ of $\Z^\I$-graded $(U,\bA)$-bimodules. We observe here that we can consider these categories also for $u_\fq$. Roughly speaking, in the case $\bA=\CC$, this identifies with the abelian subcategory generated by the simple modules $L(\pi\widetilde{\mu})$ for $\mu\in\Z^\I$.
 
Powerful tools used in {\it loc.~cit.} to study the categories $\cC_\bA$ are the so-called Lusztig automorphisms $T_w$ of $u_q(\mathfrak{g})$, where $w$ runs in the Weyl group of $\mathfrak{g}$. We find a difference between $u_q(\mathfrak{g})$ and $u_\fq$ at this point. Indeed, we have Lusztig isomorphisms but connecting possibly different algebras, {\it i.e.} $T_w:u_{w^{-*}\fq}\longrightarrow u_{\fq}$ and the matrices $w^{-*}\fq$ and $\fq$ are not necessarily equal. These isomorphisms were defined in \cite{Hlusztigiso} for each $w\in{}^\fq\cW$, the Weyl groupoid of $\fq$ \cite{Hweyl}. Nevertheless, we can carefully use them as in \cite{AJS}.

First, we produce different triangular decompositions on $u_{\fq}$ and then each triangular decomposition gives rise to new Verma modules. Namely, for $w\in{}^\fq\cW$ and $\mu\in\Z^\I$, we denote $Z_\CC^w(\mu)$ the Verma module induced by $\pi\widetilde{\mu}$ using the triangular decomposition $u_\fq=T_w(u_{w^{-*}\fq}^-)\,u_\fq^0\,T_w(u_{w^{-*}\fq}^+)$. Let $L_\CC^w(\mu)$ denote the unique simple quotient of $Z_\CC^w(\mu)$. For instance, $Z_\CC(\mu):=Z_\CC^\id(\mu)=M(\pi\widetilde{\mu})$ and $L_\CC(\mu):=L_\CC^\id(\mu)=L(\pi\widetilde{\mu})$.

Now, we set $\mu\langle w\rangle=\mu+ w(\varrho^{w^{-*}\fq})-\varrho^\fq\in\Z^\I$; we notice that $\varrho^{w^{-*}\fq}$ and $\varrho^\fq$ could be different. We show that the Verma modules $Z^w_\CC(\mu\langle w\rangle)$ and $Z^x_\CC(\mu\langle x\rangle)$ have identical characters and the Hom-space between them is one-dimensional, for all $w,x\in{}^\fq\cW$. Moreover, we construct inductively a generator for each space and compute its kernel. We also prove that the image of $\Phi:Z_\CC(\mu)\longrightarrow Z^{w_0}_\CC(\mu\langle w_{0}\rangle)$ is isomorphic to $L_\CC(\mu)$, cf. Figure \ref{fig:sketch}.

\begin{figure}[h]
\begin{tikzpicture}[scale=.9,every node/.style={scale=0.9}]

\node (0) at (-5.5,0) {$Z_\CC(\mu)$};

\node (s) at (0,0) {$Z^{w}_\CC(\mu\langle w\rangle)$};

\node (s1) at (3.5,0) {$Z^{w\sigma_{i_s}}_\CC(\mu\langle w\sigma_{i_s}\rangle)$};

\node (w0) at (8,0) {$Z^{w_0}_\CC(\mu\langle w_{0}\rangle)$};

\node (sp) at (0,-2) {$Z^{w_s}_\CC(\beta\downarrow\mu\langle w\rangle)$};

\node (0p) at (-5.5,-2) {$Z_\CC(\beta\downarrow\mu)$};

\draw[->,transform canvas={yshift=.3cm}] (0) to [out=5,in=175]  node [above,sloped] {\tiny$\Phi$} node [below,sloped] {\tiny$\circlearrowleft$} (w0);

\draw[->] (0) to (-4,0);

\draw[dotted] (-1.5,0) to (s);

\draw[->] (6,0) to (w0);

\draw[dotted] (5.5,0) to (6,0);

\draw[->] (s) to node [above] {\tiny$\varphi$} (s1);

\draw[->] (sp) to node [left] {\tiny$\psi$} (s);

\draw[->] (0p) to (-3.5,-2);

\draw[dotted] (-2.5,-2) to (sp);

\end{tikzpicture}
\caption{Here $w_0=1^{\fq}\sigma_{i_1}\cdots\sigma_{i_n}$ is a reduced expression of the longest element in ${}^\fq\cW$, $w=1^\fq\sigma_{i_1}\cdots\sigma_{i_{s-1}}$ and $\beta=w\alpha_{i_s}\in\Delta_+^\fq$.  The horizontal maps are generators of the corresponding Hom-spaces and $\Im\psi=\Ker\varphi$.
}
\label{fig:sketch}
\end{figure}

We are ready to outline the last step of the proof of Theorem \ref{teo:main teo}. Let $L_\CC(\lambda)$ be a composition factor of $Z_\CC(\mu)$ not isomorphic to $L_\CC(\mu)$. Then $L_\CC(\lambda)$ is a composition factor of $\Ker\Phi$ and hence also of $\Ker\varphi=\Im\psi$, cf. Figure \ref{fig:sketch} (for some $s$). Since the Verma modules in the same row of Figure \ref{fig:sketch} have identical characters, we conclude that $L_\CC(\lambda)$ is a composition factor of $Z_\CC(\beta\downarrow\mu)$ as well. We observe that $-\beta^\fq_{top}=\sum_{\beta\in \Delta^\fq_+}(b^\fq(\beta)-1)\beta\leq\lambda\leq\beta\downarrow\mu<\mu$ because $\beta^\fq_{top}$ is the maximum $\Z^\I$-degree of $\BV_\fq$. Therefore, by repeating this procedure, we will find $\beta_1, ..., \beta_r\in\Delta_+^\fq$ such that 
$\lambda=\beta_r\downarrow\cdots\beta_1\downarrow\mu$ as desired.

\subsection{Relations with other algebras in the literature}

We would like to remark that the representations of the present small quantum groups can be helpful for other algebras. For instance, this was pointed out by Andruskiewitsch, Angiono and Yakimov \cite[\S1.3.3]{AAYakimov} for the large quantum groups studied in \cite{Ang-reine,Ang-distinguished,AAYakimov} which are analogous to the quantized enveloping algebras of De Concini–Kac–Procesi. The small quantum groups are particular quotients of them. As in \cite{AAYakimov}, we highlight that our results apply to small quantum groups at even roots of unity.

Another example is given by the braided Drinfeld doubles of Nichols algebras recently constructed by Laugwitz and Sanmarco \cite{LS}. These are quotients of $u_\fq$ with only one copy of the finite torus. Since the corresponding projections preserve the triangular decompositions \cite[Proposition 3.16]{LS}, a linkage principle for them can be deduced from Theorem \ref{teo:main teo}.

Finally, it is worth noting that Pan and Shu \cite{panshu} have obtained results comparable to ours, as well as their proofs, for modular Lie superalgebras. This could suggest a relationship between small quantum groups of super type and the corresponding modular Lie superalgebras as it happens between $u_q(\mathfrak{g})$ and $U^{[p]}(\mathfrak{g}_k)$.

\subsection{Organization} The exposition is mostly self-contained. In Section \ref{subsection:conventions} we set up general conventions. In Section \ref{sec:nichols} we collect the main concepts regarding Nichols algebras (PBW basis, Weyl groupoid, root system) and their properties. We illustrate them in super type $A(1|1)$. In Section \ref{sec:Drinfeld} we recall the construction of the Drinfeld doubles and their Lusztig isomorphisms. In Section \ref{sec:AJS} we introduce the categories defined in \cite{AJS} and  summarize their general features. Next we investigate these categories over the Drinfeld doubles of Section \ref{sec:Drinfeld} using their Lusztig isomorphisms: in Section \ref{sec:vermas}, we construct the different Verma modules mentioned previously in this introduction, and we study the morphisms between them in Section \ref{sec:morphisms}. Finally, we prove our main results in Section \ref{sec:linkage}.

\subsection{Acknowledgments} 

I am very grateful to Nicolas Andruskiewitsch, for motivating me to study the representations of Hopf algebras since my PhD, to Ivan Angiono, for patiently answering all my questions about Nichols algebras of diagonal type, and to Simon Riche, for teaching me so much about representation theory. I also thank them for the fruitful discussions. I thank the anonymous referee for their careful reading and valuable comments

\section{Conventions}\label{subsection:conventions}

Throughout our work $\ku$ denotes an algebraically closed field of characteristic zero. For $q\in\ku^\times$ and $n\in\N$, we recall the quantum numbers
\begin{align*}
(n)_q =\sum_{j=0}^{n-1}q^{j} \quad\mbox{and the identity}\quad (n)_{q}=q^{n-1} (n)_{q^{-1}}.
\end{align*}
Let $\theta \in\N$. We set $\I=\I_\theta=\{1, 2,\dots,\theta\}$. We denote $\Pi=\{\alpha_{1}, \dots ,\alpha_{\theta}\}$ the canonical $\Z$-basis of $\Z^\I$. We will write $0=0\alpha_1+\cdots+0\alpha_\theta$. We will use $\Pi$ to identify the matrices of size $\theta\times\theta$ and the bicharacters on $\Z^\I$ with values in $\ku^\times$. Explicitly, given $\fq=(q_{ij})_{i,j\in\I}\in(\ku^\times)^{\I\times\I}$, by abuse of notation, we will denote $\fq:\Z^\I\times\Z^\I\longrightarrow\ku^\times$ the bicharacter defined by
\begin{align*}
\fq(\alpha_i,\alpha_j)=q_{ij}\quad\forall\,i,j\in\I.
\end{align*}
Let $\beta\in\Z^\I$. We will write $q_\beta=\fq(\beta,\beta)$. In particular, $q_{\alpha_i}=q_{ii}$ for all $i\in\I$. The bound function \cite[$(2.12)$]{HY} is
\begin{align}\label{eq: b}
b^{\fq}(\beta)=
\begin{cases}
\min\{m\in\N\mid (m)_{q_\beta}=0\}&\mbox{if $(m)_{q_\beta}=0$ for some $m\in\N$},\\
\infty&\mbox{otherwise}.
\end{cases}
\end{align}
Of course, $b^{\fq}(\beta)$ is finite if and only if $q_\beta$ is a primitive root of $1$ of order $b^{\fq}(\beta)$.

We will consider the dual action of $\Aut_{\Z}(\Z^\I)$ on bicharacters. That is, if $w\in\Aut_\Z(\Z^\I)$, then the bicharacter $w^*{\fq}:\Z^\I\times\Z^\I\longrightarrow\ku^\times$ is defined by
\begin{align*}
w^*{\fq}(\alpha,\beta)={\fq}(w^{-1}(\alpha),w^{-1}(\beta))\quad\forall \alpha,\beta\in\Z^\I.
\end{align*}
The partial order $\leq^w$ in $\Z^\I$ is defined by $\lambda\leq^w\mu$ if and only if $w^{-1}(\mu-\lambda)\in\Z_{\geq0}^\I$. When $w=\id$, we simply write $\leq$.

Let $M=\oplus_{\nu\in\Z^\I} M_{\nu}$ be a $\Z^\I$-graded module over a ring $\bA$. We call {\it weight space} to the homogeneous component $M_\nu$. The support $\sup(M)$ is the set of all $\nu\in\Z^\I$ such that $M_\nu\neq0$. If $w\in\Aut_\Z(\Z^\I)$, the $w${\it-twisted} $M[w]$ is the $\Z^\I$-graded $\bA$-module whose weight spaces are $M[w]_\nu=M_{w^{-1}\nu}$ for all $\nu\in\Z^\I$. This an endofunctor in the category of $\Z^\I$-graded $\bA$-modules. If the weight spaces are $\bA$-free of finite rank, the formal character of $M$ is
\begin{align*}
\ch M=\sum_{\mu\in\Z^\I}\rank_\bA(M_\mu)\,e^\mu
\end{align*}
which belongs to the $\Z$-algebra generated by the symbols $e^\mu$ with multiplication $e^\mu\cdot e^\nu=e^{\mu+\nu}$. We denote $\overline{(-)}$ and $w(-)$ the automorphisms of it given by $\overline{e^\mu}=e^{-\mu}$ and $w(e^\mu)=e^{w\mu}$ for all $\mu\in\Z^\I$ and $w\in\Aut_{\Z}(\Z^\I)$. Therefore
\begin{align}\label{eq:w ch}
\ch(M[w])=w(\ch\, M).
\end{align}

\section{Finite-dimensional Nichols algebras of diagonal type}\label{sec:nichols}

The finite-dimensional Nichols algebras of diagonal type were classified by Heckenberger in \cite{Hroot}\footnote{Indeed, he classifies a more larger family but we only consider here finite-dimensional Nichols algebras.}. They are parameterized by matrices of scalars. The classification and their structure are governed by certain Lie type objects, such as, PBW basis, the generalized Cartan matrix, the Weyl groupoid and the generalized root system, among others. We recall here their main features needed for our work.  We refer to \cite{AA-diag-survey} for an overview of the theory.

We fix $\theta\in\N$ and set $\I=\I_\theta$. Throughout this section $\fq=(q_{ij})_{i,j\in\I}\in(\ku^\times)^{\I\times\I}$ denotes a matrix with finite-dimensional Nichols algebra $\BV_\fq$. This is constructed as a quotient of the free $\ku$-algebra generated by $E_1, ..., E_\theta$, that is
\begin{align*}
\BV_\fq=\ku\langle E_1, ..., E_\theta\rangle/\cJ_\fq
\end{align*}
for certain ideal $\cJ_\fq$. It is a 
$\Z^\I$-graded algebra with 
\begin{align*}
\deg E_i=\alpha_i\quad\forall i\in\I
\end{align*}
and $\Z$-graded with $\deg E_i=1$ for all $i\in\I$. A minimal set of generators for $\cJ_\fq$ was given by Angiono in \cite{Ang-reine, Ang-presentation}. This set consists of powers of the generators of the PBW basis and generalizations of quantum Serre relations. However, we do not need their explicit description here.

\begin{example}\label{ex:small qg}
Let $\mathfrak{g}$ be a finite dimensional semisimple Lie algebra over $\mathbb{C}$ with Cartan matrix $C=(c_{ij})_{i,j\in\I}$ and $D=(d_i)_{i\in\I}$ such that $DC$ is symmetric. Let $q$ be a primitive root of unity and $\fq=(q^{d_ic_{ij}})_{i,j\in\I}$. Then $\BV_\fq$ is finite-dimensional. Moreover, if $\ord q$ is an odd prime, not $3$ if $\mathfrak{g}$ has a component of type $G_2$, then $\BV_\fq$ is the positive part of the small quantum group $u_q(\mathfrak{g})$, one of the algebras analyzed by Andersen--Jantzen--Soergel \cite[Section 1.3. Case 2]{AJS}. 
\end{example}

\begin{example}\label{ex:the example}
Let $q\in\ku$ be a primitive root of order $N>2$ and 
\begin{align*}
\fq=\begin{pmatrix} -1 & -q \\ -1 & -1 \end{pmatrix}.
\end{align*}
Set $E_{12}=\ad_cE_1(E_2)=(E_1E_2-q_{12}E_2E_1)$. Then
\begin{align*}
\BV_\fq=\langle E_1,E_2 \mid E_1^2=E_2^2=E_{12}^N=0\rangle.
\end{align*}
This is a Nichols algebra of super type $A(1|1)$. This term refers to the root system which will be introduced in \S\ref{subsec:root system}; see Example \ref{ex:the example root system}. Through this section we will use this example to illustrate the different concept. See also \cite[\S5.1.11]{AA-diag-survey}.

\end{example}

\subsection{PBW basis}\label{subsec:PBW} In \cite{karchencko}, Karchencko proves the existence of homogeneous elements $E_{\beta_1}, ..., E_{\beta_n}\in\BV_\fq$ with $\deg E_{\beta_\nu}=\beta_\nu\in\Z_{\geq0}^\I$ and $b^\fq(\beta_\nu)<\infty$, recall \eqref{eq: b}, such that
\begin{align*}
\left\{ E_{\beta_{1}}^{m_1}\cdots E_{\beta_{n}}^{m_n}\mid 0\leq m_\nu<b^{\fq}(\beta_\nu),\,1\leq \nu\leq n\right\}
\end{align*}
is a linear basis of $\BV_\fq$. We will see in the next section a way of constructing these elements $E_{\beta_\nu}$. The set of {\it positive roots} of $\fq$ is
\begin{align}\label{eq:positive roots}
\Delta_+^\fq=\{\beta_1, ..., \beta_n\}.
\end{align}
It turns out that the elements of $\Delta_+^\fq$
are pairwise different \cite[Proposition 2.12]{CH}. The set of (positive) {\it simple roots} is $\Pi^\fq=\{\alpha_1, ..., \alpha_\theta\}$. We set
\begin{align}\label{eq:beta top}
\beta_{top}^\fq=\sum_{\beta\in \Delta^\fq_+}(b^\fq(\beta)-1)\beta.
\end{align}
This is the weight of the homogeneous component of maximum $\Z^\I$-degree of $\BV_\fq$.

\begin{example}\label{ex:the example pbw}
The positive roots of $\fq$ in Example \ref{ex:the example} are
\begin{align*}
\Delta_+^\fq=\{\alpha_1,\alpha_1+\alpha_2,\alpha_2\}
\end{align*}
with
\begin{align*}
b^\fq(\alpha_1)=2,\quad
b^\fq(\alpha_1+\alpha_2)=N,\quad
b^\fq(\alpha_2)=2.
\end{align*}
The associated PBW basis is
\begin{align*}
\{E_1^{m_1}E_{12}^{m_2}E_2^{m_3}\mid 0\leq m_1,m_3<2,\,0\leq m_2<N\}.
\end{align*}
\end{example}

\subsection{Weyl groupoid}
For distinct $i,j\in\I$, by \cite{Roso} there exists $m\in\N_0$ such that
\begin{align*}
(m+1)_{q_{ii}}(q_{ii}^mq_{ij}q_{ji}-1)=0.
\end{align*}
Thus, it is possible to define the {\it generalized Cartan matrix} $C^\fq=(c_{ij}^\fq)_{i,j\in\I}$ of $\fq$ as
\begin{align}\label{eq:C chi}
c_{ij}^\fq=\begin{cases}
        2,&\mbox{if $i=j$};\\
        -\min\{m\in\N_0\mid(m+1)_{q_{ii}}(q_{ii}^mq_{ij}q_{ji}-1)=0\},&\mbox{otherwise.}
       \end{cases}
\end{align}
This matrix leads to the reflection $\sigma_i^{\fq}\in\Aut_\Z(\Z^\I)$, $i\in\I$, given by
\begin{align}\label{eq:sigma chi}
\sigma_i^{\fq}(\alpha_j)=\alpha_j-c_{ij}^\fq\alpha_i\quad\forall\,j\in\I.
\end{align}
It turns out that the Nichols algebra of $(\sigma_i^\fq)^*\fq$ is also finite-dimensional for all $i\in\I$. However, $\BV_{(\sigma_i^\fq)^*\fq}$ is not necessarily isomorphic to $\BV_\fq$. 

Let $\cH$ be the family of matrices with finite-dimensional Nichols algebras and $r_i:\cH\longrightarrow\cH$ the bijection given by
\begin{align}\label{eq:r i}
r_i(\fq)=(\sigma_i^{{\fq}})^*{\fq}.
\end{align}
for all $\fq\in\cH$ and $i\in\I$.
For instance,
$
r_p(\fq)(\alpha_i,\alpha_j)=q_{ij}q_{ip}^{-c_{pj}^{\fq}}q_{pj}^{-c_{pi}^{\fq}}q_{pp}^{c_{pi}^{\fq}c_{pj}^{\fq}}
$ for all $p,i,j\in\I$ and hence
\begin{align}\label{eq: C invariante}
c_{ij}^{r_i(\fq)}=c_{ij}^\fq.
\end{align}
It is immediate that $\sigma_i^{r_i(\fq)}=\sigma_i^{\fq}$ and $r_i^2(\fq)=\fq$. We notice that
\begin{align*}
r_{i_k}(\cdots (r_{i_1}(\fq)))=\left(\sigma_{i_k}^{r_{i_{k-1}}\cdots r_{i_{1}}(\fq)}\cdots \sigma_{i_2}^{r_1(\fq)}\sigma_{i_1}^\fq\right)^*\fq.
\end{align*}
We denote $\cG=\langle r_i\mid i\in\I\rangle$ which is a subgroup of the group of bijections on $\cH$.

Let $\cX$ be a $\cG$-orbit. The {\it Weyl groupoid} $\cW$ of $\cX$ is the category whose objects are the matrices belong to $\cX$, and their morphisms are generated by the arrows 
$\sigma_i^{{\fq}}:{\fq}\rightarrow r_i({\fq})$ for all $\fq\in\cX$ and $i\in\I$. We denote $1^{\fq}$ the identity of $\fq$ and set $\cW_\fq=\Hom_{\cW}(\fq,\fq)$. Thus, a morphism in $\cW$ is of the form
\begin{align*}
\sigma_{i_k}^{r_{i_{k-1}}\cdots r_{i_{1}}(\fq)}\cdots \sigma_{i_2}^{r_1(\fq)}\sigma_{i_1}^\fq&:\fq\longrightarrow r_{i_k}(\cdots (r_{i_1}(\fq))).&
\end{align*}
To shorten notation, we observe that a morphism in $\cW$ is determined either by specifying the source,
$\sigma_{i_k}\cdots\sigma_{i_1}^{\fq}:\fq\longrightarrow r_{i_k}\cdots r_{i_1}(\fq)$, or by specifying the target, $1^{\fq}\sigma_{i_k}\cdots\sigma_{i_1}:r_{i_1}\cdots r_{i_k}(\fq)\longrightarrow\fq$. For $w=\sigma_{i_k}\cdots\sigma_{i_1}^{\fq}$, we will write 
\begin{align*}
w^{*}\fq=r_{i_k}(\cdots (r_{i_1}(\fq)))\quad\mbox{and}\quad w^{-*}\fq=(w^{-1})^*\fq.
\end{align*}
We set ${}^\fq\cW$, resp. $\cW^\fq$, the family of morphisms in $\cW$ whose target, resp. source, is $\fq$.

Clearly, $(\sigma_p^{\fq})^*\fq(\sigma_p^{\fq}(\alpha),\sigma_p^{\fq}(\alpha))=\fq(\alpha,\alpha)$. Hence, for all $w\in\cW^\fq$ and $\alpha\in\Z^\I$, it holds
\begin{align}\label{eq: b invariante}
b^{w^*\fq}(w\alpha)=b^{\fq}(\alpha). 
\end{align}

By \cite[Theorem 1]{HY08}, the defining relations of the Weyl groupoid are of Coxeter type:
\begin{align*}
(\sigma_i\sigma_i)1^{\fq}=1^{\fq}\quad\mbox{and}\quad(\sigma_i\sigma_j)^{m_{ij}^{\fq}}1^{\fq}=1^{\fq}\quad\forall \fq\in\cX,\,\forall i,j\in\I,\,i\neq j;
\end{align*}
for certain exponents $m_{ij}^{\fq}$. Similar to Coxeter groups, there exists a length function $\ell:\cW\longrightarrow\N_0$ given by
\begin{align*}
\ell(w)=\min\bigl\{k\in\N_0\mid \exists i_1, \dots, i_k\in\I,\fq\in\cX:w=\sigma_{i_k}\cdots\sigma_{i_1}^{\fq}\bigr\} 
\end{align*}
for $w\in\cW$.  Also, for each $\fq\in\cX$, there exists a unique morphism $w_0\in{}^\fq\cW$ of maximal length. Let $w_0=1^\fq\sigma_{i_1}\cdots\sigma_{i_n}$ be a reduced expression of the longest element in ${}^\fq\cW$. The set of positive roots can be constructed as follows
\begin{align}\label{eq:roots are conjugate to simple}
\Delta_+^\fq=\biggl\{\beta_\nu=1^\fq\sigma_{i_1}\cdots\sigma_{i_{\nu-1}}(\alpha_{i_\nu})\mid1\leq\nu\leq n\biggr\},
\end{align}
see for instance \cite[Proposition 2.12]{CH}.

\begin{example}\label{ex:the example groupoid}
Let $\fq$ be as in Example \ref{ex:the example}. We set 
\begin{align*}
\fp=
\left(
\begin{array}{cc}
    -1 & q^{-1} \\ 1 & q
       \end{array}
       \right)
       \quad\mbox{and}\quad
       \fr=
\left(
\begin{array}{cc}
    q & q^{-1} \\ 1 & -1
       \end{array}
       \right)
       . 
\end{align*}
Then $\cX=\{\fq,\fp,\fr,\fq^t,\fp^t,\fr^t\}$ is a $\cG$-orbit. Indeed, the generalized Cartan matrices of them are all equal to
\begin{align*}
\left(
\begin{array}{rr}
    2 & -1 \\ -1 & 2
       \end{array}
       \right)
\end{align*}
and the Weyl groupoid of $\cX$ is
\begin{center}
\begin{tikzpicture}

\node (q) at (0,0) {$\fq$};
\node (p) at (-5,0) {$\fp$};
\node (r) at (5,0) {$\fr$};

\node (qt) at (0,-1.5) {$\fq^t$};
\node (pt) at (-5,-1.5) {$\fp^t$};
\node (rt) at (5,-1.5) {$\fr^t$};

\draw[->,transform canvas={yshift=1pt}] (q) to [out=5,in=175]  node [above,sloped] {\tiny$\sigma_2^\fq$} (r);

\draw[->,transform canvas={yshift=-1pt}] (r) to [out=-175,in=-5]  node [below,sloped] {\tiny$\sigma_2^\fq$} (q);

\draw[<-,transform canvas={yshift=1pt}] (p) to [out=5,in=175]  node [above,sloped] {\tiny$\sigma_1^\fq$} (q);

\draw[->,transform canvas={yshift=-1pt}] (p) to [out=-5,in=-175]  node [below,sloped] {\tiny$\sigma_1^\fp$} (q);

\draw[->,transform canvas={yshift=1pt}] (qt) to [out=5,in=175]  node [above,sloped] {\tiny$\sigma_2^{\fq^t}$} (rt);

\draw[->,transform canvas={yshift=-1pt}] (rt) to [out=-175,in=-5]  node [below,sloped] {\tiny$\sigma_2^{\fq^t}$} (qt);

\draw[<-,transform canvas={yshift=1pt}] (pt) to [out=5,in=175]  node [above,sloped] {\tiny$\sigma_1^{\fq^t}$} (qt);

\draw[->,transform canvas={yshift=-1pt}] (pt) to [out=-5,in=-175]  node [below,sloped] {\tiny$\sigma_1^{\fp^t}$} (qt);

\draw[->,transform canvas={xshift=-2pt}] (p) to [out=-95,in=95]  node [left] {\tiny$\sigma_2^{\fp}$} (pt);

\draw[->,transform canvas={xshift=2pt}] (pt) to [out=85,in=-85]  node [right] {\tiny$\sigma_2^{\fp^t}$} (p);

\draw[->,transform canvas={xshift=-2pt}] (r) to [out=-95,in=95]  node [left] {\tiny$\sigma_1^{\fp}$} (rt);

\draw[->,transform canvas={xshift=2pt}] (rt) to [out=85,in=-85]  node [right] {\tiny$\sigma_1^{\fp^t}$} (r);


%
%
\end{tikzpicture}
\end{center}
The corresponding Nichols algebras are not necessarily isomorphic. For instance,
\begin{align*}
\BV_\fp&=\langle E_1,E_2\mid E_1^2=E_2^N=E_{221}^2=0\rangle\quad\mbox{with}\quad E_{221}=(\ad_cE_{2})^2(E_1),\\
\BV_\fr&=\langle E_1,E_2\mid E_1^N=E_2^2=E_{112}^2=0\rangle\quad\mbox{with}\quad E_{112}=(\ad_cE_{1})^2(E_2).
\end{align*}
\end{example}

\subsection{Root systems}\label{subsec:root system}
The bundle $\cR=\{\Delta^\fq\}_{\fq\in\cX}$ with
\begin{align*}
\Delta^{\fq}=\Delta^{\fq}_+\cup-\Delta^{\fq}_+ 
\end{align*}
is the so-called {\it (generalized) root system of $\cX$} (or of $\fq\in\cX$)

We highlight that, unlike classical root systems, $\Delta^\fq$ and $\Delta^{\fp}$ are not necessarily equals for distinct $\fq,\fp\in\cX$. When it is needed, we will write $\beta^\fq$ to emphasize that $\beta\in\Z^\I$ belongs to $\Delta^\fq$. However, they share other characteristics with classical root systems which will be useful, cf. \cite{CH,HY}. In particular, for all $i\in\I$, it holds that
\begin{align}\label{eq:sigma Delta = Delta}
\sigma_i^{\fq}(\Delta_+^{\fq}\setminus\{\alpha_i\})=\Delta_+^{r_i(\fq)}\setminus\{\alpha_i\},\quad\sigma_i^{\fq}(\alpha_i)=-\alpha_i
\quad\mbox{and}\quad w(\Delta^{w^{-*}\fq})=\Delta^{\fq}.
\end{align}
Also, $\ell(w)=\left|w(\Delta_+^{\fp})\cap-\Delta_+^{\fq}\right|$ for any $w:\fp\rightarrow\fq$ \cite[Lemma 8 $(iii)$]{HY08}. This implies that
\begin{align}
w_0(\beta_{top}^{w_0^{-*}\fq})=-\beta_{top}^\fq. 
\end{align}

\begin{example}\label{ex:the example root system}
The generalized root system of $\fq$ from Example
\ref{ex:the example} is constant: $\Delta^\fz=\{\pm\alpha_1,\pm(\alpha_1+\alpha_2),\pm\alpha_2\}$ for all $\fz\in\cX$. 

We have introduced here generalized root systems for Nichols algebras. However, this is a combinatorial object appearing in other context. According to \cite[Theorem 2.34]{AA-diag-survey} contragredient Lie superalgebras have associated  generalized root systems. For instance, that associated to $\mathfrak{sl}(1|1)$ is equal to $\cR$.
\end{example}

\subsection{A shift operation on weights}
Recall the element $\beta_{top}^\fq$ in \eqref{eq:beta top}. We set
\begin{align}\label{eq:varrho}
\varrho^\fq=\frac{1}{2}\beta^\fq_{top}.
\end{align}
This element will play the role of the semi-sum of the positive roots in Lie theory. The following definition is analogous to \cite[4.7$(4)$]{AJS}.

\begin{definition}\label{def:mu w}
For $\mu\in\Z^\I$ and $w:w^{-*}\fq\rightarrow\fq$ in $\cW$, we set
\begin{align*}
\mu\langle w\rangle=\mu+ w(\varrho^{w^{-*}\fq})-\varrho^\fq\in\Z^\I.
\end{align*}
\end{definition}

We observe that 
\begin{align}\label{eq:w varrho - varrho}
w(\varrho^{w^{-*}\fq})-\varrho^\fq=-\sum_{\beta\in \Delta_+^\fq\,:\,w^{-1}\beta\in \Delta_-^{w^{-*}\fq }}(b^\fq(\beta)-1)\beta
\end{align}
since $w(\Delta^{w^{-*}\fq})=\Delta^{\fq}$ and $b^{w^{-*}\fq}(w^{-1}\beta)=b^{\fq}(\beta)$. For instance, $\mu\langle w_0\rangle=\mu-\beta^\fq_{top}$. This operation satisfies that
\begin{align}\label{eq:mu w s}
\mu\langle w\sigma_i\rangle=\mu\langle w\rangle-(b^\fq(w\alpha_{i})-1)w\alpha_{i}
\end{align}
where $w\sigma_i:\sigma_i^*w^{-*}\fq\rightarrow\fq$ and $i\in\I$. Indeed,
\begin{align*}
w\sigma_i\left(\varrho^{\sigma_i^*w^{-*}\fq}\right)-\varrho^\fq&=w\left(\sigma_i(\varrho^{\sigma_i^*w^{-*}\fq})-\varrho^{w^{-*}\fq}+\varrho^{w^{-*}\fq}\right)-\varrho^\fq\\
&=w\left(-(b^{w^{-*}\fq}(\alpha_i)-1)\alpha_i+\varrho^{w^{-*}\fq}\right)-\varrho^\fq\\
&=w\left(\varrho^{w^{-*}\fq}\right)-\varrho^\fq-\left(b^{w^{-*}\fq}(\alpha_i)-1\right)w\alpha_i.
\end{align*}

\begin{example}\label{ex:the example varrho}

Even if the generalized root system is constant, the corresponding elements $\varrho^\fq$ could be different. For instance, 
\begin{align*}
\varrho^\fq=\frac{1}{2}\left(\alpha_1+(N-1)(\alpha_1+\alpha_2)+\alpha_2\right)\\
\varrho^\fp=\frac{1}{2}\left(\alpha_1+(\alpha_1+\alpha_2)+(N-1)\alpha_2\right)\\
\varrho^\fr=\frac{1}{2}\left((N-1)\alpha_1+(\alpha_1+\alpha_2)+\alpha_2\right)
\end{align*}
where $\fq$, $\fp$ and $\fr$ are as in Example \ref{ex:the example groupoid}.

\end{example}

\section{Small quantum groups}\label{sec:Drinfeld}

In this section, we introduce the Hopf algebras which we are interested in. We will follow \cite{Hlusztigiso}. We fix $\theta\in\N$ and a matrix $\fq=(q_{ij})_{i,j\in\I}\in(\ku^\times)^{\I\times\I}$ with finite-dimensional Nichols algebra $\BV_\fq$, where $\I=\I_\theta$. Let $\cX$ be the $\cG$-orbit of $\fq$ and $\cW$ its Weyl groupoid.

We denote $U_\fq$ the Hopf algebra generated by the symbols $K_i$, $K_i^{-1}$, $L_i$, $L_i^{-1}$, $E_i$ and $F_i$, with $i\in\I$, subject to the relations:
\begin{align*}
K_iE_j=q_{ij}E_jK_i,\quad&
L_iE_j=q_{ji}^{-1}E_jL_i\\
K_iF_j=q_{ij}^{-1}F_jK_i,\quad&
L_iF_j=q_{ji}F_jL_i
\end{align*}
\begin{align}
\label{eq: EF FE}
E_iF_j-F_jE_i&=\delta_{i,j}(K_i-L_i)\\
\notag
XY&=YX,\\
\notag
K_iK_i^{-1}&=L_iL_i^{-1}=1
\end{align}
for all $i,j\in\I$ and $X,Y\in\{K_i^{\pm1},L_i^{\pm1}\mid i\in\I\}$. Also, the generators $E_i$ are subject to the defining relations $\cJ_\fq$ of the Nichols algebra, and the generators $F_i$ are subject to the relations $\tau(\cJ_\fq)$, cf. \eqref{eq:tau} below. It turns out that $\cJ_\fq$ coincides with the radical of a natural Hopf pairing between the free algebras generated by the elements $E_i$ and $F_i$. However, these latter relations, as well as the counit, comultiplication, and antipode of $U_\fq$,  will not be needed.

We have that $U_\fq=\oplus_{\mu\in\Z^\I} \,(U_\fq)_\mu$ is a $\Z^\I$-graded Hopf algebra with
\begin{align*}
\deg E_i=-\deg F_i=\alpha_i\quad\mbox{and}\quad \deg K_i^{\pm1}=\deg L_i^{\pm1}=0\quad\forall\,i\in\I.
\end{align*}
For $\alpha=n_1\alpha_1+\cdots+n_\theta\alpha_\theta\in\Z^\I$, we set 
\begin{align*}
K_\alpha=K_1^{n_1}\cdots K_\theta^{n_\theta}\quad\mbox{and}\quad L_\alpha=L_1^{n_1}\cdots L_\theta^{n_\theta}.
\end{align*}
In particular, $K_{\alpha_i}=K_i$ for $i\in\I$.

Given $\mu\in\Z^\I$, we define $\fq_\mu\in\Alg(U_\fq^0,\ku)$ and $\widetilde{\mu}\in\Aut_{\ku-alg}(U_\fq^0)$ by
\begin{align}
\label{eq:tilde mu}
\fq_\mu(K_\alpha L_\beta)=\frac{\fq(\alpha,\mu)}{\fq(\mu,\beta)}
\quad\mbox{and}\quad
\widetilde{\mu}(K_\alpha L_\beta)=\fq_\mu(K_\alpha L_\beta)\,K_\alpha L_\beta
\end{align}
for all $\alpha,\beta\in\Z^\I$. Then, it follows easily that 
\begin{align}
s\,u=u\,\widetilde\mu(s)\quad\forall u\in (U_\fq)_\mu,\,s\in U_\fq^0
\end{align}
and
\begin{align}\label{eq:composition tilde}
\widetilde{\nu}\circ\widetilde{\mu}=\widetilde{\nu+\mu}\quad\forall\nu,\mu\in\Z^\I.
\end{align}

The multiplication of $U_\fq$ induces a linear isomorphism
\begin{align}
U_\fq^-\ot U_\fq^0\ot U_\fq^+\longrightarrow U_\fq
\end{align}
where
\begin{align*}
U_\fq^{+}=\ku\langle E_i\mid i\in\I\rangle\simeq\BV_\fq,\quad
U_\fq^{0}=\ku\langle K_i,L_i\mid i\in\I\rangle,
\quad
U_\fq^{-}=\ku\langle F_i\mid i\in\I\rangle
\end{align*}
are subalgebras of $U_\fq$. We notice that   $U_\fq^0$ identifies with the group algebra $\ku(\Z^\I\times\Z^\I)$ for any matrix $\fq$.

\begin{definition}\label{def:small quantum group}
Let $p:U_\fq\longrightarrow u_\fq$ be a $\Z^\I$-graded algebra projection and set $u_\fq^{\pm,0}:=p(U_\fq^{\pm,0})$. We call $u_\fq$ {\it small quantum group} if the multiplication $u_\fq^-\ot u_\fq^0\ot u_\fq^+\longrightarrow u_\fq$ induces a linear isomorphism and $u_\fq^\pm\simeq U_\fq^\pm$.
\end{definition}

We say they are ``small'' because $u_\fq^\pm$ is finite-dimensional but we do not make any assumption on the size of $u_\fq^0$. Thus, $U_\fq$ is a small quantum group.

\begin{example}
Let $\fq$ be as in Example \ref{ex:small qg}. Then $U_\fq/\langle K_i-L_i^{-1}, K_i^{2\ord q}-1\mid i\in\I\rangle$ is a small quantum group. Moreover, if $\ord q$ is an odd prime, not $3$ if $\mathfrak{g}$ has a component of type $G_2$, then $u_q(\mathfrak{g})\simeq U_\fq/\langle K_i-L_i^{-1}, K_i^{2\ord q}-1\mid i\in\I\rangle$.
\end{example}

\begin{example}\label{ex:uq}
Let us explain how to construct the small quantum group $u_\fq$ of Figure \ref{fig:uq}.
 By \cite[Corollary 5.9]{Hlusztigiso}, there is a skew Hopf pairing $\eta$ between $U^+_\fq\#\ku\langle K_i\mid i\in\I\rangle$ and $U^-_\fq\#\ku\langle L_i\mid i\in\I\rangle$, and the corresponding Drinfeld double is isomorphic to $U_\fq$. Let $\Gamma=\overline{\langle K_i\mid i\in\I\rangle}$ be a group quotient and set $g_i=\overline{K_i}$, $i\in\I$. Suppose the character $\chi_i:\Gamma\longrightarrow\ku^\times$, $\chi_i(g_j)=q_{ij}$ for all $i,j\in\I$, is well-defined and set $\widetilde{\Gamma}=\langle \chi_i\mid i\in\I\rangle$. Then $\langle L_i\mid i\in\I\rangle\longrightarrow\widetilde{\Gamma}$, $L_i\mapsto\chi_i$, is a group quotient. Moreover, $\eta$ induces a pairing between $U^+_\fq\#\ku\Gamma$ and $U^-_\fq\#\ku\widetilde{\Gamma}$, and the corresponding Drinfeld double is $u_\fq$.
\end{example}

\begin{example}
The braided Drinfeld doubles introduced in \cite[\S3.2]{LS} are small quantum groups.
\end{example}

\subsection{Lusztig isomorphisms}\label{subsec:lusztig iso}

In \cite[Theorem 6.11]{Hlusztigiso}, Heckenberger constructs certain algebra isomorphisms
\begin{align}\label{eq:Ti}
T_i=T_i^{(\sigma_i^\fq)^*\fq}:U_{(\sigma_i^\fq)^*\fq}\longrightarrow U_\fq
\end{align}
for all $i\in\I$. They emulate some properties of the Lusztig automorphisms of small quantum groups.  We emphasize that these isomorphisms depend on the matrix defining the Drinfeld double of the domain, but we will omit this in the notation when no confusion can arise. We do not need the precise definition of these functions. We just recall some useful properties for us.

Let $w:w^{-*}\fq\rightarrow\fq$ be a morphism in $\cW$. We choose a reduced expression $w=1^\fq\sigma_{i_k}\cdots\sigma_{i_1}$ and denote
\begin{align}
T_w=T_{i_k}\cdots T_{i_1}:U_{w^{-*}\fq}\longrightarrow U_\fq. 
\end{align}
This isomorphism depends on our chosen  expression for $w$. However, if $w=1^\fq\sigma_{j_k}\cdots\sigma_{j_1}$ is another reduced expression, there exists $\underline{a}=(a)_{i\in\I}\in(\ku^{\times})^\I$ such that
\begin{align}\label{eq:Tw different presentation}
T_{i_k}\cdots T_{i_1}=T_{j_k}\cdots T_{j_1}
\varphi_{\underline{a}}
\end{align}
where $\varphi_{\underline{a}}$ is the algebra automorphism of $U_{w^{-*}\fq}$ given by 
\begin{align*}
\varphi_{\underline{a}}(K_i)=K_i,\quad
\varphi_{\underline{a}}(L_i)=L_i,\quad
\varphi_{\underline{a}}(E_i)=a_iE_i\quad\mbox{and}\quad
\varphi_{\underline{a}}(F_i)=a_i^{-1}F_i\quad\forall i\in\I.
\end{align*}
Indeed, both reduced expression of $w$ can be transformed each other using only the Coxeter type relations by \cite[Theorem 5]{HY08}. Then, \eqref{eq:Tw different presentation} follows using \cite[Theorem 6.19 and Proposition 6.8 $(ii)$]{Hlusztigiso}.

These isomorphisms permute the weight spaces according in the following way:
\begin{align}\label{eq:Tw U alpha es U w alpha}
T_w\left((U_{w^{-*}\fq})_{\alpha}\right)=(U_\fq)_{w\alpha}
\end{align}
for all $\alpha\in\Z^\I$, cf. \cite[Proposition 4.2]{HY}. They also have a good behavior on the middle subalgebras $U^0_{w^{-*}\fq}= U^0_\fq$. Explicitly,
\begin{align}\label{eq:Tw restricted to U0}
T_w(K_\alpha L_\beta)=K_{w\alpha}L_{w\beta}
\end{align}
for all $\alpha,\beta\in\Z^\I$ because $T_i^{\pm1}(K_j)=K_{\sigma_i^\fq(\alpha_j)}$ and $T_i^{\pm1}(L_j)=L_{\sigma_i^\fq(\alpha_j)}$ by definition \cite[Lemma 6.6]{Hlusztigiso}.
It follows that 
\begin{align}\label{eq: mu conjugado por Tw}
T_w\circ\widetilde\mu\circ T_w^{-1}{}_{|U^0_\fq}=\widetilde{w(\mu)}
\end{align}
for all $\mu\in\Z^\I$; keep in mind that $\widetilde{\mu}$ depends on $w^{-*}\fq$ and $\widetilde{w(\mu)}$ depends on $\fq$.

For the longest element $w_0$, there is a permutation $f$ of $\I$ such that $w_0^{-*}\fq(\alpha_i,\alpha_j)=f^{*}\fq(\alpha_i,\alpha_j)=\fq(\alpha_{f(i)},\alpha_{f(j)})$. Also, there is $\underline{b}=(b)_{i\in\I}\in(\ku^{\times})^\I$ such that
\begin{align}\label{eq:T w0}
T_{w_0}=\phi_1\,\varphi_f\,\varphi_{\underline{b}} 
\end{align}
by \cite[Corollary 6.21]{Hlusztigiso}; where $\phi_1$ is the algebra automorphism of $U_{\fq}$ given by
\begin{align*}
\phi_{1}(K_i)=K_i^{-1},
\quad
\phi_{1}(L_i)=L_i^{-1},
\quad
\phi_{1}(E_i)=F_iL_i^{-1}\quad\mbox{and}\quad
\varphi_{1}(F_i)=K_i^{-1}E_i
\end{align*}
for all $i\in\I$ and $\varphi_f:U_\fq\longrightarrow U_{f^{*}\fq}$ is the algebra isomorphism given by
\begin{align*}
\varphi_f(K_i)=K_{f(i)},\quad 
\varphi_f(L_i)=L_{f(i)},\quad
\varphi_f(E_i)=E_{f(i)},\quad\mbox{and}\quad
\varphi_f(F_i)=F_{f(i)}
\end{align*}
for all $i\in\I$.

Let $\tau$ be the algebra antiautomorphism of $U_\fq$ defined by
\begin{align}\label{eq:tau}
\tau(K_i)=K_i,\quad
\tau(L_i)=L_i,\quad
\tau(E_i)=F_i\quad\mbox{and}\quad
\tau(F_i)=E_i
\end{align}
for all $i\in\I$, see \cite[Proposition 4.9 $(7)$]{Hlusztigiso}. Notice that $\tau^2=\id$.

The generators of the PBW basis of the Nichols algebras can be constructed using the Lusztig isomorphisms as follows. Let $w_0=1^\fq\sigma_{i_1}\cdots\sigma_{i_n}$ be a reduced expression of the longest element in ${}^\fq\cW$ and recall from \eqref{eq:roots are conjugate to simple} that
$\beta_\nu=1^\fq\sigma_{i_1}\cdots\sigma_{i_{\nu-1}}(\alpha_{i_\nu})$, $1\leq\nu\leq n$, are the positive roots of $\fq$. We set
\begin{align}
E_{\beta_\nu}=T_{i_1}\cdots T_{i_{\nu-1}}(E_{i_\nu})\in (U^+_\fq)_{\beta_\nu}
\quad\mbox{and}\quad
F_{\beta_\nu}=T_{i_1}\cdots T_{i_{\nu-1}}(F_{i_\nu})\in (U^-_\fq)_{-\beta_\nu},
\end{align}
for all $1\leq\nu\leq n$; by an abuse of notation $E_{i_\nu}$ and $F_{i_\nu}$ denote the generators of $U_{(\sigma_{i_{\nu-1}} \cdots \sigma_{i_1})^*\fq}$. These elements depend on the reduced expression of $w_0$. 

By \cite[Theorem 4.9]{HY}, we know that 
\begin{align}
\label{eq:PBW basis}
\begin{split}
\left\{ E_{\beta_{f(1)}}^{m_1}\cdots E_{\beta_{f(n)}}^{m_n}\right.&\left.\mid 0\leq m_\nu<b^\fq(\beta_\nu),\,1\leq\nu\leq n\right\}\quad\mbox{and}\\
&\left\{ F_{\beta_{f(1)}}^{m_1}\cdots F_{\beta_{f(n)}}^{m_n}\mid 0\leq m_\nu<b^\fq(\beta_\nu),\,1\leq\nu\leq n\right\}
\end{split}
\end{align}
are linear bases of $U_\fq^+$ and $U_\fq^-$ for any permutation $f$ of $\I$.
It is immediate that
\begin{align}\label{eq:ch U-}
\ch\, U^-_\fq=
\prod_{\beta\in \Delta_+^\fq}\frac{1-e^{-b^\fq(\beta)\beta}}{1-e^{-\beta}}
=\prod_{\beta\in \Delta_+^\fq}\left(1+e^{-\beta}+\cdots+e^{(1-b^\fq(\beta))\beta}\right).
\end{align}
We point out that the  weight space of minimum degree of $U^-_\fq$ is one-dimensional and spanned by
\begin{align}
F^\fq_{top} =F_{\beta_1}^{b^\fq(\beta_1)-1}\cdots F_{\beta_n}^{b^\fq(\beta_n)-1}.
\end{align}

\begin{example}\label{ex:the example U}
Keeping the notation of Example \ref{ex:the example groupoid}, $T_1^\fp:U_\fp\rightarrow U_\fq$ is defined by
\begin{align*}
T_1^\fp(E_1)&=F_1L_1^{-1},\quad T_1^\fp(E_2)=E_{12},\quad T_1^\fp(K_\alpha L_\beta)=K_{\sigma_1^\fp(\alpha)}L_{\sigma_1^\fp(\alpha)},\\ T_1^\fp(F_1)&=K_1^{-1}E_1,\quad T_1^\fp(F_2)=\frac{1}{q-1}(F_1F_2+F_2F_1).
\end{align*}
\end{example}


\subsection{Parabolic subalgebras} 

We fix $i\in\I$ and denote $U_\fq(\alpha_i)$ and $U_\fq(-\alpha_i)$ the subalgebras of $U_\fq$ generated by $E_i$ and $F_i$, respectively. We set 
\begin{align}\label{eq:P alpha i}
P_\fq(\alpha_i)=U_\fq(-\alpha_i)U^0_\fq U^+_\fq.
\end{align}
This is a subalgebra of $U_\fq$ thanks to the defining relations.

By the definition of the Lusztig isomorphisms, the restriction $T_i:P_{\sigma_i^{*}\fq}(\alpha_i)\longrightarrow P_\fq(\alpha_i)$ is an isomorphism and we can decompose $P_\fq(\alpha_i)$ as
\begin{align}\label{eq:P alpha i otra}
P_{\fq}(\alpha_i)=U_\fq(\alpha_i)\,U^0_\fq \,T_i(U^+_{(\sigma_i^\fq)^{*}\fq}).
\end{align}
Indeed, $T_i(E_i)=F_iL_i^{-1}$, $T_i(E_j)=\ad_c^{-c_{ij}^\fq} E_i(E_j)$ and $T_i(F_i)=K_i^{-1}E_i$ \cite[Lemma 6.6]{Hlusztigiso}.

\subsection{Some definitions} We introduce some elements which will be key in the analysis of the representations of $U_\fq$. 

\begin{definition}
Let $\beta\in\Delta^\fq$ and $n\in\N$. We set
\begin{align*}
[\beta;n]=(n)_{q_{\beta}^{-1}}K_\beta-(n)_{q_{\beta}}L_\beta
\quad\mbox{and}\quad
[n;\beta]=(n)_{q_{\beta}^{-1}}L_\beta-(n)_{q_{\beta}}K_\beta.
\end{align*}
\end{definition}

It follows from the defining relations that
\begin{align}\label{eq: E Fn}
E_iF^n_i=F_i^nE_i+F_i^{n-1}[\alpha_i;n]
\quad\mbox{and}\quad
F_iE^n_i=E_i^nF_i+E_i^{n-1}[n;\alpha_i].
\end{align} 
for all $i\in\I$. Moreover, once we have fixed a PBW basis as in \eqref{eq:PBW basis}, we can apply the corresponding Lusztig isomorphisms to the above identities and obtain that
\begin{align*}
E_\beta F^n_\beta=F_\beta^nE_\beta+F_\beta^{n-1}[\beta;n]\quad\mbox{and}\quad 
F_\beta E^n_\beta=E_\beta^nF_\beta+E_\beta^{n-1}[n;\beta]
\end{align*}
for all $\beta\in\Delta^\fq_+$.

\begin{definition}\label{def:t beta}
Given $\beta\in\Delta^\fq$, $\mu\in\Z^\I$ and a $U_\fq^0$-algebra $\bA$ with structural map $\pi:U_\fq^0\longrightarrow\bA$, we define $t_\beta^\pi(\mu)$ as the unique $t\in\{1, ..., b^\fq(\beta)-1\}$ such that 
\begin{align*}
1=q_{\beta}^{1-t}\pi\tilde\mu(K_\beta L_\beta^{-1}),
\end{align*}
if it exists, and otherwise $t^\pi_\beta(\mu)=0$.
\end{definition}

Equivalently, we see can say that, modulo $b^\fq(\beta)$, $t_\beta^\pi(\mu)$ is the unique $t\in\{1, ..., b^\fq(\beta)\}$  such that $\pi\tilde\mu([\beta;t])=0$. In fact, $\pi\tilde\mu([\beta;b^\fq(\beta)])=0$ and
\begin{align}\label{eq:[beta;n] = otra}
[\beta;t]=(t)_{q_{\beta}}L_\beta\left(q_{\beta}^{1-t}K_\beta L_\beta^{-1}-1\right).
\end{align}

We also observe that
\begin{align}\label{eq:[n;beta] = otra}
[t;\beta]=(t)_{q_{\beta}^{-1}}L_\beta
\left(1-q_{\beta}^{t-1}K_\beta L_\beta^{-1}\right).
\end{align}

Given a $U_\fq^0$-algebra $\bA$ with structural map $\pi:U_\fq^0\longrightarrow\bA$ and a morphism $w\in\cW^\fq$, we denote by $\bA[w]$ the $U_\fq^0$-algebra with structural map $\pi[w]:U_\fq^0\longrightarrow\bA$ defined by
\begin{align}\label{eq:pi[w]}
\pi[w](K_\alpha L_\alpha)=\pi(K_{w^{-1}\alpha} L_{w^{-1}\beta})
\end{align}
for all $\alpha,\beta\in\Z^\I$. We highlight that $\pi[w]=\pi\circ T_w^{-1}{}_{|U_\fq^0}$ for any Lusztig isomorphism $T_w:U_\fq\longrightarrow U_{w^{*}\fq}$ associated to $w$ by \eqref{eq:Tw restricted to U0}.

If $\beta=w\alpha_i\in\Delta^{w^*\fq}$ for some $i\in\I$, it holds that
\begin{align}\label{eq: t w alpha i}
t_{w\alpha_i}^{\pi[w]}(w\mu)=t_{\alpha_i}^\pi(\mu).
\end{align}
In fact, we have that
\begin{align}
\notag
\pi[w]\widetilde{w\mu}([\beta,t])
&=\pi[w]\left((t)_{w^{*}\fq(\beta,\beta)^{-1}}w^{*}\fq(\beta,w\mu)K_{\beta}-(t)_{w^{*}\fq(\beta,\beta)}w^{*}\fq(w\mu,-\beta)L_{\beta}\right)\\
\label{eq:pi w mu beta}
&=\pi\left((t)_{q_{\alpha_i}^{-1}}\fq(\alpha_i,\mu)K_{\alpha_i}-(t)_{q_{\alpha_i}}\fq(\mu,-\alpha_i)L_{\alpha_i}\right)\\
\notag
&=\pi\tilde\mu[\alpha_i,t].
\end{align}

As in \cite[Definition 2.16]{HY}, we define the group homomorphism $\rho^\fq:\Z^\I\longrightarrow\ku^\times$ by $\rho^\fq(\alpha_i)=\fq(\alpha_i,\alpha_i)$ for all $i\in\I$.

\begin{definition}\label{def:n beta}
Given $\beta\in\Delta^\fq$, $\mu\in\Z^\I$ and a $U_\fq^0$-algebra $\bA$ with structural map $\pi:U_\fq^0\longrightarrow\bA$, we define $n_\beta^\pi(\mu)$ as the unique $n\in\{1, ..., b^\fq(\beta)-1\}$ such that 
\begin{align*}
q_{\beta}^{n}=\rho^\fq(\beta)\,\pi\widetilde{\mu}(K_{\beta}L_{\beta}^{-1}),
\end{align*}
if it exists, and otherwise $n^\pi_\beta(\mu)=0$.
\end{definition}

The above numbers are related in the following way.

\begin{lemma}\label{le:n beta = t beta}
If $\beta=w\alpha_i\in\Delta^{\fq}$ for some $i\in\I$, then
$n_\beta^\pi(\mu)=t_\beta^\pi(\mu\langle w\rangle)$.
\end{lemma}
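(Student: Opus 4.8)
The plan is to unravel both definitions and reduce the claim to the identity \eqref{eq:[beta;n] = otra} together with the definition of $\mu\langle w\rangle$. First I would recall that, by Definition \ref{def:n beta}, $n_\beta^\pi(\mu)$ is the unique $n\in\{1,\dots,b^\fq(\beta)-1\}$ with $q_\beta^{\,n}=\rho^\fq(\beta)\,\pi\widetilde\mu(K_\beta L_\beta^{-1})$ (when it exists), while by Definition \ref{def:t beta}, $t_\beta^\pi(\nu)$ is the unique $t\in\{1,\dots,b^\fq(\beta)-1\}$ with $1=q_\beta^{\,1-t}\pi\widetilde\nu(K_\beta L_\beta^{-1})$, i.e. $q_\beta^{\,t-1}=\pi\widetilde\nu(K_\beta L_\beta^{-1})$. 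Applying the latter with $\nu=\mu\langle w\rangle$, the equality $n_\beta^\pi(\mu)=t_\beta^\pi(\mu\langle w\rangle)$ amounts (comparing the two scalar conditions, both living in the same range $\{1,\dots,b^\fq(\beta)-1\}$ where the solution is unique if it exists, and observing that one has a solution iff the other does) to showing
\begin{align*}
\rho^\fq(\beta)\,\pi\widetilde\mu(K_\beta L_\beta^{-1})=q_\beta\cdot\pi\widetilde{\mu\langle w\rangle}(K_\beta L_\beta^{-1}),
\end{align*}
since then $q_\beta^{\,n}=q_\beta\cdot q_\beta^{\,t-1}=q_\beta^{\,t}$ forces $n=t$. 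So the whole lemma collapses to this one scalar identity.

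To prove that identity I would compute $\pi\widetilde{\mu\langle w\rangle}(K_\beta L_\beta^{-1})$ directly from \eqref{eq:tilde mu}. Writing $\mu\langle w\rangle=\mu+w(\varrho^{w^{-*}\fq})-\varrho^\fq=:\mu+\delta$, we get $\widetilde{\mu\langle w\rangle}=\widetilde\delta\circ\widetilde\mu$ by \eqref{eq:composition tilde}, and $\widetilde\delta(K_\beta L_\beta^{-1})=\fq_\delta(K_\beta L_\beta^{-1})K_\beta L_\beta^{-1}=\frac{\fq(\beta,\delta)}{\fq(\delta,-\beta)}K_\beta L_\beta^{-1}=\fq(\beta,\delta)\fq(\delta,\beta)K_\beta L_\beta^{-1}$. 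Hence the claimed identity reduces to
\begin{align*}
\rho^\fq(\beta)=q_\beta\cdot\fq(\beta,\delta)\,\fq(\delta,\beta),\qquad\text{i.e.}\qquad \rho^\fq(\beta)\,q_\beta^{-1}=\fq\bigl(\beta,\delta\bigr)\fq\bigl(\delta,\beta\bigr),
\end{align*}
with $\delta=-\sum_{\gamma\in\Delta_+^\fq:\,w^{-1}\gamma\in\Delta_-^{w^{-*}\fq}}(b^\fq(\gamma)-1)\gamma$ by \eqref{eq:w varrho - varrho}. So I need to evaluate the symmetric bicharacter pairing $\fq(\beta,-)\fq(-,\beta)$ against this particular sum of roots.

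The key computational input here is the behaviour of the quantity $\fq(\beta,\gamma)\fq(\gamma,\beta)$ when $\beta$ is a root obtained by applying a Weyl-groupoid element — this is exactly the type of relation encoded in the generalized Cartan matrix \eqref{eq:C chi} and its reflections \eqref{eq:sigma chi}, together with $b$-invariance \eqref{eq: b invariante} and $\rho$-compatibility. Concretely I expect to argue by induction on $\ell(w)$: for $w=1^\fq$ the set over which $\delta$ ranges is empty, $\delta=0$, and the identity is just $\rho^\fq(\alpha_i)=q_{\alpha_i}=q_\beta$ — which is the definition of $\rho^\fq$ since $\beta=\alpha_i$. For the inductive step, write $w=w'\sigma_i$ with $\ell(w')=\ell(w)-1$, use \eqref{eq:mu w s} which tells us $\mu\langle w\rangle=\mu\langle w'\rangle-(b^\fq(w'\alpha_i)-1)w'\alpha_i$, and relate $t^\pi_{w\alpha_i}(\mu\langle w\rangle)$ to data at level $w'$; alternatively, and probably cleaner, transport everything through the Lusztig isomorphism $T_w$: by \eqref{eq: t w alpha i}, $t_{w\alpha_i}^{\pi[w]}(w\mu)=t_{\alpha_i}^\pi(\mu)$, and there should be an analogous statement $n_{w\alpha_i}^{\pi[w]}(w\mu)=n_{\alpha_i}^\pi(\mu)$ obtained by the same $U^0$-computation \eqref{eq:pi w mu beta} now using \eqref{eq:[beta;n] = otra} and the definition of $\rho$; then the lemma for $(\beta=w\alpha_i,\mu,\pi,w)$ becomes the lemma for $(\alpha_i,\mu',\pi',1^{\,\cdot})$-type data where it is the defining relation. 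The main obstacle I anticipate is bookkeeping the twist: making sure that $\rho^{w^{-*}\fq}(\alpha_i)$ transported by $w$ gives $\rho^\fq(w\alpha_i)$ up to exactly the factor $\fq(\beta,\delta)\fq(\delta,\beta)$, i.e. that $\rho^\fq(\beta)=\rho^{w^{-*}\fq}(w^{-1}\beta)\cdot(\text{correction})$ — equivalently that $\rho^\fq$ is \emph{not} Weyl-groupoid invariant but twists precisely by this $\varrho$-shift, which is the conceptual content of the lemma and the reason the shift $\mu\mapsto\mu\langle w\rangle$ appears at all. Once that twisting law for $\rho$ is pinned down (it follows from $q_\beta=\fq(\beta,\beta)$ being Weyl-groupoid invariant by \eqref{eq: b invariante}-type reasoning, combined with the explicit form of $\sigma_i^\fq$ and $C^\fq$), the rest is the routine unwinding sketched above.
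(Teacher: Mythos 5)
Your reduction is exactly the paper's: you rewrite both defining conditions and boil the lemma down to the scalar identity $\rho^\fq(\beta)=\fq(\beta,\beta)\,\fq(\beta,0\langle w\rangle)\,\fq(0\langle w\rangle,\beta)$, which is precisely the key claim \eqref{eq:rho = ...} in the paper's proof, and your induction on $\ell(w)$ with the trivial base case is also how the paper proceeds. So the strategy is the same; the difference is that you stop at the point where the actual work happens.

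The gap is the inductive step. The paper peels off a simple reflection on the left, expands $0\langle\sigma_j w_1\rangle$, and then needs the one-reflection twisting law for $\rho$ --- the identity comparing $\rho^\fq(\sigma_j\gamma)$ with $\rho^{\sigma_j^{-*}\fq}(\gamma)$ up to the factor $\sigma_j^{-*}\fq(\alpha_j,\gamma)^{b-1}\sigma_j^{-*}\fq(\gamma,\alpha_j)^{b-1}$ --- which it imports from \cite[Lemma 2.17]{HY}. You assert this twisting law ``follows routinely'' from the invariance of $\fq(\beta,\beta)$ and the form of $C^\fq$; that is roughly how it is proved, but it requires a genuine case analysis (Cartan-type condition $q_{jj}^{-c_{jk}}q_{jk}q_{kj}=1$ versus the order condition on $q_{jj}$), and without it your induction does not close. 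Also be careful with your ``cleaner'' alternative: the analogue of \eqref{eq: t w alpha i} for $n$, namely $n_{w\alpha_i}^{\pi[w]}(w\mu)=n_{\alpha_i}^{\pi}(\mu)$, is \emph{not} obtained by the same $U^0$-computation, because $n$ involves $\rho^{w^*\fq}(w\alpha_i)$ and $w\alpha_i$ is in general not simple, so $\rho^{w^*\fq}(w\alpha_i)\neq w^*\fq(w\alpha_i,w\alpha_i)$; establishing that transport statement is equivalent to the very twisting law you are trying to avoid, so as stated that route is circular. In short: same skeleton as the paper, but the crux (the paper's appeal to \cite[Lemma 2.17]{HY}, or an equivalent direct verification) is missing.
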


\begin{proof}
 
We first claim that
\begin{align}\label{eq:rho = ...}
\rho^\fq(w\alpha_i)=\fq(w\alpha_i,w\alpha_i)\,\fq(0\langle w\rangle,w\alpha_i)\,
\fq(w\alpha_i,0\langle w\rangle).
\end{align}
We prove it by induction on the length of $w$. If $w=1^\fq$, then $n_{\alpha_i}^\pi(\mu)=t_{\alpha_i}^\pi(\mu)$ by \eqref{eq:[beta;n] = otra}. We now assume the equality holds for all bicharacters and morphisms in $\cW$ of length $r$. Thus, if $w=\sigma_jw_1$ with $j\in\I$ and $\ell(\sigma_jw_1)=1+\ell(w_1)=1+r$, similar to \eqref{eq:w varrho - varrho}, one can check that
\begin{align*}
0\langle\sigma_jw_1\rangle=-\sum_{\gamma\in \Delta_+^{\sigma_j^{-*}(\fq)}:w_1^{-1}\gamma\in \Delta_-^{(\sigma_jw_1)^{-*}\fq}}(b^\fq(\sigma_j\gamma)-1)\sigma_j\gamma+(b^\fq(\alpha_j)-1)\sigma_j\alpha_j.
\end{align*}
Therefore
\begin{align*}
\fq(\sigma_jw_1\alpha_i,\sigma_jw_1\alpha_i)&\,\fq(0\langle\sigma_j w_1\rangle,\sigma_jw_1\alpha_i)\,
\fq(\sigma_jw_1\alpha_i,0\langle \sigma_jw_1\rangle)=\\
&=\sigma_j^{-*}(\fq)(w_1\alpha_i,w_1\alpha_i)\,
\sigma_j^{-*}(\fq)(0\langle w_1\rangle,w_1\alpha_i)\,\sigma_j^{-*}(\fq)(w_1\alpha_i,0\langle w_1\rangle)\times\\
&\quad\quad\quad \sigma_j^{-*}(\fq)(\alpha_j,w_1\alpha_i)^{b^{\sigma_j^{-*}(\fq)}(\alpha_j)-1}\,\sigma_j^{-*}(\fq)(w_1\alpha_i,\alpha_j)^{b^{\sigma_j^{-*}(\fq)}(\alpha_j)-1}\\
&\overset{(\star)}{=}\rho^{\sigma_j^{-*}(\fq)}(w_1\alpha_i)\,\frac{\rho^\fq(\sigma_jw_1\alpha_i)}{\rho^{\sigma_j^{-*}(\fq)}(w_1\alpha_i)}=\rho^\fq(\sigma_jw_1\alpha_i);
\end{align*}
$(\star)$ follows from the inductive hypothesis and \cite[Lemma 2.17]{HY}. This concludes the induction and our claim holds.

In consequence, we have that
\begin{align*}
q_{\beta}\pi\widetilde{\mu\langle w\rangle}(K_{\beta}L_{\beta}^{-1})&=\fq(\beta,\beta)\fq(0\langle w\rangle,\beta)\fq(\beta,0\langle w\rangle)\,\pi\widetilde{\mu}(K_{\beta}L_{\beta}^{-1})\\
&=\rho^\fq(\beta)\,\pi\widetilde{\mu}(K_{\beta}L_{\beta}^{-1})
\end{align*}
which implies the lemma.
\end{proof}

\section{Andersen--Jantzen--Soergel categories}\label{sec:AJS}

In \cite[Section 2]{AJS}, Andersen, Jantzen and Soergel have defined certain categories of modules for algebras fulfilling most of the more remarkable features of the small quantum groups at roots if unity. We will call them {\it AJS categories}. They consider any $\Z^\I$-graded $\ku$-algebra $U$ endowed with a triangular decomposition
\begin{align*}
U^-\ot U^0\ot U^+\longrightarrow U,
\end{align*}
{\it i.e.} this is a $\ku$-linear isomorphism induced by the multiplication and $U^-$, $U^0$ and $U^+$ are $\Z^\I$-graded subalgebras satisfying the following properties, cf. \cite[\S1.1 and \S2.1]{AJS}:
\begin{align}
\label{eq:property 0}
&U^0\subset U_0,\quad (U^\pm)_0=\ku;\\
\label{eq:property order}
&(U^\pm)_{\nu}\neq0\Rightarrow \pm\nu\geq0;\\
\label{property A} 
&\mbox{$\sup(U^\pm)$ is finite};\\
\label{property B C} 
&\mbox{Each $(U^\pm)_\nu$, and hence $U^\pm$, is finite-dimensional over $\ku$}.
\end{align}
They also assume that $U^0$ is commutative and the existence of a group homomorphism $\Z^\I\longrightarrow\Aut_{\ku-alg}(U^0)$, $\mu\mapsto\widetilde{\mu}$, such that
\begin{align}\label{property D} 
s\,u=u\,\widetilde\mu(s)\quad\forall u\in U_\mu,\,s\in U^0. 
\end{align}
From \eqref{eq:property 0}, we deduce that there are augmentation maps $U^\pm\longrightarrow (U^\pm)_0=\ku$. We denote both of them by $\varepsilon$.

\begin{example}\label{ex:Uq satisface AJS}
A small quantum group satisfies all the previous properties.  
\end{example}

We fix a Noetherian commutative $U^0$-algebra $\bA$ with structural map $\pi:U^0\longrightarrow\bA$. We now present the AJS category $\cC_\bA$, cf. \cite[2.3]{AJS}. An object of $\cC_\bA$ is a $\Z^\I$-graded $U\ot\bA$-module $M$, or equivalently a left $U$-module and right $\bA$-module, such that
\begin{align}
\label{eq: cCA fin gen}
&\mbox{$M$ is finitely generated over $\bA$;}
\\
\label{eq: cCA is A graded}
&\mbox{$M_\mu\bA\subset M_\mu$,}
\\
\label{eq: cCA is U graded}
&\mbox{$U_\nu M_\mu\subset M_{\nu+\mu}$,}
\\
\label{eq:compatibilidad sm ma}
&\mbox{$s\,m=m\,\pi\widetilde{\mu}(s)$,}
\end{align}
for all $\mu,\nu\in\Z^\I$, $m\in M_\mu$ and $s\in U^0$. The last compatibility means that the $U^0$-action is determined by the $\bA$-action and the $\Z^\I$-grading. The morphisms between two objects are the morphisms of $\Z^\I$-graded $U\ot\bA$-modules.

The authors also defined categories $\cC'_\bA$ and $\cC''_\bA$ in similar fashion by replacing $U$ with $U^0U^+$ and $U^0$, respectively. There are obvious induced functors $\cC''_\bA\longrightarrow\cC'_\bA$ and $\cC'_\bA\longrightarrow\cC_\bA$ which are left adjoints of the forgetful functors. Moreover, the categories $\cC_\bA$ and $\cC'_\bA$ have enough projectives \cite[Lemma 2.7]{AJS}.

We next summarize the most important attributes of the AJS categories.

\subsection{Verma modules}
Let $\mu\in\Z^\I$. We denote $\bA^\mu$ the free right $\bA$-module of rank one concentrated in degree $\mu$ and generated by the symbol $\m{\mu}=\m{\mu}_\bA$.

We consider $\bA^\mu$ as an object of $\cC'_\bA$ with left $U^+$-action given by the augmentation map and left $U^0$-action determined by \eqref{eq:compatibilidad sm ma}. Explicitly,
\begin{align}\label{eq:Amu}
su\cdot\m{\mu}=\varepsilon(u)\,\pi\widetilde{\mu}(s)\,\m{\mu}\quad\forall s\in U^0,\,u\in U^+.
\end{align}
We call the induced modules
\begin{align}\label{eq:Z}
Z_\bA(\mu)=U\ot_{U^0U^+}\bA^\mu
\end{align}
{\it Verma modules}; where $U$ and $\bA$ act by left and right multiplication on the left and right factor, respectively. It is isomorphic to $U^-\ot \bA$ as $U^-\ot\bA$-module and its weight spaces are
\begin{align}
Z_\bA(\mu)_\beta=(U^-)_{\beta-\mu}\ot\m{\mu}
\end{align}
for all $\beta\in\Z^\I$. Therefore
\begin{align}\label{eq:ch Z}
\ch\,Z_\bA(\mu)=e^\mu\,\ch\,U^-.
\end{align}
By an abuse of notation, we also denote $\m{\mu}$ the generator $1\ot\m{\mu}$ of $Z_\bk(\mu)$.

A {\it $Z$-filtration} of a module in $\cC_\bA$ is a filtration whose subquotient are isomorphic to Verma modules. In \cite[\S2.11-\S2.16]{AJS} is proven that projective modules admit $Z$-filtrations. Several other properties of these modules are proven in \cite[Sections 2-4]{AJS}.

\subsection{Simple modules}\label{subsec:simples}

Assume that $\bA=\bk$ is a field. Then $Z_\bk(\mu)$ has a unique simple quotient denoted $L_\bk(\mu)$ \cite[\S4.1]{AJS}. This object is characterized as the unique (up to isomorphism) simple {\it highest-weight module} $L$ in $\cC_\bk$, that is $L$ is generated by some $v\in L_\mu$ with $(U^+)_\nu v=0$ for all $\nu>0$. We say that $v$ is {\it highest-weight vector of weight $\mu$}. Moreover, each simple module in $\cC_\bk$ is isomorphic to a unique simple highest-weight module. This characterization of the simple modules implies that their characters are linearly independent. 

We notice that all modules have composition series of finite length. For $M\in\cC_\bk$, $[M:L_\bk(\lambda)]$ denotes the number of composition factors isomorphic to $L_\bk(\lambda)$. Two important properties of the Verma modules are 
\begin{align*}
[Z_\bk(\mu):L_\bk(\mu)]=1\quad\mbox{and}\quad[Z_\bk(\mu):L_\bk(\lambda)]\neq0\Rightarrow\lambda\leq\mu.
\end{align*}

The next lemma is standard. It says that we can read the composition factors of a module from its character. In particular, modules with equal characters have the same composition factors. 

\begin{lemma}\label{lem:ch and composition}
Let $M\in\cC_\bk$. It holds that $\ch\, M=\sum_\lambda a_\lambda\,\ch\, L_\bk(\lambda)$ if and only if $a_\lambda=[M:L_\bk(\lambda)]$ for all $\lambda\in\Z^\I$. \qed
\end{lemma}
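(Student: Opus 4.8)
The plan is to prove the equivalence by using the linear independence of the characters $\ch L_\bk(\lambda)$, which follows from the characterization of the simple modules in \S\ref{subsec:simples} as the distinct simple highest-weight modules. First I would observe that since every $M\in\cC_\bk$ has a composition series of finite length (say with factors $L_\bk(\lambda_1),\dots,L_\bk(\lambda_m)$, counted with multiplicity), the character is additive along short exact sequences in $\cC_\bk$: for $0\to M'\to M\to M''\to 0$ one has $\ch M=\ch M'+\ch M''$, because the weight spaces are finite-dimensional over $\bk$ and dimension is additive. Hence, inducting on the length of a composition series, $\ch M=\sum_{\lambda}[M:L_\bk(\lambda)]\,\ch L_\bk(\lambda)$ always holds.

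Next, for the ``only if'' direction, suppose we are also given $\ch M=\sum_\lambda a_\lambda\,\ch L_\bk(\lambda)$. Subtracting the two expressions gives $\sum_\lambda\bigl(a_\lambda-[M:L_\bk(\lambda)]\bigr)\ch L_\bk(\lambda)=0$ in the group ring on the symbols $e^\mu$. Both sums are finite: $[M:L_\bk(\lambda)]$ is nonzero for only finitely many $\lambda$ since $M$ has finite length, and I would note that $a_\lambda$ must likewise be nonzero for only finitely many $\lambda$ since $\ch M$ has finite support and each $\ch L_\bk(\lambda)$ is nonzero with $e^\lambda$ appearing in it (so only $\lambda\in\sup M$ can contribute). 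By the linear independence of $\{\ch L_\bk(\lambda)\}_{\lambda\in\Z^\I}$ recorded in \S\ref{subsec:simples}, every coefficient vanishes, i.e. $a_\lambda=[M:L_\bk(\lambda)]$ for all $\lambda$. The ``if'' direction is immediate from the formula $\ch M=\sum_\lambda[M:L_\bk(\lambda)]\,\ch L_\bk(\lambda)$ established in the first step.

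The only genuinely nontrivial input is the linear independence of the simple characters, and this is already available in the excerpt (it is asserted right after the characterization of simple highest-weight modules: a simple module determines its highest weight, and the term $e^\lambda$ occurs in $\ch L_\bk(\lambda)$ with all other terms $e^\mu$ having $\mu<\lambda$, so a triangularity argument with respect to $\leq$ gives independence). So there is no real obstacle; the proof is a short formal argument. If one wanted to be fully self-contained I would include the one-line triangularity justification for linear independence, but since the lemma is stated as ``standard'' and the independence is already quoted, the write-up can simply invoke it.

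\begin{proof}
Every $M\in\cC_\bk$ has a composition series $0=M_0\subset M_1\subset\cdots\subset M_m=M$ with $M_j/M_{j-1}\simeq L_\bk(\lambda_j)$ for some $\lambda_j\in\Z^\I$. Since each weight space is finite-dimensional over $\bk$, the formal character is additive on short exact sequences, so $\ch M=\sum_{j=1}^m\ch L_\bk(\lambda_j)=\sum_{\lambda}[M:L_\bk(\lambda)]\,\ch L_\bk(\lambda)$. This proves the ``if'' direction. For the ``only if'' direction, assume in addition $\ch M=\sum_\lambda a_\lambda\,\ch L_\bk(\lambda)$; both sums are finite because $\ch M$ has finite support and $e^\lambda$ appears in $\ch L_\bk(\lambda)$, so only $\lambda\in\sup(M)$ can contribute to either sum. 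Subtracting the two expressions yields $\sum_\lambda\bigl(a_\lambda-[M:L_\bk(\lambda)]\bigr)\ch L_\bk(\lambda)=0$, and since the characters $\ch L_\bk(\lambda)$, $\lambda\in\Z^\I$, are linearly independent (see \S\ref{subsec:simples}), we get $a_\lambda=[M:L_\bk(\lambda)]$ for all $\lambda\in\Z^\I$.
\end{proof}
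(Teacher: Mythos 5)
Your proof is correct and is exactly the standard argument the paper has in mind (the lemma is stated with \qed and the remark ``The next lemma is standard''): additivity of $\ch$ along composition series gives $\ch M=\sum_\lambda[M:L_\bk(\lambda)]\ch L_\bk(\lambda)$, and linear independence of $\{\ch L_\bk(\lambda)\}$, already recorded in \S\ref{subsec:simples}, forces the coefficients to agree. One small point worth tightening: your claim that $a_\lambda\neq0$ forces $\lambda\in\sup(M)$ implicitly uses that $e^\lambda$ occurs in $\ch L_\bk(\lambda)$ and all other terms lie strictly below $\lambda$; to rule out cancellation among the $e^\lambda$ coefficients when the $a_\lambda$ are not assumed nonnegative, it is cleaner to pick $\lambda$ maximal (for $\leq$) with $a_\lambda-[M:L_\bk(\lambda)]\neq0$ and compare the $e^\lambda$ coefficients directly, which is the triangularity argument you already sketch in your discussion.
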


\begin{example}\label{example:L one dimensional}
Let $U=U_\fq$ be a Drinfeld double, $\bk=\ku$ and $\mu=0$. Then $K_\alpha L_\beta\cdot\m{0}=\pi(K_\alpha L_\beta)\m{0}$ and hence $Z_\ku(0)=\cM^\fq(\pi)$ is the Verma module of \cite[Definition 5.1]{HY} and $L_\ku(0)=L^\fq(\pi)$ its quotient as in \cite[$(5.7)$]{HY}.
\end{example}

\begin{example}
$L_\bk(\mu)=\bk^\mu$ is one-dimensional if and only if
\begin{align}
\pi\tilde{\mu}(K_iL_i^{-1})=1 \quad\forall i\in\I.
\end{align}
Indeed, using \eqref{eq: EF FE} and \eqref{eq:Amu}, $\bk^\mu$ is the  simple quotient of $Z_\bk(\mu)$ if and only if 
\begin{align*}
0=(K_i-L_i)\cdot\m{\mu}=\left(\fq(\alpha_i,\mu)\pi(K_i)-\frac{1}{\fq(\mu,\alpha_i)}\pi(L_i)\right)\m{\mu}
\end{align*}
which is equivalent to our claim.
\end{example}

\subsection{Blocks}

Let $\sim_b$ denote the smallest equivalence relation in $\Z^\I$ such that $\lambda\sim_b\mu$ if $\Hom_{\cC_\bA}(Z_\bA(\lambda),Z_\bA(\mu))\neq0$ or $\Ext^1_{\cC_\bA}(Z_\bA(\lambda),Z_\bA(\mu))\neq0$. The equivalence classes of $\sim_b$ are called blocks \cite[\S6.9]{AJS}. In case $\bA=\bk$ is a field, this definition coincides with the usual definition of blocks via simple modules \cite[Lemma 6.12]{AJS}. Namely, $\lambda$ and $\mu$ belong to the same block if $L_\bk(\lambda)$ and $L_\bk(\mu)$ have a non trivial extension.

Let $\cD_\bA$ denote the full subcategory of $\cC_\bA$ of all objects admitting a $Z$-filtration. If $b$ is a block, $\cD_\bA(b)$ is the full subcategory of all object admitting a $Z$-filtration whose factors are $Z_\bA(\mu)$ with $\mu\in b$. Likewise $\cC_\bA(b)$ denotes the full subcategory of all objects in $\cC_\bA$ which are the homomorphic image of an object in $\cD_\bA(b)$. The abelian categories $\cD_\bA$ and $\cC_\bA$ decompose into the sum $\oplus_b\cD_\bA(b)$ and $\oplus_b\cC_\bA(b)$. These and other properties are proved in \cite[6.10]{AJS}. We will also call the subcategories $\cD_\bA(b)$ and $\cC_\bA(b)$ blocks. The {\it principal block}, denoted $\cC_{\bk}(0)$, is the block containing $L_\bk(0)$.

\subsection{Duals}

Let $M\in\cC_\bA$. Then $M^\tau=\Hom_\bA(M,\bA)$ is an object in $\cC_\bA$ with $U$-action
\begin{align*}
(uf)(m)=f(\tau(u)m),
\end{align*}
for all $m\in M$ and $f\in M^\tau$, and homogeneous components
\begin{align*}
(M^\tau)_\lambda=\left\{f\in\Hom_\bA(M,\bA)\mid f(M_\mu)=0\,\forall\mu\neq\lambda\right\}
\end{align*}
for all $\lambda\in\Z^\I$.
If $M$ is $\bA$-free, then
\begin{align}\label{eq:ch duals}
\ch(M^\tau)=\ch(M).
\end{align}
From the characterization of the simple objects in $\cC_\bk$, $\bk$ is a field, we deduce that
\begin{align*}
L_\bk(\mu)^\tau\simeq L_\bk(\mu).
\end{align*}
for all $\mu\in\Z^\I$.

\subsection{AJS categories versus usual module categories}\label{subsec:finite} 
Assume that $U^0$ is finite-dimensional, and hence so is $U$. By forgetting the right $\bA$-action, we obtain a fully faithful exact functor from the AJS categories $\cC_\bA$ to the category ${}_{U}\cG$ of finite-dimensional $\Z^\I$-graded $U$-modules. Roughly speaking, the next proposition says that we know all the simple objects and the blocks of the category ${}_U\cG$ if we know the simple module $L_{\ku}(0)$ and the principal block $\cC_{\ku}(0)$ for all the algebra maps $\pi:U^0\longrightarrow\ku$.

\begin{proposition}
Every block in ${}_{U}\cG$ is equivalent as an abelian category to the principal block of some AJS category $\cC_\ku$.
\end{proposition}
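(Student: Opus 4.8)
The plan is to show that every block of ${}_U\cG$ "is" a principal block $\cC_\ku(0)$ after twisting by a suitable weight and reinterpreting the grading via an algebra map $\pi\colon U^0\to\ku$. The key point is that the simple objects of ${}_U\cG$ are, up to the $\Z^\I$-grading shift and the choice of $\pi$, exactly the $L_\ku(0)$'s: indeed for $U=U_\fq$ an arbitrary one-dimensional weight datum $\widetilde\mu\colon U^0\to\ku$ is absorbed into $\pi$ by \eqref{eq:composition tilde}, so that $L_\ku(\mu)$ computed with structural map $\pi$ is $L_\ku(0)$ computed with structural map $\pi\circ\widetilde\mu$. The general mechanism is: a block of ${}_U\cG$ is the Serre subcategory generated by a $\sim_b$-class of simples; each such simple is highest-weight of some weight $\mu$ for some $\pi$; translating all weights in the class by $-\mu$ and replacing $\pi$ by $\pi\circ\widetilde\mu$ turns that class into a class containing $L_\ku(0)$, i.e. the principal block for the new $\pi$.

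\textbf{Step 1: identify simples of ${}_U\cG$ with AJS simples.} Since $U^0$ is finite-dimensional (hence a finite product of local rings, and here a group algebra, so semisimple and split), every simple $\Z^\I$-graded $U$-module $S$ is finite-dimensional, and by taking a nonzero homogeneous vector killed by $(U^+)_{>0}$ of a maximal weight, $S$ is a highest-weight module of some weight $\mu$ on which $U^0$ acts through a character $\chi\colon U^0\to\ku$. One checks $\chi=\pi\widetilde\mu$ for a unique algebra map $\pi\colon U^0\to\ku$ (solve for $\pi$ using that $\widetilde\mu$ is an automorphism of $U^0$). Then $S\cong L_\ku(\mu)$ inside $\cC_\ku$ with structural map $\pi$, and via the fully faithful exact embedding $\cC_\ku\hookrightarrow{}_U\cG$ of \S\ref{subsec:finite} this realizes every simple of ${}_U\cG$.

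\textbf{Step 2: reduce a block to a principal block.} Fix a block $b$ of ${}_U\cG$; pick a simple $L_\ku(\mu)$ in it (structural map $\pi$). I would use the twist functor $M\mapsto M[\tau_{-\mu}]$ — more precisely the shift of $\Z^\I$-grading by $-\mu$ combined with the replacement $\pi\rightsquigarrow\pi\circ\widetilde\mu$, which by \eqref{eq:compatibilidad sm ma} and \eqref{eq:composition tilde} sends an object of $\cC_\ku$ (map $\pi$) of weight datum $\lambda$ to an object of $\cC_\ku$ (map $\pi\widetilde\mu$) of weight datum $\lambda-\mu$, carries $Z_\ku(\lambda)\mapsto Z_\ku(\lambda-\mu)$, $L_\ku(\lambda)\mapsto L_\ku(\lambda-\mu)$, and is an equivalence of abelian categories. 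It therefore preserves $\Hom$ and $\Ext^1$ of Verma modules, hence the relation $\sim_b$: the block $b$ is carried to the block (for map $\pi\widetilde\mu$) containing $L_\ku(0)$, which is the principal block $\cC_{\ku}(0)$ by definition. Finally I invoke the block decomposition $\cC_\ku=\oplus_{b'}\cC_\ku(b')$ together with the fact that $\cC_\ku\hookrightarrow{}_U\cG$ is fully faithful, exact, and — since every simple of ${}_U\cG$ lies in its image by Step 1 — "block-surjective": the image of $\cC_\ku(0)$ is precisely the Serre subcategory of ${}_U\cG$ generated by its simples, i.e. a block of ${}_U\cG$, giving the desired equivalence.

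\textbf{Main obstacle.} The routine part is the weight-shift bookkeeping; the delicate part is Step 1 plus checking that the embedding $\cC_\ku\hookrightarrow{}_U\cG$ really hits a whole block and not just a Serre subcategory of one — i.e. that an object of ${}_U\cG$ all of whose composition factors lie in $\cC_\ku(b)$ is itself (isomorphic to an object) in $\cC_\ku$. This follows because finite-dimensionality of $U^0$ forces condition \eqref{eq:compatibilidad sm ma} automatically once the weights are fixed, so the forgetful functor $\cC_\ku\to{}_U\cG$ is not only fully faithful but its essential image is closed under extensions and subquotients; hence it is a union of blocks, and by Step 1 it contains every simple, so it is all of ${}_U\cG$ and restricts to an equivalence on each block. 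I would spell this closure-under-extensions argument out carefully, as it is the one place where the hypothesis $\dim_\ku U^0<\infty$ is genuinely used.
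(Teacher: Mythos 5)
Your proof is correct and follows essentially the same strategy as the paper: realize every simple of ${}_U\cG$ as an AJS simple $L_\ku(\mu)$ for a suitable $\pi$, then propagate the single $\pi$ across a block via the $U^0$-action. (The paper does the propagation by a direct computation on a nonsplit extension $0\to L(\mu)\to M\to L(\pi)\to 0$, showing $U^0$ acts on the highest-weight line of $L(\mu)$ by $\pi\widetilde\mu$; your closure-under-extensions argument is the same point phrased at the level of essential images.) Two slips in the last paragraph, however, should be repaired. First, it is not \emph{finite-dimensionality} of $U^0$ that forces the compatibility \eqref{eq:compatibilidad sm ma} on extensions but \emph{semisimplicity} of $U^0$ as a $\ku$-algebra: you need each weight space of an extension of $\cC_\ku$-objects to be a direct sum of copies of the character $\pi\widetilde\mu$, which follows because $U^0$ is a finite abelian group algebra in characteristic zero; a finite-dimensional non-semisimple $U^0$ (e.g.\ $\ku[x]/(x^2)$) would not give this. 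Second, the assertion "it contains every simple, so it is all of ${}_U\cG$" is false for the essential image of a single $\cC_\ku^{\pi}$: distinct $\pi$'s that are not in the same $\Z^\I$-orbit yield disjoint images. What you actually need, and what your closure argument does give you, is that the fixed block $b$ lies entirely inside the image of the particular $\cC_\ku^{\pi}$ containing your chosen simple (the image being a union of blocks of ${}_U\cG$ and containing a simple of $b$), after which the fully faithful exact functor restricts to the claimed equivalence and your twist carries it to the principal block.
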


\begin{proof}
Since $U$ is finite-dimensional, one can construct Verma and simple modules in ${}_{U}\cG$ like in the AJS categories, see for instance \cite{BT}. Given an algebra map $\pi:U^0\longrightarrow\ku$, let us denote by $\Delta(\pi)$ and $L(\pi)$ the corresponding Verma module and simple modules in ${}_{U}\cG$. Namely, $L(\pi)$ coincides with the image under the forgetful functor of the simple object $L_{\ku}(0)$ of the AJS category $\cC_\ku$ associated to $\pi$. We claim that every simple module in ${}_{U}\cG$ belonging to the same block of $L(\pi)$ is isomorphic to the image of $L_{\ku}(\mu)$ under the forgetful functor for some $\mu$ in the principal block of $\cC_{\ku}$. In fact, suppose that $L(\mu)$ is a simple module in ${}_{U}\cG$ and
\begin{align*}
0\longrightarrow L(\mu)\longrightarrow M\longrightarrow L(\pi)\longrightarrow0 
\end{align*}
is a non trivial extension. Let $m_\mu$ be a highest-weight vector in $L(\mu)$ and $m\in M$ such that its image in $L(\pi)$ is a highest-weight vector. Then $m$ generates $M$ and there is $u_\mu\in U_\mu$ such that $m_\mu=u_\mu \cdot m$. Let $s\in U^0$. Hence 
\begin{align*}
s\cdot m_\mu=su_\mu\cdot m\overset{\eqref{property D}}{=}u_\mu\widetilde{\mu}(s)\cdot m
\overset{\eqref{eq:compatibilidad sm ma}}{=}(\pi\widetilde{\mu})(s)\,m_\mu.
\end{align*}
This implies that $L(\mu)$ is isomorphic to the image of $L_\ku(\mu)$ under the forgetful functor. Therefore every simple module of ${}_U\cG$ belonging to the same block of $L(\pi)$ also is in the image of the principal block, and the proposition follows.
\end{proof}

\subsection{AJS categories and quotients}\label{subsec:quotients}

Let $p:U\longrightarrow\overline{U}$ be a $\Z^\I$-graded algebra projection and set   $\overline{U}^{\pm,0}:=p(U^{\pm,0})$. Assume that $\overline{U}^-\ot \overline{U}^0\ot \overline{U}^+\longrightarrow \overline{U}$ induces a linear isomorphism, $\overline{U}^\pm\simeq U^\pm$ and $\widetilde{\mu}$ induces an algebra automorphism on $\overline{U}^0$. Thus, given an algebra map $\overline{\pi}:\overline{U}^0\longrightarrow\bk$ we can consider the corresponding AJS category which we denote $\overline{\cC}_\bk$; here $\bk$ is a field. We write $\overline{Z}_\bk(\mu)$ and $\overline{L}_\bk(\mu)$ for the Verma and simple modules in $\overline{\cC}_\bk$. If $\pi=\overline{\pi}\circ p:U^0\longrightarrow\bk$, we get an obvious functor $\Inf_{\overline{U}}^U:\overline{\cC}_\bk\longrightarrow\cC_\bk$ such that $\Inf_{\overline{U}}^U(\overline{Z}_\bk(\mu))\simeq Z_\bk(\mu)$ and $\Inf_{\overline{U}}^U(\overline{L}_\bk(\mu))\simeq L_\bk(\mu)$ for all $\mu\in\Z^\I$, by the assumptions on $p$. Then $\ch\overline{Z}_\bk(\mu)=\ch Z_\bk(\mu)$ and $\ch\overline{L}_\bk(\mu)=\ch L_\bk(\mu)$ and therefore
\begin{align}\label{eq:quotients}
\left[\overline{Z}_\bk(\mu):\overline{L}_\bk(\lambda)\right]=[Z_\bk(\mu):L_\bk(\lambda)]
\end{align}
for all $\mu,\lambda\in\Z^\I$ by Lemma \ref{lem:ch and composition}.  Moreover, if $\widetilde{\mu}$ descends to an algebra automorphism of $\overline{U}^0$, we have an analogous result to \eqref{eq:quotients} by considering the category ${}_{\overline{U}}\cG$ instead of $\overline{\cC}_\ku$.

\section{Twisted Verma modules}\label{sec:vermas}

Through this section we fix a matrix $\fq=(q_{ij})_{i,j\in\I}\in(\ku^\times)^{\I\times\I}$ with finite-dimensional Nichols algebra $\BV_\fq$, where $\I=\I_\theta$. Let $\cX$ be the $\cG$-orbit of $\fq$ and $\cW$ its Weyl groupoid. Let $U_\fq$ be the Hopf algebra introduced in Section \ref{sec:Drinfeld}. In the sequel $\bA$ denotes a Noetherian commutative ring and $\bk$ denotes a field, with no connection between them. We assume that both are algebras over $U^0$ and, by abuse of notation, we denote their structural maps $\pi:U_\fq^0\longrightarrow\bA$ and $\pi:U_\fq^0\longrightarrow\bk$. We denote
$\cC_\bA^\fq$, $\cC'^\fq_\bA$ and $\cC''^\fq_\bA$ the corresponding AJS categories; and similarly over the field $\bk$. We point out that the results regarding to simple modules of the following subsections only hold for the AJS categories over the field $\bk$, cf. \S\ref{subsec:simples}, and the results being valid over $\bA$ obviously hold over $\bk$.

In this section we construct and study different Verma modules for $U_\fq$ using the Lusztig isomorphisms of \S\ref{subsec:lusztig iso}. To do this, we mimic the ideas from \cite{AJS} where the authors use the Lusztig automorphisms \cite{L-qgps-at-roots}. Although our demonstrations are almost identical to \cite{AJS}, it is worthwhile to do them thoroughly again as we have morphisms between possibly different algebras parameterized by $\cX$ and permuted by the action of the Weyl groupoid.

\subsection{\texorpdfstring{$w$}{w}-Verma modules} Let $w:w^{-*}\fq\rightarrow\fq$ be a morphism in $\cW$ with $w=1^\fq\sigma_{i_k}\cdots\sigma_{i_1}$ a reduced expression. We consider the Lusztig isomorphism
\begin{align}
T_w=T_{i_k}\cdots T_{i_1}:U_{w^{-*}\fq}\longrightarrow U_\fq. 
\end{align}
Thus, the triangular decomposition of $U_{w^{-*}\fq}$ induces a new triangular decomposition on $U_\fq$. Explicitly,
\begin{align}\label{eq:triangular decomposition w recall}
T_w(U^-_{w^{-*}\fq})\ot U^0_\fq\ot T_w(U^+_{w^{-*}\fq})\longrightarrow U_\fq
\end{align}
since $T_w(U^0_{w^{-*}\fq})=U^0_{\fq}$. 

Given $\mu\in\Z^\I$, we conisder $\bA^\mu$ as a $U^0_\fq\,T_w(U^+_{w^{-*}\fq})$-module with action
\begin{align}\label{eq:Amu twisted}
su\cdot\m{\mu}=\varepsilon(u)\,\pi(\widetilde{\mu}(s))\,\m{\mu}\quad\forall s\in U_\fq^0,\,u\in T_w(U^+_{w^{-*}\chi})
\end{align}
and imitating \cite[\S4.3]{AJS}, we introduce the module
\begin{align}\label{eq:Verma twisted}
Z^w_\bA(\mu)=U_\fq\ot_{U_\fq^0T_w(U^+_{w^{-*}\fq})}\bA^\mu
\end{align}
which belongs to $\cC^\fq_\bA$. We call it {\it $w$-Verma module}. Of course, $Z_\bA^{1^\fq}(\mu)=Z_\bA(\mu)$. We notice that $Z^w_\bA(\mu)$ does not depend on the reduced expression of $w$. In fact, if $\tilde T_w$ is the Lusztig isomorphism associated to another reduced expression of $w$, then $T_w(U^\pm_{w^{-*}\fq})=\tilde T_w(U^\pm_{w^{-*}\fq})$ by \eqref{eq:Tw different presentation}. Hence the decomposition \eqref{eq:triangular decomposition w recall} does not depend on the reduced expression. Moreover, \eqref{eq:Amu twisted} defines the same module for both expressions, and {\it a posteriori} isomorphic Verma modules.

\begin{lemma}
Let $\mu\in\Z^\I$ and $w\in{}^\fq\cW$. Then
\begin{align*}
\ch Z_\bA^w(\mu)=e^{\mu-(w(\varrho^{w^{-*}\fq})-\varrho^\fq)}\ch U^-_\fq.
\end{align*} 
\end{lemma}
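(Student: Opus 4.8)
The plan is to compute the character of $Z_\bA^w(\mu)$ directly from its construction as an induced module, using the PBW basis of $U^-_{w^{-*}\fq}$ transported by $T_w$. First I would observe that, exactly as for the ordinary Verma module (see \eqref{eq:ch Z}), the module $Z_\bA^w(\mu)=U_\fq\ot_{U_\fq^0T_w(U^+_{w^{-*}\fq})}\bA^\mu$ is free as a right module over $T_w(U^-_{w^{-*}\fq})\ot\bA$ via the triangular decomposition \eqref{eq:triangular decomposition w recall}, with generator $\m{\mu}$ sitting in degree $\mu$. Hence its weight spaces are $Z_\bA^w(\mu)_\nu=T_w(U^-_{w^{-*}\fq})_{\nu-\mu}\ot\m{\mu}$, and therefore $\ch Z_\bA^w(\mu)=e^\mu\,\ch\, T_w(U^-_{w^{-*}\fq})$.

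Next I would compute $\ch\, T_w(U^-_{w^{-*}\fq})$. By \eqref{eq:Tw U alpha es U w alpha}, $T_w$ sends the weight space $(U_{w^{-*}\fq})_\alpha$ to $(U_\fq)_{w\alpha}$, so $T_w(U^-_{w^{-*}\fq})$ is the $\Z^\I$-graded vector space $w$-twisted from $U^-_{w^{-*}\fq}$ in the sense of \S\ref{subsection:conventions}; consequently, by \eqref{eq:w ch},
\begin{align*}
\ch\, T_w(U^-_{w^{-*}\fq})=w\bigl(\ch\, U^-_{w^{-*}\fq}\bigr).
\end{align*}
Using the product formula \eqref{eq:ch U-} for the matrix $w^{-*}\fq$, namely $\ch\, U^-_{w^{-*}\fq}=\prod_{\gamma\in\Delta_+^{w^{-*}\fq}}\frac{1-e^{-b^{w^{-*}\fq}(\gamma)\gamma}}{1-e^{-\gamma}}$, and applying the algebra automorphism $w$, this becomes $\prod_{\gamma\in\Delta_+^{w^{-*}\fq}}\frac{1-e^{-b^{w^{-*}\fq}(\gamma)\,w\gamma}}{1-e^{-w\gamma}}$. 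I would then rewrite this product over $\beta=w\gamma$: as $\gamma$ runs over $\Delta_+^{w^{-*}\fq}$, the elements $w\gamma$ run over $w(\Delta_+^{w^{-*}\fq})$, which by \eqref{eq:sigma Delta = Delta} is a subset of $\Delta^\fq$ whose intersection with $-\Delta_+^\fq$ has exactly $\ell(w)$ elements; and $b^{w^{-*}\fq}(\gamma)=b^{\fq}(w\gamma)$ by \eqref{eq: b invariante}.

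It then remains to reconcile the product $\prod_{\gamma\in\Delta_+^{w^{-*}\fq}}\frac{1-e^{-b^{\fq}(w\gamma)\,w\gamma}}{1-e^{-w\gamma}}$ with $e^{-(w(\varrho^{w^{-*}\fq})-\varrho^\fq)}\prod_{\beta\in\Delta_+^\fq}\frac{1-e^{-b^\fq(\beta)\beta}}{1-e^{-\beta}}$. For each $\gamma$ with $w\gamma=-\beta$, $\beta\in\Delta_+^\fq$, I use the elementary identity $\frac{1-e^{b^\fq(\beta)\beta}}{1-e^{\beta}}=e^{(b^\fq(\beta)-1)\beta}\,\frac{1-e^{-b^\fq(\beta)\beta}}{1-e^{-\beta}}$, so each ``flipped'' factor contributes exactly the positive-root factor times $e^{(b^\fq(\beta)-1)\beta}$; collecting the exponents of all flipped roots gives $\sum_{\beta\in\Delta_+^\fq:\,w^{-1}\beta\in\Delta_-^{w^{-*}\fq}}(b^\fq(\beta)-1)\beta$, which by \eqref{eq:w varrho - varrho} equals $-(w(\varrho^{w^{-*}\fq})-\varrho^\fq)$. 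This yields $\ch\, T_w(U^-_{w^{-*}\fq})=e^{-(w(\varrho^{w^{-*}\fq})-\varrho^\fq)}\ch\, U^-_\fq$, and multiplying by $e^\mu$ gives the claim. The only mildly delicate point is bookkeeping the flipped factors correctly and checking the rational-function identity termwise (so that no spurious zeros or poles appear); everything else is a direct assembly of \eqref{eq:w ch}, \eqref{eq: b invariante}, \eqref{eq:sigma Delta = Delta}, \eqref{eq:ch U-} and \eqref{eq:w varrho - varrho}.
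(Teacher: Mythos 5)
Your proof is correct and follows essentially the same route as the paper: reduce to $\ch Z_\bA^w(\mu)=e^\mu\,\ch T_w(U^-_{w^{-*}\fq})$, pass through \eqref{eq:w ch} and \eqref{eq:ch U-}, then split the product over $\gamma\in\Delta_+^{w^{-*}\fq}$ according to whether $w\gamma$ is positive or negative and absorb the flipped factors via $\frac{1-e^{b^\fq(\beta)\beta}}{1-e^{\beta}}=e^{(b^\fq(\beta)-1)\beta}\frac{1-e^{-b^\fq(\beta)\beta}}{1-e^{-\beta}}$ and \eqref{eq:w varrho - varrho}. The aside about the $\ell(w)$-count of flipped roots is not needed but harmless.
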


\begin{proof}
By the definition, we see that $\ch\,Z_\bA^w(\mu)=e^\mu\,\ch\,T_w(U^-_{w^{-*}\fq})$. On the other hand,  $T_w(U^-_{w^{-*}\fq})\simeq(U_{w^{-*}\fq}^-)[w]$ as $\Z^\I$-graded object by \eqref{eq:Tw U alpha es U w alpha}. Let us write $\Delta_+^\fq=R_1\cup R_2$ with
\begin{align*}
R_1=\left\{\beta\in \Delta_+^\fq\,:\,w^{-1}\beta\in\Delta_-^{w^{-*}\fq}\right\}
\quad\mbox{and}\quad
R_2=\left\{\beta\in\Delta_+^\fq\,:\,w^{-1}\beta\in\Delta_+^{w^{-*}\fq}\right\}.
\end{align*}
Therefore 
\begin{align}\label{eq:chTwU-}
\begin{split}
\ch\,T_w(U^-_{w^{-*}\fq})\overset{\eqref{eq:w ch}}{=}w(\ch\,U^-_{w^{-*}\fq})&\overset{\eqref{eq:ch U-}}{=}
\prod_{\gamma\in \Delta_+^{w^{-*}\fq}}\frac{1-e^{-b^{w^{-*}\fq}(\gamma)w\gamma}}{1-e^{-w\gamma}}\\
&=
\prod_{\beta\in R_1}\frac{1-e^{b^{\fq}(\beta)\beta}}{1-e^{\beta}}
\cdot
\prod_{\beta\in R_2}\frac{1-e^{-b^{\fq}(\beta)\beta}}{1-e^{-\beta}}\\
&=
\prod_{\beta\in R_1}e^{(b^{\fq}(\beta)-1)\beta}
\cdot
\prod_{\beta\in\Delta_+^\fq}\frac{1-e^{-b^{\fq}(\beta)\beta}}{1-e^{-\beta}}\\
&\overset{\eqref{eq:w varrho - varrho}}{=}e^{\varrho^\fq-w(\varrho^{w^{-*}\fq})}\ch U^-_\fq
\end{split}
\end{align}
and the lemma follows.
\end{proof}

As a consequence, we have that
\begin{align}\label{eq: ch Zmu = ch Zw muw}
\ch Z_\bA(\mu)=\ch Z_\bA^w(\mu\langle w\rangle).
\end{align}
for all $\mu\in\Z^\I$ and $w\in{}^\fq\cW$ with $\mu\langle w\rangle=\mu-(w(\varrho^{w^{-*}\fq})-\varrho^\fq)$, recall Definition \ref{def:mu w}. Moreover, we next see that the Hom spaces between these $w$-Verma modules are free of rank one over $\bA$, cf. \cite[Lemma 4.7]{AJS}.

\begin{proposition}\label{prop:Hom Zx y Zw}
Let $\mu\in\Z^\I$ and $x,w\in{}^\fq\cW$. Then
\begin{align*}
\Hom_{\cC_\bA^\fq}(Z_\bA^x(\mu\langle x\rangle),Z_\bA^w(\mu\langle w\rangle))\simeq \bA\quad\mbox{and}\quad
\Hom_{\cC_\bA^\fq}(Z_\bA^x(\mu\langle x\rangle),Z_\bA^w(\mu\langle w\rangle)^\tau)\simeq \bA.
\end{align*} 
\end{proposition}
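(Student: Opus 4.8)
The plan is to reduce the statement about general $w$-Verma modules to the already-known classical case ($x = w = 1^\fq$) by transporting everything along the Lusztig isomorphisms. The first observation is that $Z_\bA^w(\mu\langle w\rangle)$, built from $U_\fq$ via the triangular decomposition coming from $T_w \colon U_{w^{-*}\fq} \to U_\fq$, is nothing but the image of an ordinary Verma module over $U_{w^{-*}\fq}$. More precisely, the functor ``restriction of scalars along $T_w$'' should send $Z^{1^{w^{-*}\fq}}_{\bA[w]}(\mu)$ to $Z_\bA^w(\mu\langle w\rangle)$, up to the bookkeeping of the structural map and the $\varrho$-shift encoded in $\langle w\rangle$ and in \eqref{eq: mu conjugado por Tw}. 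I would make this precise first: set up an equivalence of categories $\cC_\bA^\fq \simeq \cC_{\bA[w^{-1}]}^{w^{-*}\fq}$ induced by $T_w$ (using \eqref{eq:Tw U alpha es U w alpha} to see that the $\Z^\I$-grading is twisted by $w$, and \eqref{eq: mu conjugado por Tw}, \eqref{eq:pi[w]} to track the $U^0$-action), and check that under this equivalence $w$-Verma modules correspond to ordinary Verma modules.

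Second, given that reduction, it suffices to treat the case $w = 1^\fq$, i.e.\ to show $\Hom_{\cC_\bA^\fq}(Z_\bA^x(\mu\langle x\rangle), Z_\bA(\mu)) \simeq \bA$ and similarly for the $\tau$-dual; then the general $\Hom(Z^x, Z^w)$ follows by composing the two equivalences (for $x$ and for $w$) or by applying $T_w^{-1}$. For this I would follow the argument of \cite[Lemma 4.7]{AJS} essentially verbatim. The character identity \eqref{eq: ch Zmu = ch Zw muw} already tells us $\ch Z_\bA^x(\mu\langle x\rangle) = \ch Z_\bA(\mu) = \ch Z_\bA^w(\mu\langle w\rangle)$, so all these modules are $\bA$-free with the same character; in particular their top weight space (lowest for the $U^-$-direction, i.e.\ the weight $\mu$) is free of rank one. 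A morphism $\varphi \colon Z_\bA^x(\mu\langle x\rangle) \to Z_\bA(\mu)$ is determined by where it sends the canonical generator, which lives in degree $\mu\langle x\rangle$; I would argue that the image must land in the $\mu\langle x\rangle$-weight space of $Z_\bA(\mu)$, that this weight space is $\bA$-free (of some rank), and—this is the crucial point—that the image of the generator is forced to be a highest-weight vector for the $x$-triangular structure, hence lies in a rank-one submodule, so that $\Hom$ is at most rank one over $\bA$. Existence of a nonzero (in fact, $\bA$-basis) morphism I would get by an inductive construction along a reduced expression of $x$ one simple reflection at a time, exactly as in the sketch in Figure \ref{fig:sketch} (the maps $\varphi$ and $\psi$): at each step $w \rightsquigarrow w\sigma_i$ one has an explicit morphism between the corresponding $w$-Verma modules given on generators by multiplication by a suitable power of $F_{w\alpha_i}$, and one composes these.

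For the $\tau$-dual statement, I would use that $(-)^\tau$ preserves characters \eqref{eq:ch duals} and fixes simple modules, together with the fact that $Z_\bA^w(\mu\langle w\rangle)^\tau$ has a unique simple submodule with highest weight $\mu\langle w\rangle$ (dualize the statement that a Verma module has a unique simple quotient); then the same highest-weight-vector argument bounds the $\Hom$-space by rank one, and one produces a nonzero element by composing a morphism into $Z_\bA^w(\mu\langle w\rangle)$ with the natural map to its simple quotient and then a nonzero map from that simple quotient (which embeds into the $\tau$-dual)—or more cleanly, by applying $\Hom(-, \tau(-))$-adjunction type arguments as in \cite{AJS}. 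Throughout, freeness over $\bA$ of the $\Hom$-space (as opposed to merely ``generated by one element'') needs the argument that the obstruction to a map being an isomorphism onto its image is controlled by a non-zero-divisor; this is where I would be most careful and would follow \cite[Lemma 4.7]{AJS} closely.

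The main obstacle I anticipate is not any single hard idea but the bookkeeping: keeping straight which matrix ($\fq$ versus $w^{-*}\fq$ versus $x^{-*}\fq$) each object lives over, how $\widetilde\mu$, $\pi$, $\varrho^\fq$ and the shift $\mu\langle w\rangle$ transform under $T_w$, and verifying that the equivalence of categories induced by $T_w$ is compatible with all the structure of the AJS category (the $\Z^\I$-grading twist by $w$, the compatibility \eqref{eq:compatibilidad sm ma}, and Noetherianity/finiteness conditions). Once the dictionary ``$T_w$ intertwines $w$-Verma modules with ordinary Verma modules'' is nailed down, the remaining content is a faithful transcription of \cite[Lemma 4.7]{AJS}.
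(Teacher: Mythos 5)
Your approach is in the same general direction (compare characters, then control the weight space where the generator lands), but the specific route you propose has a genuine gap, and it is also considerably more roundabout than what the paper does. The paper neither reduces to $w=1^\fq$ via a category equivalence nor builds the morphism step-by-step along a reduced expression. It simply observes: by the character identity \eqref{eq: ch Zmu = ch Zw muw}, the weight space $\bigl(Z_\bA^w(\mu\langle w\rangle)\bigr)_{\mu\langle x\rangle}$ is $\bA$-free of rank one, say equal to $v\bA$, and no weight of the form $\mu\langle x\rangle + x\gamma$ with $\gamma>0$ occurs (again by comparison with $Z_\bA^x(\mu\langle x\rangle)$); hence $T_x(U^+_{x^{-*}\fq})v=0$. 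The universal property of the induced module $Z_\bA^x(\mu\langle x\rangle)=U_\fq\ot_{U_\fq^0 T_x(U^+_{x^{-*}\fq})}\bA^{\mu\langle x\rangle}$ then yields a morphism $\m{\mu\langle x\rangle}\mapsto v$, and the same rank-one observation shows it spans the Hom-space. No reduction, no induction.

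The concrete gap in your version is the existence step. You propose to produce a nonzero morphism by composing the one-step maps of Figure \ref{fig:sketch}, but those are constructed in Section \ref{sec:morphisms}, after this Proposition, and the proof that such compositions are generators (Corollary \ref{cor:a generator}) both invokes this Proposition and requires a length-additivity hypothesis $\ell(w)=\ell(x)+\ell(x^{-1}w)$, which fails for general $x,w\in{}^\fq\cW$. Without that hypothesis there is no a priori reason the composition is nonzero, so as stated your argument is either circular or leaves a case unhandled; the paper sidesteps this entirely via the universal property. A second, smaller gap is in your treatment of the $\tau$-dual: you invoke the simple quotient of $Z_\bA^w(\mu\langle w\rangle)$, but simple quotients only make sense over the field $\bk$, whereas the Proposition is stated over an arbitrary Noetherian ring $\bA$. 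The paper handles the dual case identically to the first, just replacing the character identity for $Z_\bA^w(\mu\langle w\rangle)$ by the one for its $\tau$-dual via \eqref{eq:ch duals}.
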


\begin{proof}
Let us abbreviate $M^x=Z_\bA^x(\mu\langle x\rangle)$. We have that $(M^x)_{\mu\langle x\rangle}$ is $\bA$-free of rank $1$ and $\mu\langle x\rangle+x\beta$ is not a weight for any $\beta\in \Delta_+^{x^{-*}\fq}$ because $\ch\,M^x=e^{\mu\langle x\rangle}\ch\,T_x(U^-_{x^{-*}\fq})$. By \eqref{eq: ch Zmu = ch Zw muw}, these claims are also true for $M^w$. Let $v$ be a generator  of $(M^w)_{\mu\langle x\rangle}$ as $\bA$-module. Then  $T_x(U^+_{x^{-*}\fq})v=0$. Therefore, there is a morphism $M^x\rightarrow M^w$, induced by $\m{\mu\langle x\rangle}\mapsto v$ and any other morphism has to be a multiple of it. This shows the first isomorphism. The second one follows similarly by using \eqref{eq:ch duals}.
\end{proof}

\begin{corollary}\label{le:twisted verma in the same block}
Let $\mu\in\Z^\I$ and $w\in{}^{\fq}\cW$. Then $Z^w_\bA(\mu\langle w\rangle)$ and $Z_\bA(\mu)$ belong to the same block. 
\end{corollary}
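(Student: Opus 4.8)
The plan is to derive the statement directly from Proposition~\ref{prop:Hom Zx y Zw} together with the definition of the block relation $\sim_b$ in \S\ref{sec:AJS}. By Proposition~\ref{prop:Hom Zx y Zw} applied with $x=1^\fq$, we have
\begin{align*}
\Hom_{\cC_\bA^\fq}\bigl(Z_\bA(\mu),Z_\bA^w(\mu\langle w\rangle)\bigr)\simeq\bA\neq0,
\end{align*}
using that $Z_\bA^{1^\fq}(\mu\langle 1^\fq\rangle)=Z_\bA(\mu)$ since $\mu\langle 1^\fq\rangle=\mu$. In particular this Hom-space is nonzero, so by the very definition of the smallest equivalence relation $\sim_b$ generated by nonvanishing of $\Hom$ (or $\Ext^1$) between Verma modules, we get $\mu\sim_b\mu\langle w\rangle$; equivalently, $Z_\bA(\mu)$ and $Z_\bA^w(\mu\langle w\rangle)$ lie in the same block.

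One point that needs a remark is that the block decomposition recalled in \S\ref{sec:AJS} is phrased in terms of the ordinary Verma modules $Z_\bA(\nu)$, $\nu\in\Z^\I$, whereas here one of the two objects is a $w$-twisted Verma module $Z_\bA^w(\mu\langle w\rangle)$. This is harmless: by \eqref{eq: ch Zmu = ch Zw muw} the $w$-Verma module $Z_\bA^w(\mu\langle w\rangle)$ has the same character as $Z_\bA(\mu)$, and more to the point, the category $\cC_\bA^\fq$ decomposes as $\oplus_b\cC_\bA^\fq(b)$, so \emph{every} object of $\cC_\bA^\fq$ — in particular $Z_\bA^w(\mu\langle w\rangle)$ — lies in a single block. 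Since $\Hom_{\cC_\bA^\fq}(Z_\bA(\mu),Z_\bA^w(\mu\langle w\rangle))\neq0$ and $\cC_\bA^\fq(b)$ for distinct $b$ have no nonzero morphisms between them, $Z_\bA(\mu)$ and $Z_\bA^w(\mu\langle w\rangle)$ must lie in the same block $b$, namely the block containing $\mu$.

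There is essentially no obstacle here; the corollary is a formal consequence of the preceding proposition and the generalities on blocks. The only thing to be careful about is keeping the indexing straight — that $Z_\bA(\mu)=Z_\bA^{1^\fq}(\mu)$ and $\mu\langle 1^\fq\rangle=\mu$, so that specializing $x=1^\fq$ in Proposition~\ref{prop:Hom Zx y Zw} is legitimate — and noting that the argument works over $\bA$ (hence also over the field $\bk$). I would therefore present the proof in two sentences: cite Proposition~\ref{prop:Hom Zx y Zw} with $x=1^\fq$ to get a nonzero Hom-space, then invoke the definition of $\sim_b$ (or equivalently the block decomposition of $\cC_\bA^\fq$) to conclude.

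\begin{proof}
Taking $x=1^\fq$ in Proposition~\ref{prop:Hom Zx y Zw} and using $Z_\bA^{1^\fq}(\mu)=Z_\bA(\mu)$ with $\mu\langle 1^\fq\rangle=\mu$, we obtain
\begin{align*}
\Hom_{\cC_\bA^\fq}\bigl(Z_\bA(\mu),Z_\bA^w(\mu\langle w\rangle)\bigr)\simeq\bA\neq0.
\end{align*}
Since $\cC_\bA^\fq=\oplus_b\cC_\bA^\fq(b)$ and there are no nonzero morphisms between different blocks, $Z_\bA(\mu)$ and $Z_\bA^w(\mu\langle w\rangle)$ lie in the same block, namely the one containing $\mu$.
\end{proof}
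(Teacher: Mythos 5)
Your route is in spirit the paper's own (the paper just runs the argument of \cite[Lemma 6.11]{AJS} using Proposition~\ref{prop:Hom Zx y Zw}), but as written there is a genuine gap exactly at the point you flag and then dismiss. The decomposition $\cC^\fq_\bA=\oplus_b\cC^\fq_\bA(b)$ does \emph{not} say that every object of $\cC^\fq_\bA$ lies in a single block; it says every object is a direct sum of objects each lying in a single block (e.g.\ $Z_\bA(\lambda)\oplus Z_\bA(\nu)$ with $\lambda,\nu$ in different blocks lies in no single $\cC^\fq_\bA(b)$). Hence from $\Hom_{\cC^\fq_\bA}(Z_\bA(\mu),Z^w_\bA(\mu\langle w\rangle))\neq0$ you may only conclude that the component of $Z^w_\bA(\mu\langle w\rangle)$ in the block of $\mu$ is nonzero, not that the other components vanish --- and the latter is precisely the content of the corollary. (Your first paragraph also misapplies the definition of $\sim_b$: that relation is generated by Hom/Ext between \emph{untwisted} Verma modules $Z_\bA(\lambda)$, $Z_\bA(\nu)$, so a nonzero map into the twisted module $Z^w_\bA(\mu\langle w\rangle)$ does not give $\mu\sim_b\mu\langle w\rangle$; in any case the statement concerns the twisted module, not the weight $\mu\langle w\rangle$.)

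The missing step is short but must be said. Over the field $\bk$ it suffices to note that $Z^w_\bk(\mu\langle w\rangle)$ has a unique simple quotient (the twisted triangular decomposition satisfies \eqref{eq:property 0}--\eqref{property D}), hence is indecomposable and lies in a single block, which your Hom computation then identifies as the block of $\mu$. Over general Noetherian $\bA$ one can argue: any decomposition $Z^w_\bA(\mu\langle w\rangle)=\oplus_{b'}M_{b'}$ in $\cC^\fq_\bA$ is by graded $U_\fq\otimes\bA$-submodules, and since the module is generated by its $\mu\langle w\rangle$-weight space, which is $\bA$-free of rank one, each projection is multiplication by an idempotent on that space, so $M_{b'}=e_{b'}Z^w_\bA(\mu\langle w\rangle)$ for orthogonal idempotents $e_{b'}\in\bA$ with $\sum_{b'}e_{b'}=1$. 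If $e_{b'}\neq0$, then $e_{b'}\Phi\neq0$, where $\Phi$ is the generator of $\Hom_{\cC^\fq_\bA}(Z_\bA(\mu),Z^w_\bA(\mu\langle w\rangle))$ from Proposition~\ref{prop:Hom Zx y Zw} (it sends $\m{\mu}$ to an $\bA$-basis of the $\mu$-weight space, which is free), so $\Hom_{\cC^\fq_\bA}(Z_\bA(\mu),M_{b'})\neq0$; the Hom-vanishing between distinct blocks implicit in the decomposition $\oplus_b\cC^\fq_\bA(b)$ then forces $b'$ to be the block of $\mu$, i.e.\ all other components vanish. With this supplement your proof is complete and coincides in substance with the paper's citation of \cite[Lemma 6.11]{AJS}.
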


\begin{proof}
It follows as \cite[Lemma 6.11]{AJS} using the previous proposition.
\end{proof}

\subsection{Twisted simple modules}
Let $w\in\cW$. The new triangular decomposition \eqref{eq:triangular decomposition w recall} on $U_\fq$ satisfies \eqref{eq:property 0}--\eqref{property D} with respect to the $w$-twisted $\Z^\I$-grading on $U_\fq$ and the partial order $\leq^w$; \eqref{property D} holds thanks to \eqref{eq: mu conjugado por Tw}. Similar to \cite[\S4.3]{AJS}, this observation ensures that the $w$-Verma modules satisfy most of the properties of the usual Verma modules. For instance, we can construct the $w$-Verma modules via induction functors,  they have a simple head, and the projectives modules admit $Z^w$-filtrations.

Therefore, in the case of the field $\bk$, the $w$-Verma module $Z^w_\bk(\mu)$ has a unique simple quotient which we denote $L^w_\bk(\mu)$ for all $\mu\in\Z^\I$. As the simple modules in $\cC_\bk^\fq$ are determined by their highest-weights, $w$ induces a bijection in $\Z^\I$, $\mu\leftrightarrow\mu_w$, such that
\begin{align}\label{eq:Lw L mu w}
L^w_\bk(\mu)\simeq L_\bk(\mu_w).
\end{align}
Notice that if $w_0\in{}^\fq\cW$ is the longest element, then $\mu_{w_0}$ is the {\it lowest-weight} of $L_\bk(\mu)$ by \eqref{eq:T w0}; {\it i.e.} $(U^-)_\nu\cdot L_\bk(\mu)_{\mu_{w_0}}=0$ for all $\nu<0$.

We next give more information about the $w$-Verma modules over a the field $\bk$.

\begin{lemma}[{\cite[Lemma 4.8]{AJS}}]\label{le:socle of Zw}
Let $\mu\in\Z^\I$ and $w\in{}^\fq\cW$.
Then the socle of $Z_\bk^w(\mu)$ is a  simple module in $\cC_\bk^\fq$. Furthermore, the element $T_w(F_{top}^{w^{-*}\fq})\m{\mu}$ generates the socle and spans the homogeneous component of weight $\mu-w(\beta_{top}^{w^{-*}\fq})$. 
\end{lemma}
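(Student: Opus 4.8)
The plan is to reduce the statement for general $w$ to the already-known case $w=1^\fq$ via transport of structure along the Lusztig isomorphism $T_w$, and then to prove the base case by a direct computation on the weight space of minimal degree in $Z_\bk(\mu)$. First I would observe that $T_w\colon U_{w^{-*}\fq}\longrightarrow U_\fq$ is an algebra isomorphism carrying the triangular decomposition of $U_{w^{-*}\fq}$ to the triangular decomposition \eqref{eq:triangular decomposition w recall} of $U_\fq$ used to define $Z_\bk^w(\mu)$. Consequently restriction along $T_w$ gives an equivalence between $\cC_\bk^{w^{-*}\fq}$ (with its standard grading and order) and $\cC_\bk^\fq$ equipped with the $w$-twisted grading and the order $\leq^w$, under which $Z_\bk^w(\mu)$ corresponds to the ordinary Verma module $Z_\bk^{1^{w^{-*}\fq}}(\mu')$ over $U_{w^{-*}\fq}$ for the appropriate weight $\mu'$; here one uses \eqref{eq: mu conjugado por Tw} to match the $U^0$-actions. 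Since equivalences preserve socles and simplicity, and since $T_w(F_{top}^{w^{-*}\fq})$ is by definition the image under $T_w$ of the spanning vector $F_{top}^{w^{-*}\fq}$ of the minimal weight space of $U_{w^{-*}\fq}^-$, everything reduces to proving the lemma for $w=1^\fq$.

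For the base case I would argue as in \cite[Lemma 4.8]{AJS}. The module $Z_\bk(\mu)\simeq U_\fq^-\ot\m{\mu}$ has one-dimensional weight spaces at the two extremes: at weight $\mu$ (spanned by $\m{\mu}$) and at weight $\mu-\beta_{top}^\fq$ (spanned by $F_{top}^\fq\m{\mu}$), by \eqref{eq:ch U-} and the remark after it that the minimal weight space of $U_\fq^-$ is one-dimensional. The key point is that the line $\bk\, F_{top}^\fq\m{\mu}$ is stable under $U_\fq$: it is obviously killed by $U_\fq^-$ (for degree reasons, since $\mu-\beta_{top}^\fq$ is the lowest weight), the $U_\fq^0$-action is scalar by \eqref{eq:compatibilidad sm ma}, and one must check $(U_\fq^+)_\nu F_{top}^\fq\m{\mu}\subseteq\bk\,F_{top}^\fq\m{\mu}$ for $\nu>0$; but $(Z_\bk(\mu))_{\mu-\beta_{top}^\fq+\nu}$ has the same dimension as $(U_\fq^-)_{-\beta_{top}^\fq+\nu}$, and for a generator it suffices to treat $\nu=\alpha_i$, where the commutation relation \eqref{eq: EF FE} together with the structure of $F_{top}^\fq$ as a product $F_{\beta_1}^{b^\fq(\beta_1)-1}\cdots F_{\beta_n}^{b^\fq(\beta_n)-1}$ and the identities \eqref{eq: E Fn} show $E_i F_{top}^\fq\m{\mu}\in\bk\,F_{top}^\fq\m{\mu}$ — the crucial fact being that each exponent is one less than the bound, so commuting an $E_i$ past produces lower powers which lie in strictly lower weight spaces of $U_\fq^-$ hence act as zero on the line once multiplied back, leaving only a scalar. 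Thus $\bk\,F_{top}^\fq\m{\mu}$ is a submodule; since it is one-dimensional it is simple.

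It then remains to see that this is the \emph{whole} socle. I would invoke the general fact recorded in \S\ref{subsec:simples} (valid for any module in $\cC_\bk^\fq$, applied here to $Z_\bk(\mu)$ with its $1^\fq$-triangular decomposition): any nonzero submodule contains a highest-weight vector, but by the self-duality $L_\bk(\lambda)^\tau\simeq L_\bk(\lambda)$ and \eqref{eq:ch duals} one instead uses the dual statement that any nonzero submodule of $Z_\bk(\mu)$ contains a \emph{lowest-weight} vector; equivalently, pass to $Z_\bk(\mu)^\tau$, whose socle is dual to the head of $Z_\bk(\mu)$ hence simple, and dualize again — or more directly, note that the socle is a nonzero submodule, so it contains the lowest weight $\mu-\beta_{top}^\fq$ in its support (apply a chain of $F$'s to any lowest-weight-generating vector, landing in the one-dimensional bottom space, which is nonzero since $U_\fq^-$ acts without zero divisors in the sense that $F_{top}^\fq$ times a nonzero scalar multiple of a PBW monomial is nonzero whenever degrees are compatible), and being a sum of simples it must contain $\bk\,F_{top}^\fq\m{\mu}$; any other simple summand would contribute a second copy of the weight $\mu-\beta_{top}^\fq$, contradicting that this weight space is one-dimensional. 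Hence the socle equals $\bk\,F_{top}^\fq\m{\mu}$, which is simple and has the stated weight.

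The main obstacle I anticipate is the verification that $\bk\,F_{top}^\fq\m{\mu}$ is $U_\fq^+$-stable, i.e.\ the precise bookkeeping in the commutation $E_i F_{top}^\fq\m{\mu}$: one needs to know that moving $E_i$ through the product $F_{\beta_1}^{b^\fq(\beta_1)-1}\cdots F_{\beta_n}^{b^\fq(\beta_n)-1}$ using \eqref{eq: E Fn} (and its $T_w$-conjugated root-vector versions) produces only terms with a strictly smaller total $F$-degree, which then necessarily act as zero on the one-dimensional bottom line for weight reasons, while the ``through'' term $F_{top}^\fq E_i\m{\mu}$ vanishes because $E_i\m{\mu}=0$. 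In \cite{AJS} this is handled via a filtration/leading-term argument on the enveloping algebra; here one must simply transcribe it for $U_\fq^-$ using the PBW basis \eqref{eq:PBW basis}, which is routine but needs care because the $E_i$'s do not commute cleanly past the higher root vectors $F_\beta$ — one uses instead that $[\alpha_i;n]$-type corrections land in $U_\fq^0$ and act by scalars, reducing the estimate to degree counting.
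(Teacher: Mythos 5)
Your reduction to $w=1^\fq$ via transport along $T_w$ is harmless (the paper instead works directly with the decomposition $T_w(U^-_{w^{-*}\fq})\,U^0_\fq\,T_w(U^+_{w^{-*}\fq})$, which amounts to the same thing), but your base case contains a genuine error. You claim that the line $\bk\,F^\fq_{top}\m{\mu}$ is $U_\fq^+$-stable and conclude that the socle \emph{equals} this one-dimensional space. This is false in general: by \eqref{eq: E Fn}, already in rank one $E_iF_i^{b^\fq(\alpha_i)-1}\m{\mu}=F_i^{b^\fq(\alpha_i)-2}\,\pi\tilde\mu([\alpha_i;b^\fq(\alpha_i)-1])\m{\mu}$, and the scalar $\pi\tilde\mu([\alpha_i;b^\fq(\alpha_i)-1])$ is generically nonzero, so $E_i$ does not kill the bottom vector; nor can $E_iF^\fq_{top}\m{\mu}$ be a multiple of $F^\fq_{top}\m{\mu}$, since it sits in a different weight space. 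The lemma asserts that $T_w(F_{top}^{w^{-*}\fq})\m{\mu}$ \emph{generates} the socle (which is the typically higher-dimensional simple module $U_\fq\cdot T_w(F_{top}^{w^{-*}\fq})\m{\mu}$), not that it spans it; only the weight space of degree $\mu-w(\beta_{top}^{w^{-*}\fq})$ is one-dimensional.

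The correct mechanism — and the paper's entire proof — is the following: the socle of $T_w(U^-_{w^{-*}\fq})$ as a module over itself is spanned by $T_w(F_{top}^{w^{-*}\fq})$ (the Frobenius/Poincar\'e-duality property of the finite-dimensional Nichols algebra: for any nonzero $u\in U^-$ of degree $-\nu$ there is $u'$ of degree $-(\beta_{top}-\nu)$ with $u'u\neq0$). Since $Z^w_\bk(\mu)$ is free of rank one over $T_w(U^-_{w^{-*}\fq})$, every nonzero $U_\fq$-submodule, in particular every simple submodule, contains $T_w(F_{top}^{w^{-*}\fq})\m{\mu}$; hence any two simple submodules intersect nontrivially and coincide, so the socle is the unique simple submodule, namely the submodule generated by that vector. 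Your attempted substitute for this key fact ("apply a chain of $F$'s ... $F_{top}^\fq$ times a nonzero scalar multiple of a PBW monomial is nonzero whenever degrees are compatible") is not the right statement — $F^\fq_{top}$ multiplied by anything of nonzero negative degree vanishes — and the nondegeneracy you actually need is exactly the simple-socle property of $U^-_\fq$ above, which must be invoked for arbitrary nonzero elements, not just PBW monomials. With that fact in hand, both your $U^+$-stability computation and your "second copy of the bottom weight" argument become unnecessary.
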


\begin{proof}
The socle of $T_w(U^-_{w^{-*}\fq})$ as a module over itself is spanned by $T_w(F_{top}^{w^{-*}\fq})$ whose degree is $-w(\beta_{top}^{w^{-*}\fq})$. Then any simple submodule of $Z_\bk^w(\mu)$ in $\cC^\fq_\bk$ should contains $T_w(F_{top}^{w^{-*}\fq})\m{\mu}$ and the lemma follows.
\end{proof}

Morphisms as in the lemma below exist by Proposition \ref{prop:Hom Zx y Zw}. Recall that $w_0$ denotes the longest element in ${}^\fq\cW$.

\begin{lemma}[{\cite[Lemma 4.9]{AJS}}]\label{le:imagen de Z w0 mu w0 en Z mu}
Let $\mu\in\Z^\I$,
\begin{align*}
\Phi: Z_\bk(\mu)\longrightarrow Z_\bk^{w_0}(\mu\langle w_0\rangle) \quad\mbox{and}\quad\Phi':Z_\bk^{w_0}(\mu\langle w_0\rangle)\longrightarrow Z_\bk(\mu)
\end{align*}
be non-zero morphisms in $\cC^\fq_\bk$. Then
\begin{align*} 
L_\bk(\mu)\simeq
\soc Z_\bk^{w_0}(\mu\langle w_0\rangle)=\Im\Phi
\quad\mbox{and}\quad
L_\bk^{w_0}(\mu\langle w_0\rangle)\simeq\soc Z_\bk(\mu)=\Im\Phi'.
\end{align*}
\end{lemma}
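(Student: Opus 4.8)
The plan is to mimic \cite[Lemma 4.9]{AJS}, exploiting the symmetry between $Z_\bk(\mu)$ and $Z_\bk^{w_0}(\mu\langle w_0\rangle)$ under the duality $(-)^\tau$ together with the socle computation of Lemma \ref{le:socle of Zw}. First I would record the basic facts: by Proposition \ref{prop:Hom Zx y Zw} both Hom-spaces $\Hom_{\cC^\fq_\bk}(Z_\bk(\mu),Z_\bk^{w_0}(\mu\langle w_0\rangle))$ and $\Hom_{\cC^\fq_\bk}(Z_\bk^{w_0}(\mu\langle w_0\rangle),Z_\bk(\mu))$ are one-dimensional over $\bk$, so $\Phi$ and $\Phi'$ are unique up to scalar and it suffices to prove the statement for \emph{one} nonzero choice of each. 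Recall also from \S\ref{subsec:simples} that $Z_\bk(\mu)$ is a highest-weight module with simple head $L_\bk(\mu)$, and that $Z_\bk^{w_0}(\mu\langle w_0\rangle)$ is, by the twisted triangular decomposition, a highest-weight module for the order $\leq^{w_0}$ with simple head $L_\bk^{w_0}(\mu\langle w_0\rangle)$.

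Next I would analyze $\Im\Phi$. Since $\Phi$ is a morphism in $\cC^\fq_\bk$ out of the highest-weight module $Z_\bk(\mu)$, its image is a nonzero highest-weight module generated in weight $\mu$, hence a quotient of $Z_\bk(\mu)$; being nonzero it therefore surjects onto $L_\bk(\mu)$ and contains $L_\bk(\mu)$ as its head. On the other hand $\Im\Phi$ is a nonzero submodule of $Z_\bk^{w_0}(\mu\langle w_0\rangle)$, so it contains the socle of the latter, which by Lemma \ref{le:socle of Zw} is simple and equal to the span of $T_{w_0}(F_{top}^{w_0^{-*}\fq})\m{\mu\langle w_0\rangle}$, living in weight $\mu\langle w_0\rangle - w_0(\beta_{top}^{w_0^{-*}\fq})$. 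Using $\mu\langle w_0\rangle = \mu - \beta_{top}^\fq$ (noted after \eqref{eq:w varrho - varrho}) and the identity $w_0(\beta_{top}^{w_0^{-*}\fq}) = -\beta_{top}^\fq$ from \eqref{eq:w varrho - varrho}'s neighborhood, this weight simplifies to $\mu$. So the socle of $Z_\bk^{w_0}(\mu\langle w_0\rangle)$ sits in weight $\mu$ and is therefore a highest-weight module of weight $\mu$ (all weights of $Z^{w_0}_\bk(\mu\langle w_0\rangle)$ are $\leq \mu$ in the ordinary order, since its character equals that of $Z_\bk(\mu)$ by \eqref{eq: ch Zmu = ch Zw muw}), hence isomorphic to $L_\bk(\mu)$. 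Thus $\soc Z_\bk^{w_0}(\mu\langle w_0\rangle)\simeq L_\bk(\mu)$; and since $\Im\Phi$ is a quotient of $Z_\bk(\mu)$ containing the socle copy of $L_\bk(\mu)$, the weight-$\mu$ space of $\Im\Phi$ is one-dimensional and generates both a quotient onto $L_\bk(\mu)$ and the socle copy; it follows that $\Im\Phi$ is a highest-weight module with both head and a submodule isomorphic to $L_\bk(\mu)$, forcing $\Im\Phi = L_\bk(\mu) = \soc Z_\bk^{w_0}(\mu\langle w_0\rangle)$ (any proper extension would contradict simplicity of the socle, as the weight-$\mu$ space has dimension one). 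This gives the left-hand identity.

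For the right-hand identity I would apply the duality. Dualizing the twisted triangular decomposition via $\tau$ interchanges $T_{w_0}(U^\pm_{w_0^{-*}\fq})$ and, combined with \eqref{eq:ch duals} and $L_\bk(\mu)^\tau\simeq L_\bk(\mu)$, shows $(Z_\bk^{w_0}(\mu\langle w_0\rangle))^\tau$ is again a highest-weight-type module for the opposite order with the same character, whose head is dual to the socle; concretely $(Z_\bk^{w_0}(\mu\langle w_0\rangle))^\tau$ is a module with simple head $L_\bk^{w_0}(\mu\langle w_0\rangle)$ and socle $(L_\bk(\mu))^\tau\simeq L_\bk(\mu)$ — but by the uniqueness of highest-weight modules it must then be $Z_\bk(\mu)$ itself, or at least the argument of the previous paragraph runs verbatim with the roles of $Z_\bk(\mu)$ and $Z_\bk^{w_0}(\mu\langle w_0\rangle)$ exchanged (using that $w_0^{-*}\fq$ plays for $Z_\bk^{w_0}$ the role $\fq$ plays for $Z_\bk$, and $w_0$ is its own ``longest element'' up to the permutation $f$). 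Either way one concludes $\soc Z_\bk(\mu)\simeq L_\bk^{w_0}(\mu\langle w_0\rangle)$ and $\Im\Phi' = \soc Z_\bk(\mu)$.

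The main obstacle I anticipate is bookkeeping rather than conceptual: carefully matching the twisted grading $\leq^{w_0}$, the weight shifts $\mu\langle w_0\rangle = \mu - \beta^\fq_{top}$, and the degree $-w_0(\beta_{top}^{w_0^{-*}\fq}) = \beta^\fq_{top}$ of the socle generator, so that the socle of $Z^{w_0}_\bk(\mu\langle w_0\rangle)$ really does land in weight $\mu$, and then justifying that a highest-weight module whose head and a nonzero submodule are both $\simeq L_\bk(\mu)$ must equal $L_\bk(\mu)$ — this uses that the $\mu$-weight space of any highest-weight module of weight $\mu$ is one-dimensional, so head and socle copies of $L_\bk(\mu)$ share their unique $\mu$-line and the module is generated by it, hence simple. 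The symmetry for $\Phi'$ requires being slightly careful that $Z^{w_0}_\bk$ over $\fq$ "is" an ordinary Verma module over $w_0^{-*}\fq$ transported by $T_{w_0}$, so the first half of the proof applies after relabeling; invoking \eqref{eq:T w0} to see $T_{w_0}$ is, up to $\varphi_f\varphi_{\underline b}$, the composite of $\phi_1$ (which swaps $E$ and $F$ up to group-like factors) makes this precise.
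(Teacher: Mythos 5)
Your proof is correct and is essentially the paper's own argument: both rest on Lemma \ref{le:socle of Zw} together with the weight bookkeeping $\mu\langle w_0\rangle-w_0(\beta_{top}^{w_0^{-*}\fq})=\mu$, so that the graded map sends the cyclic generator into the one-dimensional weight space spanned by the socle generator, whence the image equals the (simple) socle and, being a simple quotient of the source Verma, is its head. The only caveat is that your duality aside for $\Phi'$ (``by the uniqueness of highest-weight modules it must then be $Z_\bk(\mu)$ itself'') is not justified as stated — that identification is Lemma \ref{le:iso Z and Z dual}, proved separately — but your fallback of running the same argument with the roles of the two Verma modules exchanged is exactly what the paper does.
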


\begin{proof}
As $\Phi'$ is graded, $\Phi'$ sends $\m{\mu\langle w_0\rangle}$ to the generator of the socle of $Z_\bk(\mu)$ by Lemma \ref{le:socle of Zw}. This shows the second isomorphism and the second one follows in a similar way.
\end{proof}

\begin{example}\label{example:L one dimensional twisted}
$L^{w_0}_\bk(\mu\langle w_0\rangle)=\bk^{\mu\langle w_0\rangle}$ is one-dimensional if and only if
\begin{align}
\pi\widetilde{\mu\langle w_0\rangle}(K_iL_i^{-1})=1 \quad\forall i\in\I.
\end{align}
Indeed, this follows as Example \ref{example:L one dimensional} using \eqref{eq:Amu twisted} and \eqref{eq:Verma twisted}.
\end{example}

\subsection{Highest weight theory} 

The proofs of the next results run as in \cite{AJS}.

\begin{lemma}[{\cite[Lemma 4.10]{AJS}}]\label{le:iso Z and Z dual}
For all $\mu\in\Z^\I$, we have that $Z_\bA^{w_0}(\mu\langle w_0\rangle)\simeq Z_\bA(\mu)^\tau$ and $Z_\bA(\mu+\beta_{top}^\fq)\simeq Z_\bA^{w_0}(\mu)^\tau$. \qed
\end{lemma}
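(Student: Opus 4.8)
The plan is to establish the first isomorphism $Z_\bA^{w_0}(\mu\langle w_0\rangle)\simeq Z_\bA(\mu)^\tau$ directly, and then deduce the second by a shift of weight using $\mu\langle w_0\rangle=\mu-\beta_{top}^\fq$ (from the remark after \eqref{eq:w varrho - varrho}) together with the fact that $\tau^2=\id$. For the first isomorphism, I would first record that both sides live in $\cC_\bA^\fq$: the dual $Z_\bA(\mu)^\tau$ is an object of $\cC_\bA^\fq$ by the generalities in \S5.4, and $Z_\bA^{w_0}(\mu\langle w_0\rangle)$ is a $w_0$-Verma module. By \eqref{eq:ch duals} and \eqref{eq: ch Zmu = ch Zw muw} the two modules have the same character, namely $e^{\mu}\,\ch U^-_\fq$ up to the reflection built into duality; so a nonzero morphism between them that is injective (or surjective) will automatically be an isomorphism when $\bA=\bk$ is a field, and for general Noetherian $\bA$ one argues by comparing graded ranks weight-space by weight-space after exhibiting an explicit surjection.

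The key construction is the highest-weight vector. The module $Z_\bA(\mu)^\tau=\Hom_\bA(Z_\bA(\mu),\bA)$ has, by the formula for $(M^\tau)_\lambda$ in \S5.4, a homogeneous component in degree $\mu$ which is $\bA$-free of rank one (dual to the top weight space $Z_\bA(\mu)_\mu=\ku\m{\mu}\ot\bA$), and one checks this generator is annihilated by $T_{w_0}(U^+_{w_0^{-*}\fq})$: indeed the $U$-action on the dual is twisted by $\tau$, so $T_{w_0}(U^+_{w_0^{-*}\fq})$ acting on a functional corresponds to $\tau\,T_{w_0}(U^+_{w_0^{-*}\fq})$ acting on $Z_\bA(\mu)$, and by \eqref{eq:T w0} together with the description of $\phi_1$ (which swaps $E_i\leftrightarrow F_iL_i^{-1}$, up to $K_i$-factors) and $\tau$ (which swaps $E_i\leftrightarrow F_i$), this maps into $U^-_\fq$-weight-lowering operators, hence kills the lowest functional. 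Equivalently, one invokes \eqref{eq:Tw U alpha es U w alpha}: $T_{w_0}(U^+_{w_0^{-*}\fq})$ has weights in $w_0(\Delta_{\geq 0}^{w_0^{-*}\fq})=\Delta_{\leq 0}^\fq$, so it cannot raise the top-degree functional. Therefore the universal property of the induced module $Z_\bA^{w_0}(\mu\langle w_0\rangle)=U_\fq\ot_{U_\fq^0 T_{w_0}(U^+_{w_0^{-*}\fq})}\bA^{\mu\langle w_0\rangle}$ (note $\mu\langle w_0\rangle = \mu-\beta_{top}^\fq$ is precisely the weight of that functional, since $\beta_{top}^\fq$ is the top degree of $U^+_\fq$, hence $-\beta_{top}^\fq$ the bottom degree of its dual) yields a morphism $Z_\bA^{w_0}(\mu\langle w_0\rangle)\to Z_\bA(\mu)^\tau$ sending $\m{\mu\langle w_0\rangle}$ to this generator. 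Surjectivity follows because the image is a submodule containing the cogenerating functional of degree $\mu$; injectivity (equivalently, that it is an iso) follows by the character equality above, arguing over $\bk$ by \S\ref{subsec:simples} and then over $\bA$ by a Nakayama/base-change argument as in \cite[\S4]{AJS}.

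For the second statement, apply the first with $\mu$ replaced by $\mu+\beta_{top}^\fq$: then $(\mu+\beta_{top}^\fq)\langle w_0\rangle=\mu+\beta_{top}^\fq-\beta_{top}^\fq=\mu$, so $Z_\bA^{w_0}(\mu)\simeq Z_\bA(\mu+\beta_{top}^\fq)^\tau$, and dualizing both sides (using that $\tau$ is an involution and $(M^\tau)^\tau\simeq M$ for $\bA$-free $M$, with $Z_\bA^{w_0}(\mu)$ being $\bA$-free) gives $Z_\bA(\mu+\beta_{top}^\fq)\simeq Z_\bA^{w_0}(\mu)^\tau$, which is the claim. The main obstacle I anticipate is the bookkeeping in the highest-weight-vector computation: verifying cleanly — via \eqref{eq:T w0}, \eqref{eq:Tw U alpha es U w alpha} and the interplay of $\phi_1$, $\varphi_f$, $\varphi_{\underline b}$ with $\tau$ — that the top functional of $Z_\bA(\mu)^\tau$ is genuinely a highest-weight vector for the $w_0$-twisted triangular decomposition, and that its weight matches $\mu\langle w_0\rangle$ on the nose; everything else is a formal consequence of the character identities and the universal property of induction, exactly as in \cite[Lemma 4.10]{AJS}.
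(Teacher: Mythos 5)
Your overall strategy is the right one: construct a morphism $Z_\bA^{w_0}(\mu\langle w_0\rangle)\to Z_\bA(\mu)^\tau$ via the universal property of the $w_0$-induced module, verify it is an isomorphism by a character/rank comparison, and then deduce the second statement by substituting $\mu+\beta_{top}^\fq$ for $\mu$ and dualizing (that last step is clean and correct). The paper gives no proof here — it simply cites \cite[Lemma 4.10]{AJS} — so there is no route to compare against, but this is indeed the expected argument. However, two details in your write-up are wrong and should be fixed.

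First, you oscillate between the top-degree and bottom-degree functional. You begin by claiming the relevant generator of $Z_\bA(\mu)^\tau$ lies in degree $\mu$, dual to $Z_\bA(\mu)_\mu=\ku\,\m{\mu}\ot\bA$, but your parenthetical later (correctly) says its weight is $\mu\langle w_0\rangle=\mu-\beta_{top}^\fq$. Only the latter is right: the generator you need has weight $\mu\langle w_0\rangle$, which is the \emph{lowest} weight of the $w_0$-Verma module, and correspondingly in $Z_\bA(\mu)^\tau$ it is the functional dual to the \emph{bottom} weight space $(U^-_\fq)_{-\beta_{top}^\fq}\ot\m{\mu}$, spanned by $F^\fq_{top}\m{\mu}$ — not the top one. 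Second, your computation of the $\tau$-twist has a sign error: since $\tau$ reverses the $\Z^\I$-grading and $T_{w_0}(E_i)$ has weight $w_0\alpha_i\in\Delta_-^\fq$, the element $\tau\,T_{w_0}(E_i)$ is weight-\emph{raising} (of type $U^0 U^+_\fq$ by \eqref{eq:T w0}), not weight-lowering as you claim. Fortunately the ``equivalent'' argument you also give is correct and makes the $\tau$-analysis unnecessary: $T_{w_0}(U^+_{w_0^{-*}\fq})$ has weights in $\Delta_{\leq 0}^\fq$ by \eqref{eq:Tw U alpha es U w alpha}, so applied to the bottom-weight functional of $Z_\bA(\mu)^\tau$ (of weight $\mu\langle w_0\rangle$) it lands strictly below the lowest weight of that module and hence vanishes; the $U^0_\fq$-compatibility \eqref{eq:compatibilidad sm ma} is automatic. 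Finally, your surjectivity remark (``the image contains the cogenerating functional of degree $\mu$'') does not by itself establish surjectivity — every nonzero submodule of $Z_\bk(\mu)^\tau$ contains that functional, so this proves nothing. The clean conclusion over the field $\bk$ is to check \emph{injectivity}: the socle of $Z_\bk^{w_0}(\mu\langle w_0\rangle)$ is the line through $T_{w_0}(F_{top}^{w_0^{-*}\fq})\m{\mu\langle w_0\rangle}$ (Lemma \ref{le:socle of Zw}), and one computes directly that this maps to a nonzero functional (it pairs non-trivially with $\m{\mu}$), so the kernel is zero and equality of dimensions finishes the case $\bA=\bk$; the general Noetherian $\bA$ is then handled by the rank argument you indicate.
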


\begin{proposition}[{\cite[Propositions 4.11 and 4.12]{AJS}}]\label{prop:ext Z}
Let $\lambda,\mu\in\Z^\theta$. Then 
\begin{align*}
\Ext^n_{\cC_\bA^\fq}\left(Z_\bA(\lambda),Z_\bA^{w_0}(\mu)\right)=\begin{cases}
\bA,&\mbox{if $n=0$ and $\mu=\lambda\langle w_0\rangle$}\\
0,&\mbox{otherwise.}
\end{cases}
\end{align*}
\qed
\end{proposition}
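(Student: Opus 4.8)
The plan is to move the computation from $\cC^\fq_\bA$ to the smaller category $\cC'^\fq_\bA$, where $Z^{w_0}_\bA(\mu)$ turns out to be an injective--projective object. Recall that $Z_\bA(\lambda)=U_\fq\ot_{U^0_\fq U^+_\fq}\bA^\lambda$ is induced from the object $\bA^\lambda\in\cC'^\fq_\bA$. Since $U_\fq$ is free as a right $U^0_\fq U^+_\fq$-module (triangular decomposition), the induction functor $\cC'^\fq_\bA\to\cC^\fq_\bA$ is exact and, being left adjoint to the exact restriction functor $\Res$, sends projectives to projectives; as $\cC'^\fq_\bA$ has enough projectives \cite[Lemma 2.7]{AJS}, it carries a projective resolution of $\bA^\lambda$ to one of $Z_\bA(\lambda)$. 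The adjunction then gives, for every $M\in\cC^\fq_\bA$ and all $n\ge0$,
\begin{align*}
\Ext^n_{\cC^\fq_\bA}\bigl(Z_\bA(\lambda),M\bigr)\cong\Ext^n_{\cC'^\fq_\bA}\bigl(\bA^\lambda,\Res M\bigr).
\end{align*}
Thus everything reduces to understanding $Z^{w_0}_\bA(\mu)$ as an object of $\cC'^\fq_\bA$.

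I would next show that, in $\cC'^\fq_\bA$, the module $Z^{w_0}_\bA(\mu)$ is isomorphic to the standard projective $Q'_\bA(\mu):=(U^0_\fq U^+_\fq)\ot_{U^0_\fq}\bA^\mu$, which as a graded $\bA$-module is $\BV_\fq\ot\bA$ generated in its lowest degree $\mu$. By Lemma \ref{le:iso Z and Z dual}, $Z^{w_0}_\bA(\mu)\cong Z_\bA(\mu+\beta^\fq_{top})^\tau$, and $Z_\bA(\nu)$ is free of rank one over $U^-_\fq\ot\bA$. Since $\tau$ interchanges $U^+_\fq$ and $U^-_\fq$ and the finite-dimensional Nichols algebra $\BV_\fq$ (hence $U^\pm_\fq$) is a Frobenius algebra, the $\bA$-dual $Z_\bA(\nu)^\tau$ is again free of rank one over $U^+_\fq\ot\bA$, generated by its lowest weight space, which lies in degree $\mu$; a degreewise rank count identifies it with $Q'_\bA(\mu)$. (Alternatively one checks directly from \eqref{eq:T w0} that $U^0_\fq\,T_{w_0}(U^+_{w_0^{-*}\fq})=U^0_\fq U^-_\fq$ inside $U_\fq$, so that $Z^{w_0}_\bA(\mu)$ is the Verma module for the opposite triangular decomposition and the identification follows from $U_\fq\cong U^+_\fq\ot U^0_\fq U^-_\fq$ as left $U^+_\fq$-modules.)

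To conclude: $\cC'^\fq_\bA$ is a Frobenius category --- this is where the Frobenius property of $\BV_\fq$ enters --- so $Q'_\bA(\mu)$ is injective, whence $\Ext^n_{\cC'^\fq_\bA}(\bA^\lambda,Q'_\bA(\mu))=0$ for all $n\ge1$, which kills every higher Ext group. For $n=0$, a morphism $\bA^\lambda\to Q'_\bA(\mu)$ is the same datum as a weight-$\lambda$ vector of $Q'_\bA(\mu)$ annihilated by $(U^+_\fq)_\nu$ for all $\nu>0$ (the $U^0_\fq$-compatibility is automatic in degree $\lambda$); since $Q'_\bA(\mu)\cong\BV_\fq\ot\bA$ is free of rank one over $U^+_\fq\ot\bA$ and $\BV_\fq$ is connected graded Frobenius with one-dimensional top degree $\beta^\fq_{top}$, such vectors form an $\bA$-free module of rank one concentrated in weight $\mu+\beta^\fq_{top}$. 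Hence $\Hom_{\cC'^\fq_\bA}(\bA^\lambda,Q'_\bA(\mu))$ equals $\bA$ if $\lambda=\mu+\beta^\fq_{top}$ and $0$ otherwise; recalling that $\lambda\langle w_0\rangle=\lambda-\beta^\fq_{top}$, this is exactly the condition $\mu=\lambda\langle w_0\rangle$, and the proposition follows. (The positive case $n=0$, $\mu=\lambda\langle w_0\rangle$ is also immediate from Proposition \ref{prop:Hom Zx y Zw} with $x=1^\fq$, $w=w_0$.)

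I expect the identification $Z^{w_0}_\bA(\mu)\cong Q'_\bA(\mu)$ in $\cC'^\fq_\bA$ to be the main obstacle: one must pass from the $w_0$-Verma module, a priori only known to be $T_{w_0}(U^-_{w_0^{-*}\fq})$-free, to a module that is $U^+_\fq$-free and injective in $\cC'^\fq_\bA$, and this rests on both the Frobenius property of the Nichols algebra and the compatibility of the two triangular decompositions of $U_\fq$; as in \cite{AJS}, some care is needed so that the rank counts and the recognition of injective objects remain valid over an arbitrary Noetherian base $\bA$ rather than only over a field, with everything tracked through the Lusztig isomorphisms relating $w^{-*}\fq$ and $\fq$. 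An alternative route, also following \cite{AJS}, proves the higher-Ext vanishing by downward induction on the weight $\lambda$, using that projective objects of $\cC^\fq_\bA$ admit $Z$-filtrations together with the $n=0$ computation above.
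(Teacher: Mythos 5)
Your skeleton is the right one, and it is essentially the argument the paper delegates to \cite{AJS}: reduce by Frobenius reciprocity (Shapiro) along the exact induction $\cC'^\fq_\bA\to\cC^\fq_\bA$, identify the restriction of $Z^{w_0}_\bA(\mu)$ to $U^0_\fq U^+_\fq$, and read off Hom and the higher Ext there. The first adjunction step is fine, and your parenthetical identification $U^0_\fq T_{w_0}(U^+_{w_0^{-*}\fq})=U^0_\fq U^-_\fq$ (from \eqref{eq:T w0}) is the cleanest way to see that $Z^{w_0}_\bA(\mu)$ is the Verma module for the opposite triangular decomposition, hence free of rank one over $U^+_\fq\ot\bA$ with generator in weight $\mu$; the route through $\tau$-duality plus the Frobenius property of $\BV_\fq$ is heavier than needed.

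The genuine gap is the step ``$\cC'^\fq_\bA$ is a Frobenius category, so $Q'_\bA(\mu)$ is injective and all higher Ext vanish.'' The proposition is stated over an arbitrary Noetherian $U^0$-algebra $\bA$, and in these categories of \emph{finitely generated} graded modules projectives are not injective in general: already for $\bA=\ku[t]$ the object $\bA^\mu$ fails to be injective in $\cC''^\fq_\bA$ (apply $\Hom(-,\bA^\mu)$ to $0\to\bA^\mu\overset{t}{\to}\bA^\mu\to(\bA/t\bA)^\mu\to0$), and the same example shows $Q'_\bA(\mu)$ is not injective in $\cC'^\fq_\bA$; your own closing caveat about ``recognition of injective objects over $\bA$'' is exactly the unresolved point. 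The conclusion is nevertheless true, and the repair is the second adjunction used in \cite{AJS}: the restriction of $Z^{w_0}_\bA(\mu)$ to $\cC'^\fq_\bA$ is not only induced but also \emph{coinduced} from $\cC''^\fq_\bA$, namely isomorphic to $\Hom_{U^0_\fq}(U^0_\fq U^+_\fq,\bA^{\mu+\beta^\fq_{top}})$ (by Lemma \ref{le:iso Z and Z dual} it is the $\tau$-dual of a module free over $U^-_\fq\ot\bA$, and the $\bA$-dual of a free module is cofree). Taking a projective resolution $P_\bullet\to\bA^\lambda$ in $\cC'^\fq_\bA$, the adjunction gives $\Hom_{\cC'^\fq_\bA}(P_\bullet,Z^{w_0}_\bA(\mu))\cong\Hom_{\cC''^\fq_\bA}(\Res P_\bullet,\bA^{\mu+\beta^\fq_{top}})$; since restrictions of $\cC'$-projectives are projective in $\cC''^\fq_\bA$ and $\bA^\lambda$ is itself projective there, this computes $\Ext^n_{\cC''^\fq_\bA}(\bA^\lambda,\bA^{\mu+\beta^\fq_{top}})$, which vanishes for $n\geq1$ and equals $\bA$ or $0$ for $n=0$ according to whether $\lambda=\mu+\beta^\fq_{top}$, i.e. $\mu=\lambda\langle w_0\rangle$. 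This also removes the reliance on $\BV_\fq$ being Frobenius and on the one-dimensional socle (true, and implicitly used elsewhere in the paper, but not needed here); your Frobenius-category argument is only safe in the special case $\bA=\bk$ a field.
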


A by-product of the above is that the category $\cC_\bk^\fq$ satisfies all the axioms of the definition of highest weight category in \cite[\S3.2]{BGS} except for the axiom (2) since $\cC_\bk^\fq$ has infinitely many simple objects.

\begin{theorem}
$\cC_\bk^\fq$ is a highest weight category with infinitely many simple modules $L_\bk(\lambda)$, $\lambda\in\Z^\I$. The standard and costandard modules are $Z_\bk(\lambda)$ and $Z_\bk(\lambda)^\tau$, $\lambda\in\Z^\I$. \qed
\end{theorem}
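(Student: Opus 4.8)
The plan is to verify, axiom by axiom, the definition of highest weight category from \cite[\S3.2]{BGS}, taking $\Z^\I$ with the order $\leq$ of \S\ref{subsection:conventions} as the underlying poset; as noted just before the theorem, the only axiom that genuinely fails is the finiteness of the set of simple objects. First I would observe that $(\Z^\I,\leq)$ is locally finite: for $\lambda\leq\mu$ the interval $\{\nu\in\Z^\I\mid\lambda\leq\nu\leq\mu\}$ is a finite box inside $\lambda+\Z_{\geq0}^\I$. By \S\ref{subsec:simples} the objects $L_\bk(\lambda)$, $\lambda\in\Z^\I$, are pairwise non-isomorphic, there are infinitely many of them, and every simple object of $\cC_\bk^\fq$ is isomorphic to one of them.

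Next I would check that $Z_\bk(\lambda)$ is the standard object attached to $\lambda$. It has simple head $L_\bk(\lambda)$, satisfies $[Z_\bk(\lambda):L_\bk(\lambda)]=1$, and $[Z_\bk(\lambda):L_\bk(\mu)]\neq0$ forces $\mu\leq\lambda$ (all recalled in \S\ref{subsec:simples}); hence the kernel of $Z_\bk(\lambda)\twoheadrightarrow L_\bk(\lambda)$ has all composition factors of the form $L_\bk(\mu)$ with $\mu<\lambda$. The identity $\ch Z_\bk(\lambda)=e^\lambda\,\ch U^-_\fq$ together with the finiteness of the support of $U^-_\fq$ supplies the remaining local finiteness conditions ($\Hom$-spaces between objects and composition multiplicities are finite).

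For the costandard objects I would use the duality $\tau$. Since $L_\bk(\mu)^\tau\simeq L_\bk(\mu)$ and $\ch(M^\tau)=\ch M$, Lemma \ref{lem:ch and composition} gives $[Z_\bk(\lambda)^\tau:L_\bk(\mu)]=[Z_\bk(\lambda):L_\bk(\mu)]$, so $Z_\bk(\lambda)^\tau$ has simple socle $L_\bk(\lambda)$ and all other factors $L_\bk(\mu)$ with $\mu<\lambda$; these are the costandard objects. Combining Lemma \ref{le:iso Z and Z dual}, namely $Z_\bk(\lambda)^\tau\simeq Z_\bk^{w_0}(\lambda\langle w_0\rangle)$, with Proposition \ref{prop:ext Z} then yields the decisive orthogonality
\begin{align*}
\Ext^n_{\cC_\bk^\fq}\bigl(Z_\bk(\lambda),Z_\bk(\mu)^\tau\bigr)=\begin{cases}\bk,&\text{if }n=0\text{ and }\lambda=\mu,\\0,&\text{otherwise.}\end{cases}
\end{align*}

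Finally I would handle projectives: $\cC_\bk^\fq$ has enough projectives \cite[Lemma 2.7]{AJS} and every projective admits a $Z$-filtration \cite[\S2.11--\S2.16]{AJS}, so each $L_\bk(\lambda)$ has a projective cover $P_\bk(\lambda)$ in $\cC_\bk^\fq$ which is $Z$-filtered. The orthogonality above is exactly the input for the standard BGG-reciprocity argument, which forces $(P_\bk(\lambda):Z_\bk(\mu))=[Z_\bk(\mu)^\tau:L_\bk(\lambda)]=[Z_\bk(\mu):L_\bk(\lambda)]$; this vanishes unless $\lambda\leq\mu$ and equals $1$ for $\mu=\lambda$. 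Thus $P_\bk(\lambda)$ surjects onto $Z_\bk(\lambda)$ with kernel $Z$-filtered by $Z_\bk(\mu)$, $\mu>\lambda$, giving the last axiom. The only genuine obstacle I anticipate is this last step: upgrading the bare existence of a $Z$-filtration of $P_\bk(\lambda)$ to the correctly \emph{ordered} filtration with $Z_\bk(\lambda)$ as top quotient and strictly larger weights above --- this is precisely where Proposition \ref{prop:ext Z} does the work; the rest is bookkeeping with characters and with the functor $\tau$.
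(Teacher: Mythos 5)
Your proposal is correct and follows exactly the route the paper intends: the paper leaves this theorem proofless (noting just before it that ``a by-product of the above'' — namely the simple/Verma formalism of \S\ref{subsec:simples}, Lemma \ref{le:iso Z and Z dual}, Proposition \ref{prop:ext Z}, and the existence of $Z$-filtered projective covers from \cite{AJS} — verifies all BGS axioms except finiteness of the simple objects), and you have simply spelled out that verification axiom by axiom using precisely those ingredients. In particular your derivation of the $\Ext$-orthogonality between $Z_\bk(\lambda)$ and $Z_\bk(\mu)^\tau$ by chaining Lemma \ref{le:iso Z and Z dual} with Proposition \ref{prop:ext Z}, and the resulting BGG-reciprocity argument for the ordered $Z$-filtration of $P_\bk(\lambda)$, is exactly what the paper also does explicitly in its proof of Theorem \ref{teo:BGG}.
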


Another interesting consequence is the so-called BGG Reciprocity \cite[Proposition 4.15]{AJS}. Let $P_\bk(\lambda)\in\cC^\fq_\bk$ be the projective cover of $L_\bk(\lambda)$, $\lambda\in\Z^\I$. Recall $P_\bk(\lambda)$ admits a $Z$-filtration \cite[Lemma 2.16]{AJS}.  Given $\mu\in\Z^\I$, we denote $[P_\bk(\lambda):Z_\bk(\mu)]$ the number of subquotients isomorphic to $Z_\bk(\mu)$ in a $Z$-filtration of $P_\bk(\lambda)$. 

\begin{theorem}[BGG Reciprocity]\label{teo:BGG}
Let $\lambda,\mu\in\Z^\I$. Then
\begin{align*}
[P_\bk(\lambda):Z_\bk(\mu)]=[Z_\bk(\mu):L_\bk(\lambda)].
\end{align*}
\end{theorem}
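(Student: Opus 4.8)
The plan is to run the classical BGG reciprocity argument, whose only nontrivial input---the orthogonality between standard and costandard modules---is already available here. Combining Proposition \ref{prop:ext Z} with the isomorphism $Z^{w_0}_\bA(\mu\langle w_0\rangle)\simeq Z_\bA(\mu)^\tau$ of Lemma \ref{le:iso Z and Z dual}, one records, for all $\lambda,\nu\in\Z^\I$ and all $n\geq 0$,
\[
\Ext^n_{\cC_\bk^\fq}\bigl(Z_\bk(\lambda),\,Z_\bk(\nu)^\tau\bigr)=
\begin{cases}
\bk,&\text{if }n=0\text{ and }\nu=\lambda,\\
0,&\text{otherwise.}
\end{cases}
\]
(Concretely: $Z^{w_0}_\bk(\mu)\simeq Z_\bk(\mu+\beta_{top}^\fq)^\tau$, and the condition $\mu=\lambda\langle w_0\rangle$ of Proposition \ref{prop:ext Z} turns into $\nu=\lambda$ after the substitution $\nu=\mu+\beta_{top}^\fq$.)

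First I would use this to compute $\Hom$ into costandard modules out of objects admitting a $Z$-filtration. If $M\in\cC_\bk^\fq$ has a $Z$-filtration and $(M:Z_\bk(\mu))$ denotes the number of its subquotients isomorphic to $Z_\bk(\mu)$, then $\dim_\bk\Hom_{\cC_\bk^\fq}(M,Z_\bk(\mu)^\tau)=(M:Z_\bk(\mu))$. The argument is a dévissage along the top step $0\to M'\to M\to Z_\bk(\nu)\to 0$ of a $Z$-filtration (so $M'$ carries a $Z$-filtration of one step fewer): applying $\Hom_{\cC_\bk^\fq}(-,Z_\bk(\mu)^\tau)$ and using $\Ext^1_{\cC_\bk^\fq}(Z_\bk(\nu),Z_\bk(\mu)^\tau)=0$ from the displayed formula, the six-term sequence collapses to a short exact sequence, whence $\dim\Hom(M,Z_\bk(\mu)^\tau)=\dim\Hom(M',Z_\bk(\mu)^\tau)+\delta_{\nu,\mu}$; induction on the length of the filtration then finishes (all objects of $\cC_\bk^\fq$ are finite-dimensional, so this terminates). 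In particular the numbers $(M:Z_\bk(\mu))$ do not depend on the chosen $Z$-filtration, which legitimizes the notation $[P_\bk(\lambda):Z_\bk(\mu)]$; and applying this with $M=P_\bk(\lambda)$, which admits a $Z$-filtration by \cite[Lemma 2.16]{AJS}, gives $[P_\bk(\lambda):Z_\bk(\mu)]=\dim_\bk\Hom_{\cC_\bk^\fq}(P_\bk(\lambda),Z_\bk(\mu)^\tau)$.

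Next I would evaluate the same $\Hom$-space from the projective side. Since $P_\bk(\lambda)$ is projective, $\Hom_{\cC_\bk^\fq}(P_\bk(\lambda),-)$ is exact; since $P_\bk(\lambda)$ has simple head $L_\bk(\lambda)$ and $\End_{\cC_\bk^\fq}(L_\bk(\mu))=\bk$ (a graded endomorphism acts by a scalar on the one-dimensional highest-weight space, hence equals that scalar, by the description of simple objects in \S\ref{subsec:simples}), one gets $\Hom_{\cC_\bk^\fq}(P_\bk(\lambda),L_\bk(\mu))=\delta_{\lambda,\mu}\,\bk$. Induction on composition length then yields $\dim_\bk\Hom_{\cC_\bk^\fq}(P_\bk(\lambda),N)=[N:L_\bk(\lambda)]$ for every $N\in\cC_\bk^\fq$. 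Taking $N=Z_\bk(\mu)^\tau$ and using that $(-)^\tau$ is an exact duality with $L_\bk(\lambda)^\tau\simeq L_\bk(\lambda)$, so $[Z_\bk(\mu)^\tau:L_\bk(\lambda)]=[Z_\bk(\mu):L_\bk(\lambda)]$ (equivalently via \eqref{eq:ch duals} and Lemma \ref{lem:ch and composition}), I obtain $\dim_\bk\Hom_{\cC_\bk^\fq}(P_\bk(\lambda),Z_\bk(\mu)^\tau)=[Z_\bk(\mu):L_\bk(\lambda)]$. Comparing with the previous paragraph completes the proof.

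I do not expect a genuine obstacle: once Proposition \ref{prop:ext Z} and Lemma \ref{le:iso Z and Z dual} are available the argument is formal. The only points requiring a little attention are the well-definedness of the filtration multiplicities (obtained as a by-product of the dévissage step) and the two bookkeeping facts $\End_{\cC_\bk^\fq}(L_\bk(\mu))=\bk$ and finiteness of composition length, both immediate from the highest-weight description of the simple objects in \S\ref{subsec:simples}; so the heart of the matter is really the standard/costandard $\Ext$-orthogonality, which is done.
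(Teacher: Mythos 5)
Your proof is correct and follows essentially the same route as the paper: both arguments identify $[P_\bk(\lambda):Z_\bk(\mu)]$ and $[Z_\bk(\mu):L_\bk(\lambda)]$ as the dimension of the same Hom-space out of $P_\bk(\lambda)$, using the $\Ext$-orthogonality of Proposition \ref{prop:ext Z} on the filtration side and the projective-cover property on the composition-factor side. The only cosmetic difference is that you work with the costandard module $Z_\bk(\mu)^\tau$ where the paper uses the isomorphic module $Z_\bk^{w_0}(\mu\langle w_0\rangle)$ (these agree by Lemma \ref{le:iso Z and Z dual}), and you spell out the d\'evissage that the paper outsources to \cite[4.15]{AJS}.
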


\begin{proof}
We know that $[Z_\bk(\mu):L_\bk(\lambda)]=\dim\Hom_{\cC_\bk^\fq}(P_\bk(\lambda),Z_\bk(\mu))$ \cite[4.15 $(2)$]{AJS}. Using Proposition \ref{prop:ext Z} we can deduce that $[P_\bk(\lambda):Z_\bk(\mu)]=\dim\Hom_{\cC_\bk^\fq}\left(P_\bk(\lambda),Z_\bk^{w_0}(\lambda\langle w_0\rangle)\right)$ which is equal to $[Z_\bk^{w_0}(\lambda\langle w_0\rangle):L_\bk(\lambda)]$. Since $\ch\, Z_\bk(\lambda)=\ch\, Z_\bk^{w_0}(\lambda\langle w_0\rangle)$, Lemma \ref{lem:ch and composition} implies the equality in the statement.
\end{proof}

\section{Morphisms between twisted Verma modules}\label{sec:morphisms}

We keep the notation of the previous section. Here we construct generators of the Hom spaces between twisted Verma modules. Recall that they are $\bA$-free of rank one by Proposition \ref{prop:Hom Zx y Zw}. We will proceed in an inductive fashion starting from morphisms between a sort of parabolic Verma modules.

\subsection{Parabolic Verma modules}

We fix $i\in\I$ and write $\sigma_i=1^\fq\sigma_i$. We also fix $\mu\in\Z^\I$. To shorten notation,  we write $\mu'=\mu\langle\sigma_i\rangle=\mu-(b^\fq(\alpha_i)-1)\alpha_i$.

Recall $P_\fq(\alpha_i)$ from \eqref{eq:P alpha i}-\eqref{eq:P alpha i otra}.   By construction $P_\fq(\alpha_i)$ has a triangular decomposition satisfying \eqref{eq:property 0}-\eqref{property D}. We denote ${}_{\alpha_i}\cC_\bA^\fq$ the corresponding AJS category. The {\it parabolic Verma module} and the {\it parabolic $\sigma_i$-Verma module} are
\begin{align}
\Psi_\bA(\mu)=P_\fq(\alpha_i)\ot_{U_\fq^0U_\fq^+}\bA^\mu
\quad\mbox{and}\quad
\Psi'_\bA(\mu')=P_\fq(\alpha_i)\ot_{U^0T_i(U^+_{\sigma_i^{*}\fq})}\bA^{\mu'},
\end{align}
respectively. These are objects of ${}_{\alpha_i}\cC_\bA^\fq$.

Clearly, the elements $F_i^t\m{\mu}$, $0\leq t< b^\fq(\alpha_i)$, form a basis of $\Psi_\bA(\mu)$. By \eqref{eq: E Fn}, the action of $P_\fq(\alpha_i)$ is given by
\begin{align}\label{eq: E Ft mu}
E_i\cdot F_i^t\m{\mu}&=\pi\tilde{\mu}[\alpha_i;t]\,F_i^{t-1}\m{\mu},
\end{align}
$E_j\cdot F_i^t\m{\mu}=0$ for all $j\in\I\setminus\{i\}$ and $F_i\cdot F_i^t\m{\mu}=F_i^{t+1}\m{\mu}$. The weight of $F_i^t\m{\mu}$ is $\mu-t\alpha_i$.

In turn, the elements $E_i^t\m{\mu'}$, $0\leq t<b^{\fq}(\alpha_i)$, form a basis of $\Psi'_A(\mu')$. By \eqref{eq: E Fn},
\begin{align}\label{eq: F Et mu}
F_i\cdot E_i^t\m{\mu'}&=\pi\widetilde{\mu'}[t;\alpha_i]\,E_i^{t-1}\m{\mu'}.
\end{align}
These are vectors of weights $\mu'+t\alpha_i=\mu-(b^\fq(\alpha_i)-1-t)\alpha_i$, respectively. This implies that $E_j\cdot E_i^{t}\m{\mu'}=0$ for all $j\in\I\setminus\{i\}$. Of course, $E_i\cdot E_i^t\m{\mu'}=E_i^{t+1}\m{\mu'}$.

Therefore there exists in ${}_{\alpha_i}\cC_\bA^\fq$ a morphism 
\begin{align}\label{eq:varphi i}
f_i:\Psi_\bA(\mu)\longrightarrow \Psi'_\bA(\mu')
\end{align}
such that $f_i(\m{\mu})= E_i^{b^\fq(\alpha_i)-1}\m{\mu'}$.  Indeed, this is the morphism induced by the fact that $E_j\cdot E_i^{b^\fq(\alpha_i)-1}\m{\mu'}=0$ for all $j\in\I$. Moreover, any morphism from $\Psi_\bA(\mu)$ to $\Psi'_\bA(\mu')$ is a $\bA$-multiple of $\fip$ since the weight spaces are of rank one over $\bA$. We can compute explicitly this morphism as follows:
\begin{align}
\label{eq: varphi i on Fi mu}
\fip(F_i^n\m{\mu})&=F_i^n\cdot E_i^{b^\fq(\alpha_i)-1}\m{\mu'}=\prod_{t=1}^n\pi\widetilde{\mu'}([b^{\fq}(\alpha_i)-t;\alpha_i])\,E_i^{b^\fq(\alpha_i)-1-n}\m{\mu'}
\end{align}
for all $0\leq n<b^\fq(\alpha_i)$. 

Analogously, there exists
\begin{align}
\fip':\Psi'_\bA(\mu')\longrightarrow \Psi_\bA(\mu)
\end{align}
given by
\begin{align}
\label{eq: varphi prima i on Ei mu}
\fip'(E_i^n\m{\mu'})&=E_i^n\cdot F_i^{b^\fq(\alpha_i)-1}\m{\mu}=\prod_{t=1}^n\pi\widetilde{\mu}([\alpha_i;b^{\fq}(\alpha_i)-t])\,F_i^{b^\fq(\alpha_i)-1-n}\m{\mu}.
\end{align}

\begin{lemma}\label{le: unit then iso varphi i}
If $\pi\widetilde{\mu}([\alpha_i;t])$ is a unit in $\bA$ for all $1\leq t\leq b^\fq(\alpha_i)-1$, then $\fip$ and $\fip'$ are isomorphisms.
\end{lemma}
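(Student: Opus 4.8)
The plan is to check that $\fip$ and $\fip'$ each send a basis to a basis. Write $b=b^\fq(\alpha_i)$, $q=q_{\alpha_i}$, and recall $\mu'=\mu-(b-1)\alpha_i$. Both $\Psi_\bA(\mu)$ and $\Psi'_\bA(\mu')$ are $\bA$-free; their weight spaces have rank one and sit in the weights $\mu-n\alpha_i$, $0\le n<b$, spanned respectively by $F_i^n\m{\mu}$ and by $E_i^{\,b-1-n}\m{\mu'}$. As $\fip,\fip'$ are graded, each is an isomorphism iff it is invertible on every weight space, so by \eqref{eq: varphi i on Fi mu} and \eqref{eq: varphi prima i on Ei mu} it suffices to show that
\begin{align*}
d_n=\prod_{t=1}^n\pi\widetilde{\mu'}([b-t;\alpha_i])
\quad\text{and}\quad
d'_n=\prod_{t=1}^n\pi\widetilde{\mu}([\alpha_i;b-t])
\end{align*}
are units in $\bA$ for $0\le n<b$. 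For $d'_n$ this is immediate: for $1\le t\le n<b$ we have $1\le b-t\le b-1$, so each factor is a unit by hypothesis.

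For $d_n$ I would reduce each factor to that same list. The key point is the identity $\pi\widetilde{\mu'}(K_{\alpha_i}L_{\alpha_i}^{-1})=q^{2}\,\pi\widetilde{\mu}(K_{\alpha_i}L_{\alpha_i}^{-1})$: indeed $\widetilde{\mu'}=\widetilde{\mu}\circ\widetilde{-(b-1)\alpha_i}$ by \eqref{eq:composition tilde}, and $\widetilde{-(b-1)\alpha_i}(K_{\alpha_i}L_{\alpha_i}^{-1})=q^{-2(b-1)}K_{\alpha_i}L_{\alpha_i}^{-1}=q^{2}K_{\alpha_i}L_{\alpha_i}^{-1}$ since $q^b=1$. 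Substituting this in \eqref{eq:[n;beta] = otra} and comparing with \eqref{eq:[beta;n] = otra}, for $1\le s\le b-1$ one gets
\begin{align*}
\pi\widetilde{\mu'}([s;\alpha_i])&=(s)_{q^{-1}}\,\pi\widetilde{\mu'}(L_{\alpha_i})\bigl(1-q^{s+1}\pi\widetilde{\mu}(K_{\alpha_i}L_{\alpha_i}^{-1})\bigr),\\
\pi\widetilde{\mu}([\alpha_i;b-s])&=(b-s)_{q}\,\pi\widetilde{\mu}(L_{\alpha_i})\bigl(q^{s+1}\pi\widetilde{\mu}(K_{\alpha_i}L_{\alpha_i}^{-1})-1\bigr),
\end{align*}
using $q^{-b}=1$ in the second line. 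Since $(s)_{q^{-1}},(b-s)_q$ are nonzero (hence units in $\bA$) for $1\le s\le b-1$ as $q$ has order $b$, and $\pi\widetilde{\mu'}(L_{\alpha_i}),\pi\widetilde{\mu}(L_{\alpha_i})$ are units as images of invertible elements of $U_\fq^0$, the element $\pi\widetilde{\mu'}([s;\alpha_i])$ differs from $\pi\widetilde{\mu}([\alpha_i;b-s])$ by a unit. Taking $s=b-t\in\{1,\dots,b-1\}$, each factor $\pi\widetilde{\mu'}([b-t;\alpha_i])$ of $d_n$ is a unit times $\pi\widetilde{\mu}([\alpha_i;t])$, a unit by hypothesis; hence $d_n$ is a unit and $\fip$ is an isomorphism.

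The only delicate point I foresee is the bookkeeping of the powers of $q$ modulo $b$ in the last display — pinning down the shift $q^{2}$ and the reduction $q^{1-(b-s)}=q^{s+1}$ — so that the families $\{\pi\widetilde{\mu}([\alpha_i;t])\}_{1\le t\le b-1}$ and $\{\pi\widetilde{\mu'}([b-t;\alpha_i])\}_{1\le t\le b-1}$ come out equal up to units. The rest is direct substitution in \eqref{eq:[beta;n] = otra}--\eqref{eq:[n;beta] = otra} together with the remark that nonzero scalars and images of group-likes are units in $\bA$ (the statement being trivial when $\bA=0$).
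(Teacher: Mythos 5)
Your proof is correct and follows essentially the same route as the paper's: the case of $f_i'$ is immediate from \eqref{eq: varphi prima i on Ei mu}, and for $f_i$ the key step is the identity $\pi\widetilde{\mu'}(K_iL_i^{-1})=q_{ii}^{2}\pi\widetilde{\mu}(K_iL_i^{-1})$ (which the paper records in the equivalent form $q_{ii}^{-t-1}\pi\widetilde{\mu'}(K_iL_i^{-1})=q_{ii}^{1-t}\pi\widetilde{\mu}(K_iL_i^{-1})$), after which \eqref{eq:[beta;n] = otra} and \eqref{eq:[n;beta] = otra} show each factor $\pi\widetilde{\mu'}([b-t;\alpha_i])$ to be a unit multiple of $\pi\widetilde{\mu}([\alpha_i;t])$. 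The only cosmetic difference is your re-indexing by $s=b-t$ and the explicit side-by-side display of the two families; the substance is identical.
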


\begin{proof}
The claim for $f_i'$ follows directly from \eqref{eq: varphi prima i on Ei mu} by the hypothesis. By the formula \eqref{eq: varphi i on Fi mu}, $\fip$ is an isomorphism if $\pi\widetilde{\mu'}[b^{\fq}(\alpha_i)-t;\alpha_i]$ is a unit for all $1\leq t\leq b^\fq(\alpha_i)-1$. We have that $q_{ii}^{-t-1}\pi\widetilde{\mu'}(K_iL_i^{-1})=q_{ii}^{1-t}\pi\tilde\mu(K_iL_i^{-1})$ and hence \eqref{eq:[n;beta] = otra} implies that
\begin{align}\label{eq:pi mu' en}
\pi\widetilde{\mu'}[b^{\fq}(\alpha_i)-t;\alpha_i]=(b^\fq(\alpha_i)-t)_{q_{ii}^{-1}}\pi\widetilde{\mu'}(L_i)\left(1-q_{ii}^{1-t}\pi\tilde\mu(K_iL_i^{-1})\right).
\end{align}
Since $(b^\fq(\alpha_i)-t)_{q_{ii}^{-1}}$ and $\pi\widetilde{\mu'}(L_i)$ are units, $\fip$ is an isomorphism by the hypothesis and \eqref{eq:[beta;n] = otra}. 
\end{proof}

We next restrict ourselves to the case of the field $\bk$. To simplify notation, we write 
\begin{align*}
t_i=t^\pi_{\alpha_i}(\mu),
\end{align*}
recall Definition \ref{def:t beta}.  By Lemma \ref{le: unit then iso varphi i}, $\fip$ and $\fip'$ are isomorphisms in ${}_{\alpha_i}\cC_\bk^\fq$ if $t_i=0$.

\begin{lemma}
Suppose $t_i\neq0$. Then
\begin{align*}
\Ker\fip=\Im\fip'=\langle\, F_i^n\m{\mu}\mid n\geq t_i\rangle\quad\mbox{and}\quad
\Im\fip=\Ker\fip'=\langle E_i^n\m{\mu'}\mid n\geq b^\fq(\alpha_i)-t_i\rangle.
\end{align*}
\end{lemma}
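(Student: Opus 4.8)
Write $b=b^\fq(\alpha_i)$ for brevity; note $b\ge 2$ since $t_i\neq0$, and $q_{ii}$ is a primitive $b$-th root of unity because $\alpha_i\in\Delta_+^\fq$. The plan is to read the four subspaces off directly from the explicit formulas \eqref{eq: varphi i on Fi mu} and \eqref{eq: varphi prima i on Ei mu}. Both $\fip$ and $\fip'$ send each vector of the basis $\{F_i^n\m{\mu}\}_{0\le n<b}$, resp. $\{E_i^n\m{\mu'}\}_{0\le n<b}$, to a scalar times a \emph{distinct} basis vector of the target, so everything reduces to deciding exactly for which $n$ the scalars $\prod_{t=1}^n\pi\widetilde{\mu'}([b-t;\alpha_i])$ and $\prod_{t=1}^n\pi\widetilde{\mu}([\alpha_i;b-t])$ vanish.

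First I would record the vanishing criterion for the individual factors. From \eqref{eq:[beta;n] = otra}, for $1\le s\le b-1$,
\begin{align*}
\pi\widetilde{\mu}([\alpha_i;s])=(s)_{q_{ii}}\,\pi\widetilde{\mu}(L_i)\left(q_{ii}^{1-s}\pi\widetilde{\mu}(K_iL_i^{-1})-1\right),
\end{align*}
and since $(s)_{q_{ii}}$ and $\pi\widetilde\mu(L_i)$ are units, this vanishes precisely when $q_{ii}^{1-s}\pi\widetilde\mu(K_iL_i^{-1})=1$, i.e. precisely for $s=t_i$ by Definition \ref{def:t beta}. Combining \eqref{eq:[n;beta] = otra} with the identity $\pi\widetilde{\mu'}(K_iL_i^{-1})=q_{ii}^2\,\pi\widetilde{\mu}(K_iL_i^{-1})$ established inside the proof of Lemma \ref{le: unit then iso varphi i} (using $\mu'=\mu-(b-1)\alpha_i$ and $q_{ii}^b=1$), one gets similarly, for $1\le s\le b-1$,
\begin{align*}
\pi\widetilde{\mu'}([s;\alpha_i])=(s)_{q_{ii}^{-1}}\,\pi\widetilde{\mu'}(L_i)\left(1-q_{ii}^{s+1}\pi\widetilde{\mu}(K_iL_i^{-1})\right),
\end{align*}
which vanishes precisely when $q_{ii}^{s+1}\pi\widetilde\mu(K_iL_i^{-1})=1$; as $q_{ii}^{1-t_i}\pi\widetilde\mu(K_iL_i^{-1})=1$, this happens precisely for $s\equiv b-t_i\ (\mathrm{mod}\ b)$, i.e. $s=b-t_i$ in the range considered.

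Finally I would assemble. In \eqref{eq: varphi i on Fi mu} the index $s=b-t$ runs through $\{b-n,\dots,b-1\}$ as $t$ runs through $\{1,\dots,n\}$, so the product $\prod_{t=1}^n\pi\widetilde{\mu'}([b-t;\alpha_i])$ picks up the vanishing factor (at $b-t=b-t_i$, i.e. $t=t_i$) iff $n\ge t_i$; hence $\fip(F_i^n\m{\mu})=0$ iff $n\ge t_i$ and is a unit times $E_i^{b-1-n}\m{\mu'}$ otherwise. Since the $F_i^n\m{\mu}$ form a basis, this gives $\Ker\fip=\langle F_i^n\m{\mu}\mid n\ge t_i\rangle$ and $\Im\fip=\langle E_i^{b-1-n}\m{\mu'}\mid 0\le n<t_i\rangle=\langle E_i^m\m{\mu'}\mid m\ge b-t_i\rangle$. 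Symmetrically, in \eqref{eq: varphi prima i on Ei mu} the product $\prod_{t=1}^n\pi\widetilde{\mu}([\alpha_i;b-t])$ has its vanishing factor at $b-t=t_i$, i.e. $t=b-t_i$, so $\fip'(E_i^n\m{\mu'})=0$ iff $n\ge b-t_i$, yielding $\Ker\fip'=\langle E_i^n\m{\mu'}\mid n\ge b-t_i\rangle$ and $\Im\fip'=\langle F_i^m\m{\mu}\mid m\ge t_i\rangle$. Comparing the four subspaces gives $\Ker\fip=\Im\fip'$ and $\Im\fip=\Ker\fip'$, as asserted. There is no conceptual obstacle here; the only point requiring care is the bookkeeping with the substitution $s=b-t$ together with the shift $\mu'=\mu-(b-1)\alpha_i$, keeping the arguments in the range $\{1,\dots,b-1\}$ where Definition \ref{def:t beta} applies, and translating "the product over $t\le n$ contains the vanishing factor" into the correct inequality on $n$.
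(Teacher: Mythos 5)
Your proposal is correct and follows essentially the same route as the paper: reading $\Ker$ and $\Im$ of $\fip,\fip'$ directly off the explicit formulas \eqref{eq: varphi i on Fi mu} and \eqref{eq: varphi prima i on Ei mu}, and locating the unique vanishing factor via \eqref{eq:[beta;n] = otra}, \eqref{eq:[n;beta] = otra}, the shift $\pi\widetilde{\mu'}(K_iL_i^{-1})=q_{ii}^{2}\pi\widetilde{\mu}(K_iL_i^{-1})$ and the definition of $t_i$. The only cosmetic difference is that the paper identifies the zero of the $\fip'$-product by showing the minimal $s_i$ with $q_{ii}^{1+s_i}\pi\widetilde{\mu}(K_iL_i^{-1})=1$ equals $b^\fq(\alpha_i)-t_i$, whereas you locate it by a congruence mod $b^\fq(\alpha_i)$; these are the same computation.
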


\begin{proof}
Using \eqref{eq: varphi i on Fi mu}, \eqref{eq:pi mu' en} and \eqref{eq:[n;beta] = otra}, we see that $\Ker\fip=\langle\, F_i^n\m{\mu}\mid n\geq t_i\rangle$ and $\Im\fip=\langle E_i^n\m{\mu}\mid n\geq b^\fq(\alpha_i)-t_i\rangle$.

For $\fip'$, we first observe that $b^\fq(\alpha_i)-t_i$ is the minimum natural number $s_i$ such that $q_{ii}^{1+s_i}\pi\widetilde{\mu}(K_iL_i^{-1})-1=0
$. Indeed, $q_{ii}^{1+s_i}\pi\widetilde{\mu}(K_iL_i^{-1})-1=0
=q_{ii}^{1-t_i}\pi\tilde\mu(K_i L_i^{-1})-1$ implies
$q_{ii}^{t_i+s_i}=1$, recall \eqref{eq:[beta;n] = otra}. Hence  $b^\fq(\alpha_i)=\ord q_{ii}\leq t_i+s_i$. On the other hand, $\pi\tilde\mu([\alpha_i;b^\fq(\alpha_i)-s_i])=0$ and hence $t_i\leq b^\fq(\alpha_i)-s_i$. Therefore $b^\fq(\alpha_i)=t_i+s_i$. 

Finally, the equalities for $\Im\fip'$ and  $\Ker\fip'$ follow from \eqref{eq: varphi prima i on Ei mu} since $\pi\widetilde{\mu}([\alpha_i;b^{\fq}(\alpha_i)-t])=(b^{\fq}(\alpha_i)-t)_{q_{ii}}\pi\widetilde{\mu}(L_i)(q_{ii}^{1+t}\pi\widetilde{\mu}(K_i L_i^{-1})-1)$.
\end{proof}

As a consequence of the above lemma we see that $F_i^{t_i}\m{\mu}$ is a highest-weight vector, {\it i.e.} $E_j\cdot F_i^{t_i}\m{\mu}=0$ for all $j\in\I$, since the weights of $\Ker\fip$ are $\mu-n\alpha_i$ with $n\geq t_i$. 
Therefore there exists a morphism 
\begin{align}\label{eq:gi}
\gip:\Psi_\bk(\mu-t_i\alpha_i)\longrightarrow\Psi_\bk(\mu) 
\end{align}
in ${}_{\alpha_i}\cC_\bk^\fq$ such that $\m{\mu-t_i\alpha_i}\mapsto F_i^{t_i}\m{\mu}$. Clearly, $\Im\gip=\Ker\fip$.

\begin{lemma}\label{le:long exact sequence for Psi}
There exists a long exact sequence
\begin{align*}
\cdots\Psi_\bk(\mu-(b^\fq(\alpha_i)+t_i)\alpha_i)\longrightarrow\Psi_\bk(\mu-b^\fq(\alpha_i)\alpha_i)\longrightarrow\Psi_\bk(\mu-t_i\alpha_i)\overset{\gip}{\longrightarrow}\Psi_\bk(\mu)\longrightarrow\cdots 
\end{align*}
\end{lemma}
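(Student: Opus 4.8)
The plan is to build the sequence by iterating $\gip$, using that replacing $\mu$ by $\mu-t_i\alpha_i$ only swaps $t_i$ with $b^\fq(\alpha_i)-t_i$; throughout we keep the standing assumption $t_i\neq0$. Write $b=b^\fq(\alpha_i)$, $t=t_i$, $s=b-t$, so $t,s\in\{1,\dots,b-1\}$. From \eqref{eq:tilde mu} one reads off at once that $\pi\widetilde{\nu-m\alpha_i}(K_iL_i^{-1})=q_{ii}^{-2m}\,\pi\widetilde{\nu}(K_iL_i^{-1})$ for all $\nu\in\Z^\I$ and $m\in\Z$. Since $q_{ii}^{b}=1$ and $q_{ii}^{1-t}\pi\widetilde{\mu}(K_iL_i^{-1})=1$, I would then put $\lambda_{2k}=\mu-kb\alpha_i$ and $\lambda_{2k+1}=\mu-(kb+t)\alpha_i$ for $k\in\Z$, and check that $t^\pi_{\alpha_i}(\lambda_j)$ equals $t$ if $j$ is even and $s$ if $j$ is odd. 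In particular this number is always nonzero, $\lambda_{j+1}=\lambda_j-t^\pi_{\alpha_i}(\lambda_j)\,\alpha_i$, and $t^\pi_{\alpha_i}(\lambda_{j-1})+t^\pi_{\alpha_i}(\lambda_j)=b$ for every $j$.

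Put $c_j=t^\pi_{\alpha_i}(\lambda_j)$. Then $\pi\widetilde{\lambda_j}([\alpha_i;c_j])=0$ by the characterization of $t^\pi_\beta$ recorded just after Definition \ref{def:t beta}, so by the description of the $P_\fq(\alpha_i)$-action, cf. \eqref{eq: E Ft mu}, the element $F_i^{\,c_j}\m{\lambda_j}$ is a highest-weight vector of weight $\lambda_{j+1}$ in $\Psi_\bk(\lambda_j)$ (exactly as was observed for $F_i^{t_i}\m{\mu}$ right before \eqref{eq:gi}). Hence there is a morphism $g_j\colon\Psi_\bk(\lambda_{j+1})\to\Psi_\bk(\lambda_j)$ in ${}_{\alpha_i}\cC_\bk^\fq$ with $\m{\lambda_{j+1}}\mapsto F_i^{\,c_j}\m{\lambda_j}$, so that $F_i^{\,n}\m{\lambda_{j+1}}\mapsto F_i^{\,n+c_j}\m{\lambda_j}$; for $j=0$ this is $\gip$. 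Since $\lambda_1=\mu-t_i\alpha_i$, $\lambda_2=\mu-b^\fq(\alpha_i)\alpha_i$ and $\lambda_3=\mu-(b^\fq(\alpha_i)+t_i)\alpha_i$, the chain $\cdots\xrightarrow{g_2}\Psi_\bk(\lambda_2)\xrightarrow{g_1}\Psi_\bk(\lambda_1)\xrightarrow{g_0}\Psi_\bk(\lambda_0)\xrightarrow{g_{-1}}\cdots$ is precisely the sequence in the statement.

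It remains to verify exactness at each $\Psi_\bk(\lambda_j)$, which I would do on the basis $\{F_i^{\,n}\m{\lambda_j}\mid 0\le n<b\}$. On one hand $\Im g_j=\langle F_i^{\,m}\m{\lambda_j}\mid m\ge c_j\rangle$. On the other, since $F_i^{\,b}\m{\lambda_{j-1}}=0$, we get $g_{j-1}(F_i^{\,n}\m{\lambda_j})=F_i^{\,n+c_{j-1}}\m{\lambda_{j-1}}=0$ precisely when $n\ge b-c_{j-1}=c_j$, so $\Ker g_{j-1}=\langle F_i^{\,n}\m{\lambda_j}\mid n\ge c_j\rangle=\Im g_j$; for $j=0$ this recovers the identity $\Im\gip=\Ker\fip$ already noted. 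The only genuine content here is the periodicity $t^\pi_{\alpha_i}(\mu-t_i\alpha_i)=b^\fq(\alpha_i)-t_i$ together with $t^\pi_{\alpha_i}(\mu-b^\fq(\alpha_i)\alpha_i)=t_i$; once that is in hand the argument is routine bookkeeping, the one mild nuisance being to fix a doubly-infinite indexing in which the maps $g_j$ and their images and kernels all line up, which the parametrization by the $\lambda_j$ above takes care of.
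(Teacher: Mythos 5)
Your proof is correct and follows essentially the same route as the paper's: identify the kernel of $\gip$ as the span of $F_i^{\,n}\m{\mu-t_i\alpha_i}$ for $n\ge b^\fq(\alpha_i)-t_i$, observe the generator is a highest-weight vector, induce the next map, and iterate. The only difference is one of exposition: the paper reads off the generator of $\Ker\gip$ from $\gip(F_i^{\,n}\m{\mu-t_i\alpha_i})=F_i^{\,n+t_i}\m{\mu}$ and then says ``repeat,'' whereas you explicitly compute the alternation $t^\pi_{\alpha_i}(\lambda_{j})\in\{t_i,\,b^\fq(\alpha_i)-t_i\}$ via the relation $\pi\widetilde{\nu-m\alpha_i}(K_iL_i^{-1})=q_{ii}^{-2m}\pi\widetilde{\nu}(K_iL_i^{-1})$ and set up the doubly-infinite indexing in order to check exactness at every spot in one uniform formula. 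This is sound and arguably makes the ``by repetition'' step of the published argument more transparent, at the cost of a slightly longer write-up.
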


\begin{proof}
The kernel of $\gip$ is generated by $F_i^{b^\fq(\alpha_i)-t_i}\m{\mu-t_i\alpha_i}$ since $\gip(F_i^n\m{\mu-t_i\alpha_i})=F_i^{n+t_i}\m{\mu}$. Hence this generator is a highest-weight vector. Therefore we have a morphism $\Psi_\bk(\mu-b^\fq(\alpha_i)\alpha_i)\longrightarrow\Psi_\bk(\mu-t_i\alpha_i)$ and its kernel is generated by $F_i^{t_i}\m{\mu-b^\fq(\alpha_i)\alpha_i}$. We can repeat the arguments with $\mu-b^\fq(\alpha_i)\alpha_i$ instead of $\mu$ in order to construct the desired long exact sequence.
\end{proof}

\begin{remark}\label{obs:gi'}
In a similar way, we can see that $T_i(E_j)\cdot E_i^{b^\fq(\alpha_i)-t_i}\m{\mu'}=0$ for all $j\in\I$. Therefore there exists a morphism $\gip':\Psi_\bk'(\mu'+(b^\fq(\alpha_i)-t_i)\alpha_i)\longrightarrow\Psi_\bk'(\mu')$ in ${}_{\alpha_i}\cC_\bk^\fq$ such that $\m{\mu'+(b^\fq(\alpha_i)-t_i)\alpha_i}\mapsto E_i^{b^\fq(\alpha_i)-t_i}\m{\mu'}$ and $\Im\gip'=\Ker\fip'$.
\end{remark}

\subsection{Morphisms between Verma modules twisted by a simple reflection}\label{subsec:twisting by a reflection}

Here we lift the morphisms of the above subsection to morphisms between actual Verma modules. We continue with the same fixed elements $i\in\I$ and $\mu\in\Z^\I$. Recall $t_i=t^\pi_{\alpha_i}(\mu)$.

We can construct the Verma modules inducing from the parabolic ones. Namely, we have the next isomorphisms in $\cC_\bA^\fq$:
\begin{align}
\label{eq: ZAmu iso}
Z_\bA(\mu)&\simeq U_\fq\ot_{P_\fq(\alpha_i)}(P_\fq(\alpha_i)\ot_{U_\fq^0U_\fq^+}\bA^\mu)\simeq U_\fq\ot_{P_\fq(\alpha_i)}\Psi_\bA(\mu) 
\quad\mbox{and}\quad\\
\label{eq: ZAmu sigma i iso}
Z^{\sigma_i}_\bA(\mu)&\simeq U_\fq\ot_{P_\fq(\alpha_i)}(P_\fq(\alpha_i)\ot_{U_\fq^0T_i(U^+_{\sigma_i^*\fq})}\bA^\mu)\simeq U_\fq\ot_{P_\fq(\alpha_i)}\Psi'_\bA(\mu).
\end{align}
These isomorphisms allow us to lift $\fip$ and $\fip'$ to morphisms in $\cC^\fq_\bA$.

\begin{lemma}

The morphisms
\begin{align}\label{eq: varphi}
\varphi=1\ot\fip:Z_\bA(\mu)\longrightarrow Z^{\sigma_i}_\bA(\mu\langle\sigma_i\rangle)
\quad\mbox{and}\quad
\varphi'=1\ot\fip':Z^{\sigma_i}_\bA(\mu\langle\sigma_i\rangle)\longrightarrow Z_\bA(\mu)
\end{align}
are generators of the respective Hom spaces as $\bA$-modules. Also,
\begin{align}\label{eq: ker varphi}
\Ker\varphi\simeq U_\fq\ot_{P_\fq(\alpha_i)}\Ker\fip
\quad\mbox{and}\quad
\Ker\varphi'\simeq U_\fq\ot_{P_\fq(\alpha_i)}\Ker\fip'.
\end{align}
\end{lemma}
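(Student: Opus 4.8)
The plan is to obtain $\varphi$ and $\varphi'$ by applying the induction functor $U_\fq\ot_{P_\fq(\alpha_i)}(-)$ to $\fip$ and $\fip'$, after observing that this functor is exact. The one structural input needed is that $U_\fq$ is free as a right $P_\fq(\alpha_i)$-module. To see this I would choose a reduced expression $w_0=1^\fq\sigma_{i_1}\cdots\sigma_{i_n}$ of the longest element of ${}^\fq\cW$ with $i_1=i$, so that $\beta_1=\alpha_i$ in \eqref{eq:roots are conjugate to simple} and the corresponding PBW generator is $F_{\beta_1}=F_i$. By \eqref{eq:PBW basis} (using a permutation that places $F_{\beta_1}$ last) the algebra $U_\fq^-$ is then free as a right $U_\fq(-\alpha_i)=\ku\langle F_i\rangle$-module on the homogeneous set $\{F_{\beta_{f(1)}}^{m_1}\cdots F_{\beta_{f(n-1)}}^{m_{n-1}}\}$, whose $\Z^\I$-degrees range over $-\sum_{\beta\in\Delta_+^\fq\setminus\{\alpha_i\}}\Z_{\geq0}\beta$ and vanish only for the empty monomial. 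Combined with the triangular decompositions $U_\fq\simeq U_\fq^-\ot U_\fq^0\ot U_\fq^+$ and $P_\fq(\alpha_i)\simeq U_\fq(-\alpha_i)\ot U_\fq^0\ot U_\fq^+$, this exhibits $U_\fq$ as a free right $P_\fq(\alpha_i)$-module on the same homogeneous set; in particular the functor $U_\fq\ot_{P_\fq(\alpha_i)}(-)$ is exact.

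Granting this, applying $U_\fq\ot_{P_\fq(\alpha_i)}(-)$ to the $P_\fq(\alpha_i)$-linear maps $\fip\colon\Psi_\bA(\mu)\to\Psi'_\bA(\mu')$ and $\fip'\colon\Psi'_\bA(\mu')\to\Psi_\bA(\mu)$, and using the identifications \eqref{eq: ZAmu iso} and \eqref{eq: ZAmu sigma i iso} (the latter at the weight $\mu'$), produces morphisms $\varphi=1\ot\fip$ and $\varphi'=1\ot\fip'$ in $\cC_\bA^\fq$ with the asserted sources and targets. Since an exact functor commutes with kernels, this at once gives $\Ker\varphi\simeq U_\fq\ot_{P_\fq(\alpha_i)}\Ker\fip$ and $\Ker\varphi'\simeq U_\fq\ot_{P_\fq(\alpha_i)}\Ker\fip'$, which is exactly \eqref{eq: ker varphi}.

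It remains to check that $\varphi$ and $\varphi'$ generate the respective Hom spaces, which by Proposition \ref{prop:Hom Zx y Zw} are free $\bA$-modules of rank one. Evaluation at the canonical generator embeds $\Hom_{\cC_\bA^\fq}(Z_\bA(\mu),Z^{\sigma_i}_\bA(\mu'))$ into the weight space $Z^{\sigma_i}_\bA(\mu')_\mu$ (since $\m{\mu}$ generates $Z_\bA(\mu)$), and likewise $\Hom_{\cC_\bA^\fq}(Z^{\sigma_i}_\bA(\mu'),Z_\bA(\mu))$ into $Z_\bA(\mu)_{\mu'}$. I would then identify these two weight spaces using the homogeneous free basis of $U_\fq$ over $P_\fq(\alpha_i)$ from the first paragraph together with the bases $\{E_i^t\m{\mu'}\}$ of $\Psi'_\bA(\mu')$ and $\{F_i^t\m{\mu}\}$ of $\Psi_\bA(\mu)$: a $\Z^\I$-degree count — using $\mu'=\mu-(b^\fq(\alpha_i)-1)\alpha_i$, that the basis degrees of $U_\fq$ over $P_\fq(\alpha_i)$ lie in $-\Z_{\geq0}^\I$ and contain no nonzero multiple of $\alpha_i$ (because $\alpha_i$ is the only positive root of $\fq$ proportional to $\alpha_i$), and $0\leq t<b^\fq(\alpha_i)$ — forces $Z^{\sigma_i}_\bA(\mu')_\mu=1\ot E_i^{b^\fq(\alpha_i)-1}\bA\m{\mu'}$ and $Z_\bA(\mu)_{\mu'}=1\ot F_i^{b^\fq(\alpha_i)-1}\bA\m{\mu}$, both free of rank one. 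By \eqref{eq:varphi i} and \eqref{eq: varphi prima i on Ei mu} one has $\varphi(\m{\mu})=1\ot E_i^{b^\fq(\alpha_i)-1}\m{\mu'}$ and $\varphi'(\m{\mu'})=1\ot F_i^{b^\fq(\alpha_i)-1}\m{\mu}$, so these are free generators of the respective weight spaces; and a free rank-one submodule of a free rank-one module that contains a generator is the whole module, whence $\varphi$ and $\varphi'$ generate their Hom spaces. The main obstacle is precisely this last bookkeeping — verifying that the two weight spaces are free of rank one and that $\varphi,\varphi'$ land on honest generators rather than non-unit multiples; once $U_\fq\ot_{P_\fq(\alpha_i)}(-)$ is known to be exact, everything else is formal.
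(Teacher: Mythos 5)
Your proposal is correct and follows essentially the same route as the paper: freeness of $U_\fq$ over $P_\fq(\alpha_i)$ from the PBW basis \eqref{eq:PBW basis} gives exactness of induction and hence \eqref{eq: ker varphi}, while the generator claim comes from the rank-one weight spaces at $\mu$ and $\mu\langle\sigma_i\rangle$ together with Proposition \ref{prop:Hom Zx y Zw}. Your explicit degree bookkeeping (choosing a reduced expression with $i_1=i$ and evaluating at the canonical generators) just spells out in detail what the paper summarizes by saying $\varphi$ is an isomorphism on the weight-$\mu$ space by construction.
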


\begin{proof}
On the space of weight $\mu$, $\varphi$ is an isomorphism by construction. Then $\varphi$ is a generator of $\Hom_{\cC_\bA^\fq}(Z_\bA(\mu),Z^{\sigma_i}_\bA(\mu\langle\sigma_i\rangle))$ by Proposition \ref{prop:Hom Zx y Zw}. By the PBW basis \eqref{eq:PBW basis}, $U_\fq$ is free over $P_\fq(\alpha_i)$ and hence $\Ker\varphi\simeq U_\fq\ot_{P_\fq(\alpha_i)}\Ker\fip$. The proof for $\varphi'$ is analogous. 
\end{proof}

From the above considerations we can extend immediately to $\varphi$ and $\varphi'$ Lemmas \ref{le: unit then iso varphi i}-\ref{le:long exact sequence for Psi}, cf. \cite[Lemmas 5.8 and 5.9]{AJS}.

\begin{lemma}\label{le:varphi iso}
Assume that $\pi\tilde\mu([\alpha_i;t])$ is a unit in $\bA$ for all $1\leq t\leq b^\fq(\alpha_i)-1$. Then $\varphi$ and $\varphi'$ are isomorphisms in $\cC_\bA^\fq$. 
\qed
\end{lemma}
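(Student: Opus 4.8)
The plan is to obtain the statement from its parabolic counterpart, Lemma~\ref{le: unit then iso varphi i}, by transporting it along the induction functor $U_\fq\ot_{P_\fq(\alpha_i)}(-)\colon{}_{\alpha_i}\cC_\bA^\fq\longrightarrow\cC_\bA^\fq$. First I would record that $U_\fq$ is free as a right $P_\fq(\alpha_i)$-module, which is immediate from the triangular decomposition of $U_\fq$, the decomposition \eqref{eq:P alpha i otra} of $P_\fq(\alpha_i)$, and the PBW bases \eqref{eq:PBW basis}; consequently this functor is additive and exact. By \eqref{eq: ZAmu iso} and \eqref{eq: ZAmu sigma i iso} it carries $\Psi_\bA(\mu)$ to $Z_\bA(\mu)$ and $\Psi'_\bA(\mu\langle\sigma_i\rangle)$ to $Z^{\sigma_i}_\bA(\mu\langle\sigma_i\rangle)$, and by the definition of the maps in \eqref{eq: varphi} it carries $\fip$ to $\varphi$ and $\fip'$ to $\varphi'$.

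Now the hypothesis that $\pi\widetilde{\mu}([\alpha_i;t])$ is a unit in $\bA$ for all $1\le t\le b^\fq(\alpha_i)-1$ is precisely the hypothesis of Lemma~\ref{le: unit then iso varphi i}, so that lemma already gives that $\fip$ and $\fip'$ are isomorphisms in ${}_{\alpha_i}\cC_\bA^\fq$. Since any additive functor preserves isomorphisms, applying $U_\fq\ot_{P_\fq(\alpha_i)}(-)$ shows that $\varphi$ and $\varphi'$ are isomorphisms in $\cC_\bA^\fq$. Equivalently, using exactness together with \eqref{eq: ker varphi} one gets $\Ker\varphi\simeq U_\fq\ot_{P_\fq(\alpha_i)}\Ker\fip=0$ and $\operatorname{coker}\varphi\simeq U_\fq\ot_{P_\fq(\alpha_i)}\operatorname{coker}\fip=0$, and likewise for $\varphi'$.

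I do not anticipate a genuine difficulty here: once Lemma~\ref{le: unit then iso varphi i} is available, the argument is purely formal, and the only point deserving a remark is the freeness of $U_\fq$ over $P_\fq(\alpha_i)$. This is the analogue of \cite[Lemma~5.8]{AJS}, and the companion results (in particular the lift of the long exact sequence of Lemma~\ref{le:long exact sequence for Psi}, cf.\ \cite[Lemma~5.9]{AJS}) are obtained by applying the same exact functor to the parabolic statements.
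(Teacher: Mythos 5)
Your argument is correct and is essentially identical to the paper's: the text immediately preceding Lemma~\ref{le:varphi iso} establishes that $\varphi=1\ot\fip$ and $\varphi'=1\ot\fip'$ under the induction functor $U_\fq\ot_{P_\fq(\alpha_i)}(-)$ (noting freeness of $U_\fq$ over $P_\fq(\alpha_i)$ via the PBW basis), and the paper then states that Lemmas~\ref{le: unit then iso varphi i}--\ref{le:long exact sequence for Psi} extend immediately to $\varphi,\varphi'$, which is why Lemma~\ref{le:varphi iso} carries no written proof. You have simply spelled out that implicit transport, and your alternative phrasing via exactness and \eqref{eq: ker varphi} is an equally valid variant of the same argument.
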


\begin{lemma}
If $t_i=0$, then
$L_\bk(\mu)\simeq L_\bk^{\sigma_i}(\mu\langle\sigma_i\rangle)$ in $\cC_\bk^\fq$. \qed 
\end{lemma}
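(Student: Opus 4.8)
The plan is to recognize that the hypothesis $t_i=0$ is precisely what is needed to apply Lemma~\ref{le:varphi iso}, and then to pass from an isomorphism of Verma modules to an isomorphism of their simple heads.

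First I would translate the condition $t_i=0$. By Definition~\ref{def:t beta}, $t_i=t^\pi_{\alpha_i}(\mu)=0$ means exactly that $q_{\alpha_i}^{1-t}\,\pi\widetilde{\mu}(K_iL_i^{-1})\neq 1$ for every $t\in\{1,\dots,b^\fq(\alpha_i)-1\}$. Using the identity \eqref{eq:[beta;n] = otra}, namely $[\alpha_i;t]=(t)_{q_{\alpha_i}}L_{\alpha_i}\bigl(q_{\alpha_i}^{1-t}K_{\alpha_i}L_{\alpha_i}^{-1}-1\bigr)$, together with the facts that $(t)_{q_{\alpha_i}}\neq 0$ for $1\le t\le b^\fq(\alpha_i)-1$ and that $\pi\widetilde{\mu}(L_i)$ is invertible, this is equivalent to $\pi\widetilde{\mu}([\alpha_i;t])\neq 0$ for all such $t$; since $\bk$ is a field, each such element is then a unit. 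Thus the hypothesis of Lemma~\ref{le:varphi iso} is satisfied, and $\varphi:Z_\bk(\mu)\longrightarrow Z^{\sigma_i}_\bk(\mu\langle\sigma_i\rangle)$ is an isomorphism in $\cC^\fq_\bk$. (This is the remark already recorded in the text just after Lemma~\ref{le: unit then iso varphi i}.)

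Finally I would invoke functoriality of the head: passing to the quotient by the radical is a functor on $\cC^\fq_\bk$, so the isomorphism $\varphi$ induces an isomorphism between the simple head of $Z_\bk(\mu)$ and the simple head of $Z^{\sigma_i}_\bk(\mu\langle\sigma_i\rangle)$. By \S\ref{subsec:simples} the former head is $L_\bk(\mu)$, and by the construction of twisted simple modules the latter is $L^{\sigma_i}_\bk(\mu\langle\sigma_i\rangle)$; hence $L_\bk(\mu)\simeq L^{\sigma_i}_\bk(\mu\langle\sigma_i\rangle)$, as claimed. There is essentially no obstacle here: the substantive work lies in Lemma~\ref{le:varphi iso} (equivalently in extending Lemma~\ref{le: unit then iso varphi i} along the flat induction functor $U_\fq\ot_{P_\fq(\alpha_i)}(-)$, already carried out above), and the only genuine verification needed in this proof is the elementary translation between the vanishing condition defining $t_i$ and the unit condition on $\pi\widetilde{\mu}([\alpha_i;t])$, which is immediate from \eqref{eq:[beta;n] = otra}.
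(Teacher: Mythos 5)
Your proof is correct and follows exactly the route the paper intends (the lemma is stated with no proof precisely because it is immediate): $t_i=0$ is equivalent, via \eqref{eq:[beta;n] = otra}, to $\pi\widetilde{\mu}([\alpha_i;t])$ being a unit in $\bk$ for $1\le t\le b^\fq(\alpha_i)-1$, so Lemma \ref{le:varphi iso} makes $\varphi$ an isomorphism, and isomorphic Verma modules have isomorphic simple heads. No gaps.
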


\begin{lemma}\label{le:varphi no iso}
Suppose $t_i\neq0$. Then
\begin{align*}
\Ker\varphi=\Im\varphi'=U_\fq\cdot F_i^{t_i}\m{\mu}\quad\mbox{and}\quad
\Im\varphi=\Ker\varphi'=U_\fq\cdot E_i^{b^\fq(\alpha_i)-t_i}\m{\mu}.
\end{align*}
\qed
\end{lemma}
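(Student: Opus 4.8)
The plan is to transport up to the (twisted) Verma level the description of $\Ker\fip$, $\Im\fip$, $\Ker\fip'$, $\Im\fip'$ already obtained at the parabolic level, by applying the induction functor $U_\fq\ot_{P_\fq(\alpha_i)}(-)$; essentially no new computation is needed.

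First I would use that, by the PBW basis \eqref{eq:PBW basis}, $U_\fq$ is free as a right $P_\fq(\alpha_i)$-module, so $U_\fq\ot_{P_\fq(\alpha_i)}(-)$ is exact and commutes with kernels and images. Since $\varphi=1\ot\fip$ and $\varphi'=1\ot\fip'$, this gives $\Ker\varphi\simeq U_\fq\ot_{P_\fq(\alpha_i)}\Ker\fip$, $\Im\varphi\simeq U_\fq\ot_{P_\fq(\alpha_i)}\Im\fip$ (the first being \eqref{eq: ker varphi}) and the analogous isomorphisms for $\varphi'$. Hence the parabolic equalities $\Ker\fip=\Im\fip'$ and $\Im\fip=\Ker\fip'$ from the lemma above describing $\Ker\fip$ and $\Im\fip$ for $t_i\neq 0$ immediately yield $\Ker\varphi=\Im\varphi'$ and $\Im\varphi=\Ker\varphi'$, once we invoke the identifications $Z_\bk(\mu)\simeq U_\fq\ot_{P_\fq(\alpha_i)}\Psi_\bk(\mu)$ and $Z^{\sigma_i}_\bk(\mu\langle\sigma_i\rangle)\simeq U_\fq\ot_{P_\fq(\alpha_i)}\Psi'_\bk(\mu\langle\sigma_i\rangle)$ of \eqref{eq: ZAmu iso}--\eqref{eq: ZAmu sigma i iso}.

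It then remains to exhibit the cyclic generators. That same parabolic lemma writes $\Ker\fip$ as the subobject $\langle F_i^n\m{\mu}\mid n\geq t_i\rangle$ of $\Psi_\bk(\mu)$ in ${}_{\alpha_i}\cC_\bk^\fq$; being a $P_\fq(\alpha_i)$-submodule which contains $F_i^{t_i}\m{\mu}$ and in which $F_i^n\m{\mu}=F_i^{n-t_i}\cdot F_i^{t_i}\m{\mu}\in P_\fq(\alpha_i)\cdot F_i^{t_i}\m{\mu}$ for all $n\geq t_i$ (note $F_i\in P_\fq(\alpha_i)$), it equals $P_\fq(\alpha_i)\cdot F_i^{t_i}\m{\mu}$; conceptually, $F_i^{t_i}\m{\mu}$ is a highest-weight vector because $E_i\cdot F_i^{t_i}\m{\mu}=\pi\widetilde{\mu}([\alpha_i;t_i])F_i^{t_i-1}\m{\mu}=0$ by \eqref{eq: E Ft mu} and the very definition of $t_i=t^\pi_{\alpha_i}(\mu)$, see Definition \ref{def:t beta}. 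Applying $U_\fq\ot_{P_\fq(\alpha_i)}(-)$ and the identity $U_\fq\ot_{P_\fq(\alpha_i)}(P_\fq(\alpha_i)\cdot v)=U_\fq\cdot(1\ot v)$ turns this into $\Ker\varphi=U_\fq\cdot F_i^{t_i}\m{\mu}$ inside $Z_\bk(\mu)$ (under \eqref{eq: ZAmu iso}). The symmetric argument for $\fip'$ starts from $\Im\fip=\langle E_i^n\m{\mu'}\mid n\geq b^\fq(\alpha_i)-t_i\rangle$ with $\mu'=\mu\langle\sigma_i\rangle$: here $E_i^n\m{\mu'}=E_i^{n-(b^\fq(\alpha_i)-t_i)}\cdot E_i^{b^\fq(\alpha_i)-t_i}\m{\mu'}\in P_\fq(\alpha_i)\cdot E_i^{b^\fq(\alpha_i)-t_i}\m{\mu'}$ since $E_i\in P_\fq(\alpha_i)$, and $E_i^{b^\fq(\alpha_i)-t_i}\m{\mu'}$ is highest-weight for the $\sigma_i$-twisted structure by Remark \ref{obs:gi'}; this gives $\Im\varphi=U_\fq\cdot E_i^{b^\fq(\alpha_i)-t_i}\m{\mu\langle\sigma_i\rangle}$ inside $Z^{\sigma_i}_\bk(\mu\langle\sigma_i\rangle)$, which is the stated equality (the generator written $\m{\mu}$ in the statement being shorthand for the generator $\m{\mu\langle\sigma_i\rangle}$ of $Z^{\sigma_i}_\bk(\mu\langle\sigma_i\rangle)$).

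I do not expect a genuine obstacle: the real work was done in the parabolic step, and all that is left is the formal exactness-and-cyclicity bookkeeping, together with care about which Verma module each displayed element lives in and with respect to which triangular decomposition ``highest weight'' is taken. Put differently, $\varphi$ and $\varphi'$ are, up to the exact functor $U_\fq\ot_{P_\fq(\alpha_i)}(-)$, nothing but $\fip$ and $\fip'$, so the lemma is the pushforward of that parabolic lemma, the analogue of \cite[Lemma 5.9]{AJS}.
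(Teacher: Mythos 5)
Your proof is correct and is exactly the route the paper intends (the lemma carries a \qed because the transport along the exact induction functor $U_\fq\ot_{P_\fq(\alpha_i)}(-)$ is viewed as immediate from \eqref{eq: ker varphi} and the parabolic computation). You are also right that the generator written $\m{\mu}$ in the second equality of the statement is an abuse of notation for $\m{\mu\langle\sigma_i\rangle}$, the generator of $Z^{\sigma_i}_\bk(\mu\langle\sigma_i\rangle)$.
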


Moreover, if $t_i\neq0$, we have that
\begin{align}\label{eq:ch Ker varphi}
\ch\Ker\varphi=e^\mu\left(e^{-t_i\alpha_i}+\cdots+e^{(1-b^\fq(\alpha_i))\alpha_i}\right) 
\prod_{\gamma\in \Delta_+^\fq\setminus\{\alpha_i\}}\frac{1-e^{-b^\fq(\gamma)\gamma}}{1-e^{-\gamma}}.
\end{align}
because of the PBW basis \eqref{eq:PBW basis} and \eqref{eq: ker varphi}.

The morphism $\psi$ below is induced by $g_i$ of \eqref{eq:gi}.

\begin{lemma}\label{le:psi} 
Suppose $t_i\neq0$. Then $F_i^{t_i}\m{\mu}$ is a highest-weight vector of $Z_\bk(\mu)$ and hence there exists a morphism 
\begin{align*}
\psi:Z_\bk(\mu-t_i\alpha_i)\longrightarrow Z_\bk(\mu) 
\end{align*}
in $\cC_\bk^\fq$ such that $\m{\mu-t_i\alpha_i}\mapsto  F_i^{t_i}\m{\mu}$. Moreover, $\Im\psi=\Ker\varphi$ and there is a long exact sequence
\begin{align*}
\cdots Z_\bk(\mu-(b^\fq(\alpha_i)+t_i)\alpha_i)\longrightarrow Z_\bk(\mu-b^\fq(\alpha_i)\alpha_i)\longrightarrow Z_\bk(\mu-t_i\alpha_i)\overset{\psi}{\longrightarrow} Z_\bk(\mu)\longrightarrow\cdots 
\end{align*}
\qed
\end{lemma}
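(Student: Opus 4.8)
The plan is to lift the parabolic statements of the previous subsection along the induction functor $U_\fq\ot_{P_\fq(\alpha_i)}(-)$, exactly as $\varphi$ and $\varphi'$ were obtained from $\fip$ and $\fip'$. First I would record that $F_i^{t_i}\m{\mu}$ is a highest-weight vector of $Z_\bk(\mu)$, i.e. $(U_\fq^+)_\nu\cdot F_i^{t_i}\m{\mu}=0$ for all $\nu>0$; since $U_\fq^+$ is generated by the $E_j$ it suffices to check $E_j\cdot F_i^{t_i}\m{\mu}=0$ for every $j\in\I$. For $j=i$ this is \eqref{eq: E Ft mu} together with $\pi\widetilde{\mu}([\alpha_i;t_i])=0$, which holds because $t_i\neq0$ by Definition \ref{def:t beta} and \eqref{eq:[beta;n] = otra}; for $j\neq i$ one has $E_jF_i^{t_i}=F_i^{t_i}E_j$ by \eqref{eq: EF FE} and $E_j\m{\mu}=\varepsilon(E_j)\m{\mu}=0$. (Alternatively, by Lemma \ref{le:varphi no iso} and \eqref{eq:ch Ker varphi} the weight $\mu-t_i\alpha_i$ is maximal among the weights of the submodule $\Ker\varphi$, with one-dimensional weight space spanned by $F_i^{t_i}\m{\mu}$, so it is killed by $(U_\fq^+)_\nu$, $\nu>0$.) Consequently $\m{\mu-t_i\alpha_i}\mapsto F_i^{t_i}\m{\mu}$ defines a morphism of $U_\fq^0U_\fq^+$-modules $\bk^{\mu-t_i\alpha_i}\to Z_\bk(\mu)$ — the $U_\fq^0$-compatibility being \eqref{eq:compatibilidad sm ma}, as $F_i^{t_i}\m{\mu}$ has weight $\mu-t_i\alpha_i$ — and inducing up gives $\psi:Z_\bk(\mu-t_i\alpha_i)\to Z_\bk(\mu)$. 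Then $\Im\psi=U_\fq\cdot F_i^{t_i}\m{\mu}=\Ker\varphi$ by Lemma \ref{le:varphi no iso}.

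For the long exact sequence I would identify $\psi$ with $1\ot\gip$ under the isomorphism $Z_\bk(\nu)\simeq U_\fq\ot_{P_\fq(\alpha_i)}\Psi_\bk(\nu)$ of \eqref{eq: ZAmu iso}: the map $1\ot\gip$ sends the generator $\m{\mu-t_i\alpha_i}$ to $\gip(\m{\mu-t_i\alpha_i})=F_i^{t_i}\m{\mu}$, hence equals $\psi$ by the universal property of $Z_\bk(\mu-t_i\alpha_i)=U_\fq\ot_{U_\fq^0U_\fq^+}\bk^{\mu-t_i\alpha_i}$. Since $U_\fq$ is free, in particular flat, over $P_\fq(\alpha_i)$ by the PBW basis \eqref{eq:PBW basis}, applying the exact functor $U_\fq\ot_{P_\fq(\alpha_i)}(-)$ to the long exact sequence of Lemma \ref{le:long exact sequence for Psi} and identifying each term by \eqref{eq: ZAmu iso} yields precisely the long exact sequence of the statement, with the connecting maps the ones induced by $\gip$ and its iterates (cf. \cite[Lemma 5.9]{AJS}).

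I do not expect a genuine obstacle here: every step parallels the treatment of $\varphi,\varphi'$ in \S\ref{subsec:twisting by a reflection}, and the only point requiring a little care is that the induction functor — exact by flatness — carries the parabolic resolution of Lemma \ref{le:long exact sequence for Psi} term by term onto Verma modules $Z_\bk(\mu-k\alpha_i)$, which is the same bookkeeping already used to pass from $\fip,\fip'$ to $\varphi,\varphi'$.
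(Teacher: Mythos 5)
Your proposal is correct and follows essentially the paper's route: the paper obtains $\psi$ by inducing the parabolic morphism $\gip$ of \eqref{eq:gi} along $U_\fq\ot_{P_\fq(\alpha_i)}(-)$ via \eqref{eq: ZAmu iso}, exactly as $\varphi,\varphi'$ were obtained from $\fip,\fip'$, and the long exact sequence is the image of the one in Lemma \ref{le:long exact sequence for Psi} under this functor, which is exact since $U_\fq$ is free over $P_\fq(\alpha_i)$ by the PBW basis. Your direct verification that $F_i^{t_i}\m{\mu}$ is a highest-weight vector and the identification $\Im\psi=U_\fq\cdot F_i^{t_i}\m{\mu}=\Ker\varphi$ via Lemma \ref{le:varphi no iso} match the paper's bookkeeping.
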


\begin{remark}
There is a morphism $\psi':Z_\bk^{\sigma_i}(\mu-(t_i-1)\alpha_i)\longrightarrow Z_\bk^{\sigma_i}(\mu\langle\sigma_i\rangle)$ in $\cC_\bk^\fq$, induced by $\psi'$ of Remark \ref{obs:gi'}, such that $\m{\mu-(t_i-1)\alpha_i}\mapsto E_i^{b^\fq(\alpha_i)-t_i}\m{\mu\langle\sigma_i\rangle}$ and $\Im\psi'=\Ker\varphi'$.
\end{remark}

From the long exact sequence, we see that
\begin{align*}
\ch(\Ker\varphi)=\sum_{\ell\geq0}\ch Z_\bk(\mu-(\ell b^\fq(\alpha_i)-t_i)\alpha_i)-\sum_{\ell\geq1}\ch Z_\bk(\mu-\ell b^\fq(\alpha_i)\alpha_i).
\end{align*}

As another consequence we obtain the next isomorphisms between simple modules.

\begin{proposition}\label{prop:L iso L sigma}
Suppose $t_i\neq0$. In $\cC_\bk^\fq$, it holds that
\begin{align*}
L_\bk(\mu)\simeq L^{\sigma_i}_\bk(\mu-(t_i-1)\alpha_i)\quad\mbox{and}\quad
L_\bk(\mu-t_i\alpha_i)\simeq L_\bk^{\sigma_i}(\mu\langle\sigma_i\rangle).
\end{align*}
 \end{proposition}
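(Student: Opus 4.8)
The plan is to read off the two isomorphisms from the kernel/image computations already made for $\varphi,\varphi',\psi,\psi'$: each of these morphisms has domain a (possibly $\sigma_i$-twisted) Verma module, so its image is a nonzero highest-weight quotient of that Verma module and hence has a unique simple quotient. The key point is that two of these images coincide as submodules of the same ambient Verma module, which forces the corresponding simple quotients to be isomorphic. I will use the standard fact recalled in \S\ref{subsec:simples} (and valid verbatim for the $w$-twisted setting, cf.\ the discussion preceding Lemma~\ref{le:psi}): a nonzero quotient of $Z_\bk(\lambda)$ (resp.\ of $Z^{\sigma_i}_\bk(\lambda)$) is a highest-weight module of highest weight $\lambda$, hence has $L_\bk(\lambda)$ (resp.\ $L^{\sigma_i}_\bk(\lambda)$) as its unique simple quotient.

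\textbf{First isomorphism.} I would set $N=\Im\varphi\subseteq Z^{\sigma_i}_\bk(\mu\langle\sigma_i\rangle)$. On one hand $\varphi$ is an isomorphism on the weight-$\mu$ space, so $N\neq0$ is a quotient of $Z_\bk(\mu)$ and its unique simple quotient is $L_\bk(\mu)$. On the other hand, by Lemma~\ref{le:varphi no iso} we have $N=\Im\varphi=\Ker\varphi'$, while the Remark following Lemma~\ref{le:psi} gives $\Im\psi'=\Ker\varphi'$; hence $N=\Im\psi'$, the image of $\psi'\colon Z^{\sigma_i}_\bk(\mu-(t_i-1)\alpha_i)\to Z^{\sigma_i}_\bk(\mu\langle\sigma_i\rangle)$. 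Since $\psi'(\m{\mu-(t_i-1)\alpha_i})=E_i^{b^\fq(\alpha_i)-t_i}\m{\mu\langle\sigma_i\rangle}\neq0$ (it is a PBW basis element, as $1\le b^\fq(\alpha_i)-t_i\le b^\fq(\alpha_i)-1$), $N$ is a nonzero quotient of $Z^{\sigma_i}_\bk(\mu-(t_i-1)\alpha_i)$, so its unique simple quotient is $L^{\sigma_i}_\bk(\mu-(t_i-1)\alpha_i)$. Comparing, $L_\bk(\mu)\simeq L^{\sigma_i}_\bk(\mu-(t_i-1)\alpha_i)$. The only bookkeeping here is to confirm the weight of $E_i^{b^\fq(\alpha_i)-t_i}\m{\mu\langle\sigma_i\rangle}$ is $\mu-(t_i-1)\alpha_i$, using $\mu\langle\sigma_i\rangle=\mu-(b^\fq(\alpha_i)-1)\alpha_i$.

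\textbf{Second isomorphism.} Symmetrically, I would set $N'=\Im\psi\subseteq Z_\bk(\mu)$. By Lemma~\ref{le:psi}, $N'=\Im\psi=\Ker\varphi$, and by Lemma~\ref{le:varphi no iso}, $\Ker\varphi=\Im\varphi'$; hence $N'=\Im\varphi'$. As a nonzero quotient of $Z_\bk(\mu-t_i\alpha_i)$ via $\psi$ (nonzero because $\psi(\m{\mu-t_i\alpha_i})=F_i^{t_i}\m{\mu}\neq0$), $N'$ has unique simple quotient $L_\bk(\mu-t_i\alpha_i)$; as a nonzero quotient of $Z^{\sigma_i}_\bk(\mu\langle\sigma_i\rangle)$ via $\varphi'$, it has unique simple quotient $L^{\sigma_i}_\bk(\mu\langle\sigma_i\rangle)$. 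Hence $L_\bk(\mu-t_i\alpha_i)\simeq L^{\sigma_i}_\bk(\mu\langle\sigma_i\rangle)$. I do not expect a genuine obstacle: once the image/kernel identities of Lemma~\ref{le:varphi no iso}, Lemma~\ref{le:psi} and Remark~\ref{obs:gi'} are in place, the proposition is a formal diagram chase; the only mild care needed is matching the four images $\Im\varphi=\Ker\varphi'=\Im\psi'$ and $\Im\psi=\Ker\varphi=\Im\varphi'$ inside the correct ambient modules and tracking the highest weights.
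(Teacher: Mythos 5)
Your proof is correct and takes essentially the same route as the paper: both argue from the identities $\Im\varphi=\Ker\varphi'=\Im\psi'$ and $\Im\psi=\Ker\varphi=\Im\varphi'$, then use that a nonzero homomorphic image of a (twisted) Verma module is a highest-weight module with a unique simple quotient, so the two Verma covers of a common image must have isomorphic heads. You have simply spelled out the one-line "domains must have isomorphic heads" argument of the paper in more detail, including the nonvanishing of the relevant PBW vectors.
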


\begin{proof}
Since $\Im\varphi=\Im\psi'$ and $\Im\psi=\Im\varphi'$, the respective domains must to have isomorphic heads, which are the isomorphisms in the statement.
\end{proof}

\begin{remark}
Maps similar to $\varphi$ were constructed in \cite[Lemma 5.8]{HY}. Those maps are just linear morphism between Verma modules in different categories. By considering the $\sigma_i$-Verma module, we obtain morphisms between objects in the same category.
\end{remark}

\subsection{Twisting the categories}\label{subsec:twisting the categories} 
Let $w:\fq\longrightarrow w^{*}\fq$ be a morphism in $\cW$ and $w=\sigma_{i_k}\cdots\sigma_{i_1}1^\fq$ a reduced expression. Fix the Lusztig isomorphism
\begin{align*}
T_w=T_{i_1}\cdots T_{i_k}:U_\fq\longrightarrow U_{w^{*}\fq}.
\end{align*}

Recall the $U_\fq^0$-algebra $\bA[w]$ from \eqref{eq:pi[w]} with structural map
\begin{align}
\pi[w]=\pi\circ T_w^{-1}{}_{|U_\fq^0}:U_\fq^0\longrightarrow\bA.
\end{align}
This does not depend on the presentation of $w$ by \eqref{eq:Tw restricted to U0}. Also, $\bA[w][x]=\bA[xw]$ for $x:w^*\fq\longrightarrow x^*(w^*\fq)=(wx)^*\fq$.

We have equivalence of categories ${}^{w}F^{\fq}_{\bA}:\cC_\bA^\fq\longrightarrow\cC^{w^{*}\fq}_{\bA[w]}$ given by: if $M\in\cC_\bA^\fq$, then $F_w(M)=M[w]$ is an object of $\cC^{w^{*}\fq}_{\bA[w]}$ with the action of $U_{w^*\fq}$ twisted by $T_w^{-1}$, that is
\begin{align*}
u\cdot_{T_w^{-1}} m=T_w^{-1}(u)m \quad\forall m\in M[w],\,u\in U_{w^*\fq}.
\end{align*}
Indeed $M[w]$ satisfies \eqref{eq: cCA is U graded}, since 
\begin{align*}
(U_{w^*\fq})_\alpha\cdot_{T_w^{-1}} M[w]_\mu
&\overset{\eqref{eq:Tw U alpha es U w alpha}}{=}(U_\fq)_{w^{-1}\alpha}\, M_{w^{-1}\mu}\subset M_{w^{-1}\alpha+w^{-1}\mu}=M_{w^{-1}(\alpha+\mu)}=M[w]_{\alpha+\mu}.
\end{align*}
It also satisfies \eqref{eq:compatibilidad sm ma}, since for $m\in M[w]_\mu=M_{w^{-1}\mu}$ and $s\in U^0_{w^*\fq}$ we have that
\begin{align*}
s\cdot_{T_w^{-1}}m=T_w^{-1}(s)m=m(\pi\circ\widetilde{w^{-1}(\mu)}\circ T_w^{-1})(s)\overset{\eqref{eq: mu conjugado por Tw}}{=}
m(\pi\circ T_w^{-1}\circ\widetilde\mu)(s)=
m\pi[w](\widetilde\mu(s))
\end{align*}
Clearly, ${}^{w}F^{\fq}_{\bA}$ depends on the reduced expression of $w$. Its inverse ${}^{w}G^{\fq}_{\bA}:\cC^{w^{*}\fq}_{\bA[w]}\longrightarrow\cC_\bA^\fq$ is given by ${}^{w}G^{\fq}_{\bA}(M)=M[w^{-1}]$ with the action of $U_\fq$ twisted by $T_{w}$. When no confusion can arise, we will write simply $M[w]$ and $M[w^{-1}]$ instead of ${}^{w}F^{\fq}_{\bA}(M)$ and ${}^{w}G^{\fq}_{\bA}(M)$, respectively.

The Verma modules of both categories are related as follows.

\begin{lemma}\label{le: Zx mu w isomorphic to}
Let $x\in{}^\fq\cW$, $w\in\cW^\fq$ and $\mu\in\Z^\I$. Then 
\begin{align}\label{eq:verma twisted}
Z_{\bA}^{x}(\mu)[w]\simeq Z_{\bA[w]}^{wx}(w\mu)
\end{align}
in the category $\cC_{\bA[w]}^{w^*\fq}$. 
Moreover, for the field $\bk$, it holds that
\begin{align}\label{eq:L [w] iso}
L_\bk^x(\mu)[w]\simeq L_{\bk[w]}^{wx}(w\mu). 
\end{align}
 
\end{lemma}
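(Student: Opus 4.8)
The plan is to apply the equivalence ${}^{w}F^{\fq}_{\bA}$ directly to the induced module $Z_{\bA}^{x}(\mu)=U_\fq\ot_{U^0_\fq\,T_x(U^+_{x^{-*}\fq})}\bA^\mu$ and to recognize the result as the induced module that by definition equals $Z_{\bA[w]}^{wx}(w\mu)$. The starting observation is that pulling back a module structure along an algebra isomorphism commutes with induction: if $C\subseteq U_\fq$ is a $\Z^\I$-graded subalgebra and $N$ a $C$-module, then $\bigl(U_\fq\ot_C N\bigr)[w]\cong U_{w^*\fq}\ot_{T_w(C)}N[w]$, where $N[w]$ denotes $N$ regarded as a $T_w(C)$-module through $T_w^{-1}$ and with its grading twisted by $w$. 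Applying this with $C=U^0_\fq\,T_x(U^+_{x^{-*}\fq})$ and $N=\bA^\mu$, and using $T_w(U^0_\fq)=U^0_{w^*\fq}$ from \eqref{eq:Tw restricted to U0}, I would obtain
\begin{align*}
Z_{\bA}^{x}(\mu)[w]\cong U_{w^*\fq}\ot_{U^0_{w^*\fq}\,(T_wT_x)(U^+_{x^{-*}\fq})}\bA^\mu[w].
\end{align*}

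Next I would identify $T_wT_x\colon U_{x^{-*}\fq}\to U_{w^*\fq}$ with a Lusztig isomorphism attached to the composite morphism $wx\in{}^{w^*\fq}\cW$: when $\ell(wx)=\ell(w)+\ell(x)$ a reduced expression of $wx$ is obtained by concatenating reduced expressions of $w$ and $x$, and in the general case $T_wT_x$ differs from a fixed $T_{wx}$ by an automorphism $\varphi_{\underline{a}}$ as in \eqref{eq:Tw different presentation}, which preserves $U^\pm_{x^{-*}\fq}$ and fixes $U^0_{x^{-*}\fq}$; in particular $(T_wT_x)(U^+_{x^{-*}\fq})=T_{wx}(U^+_{x^{-*}\fq})$. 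It then remains to check that $\bA^\mu[w]\cong(\bA[w])^{w\mu}$ as a module over $U^0_{w^*\fq}T_{wx}(U^+_{x^{-*}\fq})$: both are free of rank one over $\bA$, concentrated in $w^*\fq$-degree $w\mu$ because $\bA^\mu[w]_\nu=\bA^\mu_{w^{-1}\nu}$, with $T_{wx}(U^+_{x^{-*}\fq})$ acting through the augmentation, and for $s\in U^0_{w^*\fq}$ one computes
\begin{align*}
s\cdot_{T_w^{-1}}\m{\mu}=T_w^{-1}(s)\,\m{\mu}=\pi\widetilde\mu\bigl(T_w^{-1}(s)\bigr)\,\m{\mu}\overset{\eqref{eq: mu conjugado por Tw}}{=}\pi[w]\bigl(\widetilde{w\mu}(s)\bigr)\,\m{\mu},
\end{align*}
using $\pi[w]=\pi\circ T_w^{-1}{}_{|U^0_\fq}$, which is exactly the defining relation of $(\bA[w])^{w\mu}$. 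This yields \eqref{eq:verma twisted}. For the field $\bk$, the functor ${}^{w}F^{\fq}_{\bk}$ is an equivalence of categories, so it sends the unique simple quotient $L_\bk^x(\mu)$ of $Z_\bk^{x}(\mu)$ to the unique simple quotient of $Z_\bk^{x}(\mu)[w]\cong Z_{\bk[w]}^{wx}(w\mu)$, namely $L_{\bk[w]}^{wx}(w\mu)$, giving \eqref{eq:L [w] iso}.

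The main obstacle is the bookkeeping in the step identifying $T_wT_x$ with a Lusztig isomorphism for $wx$, i.e.\ that $(T_wT_x)(U^\pm_{x^{-*}\fq})=T_{wx}(U^\pm_{x^{-*}\fq})$. This is transparent when $wx$ is length-additive, but in general it rests on the fact recalled after \eqref{eq:Tw different presentation} that the $T_i$ realize the Coxeter-type relations of the Weyl groupoid up to the automorphisms $\varphi_{\underline{a}}$, combined with the independence of $Z^{wx}$ from the chosen reduced expression; everything else is the routine verification on generators used throughout \S\ref{sec:vermas}.
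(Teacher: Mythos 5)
Your overall strategy (apply ${}^{w}F^{\fq}_{\bA}$ directly to the induced module and recognize the result as an induced module) is reasonable and different from the paper's, but it has a genuine gap at the key step. You claim that in general $T_wT_x$ differs from a fixed $T_{wx}$ by an automorphism $\varphi_{\underline{a}}$, and hence $(T_wT_x)(U^+_{x^{-*}\fq})=T_{wx}(U^+_{x^{-*}\fq})$. That is not established in the paper and is false when $\ell(wx)<\ell(w)+\ell(x)$. Equation \eqref{eq:Tw different presentation} only compares two \emph{reduced} expressions of the \emph{same} morphism; the concatenation of reduced expressions of $w$ and $x$ is reduced only in the length-additive case. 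When it is not, the relation $\sigma_i\sigma_i=1$ is \emph{not} realized by the $T_i$ up to $\varphi_{\underline{a}}$. Concretely, take $x=1^\fq\sigma_i\in{}^\fq\cW$ and $w=\sigma_i 1^\fq\in\cW^\fq$, so $wx=1^{\sigma_i^*\fq}$ and $T_{wx}=\mathrm{id}$; from $T_i(E_i)=F_iL_i^{-1}$, $T_i(F_i)=K_i^{-1}E_i$, $T_i(L_i)=L_{\sigma_i\alpha_i}=L_i^{-1}$ one computes $(T_wT_x)(E_i)=T_i^\fq(F_iL_i^{-1})=K_i^{-1}E_iL_i$, which is not a scalar multiple of $E_i$ and does not lie in $U^+_{\sigma_i^*\fq}$. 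So $(T_wT_x)(U^+_{x^{-*}\fq})\neq T_{wx}(U^+_{x^{-*}\fq})$; at best you would need $U^0_{w^*\fq}(T_wT_x)(U^+_{x^{-*}\fq})=U^0_{w^*\fq}T_{wx}(U^+_{x^{-*}\fq})$, which is a nontrivial statement you neither state nor prove.

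The paper's proof deliberately avoids any comparison between $T_wT_x$ and $T_{wx}$. It uses only the grading property \eqref{eq:Tw U alpha es U w alpha} of $T_{wx}$ together with the character computation $\ch Z_\bA^x(\mu)=e^\mu\,\ch T_x(U^-_{x^{-*}\fq})$ to conclude that $Z_\bA^x(\mu)[w]$ has no weights of the form $w\mu+wx\beta$ with $\beta\in\Delta_+^{x^{-*}\fq}$; hence the degree-$w\mu$ generator $v_1=\m{\mu}$ is annihilated by $T_{wx}(U^+_{x^{-*}\fq})$ for purely weight reasons. This yields a map $Z_{\bA[w]}^{wx}(w\mu)\to Z_\bA^x(\mu)[w]$, a symmetric argument yields a map the other way, and the two send generator to generator, hence are mutually inverse. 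Your verification that $\bA^\mu[w]\cong(\bA[w])^{w\mu}$ as a $U^0_{w^*\fq}$-module via \eqref{eq: mu conjugado por Tw} and $\pi[w]=\pi\circ T_w^{-1}{}_{|U^0_\fq}$ is correct, as is the deduction of \eqref{eq:L [w] iso} from the fact that ${}^{w}F^{\fq}_{\bk}$ is an equivalence; but the step identifying the inducing subalgebras must be replaced by the weight argument above.
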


\begin{proof}
We can repeat the arguments in  \cite[4.4$(2)$]{AJS} but we must be thorough with the categories where we are considering the Verma modules. 

We first observe that $Z_\bA^x(\mu)$ has not weights of the form $\mu+x\beta$ for all $\beta\in \Delta_+^{x^{-*}\fq}$ because $\ch\,Z_\bA^x(\mu)=e^{\mu}\ch\,T_x(U^-_{x^{-*}\fq})$. Therefore $Z_\bA^x(\mu)[w]$ has not weights of the form $w\mu+wx\beta$ for all $\beta\in\Delta_+^{x^{-*}\fq}$. Also, the space of weight $w\mu$ of $Z_\bA^x(\mu)[w]$ is $\bA$-spanned by $v_1=\m{\mu}_\bA$. In particular, we have that $T_{wx}(U^+_{x^{-*}\fq})$ annuls $v_1$.

On the other hand, we are thinking of $Z_{\bA[w]}^{wx}(w\mu)$ as an object in $\cC_{\bA[w]}^{w^*\fq}$. Thus, it is constructed using the triangular decomposition of $U_{w^{*}\fq}$ induced by $T_{wx}:U_{x^{-*}\fq}\rightarrow U_{w^*\fq}$. This means that its generator $v_2=\m{w\mu}_{\bA[w]}$ is annuled by $T_{wx}(U^+_{x^{-*}\fq})$. Therefore there exists a morphism $f:Z_{\bA[w]}^{wx}(w\mu)\longrightarrow Z_{\bA}^{x}(\mu)[w]$ given by $v_2\mapsto v_1$.

In a similar fashion, we get a morphism $Z_\bA^x(\mu)\longrightarrow Z_{\bA[w]}^{wx}(w\mu)[w^{-1}]$ in $\cC_\bA^\fq$ such that $v_1\mapsto v_2$. Then, we can transform it into a morphism $g:Z_\bA^x(\mu)[w]\longrightarrow Z_{\bA[w]}^{wx}(w\mu)$ in $\cC_{\bA[w]}^{w^*\fq}$ such that $v_1\mapsto v_2$. Clearly, $g\circ f=\id$ and $g[w^{-1}]\circ f[w^{-1}]=\id$. This proves the isomorphisms between the Verma modules.   Therefore, in the case of the field $\bk$, we deduce that their heads also are isomorphic.
\end{proof}

\subsection{Morphisms between twisted Verma modules} 

Here we extend the results of \S\ref{subsec:twisting by a reflection} to any morphism in the Weyl groupoid using the functors of \S\ref{subsec:twisting the categories} and following the ideas of \cite[\S5.10]{AJS}.

We fix $w\in{}^\fq\cW$ and $\beta\in\Delta^\fq$ such that $\beta=w\alpha_i$ for fixed $\alpha_i\in\Pi^{w^{-*}\fq}$ and $i\in\I$. We also fix  $\mu\in\Z^\I$. We set $t_\beta=t_\beta^\pi(\mu)$, recall Definition \ref{def:t beta}. We will use the functor ${}^wF_{\bA[w^{-1}]}^{w^{-*}\fq}:\cC_{\bA[w^{-1}]}^{w^{-*}\fq}\longrightarrow\cC_\bA^\fq$. For abbreviation, we will denote $M[w]$ and $f[w]$ the image of objects and morphisms through this functor.

Using ${}^wF_{\bA[w^{-1}]}^{w^{-*}\fq}$, Lemma \ref{le: Zx mu w isomorphic to} implies that
\begin{align}\label{eq:w Zx mu w isomorphic to}
\begin{split}
Z_{\bA[w^{-1}]}(w^{-1}\mu)[w]&\simeq Z_{\bA}^{w}(\mu) 
\quad\mbox{and}\\
&Z_{\bA[w^{-1}]}^{\sigma_i}((w^{-1}\mu)')[w]\simeq Z_{\bA}^{w\sigma_i}(\mu-(b^\fq(\beta)-1)\beta) 
\end{split}
\end{align}
for $(w^{-1}\mu)'=(w^{-1}\mu)\langle\sigma_i\rangle=w^{-1}\mu-(b^{w^{-*}\fq}(\alpha_i)-1)\alpha_i$; for the first isomorphism take $x=1^{w^{-*}\fq}$ and for the second one $x=\sigma_i^{(\sigma_iw)^{-*}\fq}$.

On the other hand, let $\varphi$ and $\varphi'$ be the morphisms in the category $\cC_{\bA[w^{-1}]}^{w^{-*}\fq}$ between $Z_{\bA[w^{-1}]}(w^{-1}\mu)$ and $Z_{\bA[w^{-1}]}^{\sigma_i}((w^{-1}\mu)')$ given in \eqref{eq: varphi}. We apply to them the functor ${}^wF_{\bA[w^{-1}]}^{w^{-*}\fq}$ and obtain the morphisms
\begin{align}\label{eq: varphi Zw mu Zws mu prima}
\begin{split}
\varphi[w]&:Z_\bA^w(\mu)\longrightarrow Z^{w\sigma_i}_\bA(\mu-(b^\fq(\beta)-1)\beta)
\quad\mbox{and}\\
\noalign{\smallskip}
&\varphi'[w]:Z^{w\sigma_i}_\bA(\mu-(b^\fq(\beta)-1)\beta)\longrightarrow Z_\bA^w(\mu)\quad\mbox{in $\cC_\bA^\fq$}. 
\end{split}
\end{align}

We are ready to extend the results from \S\ref{subsec:twisting by a reflection}.

\begin{lemma}\label{le:t beta unit}
If $\pi\tilde\mu([\beta;t])$ is a unit for all $1\leq t\leq b^\fq(\beta)-1$, then $\varphi[w]$ and $\varphi'[w]$ are isomorphisms in $\cC_\bA^\fq$. 
\end{lemma}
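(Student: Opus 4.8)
The plan is to transport Lemma \ref{le:varphi iso} through the equivalence of categories ${}^wF_{\bA[w^{-1}]}^{w^{-*}\fq}:\cC_{\bA[w^{-1}]}^{w^{-*}\fq}\longrightarrow\cC_\bA^\fq$ of \S\ref{subsec:twisting the categories}. Recall from \eqref{eq: varphi Zw mu Zws mu prima} that $\varphi[w]$ and $\varphi'[w]$ are, by definition, the images under this functor of the morphisms $\varphi$ and $\varphi'$ of \eqref{eq: varphi}, the latter being the generators of the respective Hom-spaces in $\cC_{\bA[w^{-1}]}^{w^{-*}\fq}$ between $Z_{\bA[w^{-1}]}(w^{-1}\mu)$ and $Z_{\bA[w^{-1}]}^{\sigma_i}((w^{-1}\mu)')$. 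Since an equivalence of categories preserves isomorphisms, it is enough to prove that $\varphi$ and $\varphi'$ are isomorphisms in $\cC_{\bA[w^{-1}]}^{w^{-*}\fq}$, and for this I would apply Lemma \ref{le:varphi iso} in that category, with matrix $w^{-*}\fq$, weight $w^{-1}\mu$, index $i$, and base ring $\bA[w^{-1}]$ with structural map $\pi[w^{-1}]$.

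It remains to check that the hypothesis of Lemma \ref{le:t beta unit} translates exactly into the hypothesis needed to apply Lemma \ref{le:varphi iso} in the twisted category, i.e.\ that $\pi[w^{-1}]\widetilde{w^{-1}\mu}([\alpha_i;t])$ is a unit in $\bA$ for all $1\leq t\leq b^{w^{-*}\fq}(\alpha_i)-1$. First, by \eqref{eq: b invariante} we have $b^{w^{-*}\fq}(\alpha_i)=b^\fq(w\alpha_i)=b^\fq(\beta)$, so the two ranges of indices agree. Second, running the computation \eqref{eq:pi w mu beta} for the morphism $w:w^{-*}\fq\to\fq$, with structural map $\pi[w^{-1}]$ on $U^0_{w^{-*}\fq}$ and weight $w^{-1}\mu$ (note that the ``$\pi[w]$'' of \eqref{eq:pi w mu beta} then becomes $(\pi[w^{-1}])[w]=\pi$ and its ``$w\mu$'' becomes $\mu$, while its ``$\beta=w\alpha_i$'' becomes our $\beta$), one obtains
\begin{align*}
\pi\widetilde{\mu}([\beta;t])=\pi[w^{-1}]\,\widetilde{w^{-1}\mu}([\alpha_i;t])\qquad\text{for all }t\in\N.
\end{align*}
Hence the assumption that $\pi\widetilde\mu([\beta;t])$ is a unit in $\bA$ for $1\leq t\leq b^\fq(\beta)-1$ is literally the hypothesis of Lemma \ref{le:varphi iso} applied to $\varphi,\varphi'$ in $\cC_{\bA[w^{-1}]}^{w^{-*}\fq}$.

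Thus Lemma \ref{le:varphi iso} yields that $\varphi$ and $\varphi'$ are isomorphisms in $\cC_{\bA[w^{-1}]}^{w^{-*}\fq}$, and applying the equivalence ${}^wF_{\bA[w^{-1}]}^{w^{-*}\fq}$ we conclude that $\varphi[w]$ and $\varphi'[w]$ are isomorphisms in $\cC_\bA^\fq$, as desired. The only delicate point is the bookkeeping of structural maps and weight relabelings under the twisting functor — making sure the ``unit'' hypothesis transports correctly — but this is precisely handled by \eqref{eq:pi w mu beta} and \eqref{eq: b invariante}, so I do not anticipate a genuine obstacle.
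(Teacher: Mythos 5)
Your proof is correct and follows the same route as the paper: apply Lemma \ref{le:varphi iso} in the category $\cC_{\bA[w^{-1}]}^{w^{-*}\fq}$ and transport the "unit" hypothesis along the twisting functor via \eqref{eq:pi w mu beta} (together with \eqref{eq: b invariante} for the range of $t$). The paper's proof is a two-line compression of exactly this argument; your version merely spells out the bookkeeping of structural maps and weights a bit more explicitly.
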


\begin{proof}
By Lemma \ref{le:varphi iso}, $\varphi$ and $\varphi'$ are isomorphisms if $\pi[w^{-1}]\widetilde{w^{-1}\mu}([\alpha_i;t])$ is a unit for all $1\leq t\leq b^{w^{-*}\fq}(\alpha_i)-1=b^{\fq}(\beta)-1$. As $\pi[w^{-1}]\widetilde{w^{-1}\mu}([\alpha_i;t])=\pi\tilde\mu([\beta;t])$ by \eqref{eq:pi w mu beta}, the lemma follows from the hypothesis.
\end{proof}

In the case of the field $\bk$, we immediately deduce an isomorphism between the heads of the Verma modules.
\begin{lemma}\label{le: t beta =0}
If $t_\beta=0$, then
$
L_\bk^w(\mu)\simeq L_\bk^{w\sigma_i}(\mu-(b^\fq(\beta)-1)\beta)$ in $\cC_\bk^\fq$. \qed 
\end{lemma}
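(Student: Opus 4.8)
The plan is to deduce the statement from Lemma \ref{le:t beta unit} by checking that the combinatorial hypothesis $t_\beta = 0$ is exactly what is needed to make the relevant scalars invertible. First I would record that, since $\beta = w\alpha_i \in \Delta^\fq$, we have $b^\fq(\beta) = b^{w^{-*}\fq}(\alpha_i) < \infty$ by \eqref{eq: b invariante} (as $\BV_{w^{-*}\fq}$ is finite-dimensional), so $q_\beta$ is a primitive root of unity of order $b^\fq(\beta)$; in particular the quantum integer $(t)_{q_\beta}$ is nonzero for $1 \le t \le b^\fq(\beta)-1$. Also $L_\beta \in U_\fq^0$ is invertible, hence $\pi\tilde\mu(L_\beta) \ne 0$ in $\bk$.

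Next I would expand $\pi\tilde\mu([\beta;t])$ via the identity \eqref{eq:[beta;n] = otra}, getting
\[
\pi\tilde\mu([\beta;t]) = (t)_{q_\beta}\,\pi\tilde\mu(L_\beta)\,\bigl(q_\beta^{1-t}\pi\tilde\mu(K_\beta L_\beta^{-1}) - 1\bigr).
\]
By Definition \ref{def:t beta}, the assumption $t_\beta = t_\beta^\pi(\mu) = 0$ means precisely that $q_\beta^{1-t}\pi\tilde\mu(K_\beta L_\beta^{-1}) \neq 1$ for every $t \in \{1, \dots, b^\fq(\beta)-1\}$. Combining this with the previous paragraph, all three factors on the right-hand side are nonzero, so $\pi\tilde\mu([\beta;t])$ is a nonzero element of $\bk$, hence a unit, for all such $t$.

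Finally, Lemma \ref{le:t beta unit} now applies and yields that the morphism $\varphi[w]\colon Z_\bk^w(\mu)\longrightarrow Z^{w\sigma_i}_\bk(\mu-(b^\fq(\beta)-1)\beta)$ from \eqref{eq: varphi Zw mu Zws mu prima} is an isomorphism in $\cC_\bk^\fq$. An isomorphism of modules carries the radical to the radical and so induces an isomorphism of heads; since $L_\bk^w(\mu)$ and $L_\bk^{w\sigma_i}(\mu-(b^\fq(\beta)-1)\beta)$ are by definition the unique simple quotients of these two twisted Verma modules, the asserted isomorphism follows. I do not expect any genuine difficulty here: the only point demanding care is the bookkeeping that turns ``$t_\beta = 0$'' into the unit hypothesis of Lemma \ref{le:t beta unit}, together with the elementary observation that over the field $\bk$ being nonzero is the same as being a unit.
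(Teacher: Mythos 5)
Your proposal is correct and follows exactly the route the paper intends: the paper states this lemma as an immediate consequence of Lemma \ref{le:t beta unit}, and your argument simply spells out why $t_\beta=0$ forces each $\pi\tilde\mu([\beta;t])$, $1\leq t\leq b^\fq(\beta)-1$, to be a nonzero (hence invertible) scalar via \eqref{eq:[beta;n] = otra}, so that $\varphi[w]$ is an isomorphism and the heads of the two twisted Verma modules coincide. The bookkeeping with \eqref{eq: b invariante} and Definition \ref{def:t beta} is accurate, so there is nothing to add.
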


Suppose now $t_\beta\neq0$. Since $t_{\alpha_i}^{\pi[w^{-1}]}(w^{-1}\mu)=t_\beta$ by \eqref{eq: t w alpha i}, we can consider the morphisms $\psi$ and $\psi'$ between $Z_{\bk[w^{-1}]}(w^{-1}\mu)$ and $Z_{\bk[w^{-1}]}^{\sigma_i}((w^{-1}\mu)')$ in the category $\cC_{\bk[w^{-1}]}^{w^{-*}\fq}$ given by Lemma \ref{le:psi}. By applying the functor ${}^{w}F_{\bk[w^{-1}]}^{w^{-*}\fq}$ to this lemma, we obtain the following.

\begin{lemma}\label{le:psi w}
Suppose $t_\beta\neq0$. Then the morphisms
\begin{align*}
\psi[w]:Z_\bk^w(\mu-t_\beta\beta)&\longrightarrow Z_\bk^w(\mu)
\quad\mbox{and}\\
&\psi'[w]:Z_\bk^{w\sigma_i}(\mu-(t_\beta-1)\beta)\longrightarrow Z_\bk^{w\sigma_i}(\mu-(b^\fq(\beta)-1)\beta)
\end{align*}
in $\cC_\bk^\fq$ satisfy that
\begin{align*}
\Ker\varphi[w]=\Im\varphi'[w]=\Im\psi[w]\quad\mbox{and}\quad
\Im\varphi[w]=\Ker\varphi'[w]=\Im\psi'[w].
\end{align*}
We also have a long exact sequence
\begin{align*}
\cdots Z^w_\bk(\mu-(b^\fq(\beta)+t_\beta)\beta)\longrightarrow Z^w_\bk(\mu-b^\fq(\beta)\beta)\longrightarrow Z^w_\bk(\mu-t_\beta\beta)\longrightarrow Z^w_\bk(\mu)\longrightarrow\cdots 
\end{align*}
\qed
\end{lemma}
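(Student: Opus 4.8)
The plan is to transport the simple-reflection results of \S\ref{subsec:twisting by a reflection} through the category equivalence ${}^wF_{\bk[w^{-1}]}^{w^{-*}\fq}:\cC_{\bk[w^{-1}]}^{w^{-*}\fq}\longrightarrow\cC_\bk^\fq$, exactly as was done to produce $\varphi[w]$ and $\varphi'[w]$ in \eqref{eq: varphi Zw mu Zws mu prima}. First, since $t_{\alpha_i}^{\pi[w^{-1}]}(w^{-1}\mu)=t_\beta\neq0$ by \eqref{eq: t w alpha i}, Lemma \ref{le:psi} and the Remark following it apply in the category $\cC_{\bk[w^{-1}]}^{w^{-*}\fq}$ with the weight $w^{-1}\mu$ in place of $\mu$, yielding morphisms
\begin{align*}
\psi:Z_{\bk[w^{-1}]}(w^{-1}\mu-t_\beta\alpha_i)\longrightarrow Z_{\bk[w^{-1}]}(w^{-1}\mu)
\end{align*}
and $\psi':Z_{\bk[w^{-1}]}^{\sigma_i}(w^{-1}\mu-(t_\beta-1)\alpha_i)\longrightarrow Z_{\bk[w^{-1}]}^{\sigma_i}((w^{-1}\mu)')$ there, which satisfy $\Ker\varphi=\Im\varphi'=\Im\psi$ and $\Im\varphi=\Ker\varphi'=\Im\psi'$ and fit into the long exact sequence $\cdots Z_{\bk[w^{-1}]}(w^{-1}\mu-\ell\,b^{w^{-*}\fq}(\alpha_i)\alpha_i)\longrightarrow\cdots$, with $\varphi,\varphi'$ the morphisms of \eqref{eq: varphi} for that category.

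Next I would apply ${}^wF_{\bk[w^{-1}]}^{w^{-*}\fq}$ to this entire configuration and call the images $\psi[w]$ and $\psi'[w]$. The functor twists the grading by $w$; hence, using \eqref{eq:w Zx mu w isomorphic to} (with $x=1^{w^{-*}\fq}$, resp.\ $x=\sigma_i^{(\sigma_iw)^{-*}\fq}$), $w\alpha_i=\beta$, and $b^{w^{-*}\fq}(\alpha_i)=b^\fq(\beta)$ from \eqref{eq: b invariante}, it carries $Z_{\bk[w^{-1}]}(w^{-1}\mu-n\alpha_i)$ to $Z_\bk^w(\mu-n\beta)$ and $Z_{\bk[w^{-1}]}^{\sigma_i}(w^{-1}\mu-n\alpha_i)$ to $Z_\bk^{w\sigma_i}(\mu-n\beta)$ (after rewriting $(w^{-1}\mu)'=w^{-1}\mu-(b^\fq(\beta)-1)\alpha_i$). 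In particular the images of $\varphi,\varphi'$ are the morphisms $\varphi[w],\varphi'[w]$ of \eqref{eq: varphi Zw mu Zws mu prima}. Because an equivalence of abelian categories is exact, it preserves kernels and images, so the relations among $\varphi,\varphi',\psi,\psi'$ transfer verbatim to give $\Ker\varphi[w]=\Im\varphi'[w]=\Im\psi[w]$ and $\Im\varphi[w]=\Ker\varphi'[w]=\Im\psi'[w]$, and the long exact sequence is carried term by term to the one in the statement.

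The only delicate part is the bookkeeping of weights under the $w$-twist: confirming that a shift by $n\alpha_i$ in $\cC_{\bk[w^{-1}]}^{w^{-*}\fq}$ becomes a shift by $n\beta$ in $\cC_\bk^\fq$, and that the bound $b^{w^{-*}\fq}(\alpha_i)$ governing the source long exact sequence equals $b^\fq(\beta)$, so the translated sequence has exactly the displayed form. I would also remark that the functor-images of $\psi$ and $\psi'$ coincide with the morphisms acting on generators as one expects, which is automatic since these maps are determined by their restriction to the relevant rank-one weight spaces, and ${}^wF_{\bk[w^{-1}]}^{w^{-*}\fq}$ identifies those spaces. Beyond this routine translation I do not anticipate any real obstacle.
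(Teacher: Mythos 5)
Your proposal is correct and takes essentially the same route as the paper: the paper likewise observes that $t_{\alpha_i}^{\pi[w^{-1}]}(w^{-1}\mu)=t_\beta$ by \eqref{eq: t w alpha i}, invokes Lemma \ref{le:psi} (and its accompanying Remark) in the category $\cC_{\bk[w^{-1}]}^{w^{-*}\fq}$, and then applies the exact equivalence ${}^wF_{\bk[w^{-1}]}^{w^{-*}\fq}$ to transport the morphisms, kernels, images, and the long exact sequence, with the weight and bound bookkeeping handled via \eqref{eq:w Zx mu w isomorphic to} and \eqref{eq: b invariante} just as you describe.
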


Similar to Proposition \ref{prop:L iso L sigma}, we deduce an isomorphism between the heads of the $w$-Verma modules above.

\begin{proposition}\label{prop:L w iso L w sigma}
Suppose $t_\beta\neq0$. In $\cC_\bk^\fq$, it holds that
\begin{align*}
L_{\bk}^w(\mu)\simeq L_{\bk}^{w\sigma_i}(\mu-(t_\beta-1)\beta)
\quad\mbox{and}\quad
L_{\bk}^{w}(\mu-t_\beta\beta)\simeq L_{\bk}^{w\sigma_i}(\mu-(b^\fq(\beta)-1)\beta).
\end{align*}
\qed
\end{proposition}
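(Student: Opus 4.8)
The plan is to copy the proof of Proposition~\ref{prop:L iso L sigma} almost verbatim, now working with the morphisms $\varphi[w]$, $\varphi'[w]$ of \eqref{eq: varphi Zw mu Zws mu prima} together with the morphisms $\psi[w]$, $\psi'[w]$ of Lemma~\ref{le:psi w}. Since $t_\beta\neq0$, Lemma~\ref{le:psi w} supplies the equalities
\begin{align*}
\Ker\varphi[w]=\Im\varphi'[w]=\Im\psi[w]\quad\mbox{and}\quad\Im\varphi[w]=\Ker\varphi'[w]=\Im\psi'[w].
\end{align*}
In particular $\Im\varphi[w]=\Im\psi'[w]$ as submodules of $Z_\bk^{w\sigma_i}(\mu-(b^\fq(\beta)-1)\beta)$, and $\Im\psi[w]=\Im\varphi'[w]$ as submodules of $Z_\bk^w(\mu)$.

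The only extra ingredient is the elementary observation that the image of a non-zero morphism in $\cC_\bk^\fq$ whose source is a $w$-Verma module $Z_\bk^w(\nu)$ is a non-zero quotient of $Z_\bk^w(\nu)$, hence has simple head isomorphic to $L_\bk^w(\nu)$ (recall that $Z_\bk^w(\nu)$ has simple head $L_\bk^w(\nu)$ by construction). First I would apply this to $\varphi[w]$, whose source is $Z_\bk^w(\mu)$, and to $\psi'[w]$, whose source is $Z_\bk^{w\sigma_i}(\mu-(t_\beta-1)\beta)$: since $\Im\varphi[w]=\Im\psi'[w]$, these two submodules have the same head, and therefore $L_\bk^w(\mu)\simeq L_\bk^{w\sigma_i}(\mu-(t_\beta-1)\beta)$. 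Then I would apply the same observation to $\psi[w]$, whose source is $Z_\bk^w(\mu-t_\beta\beta)$, and to $\varphi'[w]$, whose source is $Z_\bk^{w\sigma_i}(\mu-(b^\fq(\beta)-1)\beta)$: since $\Im\psi[w]=\Im\varphi'[w]$, we obtain in the same way $L_\bk^w(\mu-t_\beta\beta)\simeq L_\bk^{w\sigma_i}(\mu-(b^\fq(\beta)-1)\beta)$, which is the second isomorphism of the statement.

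I do not expect any genuine obstacle here: all the substance has already been packaged into Lemma~\ref{le:psi w}, which itself comes from Lemma~\ref{le:psi} by transport along the equivalence ${}^{w}F_{\bk[w^{-1}]}^{w^{-*}\fq}$ of \S\ref{subsec:twisting the categories}. The one thing to keep straight is the bookkeeping of the highest-weight shifts and of the source and target of each of the four morphisms $\varphi[w]$, $\varphi'[w]$, $\psi[w]$, $\psi'[w]$; once these are read off from \eqref{eq: varphi Zw mu Zws mu prima} and the statement of Lemma~\ref{le:psi w}, the proposition follows, exactly as in the untwisted case.
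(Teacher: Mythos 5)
Your argument is exactly the intended one: the paper proves Proposition~\ref{prop:L w iso L w sigma} by the same reasoning as Proposition~\ref{prop:L iso L sigma}, namely that whenever two $w$-Verma modules map onto a common nonzero image, the (simple) heads of the sources coincide, and you correctly read off the sources of $\varphi[w]$, $\psi'[w]$, $\psi[w]$, $\varphi'[w]$ from~\eqref{eq: varphi Zw mu Zws mu prima} and Lemma~\ref{le:psi w}. This matches the paper's proof essentially verbatim.
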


\begin{remark}
Using iteratively the prior lemma, we can calculate $\mu_w\in\Z^\I$ such that $L_{\bk}^w(\mu)=L_{\bk}(\mu_w)$, recall \eqref{eq:Lw L mu w}.
\end{remark}

Let us make some comments about the kernel of $\varphi[w]$ in the case of the field $\bk$. First, using \eqref{eq:ch Ker varphi} and $\Ker(\varphi[w])=\Ker(\varphi)[w]$, we have that 
\begin{align}\notag
\ch\Ker(\varphi[w])=e^\mu\left(e^{-t_\beta\beta}+\cdots+\right.&\left.e^{(1-b^\fq(\beta))\beta}\right)\times\\
\label{eq:ch Ker varphi w}
\prod_{\gamma\in\Delta^\fq_+\setminus\{\beta\}:w^{-1}\gamma\in \Delta^{w^{-*}\fq}_+}&\left(1+e^{-\gamma}+\cdots+e^{(1-b^\fq(\gamma))\gamma}\right)\times\\
\notag
&\prod_{\gamma\in\Delta^\fq_+\setminus\{\beta\}:w^{-1}\gamma \in \Delta^{w^{-*}\fq}_-}\left(1+e^{\gamma}+\cdots+e^{(b^\fq(\gamma)-1)\gamma}\right).
\end{align}
Then, if $\beta=w\alpha_i\in\Delta^\fq_+$, it follows that
\begin{align}\label{eq:mu is not a weight of Ker}
\mu-(w(\varrho^{w^{-*}\fq})-\varrho^\fq)\mbox{ is not a weight of }\Ker(\varphi[w])
\end{align}
since $\mu-(w(\varrho^{w^{-*}\fq})-\varrho^\fq)=\mu+\sum_{\gamma\in \Delta_+^\fq\,:\,w^{-1}\gamma\in \Delta_-^{w^{-*}\fq }}(b^\fq(\gamma)-1)\gamma$ by \eqref{eq:w varrho - varrho}. Lastly, we claim that 
\begin{align}
T_w(F_i)^{t_\beta}\m{\mu}_\bk\mbox{ is a  $U_\fq$-generator of }\Ker(\varphi[w])
\end{align}
where $T_w:U_{w^{-*}\fq}\longrightarrow U_\fq$ is a Lusztig isomorphism associated to $w$. In fact, $\psi(\m{w^{-1}\mu-t_\beta\alpha_i}_{\bk[w^{-1}]})=F_i^{t_\beta}\m{w^{-1}\mu}_{\bk[w^{-1}]}$ by Lemma \ref{le:psi}. From the proof of Lemma \ref{le: Zx mu w isomorphic to}, we see that $\psi[w](\m{\mu-t_\beta\beta}_\bk)=g(F_i^{t_\beta}\m{w^{-1}\mu}_{\bk[w^{-1}]})=g(T_w(F_i^{t_\beta})\cdot_{T_w^{-1}}\m{w^{-1}\mu}_{\bk[w^{-1}]})=T_w(F_i^{t_\beta})\m{\mu}_{\bk}$.

\subsubsection{A generator}\label{sss:a generator}

We end this subsection by constructing a generator of the Hom spaces between twisted Verma modules as in Proposition \ref{prop:Hom Zx y Zw}. We follow the strategy in \cite[\S5.13]{AJS}.

We fix $x:x^{-*}\fq\rightarrow\fq$ and a reduced expression $x^{-1}w=1^{x^{-*}\fq}\sigma_{i_1}\cdots\sigma_{i_r}$. We set 
\begin{align*}
x_s=1^\fq  x\sigma_{i_1}\cdots\sigma_{i_{s-1}}                                                                                                                                                                                                                                          \end{align*}
for $1\leq s\leq r+1$. Then
\begin{align*}
\mu\langle x_{s+1}\rangle=\mu\langle x_{s}\rangle-(b^{\fq}(x_s\alpha_{i_s})-1)x_s\alpha_{i_s}
\end{align*}
for all $1\leq s\leq r$ by \eqref{eq:mu w s}. Notice $x_1=x$ and $x_{r+1}=w$. We set $\varphi_s=\varphi[x_s]$ the morphism in $\cC_\bA^\fq$ given by \eqref{eq: varphi Zw mu Zws mu prima} for $\mu\langle x_s\rangle$. Explicitly,
\begin{align*}
\varphi_s:Z_\bA^{x_s}(\mu\langle x_s\rangle)\longrightarrow 
Z_\bA^{x_{s+1}}(\mu\langle x_{s+1}\rangle)
\end{align*}
for all $1\leq s\leq r$. We get a morphism
\begin{align*}
\varphi_{r}\cdots\varphi_1:Z_\bA^{x}(\mu\langle x\rangle)\longrightarrow 
Z_\bA^{w}(\mu\langle w\rangle).
\end{align*}

\begin{proposition}\label{prop:hom generator}
Suppose that either $\pi\widetilde{\mu\langle x_s\rangle}([x_s\alpha_{i_s};t])$ is a unit for all $1\leq t\leq b^{\fq}(x_s\alpha_{i_s})-1$ and $1\leq s\leq r$, or $\bA=\bk$ is a field and $x_s\alpha_{i_s}\in\Delta_+^\fq$ for all $1\leq s\leq r$. Then $\varphi_{r}\cdots\varphi_1$ induces an $\bA$-isomorphism between the spaces of weight $\mu$ and therefore $\varphi_{r}\cdots\varphi_1$ is  an $\bA$-generator of the corresponding $\Hom$ space.
\end{proposition}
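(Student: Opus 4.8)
The plan is to analyze each $\varphi_s$ separately on the weight space of degree $\mu$, using the dichotomy in the hypothesis, and then compose. Since $\Hom_{\cC_\bA^\fq}(Z_\bA^x(\mu\langle x\rangle),Z_\bA^w(\mu\langle w\rangle))$ is free of rank one over $\bA$ by Proposition \ref{prop:Hom Zx y Zw}, and since a morphism of $\Z^\I$-graded $(U_\fq,\bA)$-bimodules that is an $\bA$-isomorphism on the top weight space $\mu\langle x\rangle$ is automatically a generator of this $\Hom$ space (the same argument as in the proof of Proposition \ref{prop:Hom Zx y Zw}: the generator $\m{\mu\langle x\rangle}$ of $Z_\bA^x(\mu\langle x\rangle)$ must go to a nonzero $\bA$-multiple of the free generator of $(Z_\bA^w(\mu\langle w\rangle))_{\mu\langle x\rangle}$, and that multiple is a unit precisely when the map is an isomorphism on this weight space). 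So it suffices to prove that $\varphi_r\cdots\varphi_1$ is an $\bA$-isomorphism on the weight space of weight $\mu\langle x\rangle$. Here I should note that the statement says ``weight $\mu$'', but in light of \eqref{eq: ch Zmu = ch Zw muw} and the way the $\varphi_s$ are indexed via $\mu\langle x_s\rangle$, the relevant common weight of all the modules $Z_\bA^{x_s}(\mu\langle x_s\rangle)$ to track is $\mu\langle x\rangle$ (equivalently one may replace $\mu$ throughout by a shifted parameter); I would make this bookkeeping explicit at the start.

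The key step is the behaviour of a single $\varphi_s\colon Z_\bA^{x_s}(\mu\langle x_s\rangle)\to Z_\bA^{x_{s+1}}(\mu\langle x_{s+1}\rangle)$ on weight spaces. There are two cases coming from the hypothesis. \emph{Case 1:} $\pi\widetilde{\mu\langle x_s\rangle}([x_s\alpha_{i_s};t])$ is a unit for all $1\le t\le b^\fq(x_s\alpha_{i_s})-1$. Then by Lemma \ref{le:t beta unit}, $\varphi_s=\varphi[x_s]$ is an isomorphism of objects in $\cC_\bA^\fq$, in particular an $\bA$-isomorphism on every weight space. \emph{Case 2:} $\beta_s:=x_s\alpha_{i_s}\in\Delta_+^\fq$. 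Then $x_{s+1}=x_s\sigma_{i_s}$ has length $\ell(x_s)+1$ with respect to the parametrization by target, so $w^{-1}\mapsto x_s^{-1}$, and the root $\beta_s$ satisfies $x_s^{-1}\beta_s=\alpha_{i_s}\in\Delta_+^{x_s^{-*}\fq}$. By \eqref{eq:ch Ker varphi w} applied with $w=x_s$ (and $\mu$ replaced by $\mu\langle x_s\rangle$), together with \eqref{eq:mu is not a weight of Ker}, the weight $\mu\langle x_s\rangle-(x_s(\varrho^{x_s^{-*}\fq})-\varrho^\fq)=\mu\langle x\rangle\langle$-shift$\rangle$ — more precisely the weight $\mu\langle x\rangle$, since $\mu\langle x_s\rangle-(x_s(\varrho^{x_s^{-*}\fq})-\varrho^\fq)$ equals $\mu\langle x\rangle$ by unwinding Definition \ref{def:mu w} — is not a weight of $\Ker(\varphi[x_s])$. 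Dually, using Lemma \ref{le:varphi no iso} and \eqref{eq:ch Ker varphi w} for $\varphi'[x_s]$ (or the identity $\Im\varphi_s=\Ker\varphi'_s$ from Lemma \ref{le:psi w}), this weight is also not a weight of $\Coker(\varphi[x_s])$. Hence $\varphi_s$ restricts to an $\bA$-isomorphism on the weight space of weight $\mu\langle x\rangle$.

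Finally I would compose: in either case each $\varphi_s$ induces an $\bA$-isomorphism
$$(Z_\bA^{x_s}(\mu\langle x_s\rangle))_{\mu\langle x\rangle}\ \xrightarrow{\ \sim\ }\ (Z_\bA^{x_{s+1}}(\mu\langle x_{s+1}\rangle))_{\mu\langle x\rangle},$$
all of these weight spaces being $\bA$-free of rank one by the character computation \eqref{eq: ch Zmu = ch Zw muw} (the weight $\mu\langle x\rangle$ is the unique top weight and occurs with multiplicity one). Composing the chain for $s=1,\dots,r$ shows that $\varphi_r\cdots\varphi_1$ is an $\bA$-isomorphism on the weight space of weight $\mu\langle x\rangle$, and by the remark in the first paragraph it is therefore an $\bA$-generator of $\Hom_{\cC_\bA^\fq}(Z_\bA^{x}(\mu\langle x\rangle),Z_\bA^{w}(\mu\langle w\rangle))$, which is what we wanted. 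The main obstacle is the bookkeeping in Case 2: one must check carefully that the specific weight on which we need an isomorphism is exactly the one excluded from both the kernel and the cokernel of $\varphi[x_s]$, i.e. that $\mu\langle x_s\rangle-(x_s(\varrho^{x_s^{-*}\fq})-\varrho^\fq)=\mu\langle x\rangle$ and that this weight avoids the support of $\Ker(\varphi[x_s])$ — this is precisely where \eqref{eq:mu is not a weight of Ker} and the hypothesis $x_s\alpha_{i_s}\in\Delta_+^\fq$ are used.
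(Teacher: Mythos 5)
Your overall plan mirrors the paper's (reduce to each $\varphi_s$ via Proposition \ref{prop:Hom Zx y Zw}, split into the two cases of the hypothesis, use Lemma \ref{le:t beta unit} in Case~1 and \eqref{eq:mu is not a weight of Ker} in Case~2). However, the first paragraph contains a genuine conceptual error, and a dependent computational slip in Case~2 that would derail the argument.

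The error is the claim that ``the relevant common weight to track is $\mu\langle x\rangle$, not $\mu$''. In fact the paper's statement is correct as written: by \eqref{eq: ch Zmu = ch Zw muw}, every module $Z_\bA^{x_s}(\mu\langle x_s\rangle)$ has character equal to $\ch Z_\bA(\mu)=e^\mu\ch U^-_\fq$, and since $U^-_\fq$ is supported in nonpositive degrees with $(U^-_\fq)_0=\ku$, the weight $\mu$ is the unique top weight of each of these modules and its weight space is $\bA$-free of rank one. That is the weight one tracks. Your later computation compounds this: from Definition \ref{def:mu w} one has $\mu\langle x_s\rangle=\mu+x_s(\varrho^{x_s^{-*}\fq})-\varrho^\fq$, so
\begin{align*}
\mu\langle x_s\rangle-\bigl(x_s(\varrho^{x_s^{-*}\fq})-\varrho^\fq\bigr)=\mu,
\end{align*}
\emph{not} $\mu\langle x\rangle$ (these agree only when $x=1^\fq$, but the proposition allows general $x$). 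Thus \eqref{eq:mu is not a weight of Ker} excludes the weight $\mu$ from $\Ker\varphi_s$, in agreement with the paper's stated claim; had you carried out the unwinding correctly you would have landed back on $\mu$, and there would have been no discrepancy to flag.

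Two smaller points. First, the cokernel argument you add for Case~2 is unnecessary and, as written, not legitimate over a general $\bA$: \eqref{eq:ch Ker varphi w}, Lemma \ref{le:varphi no iso} and Lemma \ref{le:psi w} are stated only over the field $\bk$. The paper instead reduces at the outset to the case of a field (checking unitness of the scalar by which the map acts on the rank-one weight-$\mu$ space may be done at residue fields), and over a field injectivity of a map between weight spaces of equal finite dimension already forces it to be an isomorphism, so only the kernel needs to be controlled. Second, you should make this field reduction explicit, since several of the lemmas you invoke are stated only over $\bk$.
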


\begin{proof}
The spaces of weight $\mu$ are free of rank $1$ over $\bA$ by \eqref{eq: ch Zmu = ch Zw muw} and then the first assertion implies the second one. Also, it is enough to prove it for each $s$. Under the first supposition, $\varphi_s$ is an isomorphism by Lemma \ref{le:t beta unit}. Under the second one, $\mu$ is not a weight of $\Ker\varphi_s$ by \eqref{eq:mu is not a weight of Ker} as $\mu=\mu\langle x_s\rangle-(x_s(\varrho^{x_s^{-*}\fq})-\varrho^\fq)$. Therefore $\varphi_s$ is an isomorphism on the spaces of weight $\mu$.
\end{proof}

We will use the following corollary with $x=1^\fq$ and $w=w_0\in{}^\fq\cW$ the longest element to prove the linkage principle in the next section.

\begin{corollary}\label{cor:a generator}
If $\ell(w)=\ell(x)+\ell(x^{-1}w)$, then $\varphi_{r}\cdots\varphi_1$ is  an $\bk$-generator of the corresponding Hom space.
\end{corollary}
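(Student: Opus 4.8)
The statement to prove is Corollary~\ref{cor:a generator}: if $\ell(w)=\ell(x)+\ell(x^{-1}w)$, then $\varphi_{r}\cdots\varphi_1$ is an $\bA$-generator of the corresponding $\Hom$ space.

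\medskip

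The plan is to deduce this directly from Proposition~\ref{prop:hom generator}. That proposition requires, for each $1\leq s\leq r$, either that $\pi\widetilde{\mu\langle x_s\rangle}([x_s\alpha_{i_s};t])$ be a unit for all $1\leq t\leq b^{\fq}(x_s\alpha_{i_s})-1$, or that $x_s\alpha_{i_s}\in\Delta_+^\fq$. So the whole task reduces to checking that, under the length-additivity hypothesis, the second alternative $x_s\alpha_{i_s}\in\Delta_+^\fq$ holds for \emph{every} $s$. First I would recall the setup: $x^{-1}w=1^{x^{-*}\fq}\sigma_{i_1}\cdots\sigma_{i_r}$ is a reduced expression (so $r=\ell(x^{-1}w)$), and $x_s=1^\fq x\sigma_{i_1}\cdots\sigma_{i_{s-1}}:x_s^{-*}\fq\to\fq$ with $x_1=x$, $x_{r+1}=w$. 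The root in question at step $s$ is $\beta_s:=x_s\alpha_{i_s}=x\sigma_{i_1}\cdots\sigma_{i_{s-1}}(\alpha_{i_s})$.

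\medskip

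The key step is the following. Since $\ell(w)=\ell(x)+\ell(x^{-1}w)=\ell(x)+r$, concatenating a reduced expression for $x$ with $\sigma_{i_1}\cdots\sigma_{i_r}$ yields a reduced expression for $w$. Hence, for each $s$, the morphism $x_s=x\sigma_{i_1}\cdots\sigma_{i_{s-1}}:x_s^{-*}\fq\to\fq$ is obtained from $w$ by truncating a reduced expression, so $\ell(x_s)=\ell(x)+(s-1)$ and, crucially, $x_s\sigma_{i_s}$ is also the truncation of that same reduced expression of $w$ at one further step, with $\ell(x_s\sigma_{i_s})=\ell(x_s)+1$. For morphisms in a Weyl groupoid, this ``going up in length by applying $\sigma_{i_s}$'' is exactly the condition guaranteeing that the new positive root is positive: by \eqref{eq:sigma Delta = Delta}, $\sigma_{i_s}^{x_s^{-*}\fq}(\Delta_+^{x_s^{-*}\fq}\setminus\{\alpha_{i_s}\})=\Delta_+^{(\sigma_{i_s}x_s)^{-*}\fq}\setminus\{\alpha_{i_s}\}$ and $\sigma_{i_s}^{x_s^{-*}\fq}(\alpha_{i_s})=-\alpha_{i_s}$, together with $x_s(\Delta^{x_s^{-*}\fq})=\Delta^\fq$. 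Concretely, $\alpha_{i_s}\in\Delta_+^{x_s^{-*}\fq}$ and, because appending $\sigma_{i_s}$ to the reduced expression of $x_s$ keeps it reduced, $\alpha_{i_s}$ is not sent into $\Delta_-$ by $x_s$, i.e. $x_s\alpha_{i_s}=\beta_s\in\Delta_+^\fq$. (This is the standard fact that in a reduced word $w=\sigma_{i_k}\cdots\sigma_{i_1}$ the roots $\sigma_{i_k}\cdots\sigma_{i_{s+1}}(\alpha_{i_s})$ enumerate $\Delta_+^\fq\cap w(\Delta_-)$ only when one removes that letter; here we keep all letters through $i_s$ inside a reduced word for $w$, so $\beta_s$ lies in $\Delta_+^\fq$.) I would phrase this via \eqref{eq:roots are conjugate to simple}: taking the reduced expression $w=1^\fq(\text{red.\ expr.\ of }x)\sigma_{i_1}\cdots\sigma_{i_r}$, the roots $\beta_\nu$ produced by that formula are precisely all of $\Delta_+^\fq$, and for $\nu$ corresponding to the letter $\sigma_{i_s}$ one gets exactly $\beta_s=x_s\alpha_{i_s}$; hence $\beta_s\in\Delta_+^\fq$.

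\medskip

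Having established $x_s\alpha_{i_s}\in\Delta_+^\fq$ for all $1\leq s\leq r$, the hypotheses of Proposition~\ref{prop:hom generator} are satisfied (using the second alternative at every step), so $\varphi_r\cdots\varphi_1$ restricts to an $\bA$-isomorphism on the weight-$\mu$ spaces; since these are $\bA$-free of rank one by \eqref{eq: ch Zmu = ch Zw muw}, and since $\Hom_{\cC_\bA^\fq}(Z_\bA^x(\mu\langle x\rangle),Z_\bA^w(\mu\langle w\rangle))\simeq\bA$ by Proposition~\ref{prop:Hom Zx y Zw} with any generator detected by its action on the weight-$\mu$ space, $\varphi_r\cdots\varphi_1$ is an $\bA$-generator. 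I expect the only real point needing care is the combinatorial claim that truncating a reduced expression for $w$ gives $x_s\alpha_{i_s}\in\Delta_+^\fq$; the rest is bookkeeping. One should make sure the reduced expression chosen for $x^{-1}w$ in the statement is compatible with a reduced expression for $w$ of the form (reduced $x$)(reduced $x^{-1}w$) — this is exactly what $\ell(w)=\ell(x)+\ell(x^{-1}w)$ provides, so there is no loss of generality — and invoke \eqref{eq:Tw different presentation} (independence of $\varphi_s$, hence of the product, from the chosen reduced expressions up to the automorphisms $\varphi_{\underline a}$, which are isomorphisms and do not affect the generator property).
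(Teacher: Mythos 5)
Your proposal is correct and follows essentially the same route as the paper: the whole point is that length-additivity forces $x_s\alpha_{i_s}\in\Delta_+^\fq$ for every $s$, after which Proposition~\ref{prop:hom generator} gives the generator; the paper simply cites \cite[Lemma 8 $(i)$]{HY08} for this positivity, which is exactly the ``appending a letter to a reduced word keeps the root positive'' fact you argue. One small caution: \eqref{eq:roots are conjugate to simple} as stated in the paper applies only to reduced expressions of the longest element $w_0$, so for general $w$ you should rely on the cited lemma (or on $\ell(w)=\lvert w(\Delta_+^{w^{-*}\fq})\cap-\Delta_+^\fq\rvert$) rather than on that formula.
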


\begin{proof}
We have that $x_{s}\alpha_{i_s}\in\Delta_+^\fq$ by \cite[Lemma 8 $(i)$]{HY08} for all $1\leq s\leq r$.
\end{proof}

\section{The linkage principle}\label{sec:linkage}

We are ready to prove our main results. We follow here the ideas in \cite[\S6]{AJS}. We keep the notation of the above sections and restrict ourselves to the case of the field $\bk$. Recall Definition \ref{def:n beta}

\begin{definition}
Let $\beta\in\Delta_+^\fq$ and $\mu\in\Z^\I$. We set
\begin{align*}
\beta\downarrow\mu=\mu-n_\beta^\pi(\mu)\,\beta.
\end{align*}
We say that $\lambda\in\Z^\I$ is strongly linked to $\mu$ if and only if there exist $\beta_1, ..., \beta_r\in\Delta_+^\fq$ such that $\lambda=\beta_r\downarrow\cdots\beta_1\downarrow\mu$. We denote 
\begin{align*}
{}^{\downarrow}\mu=\left\{\lambda\in\Z^\I\mid\lambda\,\mbox{is strongly linked to}\,\mu\,\mbox{and}\,\mu-\beta_{top}^\fq\leq\lambda\leq\mu
\right\}. 
\end{align*}
Lastly, being linked is the smallest equivalence relation in $\Z^\I$ such that $\lambda$ and $\mu$ are linked if $\lambda$ is strongly linked to $\mu$ or {\it vice versa}. We denote $[\mu]_{\link}$ the equivalence class of $\mu\in\Z^\I$.
\end{definition}

\begin{theorem}\label{teo:strongly linked}
If $L_\bk(\lambda)$ is a composition factor of $Z_\bk(\mu)$, then $\lambda=\mu$ or $L_\bk(\lambda)$ is a composition factor of $Z_\bk(\beta\downarrow\mu)$ for some $\beta\in\Delta_+^\fq$. Moreover, $\lambda\in{}^{\downarrow}\mu$. 
\end{theorem}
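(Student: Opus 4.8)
The plan is to follow the outline sketched in the introduction, using the machinery of twisted Verma modules and the morphisms constructed in Section~\ref{sec:morphisms}. Fix a reduced expression $w_0=1^\fq\sigma_{i_1}\cdots\sigma_{i_n}$ of the longest element in ${}^\fq\cW$, and set $x_s=1^\fq\sigma_{i_1}\cdots\sigma_{i_{s-1}}$ for $1\le s\le n+1$, so that $x_1=1^\fq$ and $x_{n+1}=w_0$, and put $\beta_s=x_s\alpha_{i_s}\in\Delta_+^\fq$. By Corollary~\ref{cor:a generator} (with $x=1^\fq$, $w=w_0$, since $\ell(w_0)=\ell(1^\fq)+\ell(w_0)$) the composite $\Phi=\varphi_n\cdots\varphi_1\colon Z_\bk(\mu)\to Z_\bk^{w_0}(\mu\langle w_0\rangle)$ is a generator of the Hom space, and by Lemma~\ref{le:imagen de Z w0 mu w0 en Z mu} its image is $\soc Z_\bk^{w_0}(\mu\langle w_0\rangle)\simeq L_\bk(\mu)$. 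Hence $[Z_\bk(\mu):L_\bk(\mu)]=1$ and every composition factor $L_\bk(\lambda)$ of $Z_\bk(\mu)$ with $\lambda\neq\mu$ is a composition factor of $\Ker\Phi$.

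Next I would factor the kernel. Since $\Phi=\varphi_n\cdots\varphi_1$, there is a filtration of $\Ker\Phi$ whose successive subquotients embed into $\Ker\varphi_s$ for $s=1,\dots,n$ (more precisely, $\Ker(\varphi_s\cdots\varphi_1)/\Ker(\varphi_{s-1}\cdots\varphi_1)$ maps injectively into $\Ker\varphi_s$ via $\varphi_{s-1}\cdots\varphi_1$). Thus any composition factor of $\Ker\Phi$ is a composition factor of some $\Ker\varphi_s$. Now $\varphi_s=\varphi[x_s]$ is the morphism of \eqref{eq: varphi Zw mu Zws mu prima} between $Z_\bk^{x_s}(\mu\langle x_s\rangle)$ and $Z_\bk^{x_s\sigma_{i_s}}(\mu\langle x_{s+1}\rangle)$; set $\beta=\beta_s$ and $t_\beta=t_\beta^\pi(\mu\langle x_s\rangle)$. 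If $t_\beta=0$ then $\varphi_s$ is an isomorphism on the weight-$\mu$ space and, more relevantly, by the argument behind Lemma~\ref{le:t beta unit}/Lemma~\ref{le: t beta =0} it contributes nothing new; if $t_\beta\neq0$, Lemma~\ref{le:psi w} gives $\Ker\varphi_s=\Im\psi[x_s]$, the image of $\psi[x_s]\colon Z_\bk^{x_s}(\mu\langle x_s\rangle-t_\beta\beta)\to Z_\bk^{x_s}(\mu\langle x_s\rangle)$. Therefore every composition factor of $\Ker\varphi_s$ is a composition factor of $Z_\bk^{x_s}(\mu\langle x_s\rangle-t_\beta\beta)$. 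By Lemma~\ref{le:n beta = t beta}, $t_\beta=t_\beta^\pi(\mu\langle x_s\rangle)=n_\beta^\pi(\mu)$ — here one uses $\mu\langle x_s\rangle$ is itself of the form $(\cdot)\langle x_s\rangle$ and the compatibility \eqref{eq:mu w s}; I would need to check carefully that the relevant $\langle\,\rangle$-shift matches, replacing $\mu$ by the appropriate weight — so $\mu\langle x_s\rangle-t_\beta\beta=(\beta\downarrow\mu)\langle x_s\rangle$ up to the identification, and then \eqref{eq: ch Zmu = ch Zw muw} together with Lemma~\ref{lem:ch and composition} give that $Z_\bk^{x_s}(\mu\langle x_s\rangle-t_\beta\beta)$ and $Z_\bk(\beta\downarrow\mu)$ have the same composition factors. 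This yields the first assertion: $L_\bk(\lambda)$ is a composition factor of $Z_\bk(\beta\downarrow\mu)$ for $\beta=\beta_s\in\Delta_+^\fq$.

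For the refinement $\lambda\in{}^{\downarrow}\mu$, I would iterate. Induct on the height of $\mu-\lambda$ (or on $\mu-\lambda$ in the partial order $\le$, which is well-founded on the finite interval $[\mu-\beta_{top}^\fq,\mu]$, using $[Z_\bk(\mu):L_\bk(\lambda)]\neq0\Rightarrow\lambda\le\mu$ from \S\ref{subsec:simples}). If $\lambda=\mu$ we are done. Otherwise, by the first part $L_\bk(\lambda)$ is a composition factor of $Z_\bk(\beta\downarrow\mu)$ for some $\beta\in\Delta_+^\fq$, and $\beta\downarrow\mu=\mu-n_\beta^\pi(\mu)\beta<\mu$ strictly since $n_\beta^\pi(\mu)\ge1$ (as $t_\beta\neq0$). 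Moreover $\lambda\ge \mu-\beta_{top}^\fq$ because $L_\bk(\lambda)$ is a composition factor of $Z_\bk(\mu)$, whose character is $e^\mu\ch U^-_\fq$ with lowest weight $\mu-\beta_{top}^\fq$ by \eqref{eq:ch U-}; likewise $\lambda\le\beta\downarrow\mu$, and $\mu-\beta_{top}^\fq\le \beta\downarrow\mu-\beta_{top}^\fq$ is automatic, giving $\beta\downarrow\mu-\beta_{top}^\fq\le\lambda$ too once one observes the same bound applies inside $Z_\bk(\beta\downarrow\mu)$. Applying the inductive hypothesis to $Z_\bk(\beta\downarrow\mu)$ produces $\beta_1,\dots,\beta_{r-1}\in\Delta_+^\fq$ with $\lambda=\beta_{r-1}\downarrow\cdots\beta_1\downarrow(\beta\downarrow\mu)$, and prepending $\beta$ finishes the proof. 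The main obstacle I anticipate is bookkeeping the $\langle\,\rangle$-shifts correctly when invoking Lemma~\ref{le:n beta = t beta} inside the twisted categories: one must verify that the number $t_{\beta_s}^\pi(\mu\langle x_s\rangle)$ produced by the twisted morphism $\varphi[x_s]$ really equals $n_{\beta_s}^\pi(\mu)$, i.e. that the composite shift $\mu\mapsto\mu\langle x_s\rangle$ followed by the parabolic reflection matches the single operation $\mu\mapsto\beta_s\downarrow\mu$; this is where \eqref{eq: t w alpha i}, \eqref{eq:mu w s} and Lemma~\ref{le:n beta = t beta} must be combined with care.
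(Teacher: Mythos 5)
Your proposal is correct and takes essentially the same route as the paper's proof: the chain $\Phi=\varphi_n\cdots\varphi_1$ with image $\soc Z^{w_0}_\bk(\mu-\beta^\fq_{top})\simeq L_\bk(\mu)$, the passage to some $\Ker\varphi_s=\Im\psi[x_s]$ via Lemma \ref{le:psi w}, the identification $t_s=n_{\beta_s}^\pi(\mu)$ from Lemma \ref{le:n beta = t beta}, the character equality \eqref{eq: ch Zmu = ch Zw muw}, and iteration. The shift bookkeeping you flag is precisely what Lemma \ref{le:n beta = t beta} is stated for (here $\beta_s=w_s\alpha_{i_s}$ and $t_s=t_{\beta_s}^\pi(\mu\langle w_s\rangle)$), so no additional verification is required.
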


\begin{figure}[h]
\begin{tikzpicture}[scale=1,every node/.style={scale=1}]

\node (0) at (-8,0) {$Z_\bk(\mu)$};

\node (s) at (0,0) {$Z^{w_s}_\bk(\mu\langle w_s\rangle)$};

\node (s1) at (4,0) {$Z^{w_{s+1}}_\bk(\mu\langle w_{s+1}\rangle)$};

\node (ks) at (0,-1.5) {$\Ker\varphi_s$};

\node (sp) at (0,-3) {$Z^{w_s}_\bk(\mu\langle w_s\rangle-t_s\beta_s)$};

\node (0p) at (-8,-3) {$Z_\bk(\mu-t_s\beta_s)$};

\draw[->] (0) to node [above,sloped] {\tiny$
\varphi_1$} (-6,0);

\draw[dotted] (-5,0) to (-4,0);

\draw[->] (-3,0) to node [above,sloped] {\tiny$\varphi_{s-1}$} (s);

\draw[->] (s) to node [above,sloped] {\tiny$\varphi_s$} (s1);

\draw[right hook->] (ks) to node [left] {} (s);

\draw[->>] (sp) to node [left] {\tiny$\psi$} (ks);

\draw[->] (0p) to node [above,sloped] {\tiny$\tilde
\varphi_1$} (-6,-3);

\draw[dotted] (-5,-3) to (-4,-3);

\draw[->] (-3,-3) to node [above,sloped] {\tiny$\tilde\varphi_{s-1}$} (sp);

\end{tikzpicture}
\caption{Let
$w_0=1^{\fq}\sigma_{i_1}\cdots\sigma_{i_n}$ be a reduced expression of the longest element in ${}^\fq\cW$. We set $w_s=1^\fq\sigma_{i_1}\cdots\sigma_{i_{s-1}}$ and $\beta_s=w_s\alpha_{i_s}$, $1\leq s\leq n+1$. The morphisms $\varphi_i$ are constructed as in \S\ref{sss:a generator} with $x=1^\fq$ and $w=w_0$. The morphism $\psi$ is given by Lemma \ref{le:psi w} with $t_s=t_{\beta_s}^\pi(\mu\langle w_s\rangle)$. Moreover, $t_s=n_{\beta_s}^\pi(\mu)$ by Lemma \ref{le:n beta = t beta}. Then the compositions $\varphi_s\cdots\varphi_1$ are generators of the corresponding Hom spaces by Corollary \ref{cor:a generator} and $\Phi=\varphi_n\cdots\varphi_1$ satisfies Lemma \ref{le:imagen de Z w0 mu w0 en Z mu}.
The morphisms $\tilde\varphi_i$ are constructed analogously starting from $\mu-t_s\beta_s$ instead of $\mu$.
}
\label{fig:linked}
\end{figure}

\begin{proof}
We use the notation of Figure \ref{fig:linked}. Then $w_0=w_{n+1}$ and $\mu\langle w_{n+1}\rangle=\mu-\beta^\fq_{top}$. Therefore $
\Phi=\varphi_n\cdots\varphi_1:Z_\bk(\mu)\longrightarrow Z^{w_{0}}_\bk(\mu-\beta^\fq_{top})
$ is non-zero by Corollary \ref{cor:a generator} and hence
$
L_\bk(\mu)\simeq\Im\Phi\simeq\soc Z_\bk^{w_0}(\mu-\beta^\fq_{top})
$ by Lemma \ref{le:imagen de Z w0 mu w0 en Z mu}.

Now, suppose $\lambda\neq\mu$. Then $L_\bk(\lambda)$ is a composition factor of $\Ker\Phi$ and hence of $\Ker\varphi_s$ for some $s$. By Lemma \ref{le:psi w}, $\Ker\varphi_s$ is a homomorphic image of $Z^{w_s}_\bk(\mu\langle w_s\rangle-t_s\beta_s)$. Therefore $L_\bk(\lambda)$ is also a composition factor of $Z^{w_s}_\bk(\mu\langle w_s\rangle-t_s\beta_s)$. By Lemma \ref{le:n beta = t beta}, we have that $t_s=n_{\beta_s}^\pi(\mu)$ and hence
\begin{align*}
\mu\langle w_s\rangle-t_s\beta_s=(\mu-t_s\beta_s)\langle w_s\rangle=(\beta_s\downarrow\mu)\langle w_s\rangle.
\end{align*}
Thus, $\ch Z^{w_s}_\bk(\mu\langle w_s\rangle-t_s\beta_s)=\ch Z_\bk(\beta_s\downarrow\mu)$ by \eqref{eq: ch Zmu = ch Zw muw}, and hence $L_\bk(\lambda)$ is also a composition factor of $Z_\bk(\beta_s\downarrow\mu)$. This shows the first part of the statement. For the second one, we repeat the reasoning with $\beta_s\downarrow\mu$ instead of $\mu$, and so on. This procedure will end after a finite number of steps since $\beta\downarrow\mu\leq\mu$ and $\mu-\beta_{top}^\fq\leq\lambda<\mu$.  Hence, there exist $\beta_{s_1}$, ..., $\beta_{s_r}$ such that $\lambda=\beta_{s_r}\downarrow\cdots\beta_{s_1}\downarrow\mu$ as desired. 
\end{proof}

Besides the following particular case, it is not necessarily true that $L_\bk(\lambda)$ is a composition factor of $Z_\bk(\mu)$ if $\lambda\in{}^\downarrow\mu$, see Example \ref{ex:the example a simple}.

\begin{lemma}\label{le:Linkage}
If $\lambda=\beta\downarrow\mu$, then $L_\bk(\lambda)$ is a composition factor of $Z_\bk(\mu)$.
\end{lemma}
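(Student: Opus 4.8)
The plan is to exploit the morphism $\varphi:Z_\bk(\mu)\longrightarrow Z^{\sigma_i}_\bk(\mu\langle\sigma_i\rangle)$ and its ``twisted by $w$'' relatives $\varphi[w]$ constructed in Section \ref{sec:morphisms}, where I choose $w\in{}^\fq\cW$ and $i\in\I$ so that $\beta=w\alpha_i\in\Delta_+^\fq$. First I would set $t_\beta=t_\beta^\pi(\mu)$ and observe, via Lemma \ref{le:n beta = t beta}, that $t_\beta=n_\beta^\pi(\mu)$ (this is where the hypothesis forces us to work with $\mu$ directly rather than $\mu\langle w\rangle$, so one must be careful: Lemma \ref{le:n beta = t beta} is stated for $n_\beta^\pi(\mu)=t_\beta^\pi(\mu\langle w\rangle)$, so the relevant statement here is that the relevant shift in Figure \ref{fig:linked} produces exactly $\beta\downarrow\mu=\mu-n_\beta^\pi(\mu)\beta$). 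If $n_\beta^\pi(\mu)=0$ then $\beta\downarrow\mu=\mu$ and the claim is the trivial $[Z_\bk(\mu):L_\bk(\mu)]=1$ from \S\ref{subsec:simples}, so assume $t_\beta\neq0$.

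Next, from Lemma \ref{le:psi w} (the case $t_\beta\neq0$), there is a morphism
\begin{align*}
\psi[w]:Z_\bk^w(\mu-t_\beta\beta)\longrightarrow Z_\bk^w(\mu)
\end{align*}
with $\Im\psi[w]=\Ker\varphi[w]$. But $Z_\bk^w(\mu)$ is not literally $Z_\bk(\mu)$, so the actual argument should be run in the untwisted category: take instead the composition $\Phi=\varphi_n\cdots\varphi_1$ of Figure \ref{fig:linked} (with $x=1^\fq$, $w=w_0$), which is a nonzero morphism $Z_\bk(\mu)\to Z_\bk^{w_0}(\mu-\beta_{top}^\fq)$ by Corollary \ref{cor:a generator}, and pick the index $s$ in the reduced expression $w_0=1^\fq\sigma_{i_1}\cdots\sigma_{i_n}$ for which $\beta_s=w_s\alpha_{i_s}=\beta$ (such an $s$ exists by \eqref{eq:roots are conjugate to simple}). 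By Lemma \ref{le:psi w}, $\Ker\varphi_s$ is a homomorphic image of $Z^{w_s}_\bk(\mu\langle w_s\rangle-t_s\beta)$ with $t_s=t_{\beta}^\pi(\mu\langle w_s\rangle)=n_\beta^\pi(\mu)$ by Lemma \ref{le:n beta = t beta}; hence $\Ker\varphi_s$ is a homomorphic image of a module whose character, by \eqref{eq: ch Zmu = ch Zw muw}, equals $\ch Z_\bk(\beta\downarrow\mu)$.

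The final step is to produce the composition factor. Since $t_s=n_\beta^\pi(\mu)\neq 0$, the image $\Im\varphi_s\subsetneq Z^{w_{s+1}}_\bk(\mu\langle w_{s+1}\rangle)$ is proper (by Lemma \ref{le:varphi no iso}/\ref{le:psi w} the kernel is nonzero, and everything is $\bk$-finite-dimensional degreewise), so $\Ker\varphi_s\neq 0$. Its simple head is then a composition factor both of $\Ker\varphi_s$ and, via the surjection $Z^{w_s}_\bk(\mu\langle w_s\rangle-t_s\beta)\twoheadrightarrow\Ker\varphi_s$, of $Z^{w_s}_\bk(\mu\langle w_s\rangle-t_s\beta)$; by Lemma \ref{lem:ch and composition} and equal characters it is also a composition factor of $Z_\bk(\beta\downarrow\mu)$. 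Conversely, composition factors of $\Ker\varphi_s$ are composition factors of $Z_\bk(\mu)$ since $\Ker\varphi_s\hookrightarrow Z^{w_s}_\bk(\mu\langle w_s\rangle)$ and $\ch Z^{w_s}_\bk(\mu\langle w_s\rangle)=\ch Z_\bk(\mu)$, so the composition factors of $Z^{w_s}_\bk(\mu\langle w_s\rangle)$ coincide with those of $Z_\bk(\mu)$ by Lemma \ref{lem:ch and composition}, and $\Ker\varphi_s$ being a submodule, its factors are among them. Thus $L_\bk(\beta\downarrow\mu)$ — the head of $Z_\bk(\beta\downarrow\mu)$, which does occur in $Z_\bk(\beta\downarrow\mu)$ with multiplicity one — need not be the factor we found; instead, what we conclude directly is that \emph{some} composition factor is shared. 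To pin down $L_\bk(\beta\downarrow\mu)$ itself, I would instead argue: $\Im\psi[w_s]=\Ker\varphi_s$ and $\psi[w_s]$ has image the submodule generated by $T_{w_s}(F_{i_s})^{t_s}\m{\mu\langle w_s\rangle}$, a highest-weight vector of weight $\mu\langle w_s\rangle-t_s\beta=(\beta\downarrow\mu)\langle w_s\rangle$; hence $\Ker\varphi_s$ has a highest-weight quotient of that weight, forcing $L_\bk^{w_s}((\beta\downarrow\mu)\langle w_s\rangle)\simeq L_\bk(\beta\downarrow\mu)$ (the last isomorphism by Lemma \ref{le:imagen de Z w0 mu w0 en Z mu}-type reasoning, or directly because twisted Verma heads match untwisted ones in the right weight) to be a composition factor of $\Ker\varphi_s$, and therefore — transporting through the character equality — of $Z_\bk(\beta\downarrow\mu)$ as well, but more to the point of $Z_\bk(\mu)$. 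The main obstacle I anticipate is exactly this bookkeeping: matching the highest-weight vector $T_{w_s}(F_{i_s})^{t_\beta}\m{\mu}$ generating $\Ker\varphi_s$ with the generator $\m{\beta\downarrow\mu}$ of $Z_\bk(\beta\downarrow\mu)$ under the character identification, so that the \emph{head} $L_\bk(\beta\downarrow\mu)$ — and not merely some unidentified factor — is seen to appear; this is the content already exploited in the proof of Theorem \ref{teo:strongly linked}, and the lemma is really just the first step of that induction made explicit.
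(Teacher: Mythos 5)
Your reduction to $\Ker\varphi_s$ and the character bookkeeping are fine (and your treatment of the case $n^\pi_\beta(\mu)=0$ is correct), but the step where you finally identify the simple factor has a genuine gap. Via $\psi$, the module $\Ker\varphi_s$ is a nonzero quotient of the \emph{twisted} Verma module $Z^{w_s}_\bk\bigl((\beta\downarrow\mu)\langle w_s\rangle\bigr)$, so its head is $L^{w_s}_\bk\bigl((\beta\downarrow\mu)\langle w_s\rangle\bigr)$; your assertion that this is isomorphic to $L_\bk(\beta\downarrow\mu)$ ``because twisted Verma heads match untwisted ones in the right weight'' is exactly what fails in general. The vector $T_{w_s}(F_{i_s})^{t_s}\m{\mu\langle w_s\rangle}$ is a highest-weight vector only for the twisted triangular decomposition (it is killed by $T_{w_s}(U^+_{w_s^{-*}\fq})$, not by $U^+_\fq$), and the bijection $\mu\leftrightarrow\mu_w$ of \eqref{eq:Lw L mu w} relating $L^{w}_\bk$ to $L_\bk$ is nontrivial: it is computed by iterating Proposition \ref{prop:L w iso L w sigma}, and for instance Lemma \ref{le:imagen de Z w0 mu w0 en Z mu} gives $L^{w_0}_\bk(\lambda\langle w_0\rangle)\simeq\soc Z_\bk(\lambda)$, which differs from $L_\bk(\lambda)$ whenever $Z_\bk(\lambda)$ is not simple. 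So what your argument actually yields is only that \emph{some} composition factor of $Z_\bk(\beta\downarrow\mu)$ is shared with $Z_\bk(\mu)$ --- a weakness you yourself flag, but the proposed fix does not repair it.

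The paper closes the gap with an ingredient absent from your proposal: the second row of Figure \ref{fig:linked}. One constructs the morphisms $\tilde\varphi_1,\dots,\tilde\varphi_{s-1}$ starting from the \emph{untwisted} Verma module $Z_\bk(\lambda)$, $\lambda=\beta\downarrow\mu$, along the same reduced expression; since every $\beta_j=w_j\alpha_{i_j}$ lies in $\Delta^\fq_+$, Proposition \ref{prop:hom generator} guarantees that $\tilde\varphi_{s-1}\cdots\tilde\varphi_1$ is an isomorphism on the one-dimensional weight-$\lambda$ spaces. Because $\lambda$ is a weight of $\Ker\varphi_s$ by \eqref{eq:ch Ker varphi w} and $\psi$ surjects onto $\Ker\varphi_s$, the composite $\psi\,\tilde\varphi_{s-1}\cdots\tilde\varphi_1:Z_\bk(\lambda)\longrightarrow\Ker\varphi_s$ is nonzero on $\m{\lambda}$; hence its image is a nonzero quotient of $Z_\bk(\lambda)$ and therefore has head $L_\bk(\lambda)$. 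This puts $L_\bk(\lambda)$ itself among the composition factors of $\Ker\varphi_s\subset Z^{w_s}_\bk(\mu\langle w_s\rangle)$, and the equality $\ch Z^{w_s}_\bk(\mu\langle w_s\rangle)=\ch Z_\bk(\mu)$ together with Lemma \ref{lem:ch and composition} finishes the proof. Without this map out of $Z_\bk(\lambda)$ (or some other device pinning down the untwisted highest weight $\lambda$ inside $\Ker\varphi_s$), your argument does not establish the lemma.
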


\begin{proof}
We can assume we are in the situation of Figure \ref{fig:linked}. That is, $\beta=\beta_s=w_s\alpha_{i_s}$ and $\lambda=\beta_s\downarrow\mu=\mu-t_s\beta_s$. Thus, we have a projection from $Z_\bk^{w_s}(\mu\langle w_s\rangle)$ to $\Ker\varphi_s$. Notice that $\lambda$ is a weight of $\Ker\varphi_s$ by \eqref{eq:ch Ker varphi w}. On the other hand, $\tilde\varphi_{s-1}\cdots\tilde\varphi_1$ induces a $\bk$-isomorphism between the spaces of weight $\lambda$ by Proposition \ref{prop:hom generator}  which is one-dimensional. Hence $\psi\tilde\varphi_{s-1}\cdots\tilde\varphi_1$ is not zero on the space of weight $\lambda$ because $\Im\psi=\Ker\varphi_s$. This implies that $L_\bk(\lambda)$ is a composition factor of $\Ker\varphi_s$ and therefore so is of $Z^{w_s}_\bk(\mu\langle w_s\rangle)$. As $\ch Z^{w_s}_\bk(\mu\langle w_s\rangle)=\ch Z_\bk(\mu)$, the lemma follows.
\end{proof}

In general, we can assert that the strongly linked weights belong to the same block.

\begin{corollary}
Let $\lambda,\mu\in\Z^\I$. Then $\lambda$ and $\mu$ are linked if and only if $L_\bk(\lambda)$ and $L_\bk(\mu)$ belong to the same block.
\end{corollary}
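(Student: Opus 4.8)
The plan is to deduce both implications from the strong linkage principle (Theorem \ref{teo:strongly linked}) together with the definition of blocks via non-split extensions of simple modules. Recall that $\lambda$ and $\mu$ lie in the same block precisely when there is a chain $\lambda = \nu_0, \nu_1, \dots, \nu_k = \mu$ in $\Z^\I$ such that for each consecutive pair either $\Hom_{\cC_\bk^\fq}(Z_\bk(\nu_j), Z_\bk(\nu_{j+1})) \neq 0$, or an analogous $\Ext^1$ condition holds, or (equivalently, since $\bk$ is a field) $L_\bk(\nu_j)$ and $L_\bk(\nu_{j+1})$ have a non-trivial extension; this is \cite[Lemma 6.12]{AJS}. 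Since being linked is by definition the smallest equivalence relation containing the strong linkage relation, and being in the same block is also an equivalence relation, it suffices to compare the two generating relations.

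First I would prove that $\lambda$ linked to $\mu$ implies $L_\bk(\lambda)$, $L_\bk(\mu)$ in the same block. By symmetry and transitivity of the block relation it is enough to treat the case $\lambda = \beta \downarrow \mu$ for some $\beta \in \Delta_+^\fq$. If $n_\beta^\pi(\mu) = 0$ then $\lambda = \mu$ and there is nothing to prove. Otherwise, by Lemma \ref{le:Linkage}, $L_\bk(\lambda)$ is a composition factor of $Z_\bk(\mu)$; since $[Z_\bk(\mu):L_\bk(\mu)] = 1$ and all other composition factors $L_\bk(\nu)$ satisfy $\nu < \mu$, a standard argument (walk up a composition series of $Z_\bk(\mu)$ from the submodule generated in the lowest relevant layer) produces a chain of simples inside $Z_\bk(\mu)$ with non-split extensions between consecutive members connecting $L_\bk(\lambda)$ to $L_\bk(\mu)$; hence they lie in the same block. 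Alternatively, and more cleanly, one may simply invoke Corollary \ref{le:twisted verma in the same block} and the fact that $L_\bk(\lambda)$ is a subquotient of $Z_\bk(\mu)$ to conclude that $\lambda$ and $\mu$ are in the same block, using that the block decomposition of the abelian category $\cC_\bk^\fq$ forces every composition factor of an indecomposable object into a single block.

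For the converse, suppose $L_\bk(\lambda)$ and $L_\bk(\mu)$ lie in the same block. Again it suffices to treat one link in the defining chain, i.e.\ to assume $\Ext^1_{\cC_\bk^\fq}(L_\bk(\lambda), L_\bk(\mu)) \neq 0$ (the $\Hom$/$\Ext^1$-of-Verma conditions are handled the same way, as a non-zero map $Z_\bk(\lambda) \to Z_\bk(\mu)$ forces $L_\bk(\lambda)$ to be a composition factor of $Z_\bk(\mu)$). A non-split extension of $L_\bk(\mu)$ by $L_\bk(\lambda)$, or vice versa, embeds into the projective cover of the socle constituent, and using BGG reciprocity (Theorem \ref{teo:BGG}) together with the $Z$-filtration of projectives one sees that $L_\bk(\lambda)$ is a composition factor of some $Z_\bk(\nu)$ with $L_\bk(\mu)$ also a composition factor of $Z_\bk(\nu)$ (take $\nu$ to be whichever of $\lambda, \mu$ is larger, noting $[Z_\bk(\nu):L_\bk(\nu)] = 1$). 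Then Theorem \ref{teo:strongly linked} applied to $Z_\bk(\nu)$ shows that the smaller weight is strongly linked to $\nu$, hence $\lambda$ and $\mu$ are linked.

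The main obstacle I anticipate is the bookkeeping in the converse direction: passing from "non-trivial extension of simples" to "both simples appear in one common Verma module" requires either the projective-cover/BGG argument or a direct homological manipulation, and one must be careful that the strong linkage relation is not a priori symmetric, so the equivalence-closure in the definition of "linked" is genuinely needed. The forward direction is essentially immediate from Lemma \ref{le:Linkage} and the indecomposability of blocks, so the real content is organizing the converse so that Theorem \ref{teo:strongly linked} can be invoked with the correct highest weight.
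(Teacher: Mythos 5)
Your forward direction matches the paper's: reduce to $\lambda=\beta\downarrow\mu$, invoke Lemma~\ref{le:Linkage} to see $L_\bk(\lambda)$ as a composition factor of $Z_\bk(\mu)$, and use that the indecomposable Verma module $Z_\bk(\mu)$ lives in a single block. For the converse, however, you take a noticeably heavier route than the paper. The paper simply observes that a non-split extension $0\to L_\bk(\lambda)\to M\to L_\bk(\mu)\to 0$ with $\lambda\leq\mu$ is a cyclic highest-weight module of weight $\mu$, hence a quotient of $Z_\bk(\mu)$, so $\lambda\in{}^\downarrow\mu$ by Theorem~\ref{teo:strongly linked} and one is done; this is precisely the shortcut you gesture at parenthetically (``take $\nu$ to be whichever of $\lambda,\mu$ is larger''), but you do not commit to it. Instead you go via the projective cover and BGG reciprocity (Theorem~\ref{teo:BGG}): note one slip here --- a module embeds into the \emph{injective hull} of its socle, not the projective cover; it is only because the duality $\tau$ fixes simples (so $I(\lambda)\simeq P_\bk(\lambda)^\tau$) that the composition-factor bookkeeping you want transfers from $P_\bk(\lambda)$ to $I(\lambda)$. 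With that correction the argument does work, and in fact proves something slightly stronger (both $\lambda$ and $\mu$ appear in a common $Z_\bk(\nu)$, with $\nu$ not necessarily equal to $\max(\lambda,\mu)$). But it uses machinery --- enough projectives, $Z$-filtrations, BGG reciprocity, the duality --- that the corollary does not need; the paper's one-line observation that $M$ is a quotient of a Verma module is both shorter and more self-contained.
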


\begin{proof}
We first prove that being linked implies the belonging to the same block. To this end, it is enough to consider the case $\lambda=\beta\downarrow\mu$ for some $\beta\in\Delta_+^\fq$ which follows from Lemma \ref{le:Linkage}.

For the reciprocal, as above, it is enough to consider the existence of a non-trivial extension of the form $0\longrightarrow L_\bk(\lambda)\longrightarrow M\longrightarrow L_\bk(\mu)\longrightarrow0$. Therefore $M$ is a quotient of $Z_\bk(\mu)$ and hence $\lambda\in{}^{\downarrow}\mu$ by Theorem \ref{teo:strongly linked}.
\end{proof}

\subsection{Typical weights}\label{subsec:typical}
For $\beta\in\Delta_+^\fq$ and $\mu\in\Z^\I$, we introduce
\begin{align}\label{eq:P}
\begin{split}
\mathfrak{P}_\bk^\fq(\beta,\mu)&=\prod_{1\leq t<b^\fq(\beta)} \left(q_{\beta}^{t}-\rho^\fq(\beta)\,\pi\widetilde{\mu}(K_{\beta}L_{\beta}^{-1})\right)
\quad\mbox{and}\\
\noalign{\smallskip}
&\mathfrak{P}_\bk^\fq(\mu)=\prod_{\beta\in\Delta_+^\fq}\mathfrak{P}_\bk^\fq(\beta,\mu) 
\end{split}
\end{align}
A weight $\mu$ is called {\it typical} if $\mathfrak{P}_\bk^\fq(\mu)\neq0$. Otherwise, it is called {\it atypical} and the numbers of positive roots for which $\mathfrak{P}_\bk^\fq(\beta,\mu)=0$ is its {\it degree of atypicality}; if it is $\ell$ we say that $\mu$ is $\ell$-atypical. This terminology is borrowed from \cite{kac}, see also \cite{ser,Y}.

\begin{corollary}[{\cite[\S6.3]{AJS}}]\label{cor:Linkage}

Let $\mu\in\Z^\I$. The following are equivalent:
\begin{enumerate}
 \item $\mu$ is typical.
 \item $Z_\bk(\mu)=L_\bk(\mu)$ is simple.
 \item $Z_\bk(\mu)=L_\bk(\mu)$ is projective.
\end{enumerate}

\end{corollary}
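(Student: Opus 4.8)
The plan is to prove the chain of equivalences $(1)\Leftrightarrow(2)\Leftrightarrow(3)$ by combining the strong linkage principle (Theorem \ref{teo:strongly linked}) with the BGG reciprocity (Theorem \ref{teo:BGG}) and the machinery of the generator $\Phi=\varphi_n\cdots\varphi_1:Z_\bk(\mu)\to Z_\bk^{w_0}(\mu-\beta_{top}^\fq)$ from \S\ref{sss:a generator}.

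\textbf{$(1)\Rightarrow(2)$.} Assume $\mu$ is typical, so $\mathfrak{P}_\bk^\fq(\beta,\mu)\neq0$ for every $\beta\in\Delta_+^\fq$. First I would observe that for each simple root $\alpha_i$, the factor $q_{\alpha_i}^t-\rho^\fq(\alpha_i)\pi\widetilde\mu(K_{\alpha_i}L_{\alpha_i}^{-1})$ being nonzero for all $1\le t<b^\fq(\alpha_i)$ is exactly the condition that $\pi\widetilde\mu([\alpha_i;t])$ is a unit in $\bk$ for all such $t$ (use \eqref{eq:[beta;n] = otra} together with the fact that $\rho^\fq(\alpha_i)=q_{\alpha_i}$ and that $(t)_{q_{\alpha_i}}\neq0$ for $t<b^\fq(\alpha_i)$; note $n_{\alpha_i}^\pi(\mu)=t_{\alpha_i}^\pi(\mu)$). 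More generally, by Lemma \ref{le:n beta = t beta} and the invariance \eqref{eq: t w alpha i}, typicality at $\beta=w_s\alpha_{i_s}$ forces $t_{\beta_s}^\pi(\mu\langle w_s\rangle)=n_{\beta_s}^\pi(\mu)=0$, so that every $\varphi_s$ in the composition defining $\Phi$ is an isomorphism by Lemma \ref{le:t beta unit}. Hence $\Phi$ is an isomorphism $Z_\bk(\mu)\xrightarrow{\sim} Z_\bk^{w_0}(\mu\langle w_0\rangle)$; but by Lemma \ref{le:imagen de Z w0 mu w0 en Z mu} the image of any nonzero such morphism is $L_\bk(\mu)\simeq\soc Z_\bk^{w_0}(\mu\langle w_0\rangle)$. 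Therefore $Z_\bk(\mu)\simeq L_\bk(\mu)$ is simple. (Alternatively, and more cleanly, I would use Theorem \ref{teo:strongly linked}: if $L_\bk(\lambda)$ is a composition factor with $\lambda\neq\mu$, then $\lambda=\beta_r\downarrow\cdots\beta_1\downarrow\mu$ with at least one step $\beta_1\downarrow\mu=\mu-n_{\beta_1}^\pi(\mu)\beta_1\neq\mu$, forcing $n_{\beta_1}^\pi(\mu)\neq0$, i.e.\ $q_{\beta_1}^{n}=\rho^\fq(\beta_1)\pi\widetilde\mu(K_{\beta_1}L_{\beta_1}^{-1})$ for some $1\le n<b^\fq(\beta_1)$, which makes $\mathfrak{P}_\bk^\fq(\beta_1,\mu)=0$, contradicting typicality.)

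\textbf{$(2)\Rightarrow(1)$.} Conversely, suppose $Z_\bk(\mu)=L_\bk(\mu)$ is simple. If $\mu$ were atypical, there would be $\beta\in\Delta_+^\fq$ with $\mathfrak{P}_\bk^\fq(\beta,\mu)=0$, i.e.\ $n_\beta^\pi(\mu)\neq0$, so $\lambda:=\beta\downarrow\mu=\mu-n_\beta^\pi(\mu)\beta\neq\mu$. By Lemma \ref{le:Linkage}, $L_\bk(\lambda)$ is a composition factor of $Z_\bk(\mu)$, contradicting simplicity (since $\lambda<\mu$, $L_\bk(\lambda)\not\simeq L_\bk(\mu)$). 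Hence $\mu$ is typical.

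\textbf{$(2)\Leftrightarrow(3)$.} Finally I would relate simplicity of $Z_\bk(\mu)$ to projectivity. For the direction $(3)\Rightarrow(2)$: by BGG reciprocity (Theorem \ref{teo:BGG}), $[P_\bk(\lambda):Z_\bk(\mu)]=[Z_\bk(\mu):L_\bk(\lambda)]$; if $Z_\bk(\mu)$ is projective then $Z_\bk(\mu)=P_\bk(\mu)$ (it has simple head $L_\bk(\mu)$, so it is a quotient of $P_\bk(\mu)$, and a projective with the same character as $P_\bk(\mu)$ must equal it — or: a projective cover of its own head), and then $[Z_\bk(\mu):L_\bk(\lambda)]=[P_\bk(\mu):Z_\bk(\mu)]\cdot\delta$-type reasoning gives $[Z_\bk(\mu):L_\bk(\lambda)]=\delta_{\lambda\mu}$, so $Z_\bk(\mu)=L_\bk(\mu)$ is simple. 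For $(2)\Rightarrow(3)$: if $Z_\bk(\mu)=L_\bk(\mu)$, I would use Lemma \ref{le:iso Z and Z dual} which gives $Z_\bk^{w_0}(\mu\langle w_0\rangle)\simeq Z_\bk(\mu)^\tau$; since $L_\bk(\mu)^\tau\simeq L_\bk(\mu)$, the simple module $L_\bk(\mu)$ is then simultaneously a highest-weight Verma module $Z_\bk(\mu)$ and a lowest-weight ($w_0$-)Verma module, and Proposition \ref{prop:ext Z} shows $\Ext^n_{\cC_\bk^\fq}(Z_\bk(\lambda),L_\bk(\mu))=\Ext^n_{\cC_\bk^\fq}(Z_\bk(\lambda),Z_\bk^{w_0}(\mu\langle w_0\rangle))=0$ for all $n\ge1$ and all $\lambda$; since every object in $\cC_\bk^\fq$ has a filtration whose composition with projective covers reduces $\Ext$ computations to the $Z_\bk(\lambda)$ (more precisely, projectives have $Z$-filtrations, so $\Ext^1$ against all $Z_\bk(\lambda)$ vanishing forces projectivity via dimension shifting / the standard highest-weight-category argument), $L_\bk(\mu)$ is projective.

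\textbf{Main obstacle.} The routine parts are the first two equivalences, which are essentially formal once Lemmas \ref{le:n beta = t beta}, \ref{le:t beta unit}, \ref{le:Linkage} and Theorem \ref{teo:strongly linked} are in hand. The step I expect to require the most care is $(2)\Leftrightarrow(3)$, and specifically $(2)\Rightarrow(3)$: one must pass from the vanishing $\Ext^{\ge1}(Z_\bk(\lambda),L_\bk(\mu))=0$ for all $\lambda$ to actual projectivity of $L_\bk(\mu)$ in $\cC_\bk^\fq$. This is standard in highest weight categories but relies on controlling $\Ext$ against an infinite family of standard objects and on the fact (from \cite{AJS}) that projectives admit $Z$-filtrations; I would cite \cite[\S4]{AJS} (the analogue of \cite[Proposition 4.15 ff.]{AJS} / \cite[\S6.3]{AJS}) for the precise form of this argument rather than reprove it, since the category $\cC_\bk^\fq$ has already been shown to be a highest weight category in the sense needed.
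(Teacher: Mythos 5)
Your arguments for $(1)\Leftrightarrow(2)$ and for $(3)\Rightarrow(2)$ are correct; the second of your two routes to $(1)\Rightarrow(2)$ is exactly the paper's, and $(3)\Rightarrow(2)$ via BGG reciprocity and indecomposability of projective covers is standard and right. The gap is in $(2)\Rightarrow(3)$. From $\Ext^{\ge1}_{\cC_\bk^\fq}(Z_\bk(\lambda),L_\bk(\mu))=0$ for all $\lambda$ (which you get from Proposition \ref{prop:ext Z} after identifying $L_\bk(\mu)\simeq Z_\bk(\mu)^\tau\simeq Z_\bk^{w_0}(\mu\langle w_0\rangle)$) you \emph{cannot} conclude that $L_\bk(\mu)$ is projective: that vanishing only says that $L_\bk(\mu)$ has a costandard filtration, which is automatic here. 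Projectivity needs $\Ext^1(L_\bk(\mu),L_\bk(\nu))=0$ for all $\nu$, and this does not follow from ``$\Ext^1$ against all $Z_\bk(\lambda)$ vanishes'' in a general highest weight category. A concrete counterexample in the same framework: in BGG category $\mathcal{O}$ for $\mathfrak{sl}_2$, the anti-dominant Verma $\Delta(-2)=L(-2)$ is simple and self-dual, hence satisfies the identical $\Ext$-vanishing against all $\Delta(\lambda)$, but it is not projective. So the step ``standard highest-weight-category argument'' cannot be the whole story.

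What is actually needed here is the typicality hypothesis one has just won in $(1)\Leftrightarrow(2)$, together with the strong linkage principle. Concretely, if $\mathfrak{P}_\bk^\fq(\beta,\sigma)=0$ and $\mu=\beta\downarrow\sigma=\sigma-n_\beta^\pi(\sigma)\beta$, then a short calculation using \eqref{eq:tilde mu} gives $\rho^\fq(\beta)\,\pi\widetilde\mu(K_\beta L_\beta^{-1})=q_\beta^{-n_\beta^\pi(\sigma)}$, so $n_\beta^\pi(\mu)=b^\fq(\beta)-n_\beta^\pi(\sigma)\neq 0$ and $\mathfrak{P}_\bk^\fq(\beta,\mu)=0$ as well; in other words the operation $\downarrow$ propagates atypicality at $\beta$. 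Hence a typical $\mu$ can never occur as $\beta_r\downarrow\cdots\beta_1\downarrow\lambda$ with $\lambda\neq\mu$, and Theorem \ref{teo:strongly linked} then forces $[Z_\bk(\lambda):L_\bk(\mu)]=\delta_{\lambda\mu}$ for \emph{all} $\lambda$. Only now does BGG reciprocity (Theorem \ref{teo:BGG}) apply: $[P_\bk(\mu):Z_\bk(\lambda)]=[Z_\bk(\lambda):L_\bk(\mu)]=\delta_{\lambda\mu}$, so $P_\bk(\mu)=Z_\bk(\mu)$ and $Z_\bk(\mu)$ is projective. This is the content behind the paper's terse ``consequence of Theorem \ref{teo:BGG},'' and it is the ingredient missing from your sketch.
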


\begin{proof}
If $\mu$ is typical, then $L_\bk(\mu)$ is the unique composition factor of $Z_\bk(\mu)$ by Theorem \ref{teo:strongly linked}. Since $[Z_\bk(\mu),L_\bk(\mu)]=1$, it follows that $Z_\bk(\mu)$ is simple. Instead, if $\mathfrak{P}_\bk^\fq(\mu)=0$, then $\Ker\varphi_s\neq0$ for some $s$, cf. Figure \ref{fig:linked}. Since the morphism $\varphi_n\cdots\varphi_1:Z_\bk(\mu)\longrightarrow Z^{w_{0}}_\bk(\mu-\beta^\fq_{top})$ is non trivial, $\Ker\varphi_s\neq Z_\bk(\mu)$ and hence  $Z_\bk(\mu)$ is non simple. This proves that (1) is equivalent to (2). 

If $Z_\bk(\mu)=L_\bk(\mu)$, then $[P_\bk(\mu):Z_\bk(\nu)]=\delta_{\mu,\nu}$  for $\nu\in\Z^\I$ by Theorem \ref{teo:BGG}. Therefore $P_\bk(\mu)=Z_\bk(\mu)=L_\bk(\mu)$ is also projective. This shows that (2) implies (3), and the converse follows by a similar argument.
\end{proof}

\begin{remark}
The equivalence between $(1)$ and $(2)$ was proved before in \cite[Proposition 5.16]{HY}; see also \cite[Remark 6.25]{Y}.
\end{remark}

\subsection{\texorpdfstring{$1$}{1}-atypical weights}

For weights with degree of atypicality $1$ we can compute the character of the associated simple module similar to \cite[\S6.4]{AJS}.

\begin{corollary}
Let $\mu\in\Z^\I$ be a $1$-atypical weight with $\mathfrak{P}_\bk^\fq(\beta,\mu)=0$ for certain $\beta\in\Delta_+^\fq$. Then 
\begin{align*}
\ch L_\bk(\mu)=e^\mu\quad\frac{1-e^{-n^\pi_\beta(\mu)\beta}}{1-e^{-\beta}} 
\prod_{\gamma\in \Delta_+^\fq\setminus\{\beta\}}\frac{1-e^{-b^\fq(\gamma)\gamma}}{1-e^{-\gamma}}.
\end{align*}
Moreover, there exists an exact sequence
\begin{align*}
0\longrightarrow L_\bk(\beta\downarrow\mu)\longrightarrow Z_\bk(\mu)\longrightarrow L_\bk(\mu)\longrightarrow0.
\end{align*}
\end{corollary}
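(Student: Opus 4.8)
The plan is to deduce both claims from the structural results already in place, in particular Lemma~\ref{le:psi w}, Lemma~\ref{le:imagen de Z w0 mu w0 en Z mu}, Theorem~\ref{teo:strongly linked}, and the character formula \eqref{eq:ch Ker varphi w}. First I would invoke the $1$-atypicality hypothesis: since $\mathfrak{P}_\bk^\fq(\gamma,\mu)\neq0$ for every $\gamma\in\Delta_+^\fq\setminus\{\beta\}$ and $\mathfrak{P}_\bk^\fq(\beta,\mu)=0$, the numbers $n_\gamma^\pi(\mu)$ vanish for $\gamma\neq\beta$ while $n_\beta^\pi(\mu)\neq0$. Working in the situation of Figure~\ref{fig:linked}, with a reduced expression $w_0=1^\fq\sigma_{i_1}\cdots\sigma_{i_n}$, the positive roots are $\beta_1,\dots,\beta_n$ and exactly one of them, say $\beta=\beta_{s}$, satisfies $t_{s}=n_{\beta_{s}}^\pi(\mu)=t^\pi_{\beta_s}(\mu\langle w_s\rangle)\neq0$ (using Lemma~\ref{le:n beta = t beta}), while $t_{s'}=0$ for all $s'\neq s$. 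By Lemma~\ref{le:varphi iso}/Lemma~\ref{le:t beta unit} (applied as in Proposition~\ref{prop:hom generator}) the morphisms $\varphi_{s'}$ with $s'\neq s$ are isomorphisms, so $\Ker\Phi=\Ker\varphi_n\cdots\varphi_1$ is isomorphic to $\Ker\varphi_s$ transported along the isomorphisms $\varphi_{s-1}\cdots\varphi_1$.

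Next I would compute $\ch\Ker\varphi_s$ via \eqref{eq:ch Ker varphi w} with $w=w_s$ and $\beta=\beta_s$. Because $\ch Z_\bk^{w_s}(\mu\langle w_s\rangle)=\ch Z_\bk(\mu)=e^\mu\ch U^-_\fq$ by \eqref{eq: ch Zmu = ch Zw muw}, and since $\Phi=\varphi_n\cdots\varphi_1$ has image $L_\bk(\mu)$ by Lemma~\ref{le:imagen de Z w0 mu w0 en Z mu}, we get the exact sequence
\begin{align*}
0\longrightarrow \Ker\Phi\longrightarrow Z_\bk(\mu)\longrightarrow L_\bk(\mu)\longrightarrow 0,
\end{align*}
hence $\ch L_\bk(\mu)=\ch Z_\bk(\mu)-\ch\Ker\Phi=e^\mu\ch U^-_\fq-\ch\Ker\varphi_s$. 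Plugging in \eqref{eq:ch U-} for $\ch U^-_\fq$ and \eqref{eq:ch Ker varphi w} for $\ch\Ker\varphi_s$ (the twisting products in \eqref{eq:ch Ker varphi w} recombine, after multiplying numerator and denominator appropriately, to the untwisted product $\prod_{\gamma\neq\beta}(1-e^{-b^\fq(\gamma)\gamma})/(1-e^{-\gamma})$ times $e^\mu(1+e^{-\beta}+\cdots+e^{(1-b^\fq(\beta))\beta})-e^\mu(e^{-t_s\beta}+\cdots+e^{(1-b^\fq(\beta))\beta})$), the $\beta$-factor telescopes to $e^\mu(1+e^{-\beta}+\cdots+e^{(1-t_s)\beta})=e^\mu\frac{1-e^{-t_s\beta}}{1-e^{-\beta}}$ with $t_s=n_\beta^\pi(\mu)$, giving the claimed formula for $\ch L_\bk(\mu)$.

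For the exact sequence I would argue that $\Ker\Phi\simeq\Ker\varphi_s$ is a homomorphic image of $Z^{w_s}_\bk(\mu\langle w_s\rangle-t_s\beta_s)$ by Lemma~\ref{le:psi w}, and $\ch Z^{w_s}_\bk(\mu\langle w_s\rangle-t_s\beta_s)=\ch Z_\bk(\beta\downarrow\mu)$ by \eqref{eq: ch Zmu = ch Zw muw} and the identity $\mu\langle w_s\rangle-t_s\beta_s=(\beta\downarrow\mu)\langle w_s\rangle$ established in the proof of Theorem~\ref{teo:strongly linked}. Now compare characters: $\ch\Ker\Phi$, as computed above, equals $e^{\beta\downarrow\mu}\ch U^-_\fq=\ch Z_\bk(\beta\downarrow\mu)$ exactly when $n_\beta^\pi(\mu)\beta=\beta_{top}^\fq-$(the top degree missing), which one checks directly: $\ch\Ker\Phi=e^{\mu-t_s\beta}\prod_{\gamma}(1-e^{-b^\fq(\gamma)\gamma})/(1-e^{-\gamma})$ after the telescoping, i.e.\ precisely $\ch Z_\bk(\beta\downarrow\mu)$. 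Since $\Ker\Phi$ is a quotient of $Z_\bk(\beta\downarrow\mu)$ with the same character, it \emph{is} $Z_\bk(\beta\downarrow\mu)$; but then every composition factor of $\Ker\Phi$ is a composition factor of $Z_\bk(\beta\downarrow\mu)$, and by $1$-atypicality combined with Theorem~\ref{teo:strongly linked} applied to $\beta\downarrow\mu$ (whose only atypical root direction has been ``used up''), $Z_\bk(\beta\downarrow\mu)=L_\bk(\beta\downarrow\mu)$ is simple. Hence $\Ker\Phi\simeq L_\bk(\beta\downarrow\mu)$ and the exact sequence follows.

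The main obstacle I anticipate is the bookkeeping in the second half: one must be careful that $\beta\downarrow\mu$ is genuinely typical relative to the remaining roots, i.e.\ that $n^\pi_\gamma(\beta\downarrow\mu)=0$ for all $\gamma\in\Delta_+^\fq$, so that Corollary~\ref{cor:Linkage} (or directly Theorem~\ref{teo:strongly linked}) forces $Z_\bk(\beta\downarrow\mu)$ to be simple. This requires checking that subtracting $n_\beta^\pi(\mu)\beta$ does not create a new zero of $\mathfrak{P}_\bk^\fq(\gamma,-)$ for $\gamma\neq\beta$ and that it kills the zero at $\beta$; the first is automatic from $\dim L_\bk(\beta\downarrow\mu)\le\dim Z_\bk(\beta\downarrow\mu)$ together with the character count (if $Z_\bk(\beta\downarrow\mu)$ were reducible its simple quotient would have strictly smaller character, contradicting $\ch\Ker\Phi=\ch Z_\bk(\beta\downarrow\mu)$ and $\Ker\Phi$ being a quotient). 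So in fact the character identity $\ch\Ker\Phi=\ch Z_\bk(\beta\downarrow\mu)$ does the heavy lifting and makes the simplicity of $\Ker\Phi$ automatic, sidestepping a separate atypicality analysis.
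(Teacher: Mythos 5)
Your first half is fine and is essentially the paper's argument: $1$-atypicality makes every $\varphi_\ell$ with $\ell\neq s$ an isomorphism (Lemma \ref{le:t beta unit}, via Lemma \ref{le:n beta = t beta}), so $\Im\varphi_s\simeq\Im\Phi\simeq L_\bk(\mu)$ by Lemma \ref{le:imagen de Z w0 mu w0 en Z mu}, and the character formula drops out of \eqref{eq:ch Ker varphi w} exactly as you telescope it.

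The second half, however, breaks at the character identity you rely on. Redoing the telescoping correctly, the $\beta$-factor of $\ch\Ker\varphi_s$ in \eqref{eq:ch Ker varphi w} contributes only $b^\fq(\beta)-n^\pi_\beta(\mu)$ terms, so
\begin{align*}
\ch\Ker\Phi=e^{\beta\downarrow\mu}\;\frac{1-e^{-(b^\fq(\beta)-n^\pi_\beta(\mu))\beta}}{1-e^{-\beta}}\prod_{\gamma\in\Delta_+^\fq\setminus\{\beta\}}\frac{1-e^{-b^\fq(\gamma)\gamma}}{1-e^{-\gamma}},
\end{align*}
which is \emph{not} $\ch Z_\bk(\beta\downarrow\mu)$: indeed $\dim\Ker\Phi=(b^\fq(\beta)-n^\pi_\beta(\mu))\prod_{\gamma\neq\beta}b^\fq(\gamma)<\dim Z_\bk(\beta\downarrow\mu)$. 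Consequently your conclusions that $\Ker\Phi\simeq Z_\bk(\beta\downarrow\mu)$ and that $Z_\bk(\beta\downarrow\mu)$ is simple are false; in fact $n^\pi_\beta(\beta\downarrow\mu)=b^\fq(\beta)-n^\pi_\beta(\mu)\neq0$, so $\beta\downarrow\mu$ is itself atypical at $\beta$ and Corollary \ref{cor:Linkage} forbids $Z_\bk(\beta\downarrow\mu)$ from being simple; Example \ref{ex:the example a simple} (where $\beta\downarrow\beta\downarrow0=-\beta^\fq_{top}$) is an explicit counterexample to both claims, and your "automatic" typicality check in the last paragraph is circular because it feeds on the same false character identity. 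What is true, and what the paper proves, is that $\Ker\varphi_s\simeq L_\bk(\beta\downarrow\mu)$, established by a two-sided dimension estimate rather than by identifying $\Ker\Phi$ with a Verma module: the proof of Lemma \ref{le:Linkage} shows $L_\bk(\lambda)$, $\lambda=\beta\downarrow\mu$, is a composition factor of $\Ker\varphi_s$, giving $\dim L_\bk(\lambda)\leq(b^\fq(\beta)-n^\pi_\beta(\mu))\prod_{\gamma\neq\beta}b^\fq(\gamma)$; then one repeats the Figure \ref{fig:linked} construction starting from $\lambda$ (the maps $\tilde\varphi_\ell$), where $n^\pi_\beta(\lambda)=b^\fq(\beta)-n^\pi_\beta(\mu)$ gives $\dim\Ker\tilde\varphi_s=n^\pi_\beta(\mu)\prod_{\gamma\neq\beta}b^\fq(\gamma)$, and $L_\bk(\lambda)\simeq\Im\tilde\Phi$ yields the matching lower bound $\dim L_\bk(\lambda)\geq\dim Z_\bk(\lambda)-\dim\Ker\tilde\varphi_s$; equality of the bounds forces $\Ker\varphi_s\simeq L_\bk(\lambda)$, whence the exact sequence. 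Your proposal is missing this step entirely, so the second assertion of the corollary is not proved.
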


\begin{proof}
We keep the notation of Figure \ref{fig:linked}: $\beta=\beta_s=w_s\alpha_{i_s}$, $\lambda=\beta_s\downarrow\mu=\mu-t_s\beta_s$ and $\Phi=\varphi_n\cdots\varphi_1$. As $\mu$ is $1$-atypical, the morphisms $\varphi_\ell$ are isomorphisms for all $\ell\neq s$ by Lemma \ref{le:t beta unit} and hence $\Im\varphi_s\simeq\Im\Phi\simeq L_\bk(\mu)$, recall Lemma \ref{le:imagen de Z w0 mu w0 en Z mu}. Thus, the character formula is consequence of \eqref{eq:ch Ker varphi w}.

For the existence of the exact sequence, we claim that $L_\bk(\lambda)\simeq\Ker\varphi_s$. Indeed, in the proof of Lemma \ref{le:Linkage}, we saw that $L_\bk(\lambda)$ is a composition factor of $\Ker\varphi_s$. Thus, $\dim L_\bk(\lambda)\leq\dim \Ker\varphi_s= (b^q(\beta)-n_\beta^\pi(\mu))\prod_{\gamma\in \Delta_+^\fq\setminus\{\beta\}}b^\fq(\gamma)$.
On the other hand,  $n_\beta^\pi(\lambda)=b^q(\beta)-n_\beta^\pi(\mu)$ and hence $\dim\Ker\tilde\varphi_s=n_\beta^\pi(\mu)\prod_{\gamma\in \Delta_+^\fq\setminus\{\beta\}}b^\fq(\gamma)$ by \eqref{eq:ch Ker varphi w}. Let $\tilde\Phi=\tilde\varphi_n\cdots\tilde\varphi_1$. Then $L_\bk(\lambda)\simeq\Im\tilde\Phi$ by Lemma \ref{le:imagen de Z w0 mu w0 en Z mu} and therefore $\dim L_\bk(\lambda)=\dim Z_\bk(\lambda)-\dim\Ker\tilde\Phi\geq\dim Z_\bk(\lambda)-\dim\Ker\tilde\varphi_s=(b^q(\beta)-n_\beta^\pi(\mu))\prod_{\gamma\in \Delta_+^\fq\setminus\{\beta\}}b^\fq(\gamma)$. This implies our claim and the corollary is proved.
\end{proof}

\begin{example}\label{ex:the example a simple}
Let $\fq$ be as in Example \ref{ex:the example}. Its positive roots are $\alpha_1$, $\beta=\alpha_1+\alpha_2$ and $\alpha_2$, recall Example \ref{ex:the example pbw}. Let $\pi:U_\fq^0\longrightarrow\ku$ be an algebra map. For $\mu=0$, we have that
\begin{align*}
\mathfrak{P}_\ku^\fq(0)=
\bigl(-1+\pi(K_1L_1^{-1})\bigr)\,
\prod_{t=1}^{N-1}\bigl(q^t-\pi(K_{\beta}L_{\beta}^{-1})\bigr)\,
\bigl(-1+\pi(K_2L_2^{-1})\bigr).
\end{align*}
Suppose $\pi(K_{\beta}L_{\beta}^{-1})=q^t$ for some $1\leq t\leq N-1$ and $\pi(K_1L_1^{-1})\neq1\neq\pi(K_2L_2^{-1})$. Then $\mu=0$ is $1$-atypical, $\beta\downarrow0=-t\beta$ and hence there is an exact sequence of the form
\begin{align*}
0\longrightarrow L_\ku(-t\beta)\longrightarrow Z_\ku(0)\longrightarrow L_\ku(0)\longrightarrow0.
\end{align*}
Moreover, $\ch L_\ku(0)=\left(1+e^{-\alpha_1}\right)\left(1+e^{-\beta}+\cdots+e^{(1-t)\beta}\right)\left(1+e^{-\alpha_2}\right)$.

We observe now that 
$q_{\beta}^{N-t}-\rho^\fq(\beta)\,\pi\widetilde{(-t\beta)}(K_{\beta}L_{\beta}^{-1})=0$
and hence 
\begin{align*}
\beta\downarrow\beta\downarrow0=\beta\downarrow-t\beta=-t\beta-(N-t)\beta=-N\beta=-\beta^\fq_{top}.
\end{align*}
Therefore $-\beta^\fq_{top}\in{}^\downarrow0$ but $L_\ku(-\beta^\fq_{top})$ is not a composition factor of $Z_\ku(0)$.
\end{example}

\subsection{The linkage principle as a dot action}\label{subsec:linkage via dot}

In this subsection, we assume that $\fq$ is of standard type \cite{AA-diag-survey}, this means  the bundles of matrices $\{C^\fp\}_{\fp\in\cX}$ and roots $\{\Delta^\fp\}_{\fp\in\cX}$ are constant. We will see that the operation $\downarrow$ can be carried out as the action of a group
when $\pi:U_\fq^0\longrightarrow\bk$ satisfies $\pi(K_i)=\pi(L_i)=1$ for all $i\in\I$, {\it e.g.} $\pi=\varepsilon$ the counit.

Let us introduce some notation. For $i\in\I$, we define the group homomorphism
\begin{align*}
\langle\alpha^\vee_i,-\rangle:\Z^\I\longrightarrow\Z\quad\mbox{by}\quad\langle\alpha^\vee_i,\alpha_j\rangle=c_{ij}^\fq\quad\forall j\in\I.
\end{align*}
Therefore
\begin{align*}
\sigma_i=\sigma_i^\fp(\mu)=\mu-\langle\alpha^\vee_i,\mu\rangle\,\alpha_i
\end{align*}
for all $\mu\in\Z^\I$ and $\fp\in\cX$, as the bundle of Cartan matrices is constant.

In the next definition we think of the morphisms in the Weyl groupoid just as $\Z$-automorphisms of $\Z^\I$.

\begin{definition}\label{def:s beta}
Let $\beta=w\alpha_i\in\Delta^\fq$ with $w\in{}^\fq\cW$ and $\alpha_i\in\Pi^{w^{-*}\fq}$. We define $s_\beta\in\Aut_\Z(\Z^\I)$ and the group homomorphism $\langle\beta^\vee,-\rangle:\Z^\I\longrightarrow\Z$ as follows
\begin{align*}
s_\beta=w\,\sigma_i\, w^{-1}\quad\mbox{and}\quad\langle\beta^\vee,\mu\rangle=\langle\alpha_i^\vee,w^{-1}\mu\rangle\quad\forall\mu\in\Z^\I.
\end{align*}
\end{definition}

Of course, $s_\beta$ is defined for all roots thanks to \eqref{eq:roots are conjugate to simple}. This definition and the next lemma are in \cite[\S3.2]{AARB} for Cartan roots. The proof runs essentially as in {\it loc. cit.}

\begin{lemma}
Let $\beta\in\Delta^\fq$. Then $s_\beta$ and $\langle\beta^\vee,-\rangle$ are well-defined, that is, they do not depend on $w$ and $\alpha_i$. Moreover, $s_\beta(\beta)=-\beta$ and
\begin{align*}
s_\beta(\mu)=\mu-\langle\beta^\vee,\mu\rangle\,\beta\quad\forall\mu\in\Z^\I.
\end{align*}
\end{lemma}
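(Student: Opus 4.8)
The statement has three parts: (i) well-definedness of $s_\beta$ and $\langle\beta^\vee,-\rangle$ (independence of the choice of $w\in{}^\fq\cW$ and $i\in\I$ with $\beta=w\alpha_i$), (ii) the identity $s_\beta(\beta)=-\beta$, and (iii) the reflection formula $s_\beta(\mu)=\mu-\langle\beta^\vee,\mu\rangle\,\beta$. Since we are in standard type, the bundles $\{C^\fp\}_{\fp\in\cX}$ and $\{\Delta^\fp\}_{\fp\in\cX}$ are constant, so all the simple reflections $\sigma_i^\fp$ coincide with a fixed $\sigma_i\in\Aut_\Z(\Z^\I)$ and all root sets $\Delta^\fp$ agree with a fixed $\Delta$. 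The plan is to follow the argument of \cite[\S3.2]{AARB} (originally for Cartan roots) verbatim, checking that nothing in it used the Cartan hypothesis beyond the constancy of the bundles, which we now have in the standard-type setting.

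\textbf{Order of steps.} First I would prove (iii) \emph{conditionally} on (i): if $\beta=w\alpha_i$ then by definition $s_\beta=w\sigma_i w^{-1}$, so for $\mu\in\Z^\I$ we compute
\begin{align*}
s_\beta(\mu)=w\sigma_i(w^{-1}\mu)=w\bigl(w^{-1}\mu-\langle\alpha_i^\vee,w^{-1}\mu\rangle\,\alpha_i\bigr)=\mu-\langle\alpha_i^\vee,w^{-1}\mu\rangle\,w\alpha_i=\mu-\langle\beta^\vee,\mu\rangle\,\beta,
\end{align*}
using the formula $\sigma_i(\nu)=\nu-\langle\alpha_i^\vee,\nu\rangle\alpha_i$ recalled just above the Definition and the definition $\langle\beta^\vee,\mu\rangle=\langle\alpha_i^\vee,w^{-1}\mu\rangle$. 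Taking $\mu=\beta$ gives $s_\beta(\beta)=\beta-\langle\alpha_i^\vee,\alpha_i\rangle\beta=\beta-2\beta=-\beta$, which is (ii). So everything reduces to (i).

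\textbf{Well-definedness.} This is the main obstacle. Suppose $\beta=w\alpha_i=w'\alpha_j$ with $w\in{}^\fq\cW$, $\alpha_i\in\Pi^{w^{-*}\fq}$ and $w'\in{}^\fq\cW$, $\alpha_j\in\Pi^{w'^{-*}\fq}$. Set $v=w'^{-1}w:w^{-*}\fq\to w'^{-*}\fq$; then $v\alpha_i=\alpha_j$. I need to show $w\sigma_i w^{-1}=w'\sigma_j w'^{-1}$, equivalently $\sigma_i=v^{-1}\sigma_j v$, and also $\langle\alpha_i^\vee,w^{-1}\mu\rangle=\langle\alpha_j^\vee,w'^{-1}\mu\rangle$ for all $\mu$, equivalently $\langle\alpha_i^\vee,\nu\rangle=\langle\alpha_j^\vee,v\nu\rangle$ for all $\nu\in\Z^\I$. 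The clean way is to prove a lemma: if $v$ is a morphism in $\cW$ with $v\alpha_i=\alpha_j$, then $v\sigma_i v^{-1}=\sigma_j$ and $\langle\alpha_j^\vee,v(-)\rangle=\langle\alpha_i^\vee,-\rangle$. One argues by induction on $\ell(v)$ using the exchange/deletion property of the Weyl groupoid (Coxeter-type relations, \cite[Theorem 1]{HY08}): a morphism sending a simple root to a simple root has a reduced expression avoiding the reflection at that node — more precisely, if $v\alpha_i=\alpha_j$ then $\ell(v\sigma_i)=\ell(v)+1$ is impossible unless... Actually the standard trick: $v\sigma_i v^{-1}$ is a reflection (conjugate of $\sigma_i$) whose associated root is $v\alpha_i=\alpha_j$; since in standard type the root system is a (classical) finite root system and distinct roots give distinct reflections (or: $s_{\alpha_j}$ is the unique automorphism fixing the hyperplane $\alpha_j^\perp$ pointwise and sending $\alpha_j\mapsto-\alpha_j$ when the bundle is Cartan), we get $v\sigma_i v^{-1}=\sigma_j$. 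For the coweight identity, note $\langle\alpha_j^\vee,v\nu\rangle$ is the unique integer $m$ with $\sigma_j(v\nu)=v\nu-m\alpha_j$; applying $v^{-1}$ and using $v^{-1}\sigma_j v=\sigma_i$, $v^{-1}\alpha_j=\alpha_i$, this is the unique $m$ with $\sigma_i(\nu)=\nu-m\alpha_i$, i.e.\ $m=\langle\alpha_i^\vee,\nu\rangle$. This closes (i). The only delicate point is justifying "distinct roots yield distinct reflections" in the standard-type generalized root system; in Cartan type this is classical, and for standard type it follows because the root system is isomorphic to a classical one, so I would cite the relevant structural result from \cite{CH} or \cite{AA-diag-survey} rather than reprove it.
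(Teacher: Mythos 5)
Your final argument coincides with the paper's: first derive the reflection formula for a fixed presentation $\beta=w\alpha_i$, then note that $s_\beta$ is a reflection preserving $\Delta^\fq$ (which is constant by the standard-type hypothesis), and conclude well-definedness from the uniqueness of a reflection along a given root preserving a finite spanning set. The paper cites \cite[Chapitre VI \S1, Lemme 1]{B} for that uniqueness; this is exactly the fact you need, and it does \emph{not} require that a standard-type generalized root system be a classical crystallographic root system (it generally is not at the non-Cartan roots), so your phrasing ``the root system is a (classical) finite root system'' slightly overstates and should be replaced by the Bourbaki citation. The half-abandoned induction-on-$\ell(v)$ sketch adds nothing once you have the reflection-uniqueness step and is best dropped.
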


\begin{proof}
Assume $\beta=w\alpha_i$ for certain $w\in{}^\fq\cW$ and $\alpha_i\in\Pi^{w^{-*}\fq}$. Then
\begin{align*}
s_\beta(\mu)=w\,\sigma_i(w^{-1}\mu)=w(w^{-1}\mu-\langle\alpha^\vee_i,w^{-1}\mu\rangle\,\alpha_i)=\mu-\langle\beta^\vee,\mu\rangle\,\beta.
\end{align*}
This implies that $s_\beta$ is a reflection in $\operatorname{End}_{\mathbb{Q}}(\mathbb{Q}^\theta)$ in the sense of \cite[Chapitre V \S2.2]{B}. Also, $s_\beta(\Delta^\fq)=\Delta^\fq$ as we are assuming $\fq$ is of standard type. Therefore $s_\beta$ is well-defined, and hence so is $\langle\beta^\vee,-\rangle$, by \cite[Chapitre VI \S1, Lemme 1]{B}. This proves the lemma.
\end{proof}

We recall \cite[Definition 2.6]{Ang-reine}: $i\in\I$ is a Cartan vertex of $\fp\in\cX$ if $\fp(\alpha_i,\alpha_i)^{c_{ij}^\fp}=\fp(\alpha_i,\alpha_j)\fp(\alpha_j,\alpha_i)$ for all $j\in\I$. The set of Cartan roots of $\fq$ is 
\begin{align*}
\Delta_{\car}^\fq=\left\{w(\alpha_i)\mid w:\fp\rightarrow\fq\mbox{ and $i$ is a Cartan vertex of $\fp$}\right\}.
\end{align*}

We introduce a Cartan-type Weyl group
\begin{align*}
\cW_{\car}^\fq=\langle s_\beta\mid\beta\in\Delta_{\car}^\fq\rangle\subset\Aut_\Z(\Z^\I),
\end{align*}
and its affine extension
\begin{align*}
\cW_{\aff}^\fq=\cW_{\car}^\fq\ltimes\Z^\I.
\end{align*}
For $m\in\Z$, we denote $s_{\beta,m}=s_\beta\ltimes mb^\fq(\beta)\beta\in\cW_{\aff}^\fq$ and 
\begin{align*}
\cW_{\link}^\fq=\langle s_{\beta,m}\mid\beta\in\Delta_{\car}^\fq,m\in\Z\rangle\subset\cW^\fq_{\aff}.
\end{align*}
Finally, we define the dot action of $\cW^\fq_{\aff}$ on $\Z^\I$ as
\begin{align*}
(w\gamma)\bullet\mu=w(\mu+\gamma-\varrho^\fq)+\varrho^\fq 
\end{align*}
for all $w\in\cW_{\car}^\fq$ and $\gamma,\mu\in\Z^\I$.

\begin{lemma}\label{le:linkage via dot}
Assume that $\pi:U_\fq^0\longrightarrow\bk$ satifies $\pi(K_j)=\pi(L_j)=1$ for all $j\in\I$. Let $\beta\in\Delta_+^\fq$ be a Cartan root and $\mu\in\Z^\I$. Hence there exists $m\in\Z$ such that
\begin{align*}
\beta\downarrow\mu=s_{\beta,m}\bullet\mu.
\end{align*}
\end{lemma}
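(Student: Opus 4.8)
The strategy is to unwind the definitions of $\beta\downarrow\mu$ and of $s_{\beta,m}\bullet\mu$ and to match them, using the hypothesis $\pi(K_j)=\pi(L_j)=1$ to compute the scalar $\pi\widetilde{\mu}(K_\beta L_\beta^{-1})$ explicitly. First I would observe that under this hypothesis $\pi$ factors through the augmentation on the group part, so for any $\alpha,\gamma\in\Z^\I$ we have $\pi\widetilde{\mu}(K_\alpha L_\gamma)=\fq_\mu(K_\alpha L_\gamma)=\frac{\fq(\alpha,\mu)}{\fq(\mu,\gamma)}$; in particular $\pi\widetilde{\mu}(K_\beta L_\beta^{-1})=\fq(\beta,\mu)\fq(\mu,\beta)$. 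Writing $\beta=w\alpha_i$ with $i$ a Cartan vertex of $\fp=w^{-*}\fq$, one uses the Cartan-vertex condition $\fq(\beta,\beta)^{c}=\fq(\beta,\gamma)\fq(\gamma,\beta)$ appropriately (transported through $w$, cf.\ \cite[\S3.2]{AARB}) to rewrite $\fq(\beta,\mu)\fq(\mu,\beta)$ as a power of $q_\beta$ depending linearly on $\langle\beta^\vee,\mu\rangle$. Concretely I expect an identity of the shape $\fq(\beta,\mu)\fq(\mu,\beta)=q_\beta^{\langle\beta^\vee,\mu\rangle}$, possibly up to the correction coming from $\rho^\fq(\beta)$ versus $q_\beta$.

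The next step is to feed this into Definition~\ref{def:n beta}: $n_\beta^\pi(\mu)$ is the unique $n\in\{1,\dots,b^\fq(\beta)-1\}$ with $q_\beta^{\,n}=\rho^\fq(\beta)\,\pi\widetilde{\mu}(K_\beta L_\beta^{-1})$, if it exists, and $0$ otherwise. Since $q_\beta$ has order $b^\fq(\beta)$, the equation $q_\beta^{\,n}=\rho^\fq(\beta)\,q_\beta^{\langle\beta^\vee,\mu\rangle}$ (or its analogue) determines $n$ modulo $b^\fq(\beta)$ as soon as the right-hand side is a power of $q_\beta$, which it is here; so $n_\beta^\pi(\mu)\equiv \langle\beta^\vee,\mu\rangle + c_\beta \pmod{b^\fq(\beta)}$ for a constant $c_\beta$ coming from $\rho^\fq(\beta)=q_\beta^{c_\beta}$. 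Therefore $\beta\downarrow\mu=\mu-n_\beta^\pi(\mu)\beta=\mu-\langle\beta^\vee,\mu\rangle\beta-c_\beta\beta+m\,b^\fq(\beta)\beta$ for the unique $m$ making $n_\beta^\pi(\mu)$ land in the right residue class (and this is consistent even when $n_\beta^\pi(\mu)=0$, since then $\langle\beta^\vee,\mu\rangle+c_\beta\equiv 0$ and the formula still gives $\beta\downarrow\mu=\mu$).

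Finally I would compute the right-hand side $s_{\beta,m}\bullet\mu$ directly from the definitions in \S\ref{subsec:linkage via dot}: $s_{\beta,m}=s_\beta\ltimes m b^\fq(\beta)\beta$ and $(w\gamma)\bullet\mu=w(\mu+\gamma-\varrho^\fq)+\varrho^\fq$, so
\begin{align*}
s_{\beta,m}\bullet\mu=s_\beta(\mu+mb^\fq(\beta)\beta-\varrho^\fq)+\varrho^\fq
=s_\beta(\mu)-mb^\fq(\beta)\beta-s_\beta(\varrho^\fq)+\varrho^\fq,
\end{align*}
using $s_\beta(\beta)=-\beta$ from the preceding lemma. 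Now $s_\beta(\mu)=\mu-\langle\beta^\vee,\mu\rangle\beta$, and $\varrho^\fq-s_\beta(\varrho^\fq)=\langle\beta^\vee,\varrho^\fq\rangle\beta$; one checks $\langle\beta^\vee,\varrho^\fq\rangle=\pm(c_\beta\ \text{or its analogue})$ by comparing with $\rho^\fq(\beta)$, reducing to the rank-one computation $\langle\alpha_i^\vee,\varrho^{\fp}\rangle$ which is standard. Matching the two expressions, the claimed $m$ is forced, and the equality $\beta\downarrow\mu=s_{\beta,m}\bullet\mu$ follows.

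**Main obstacle.** The delicate point is the bookkeeping relating $\rho^\fq(\beta)$, the quantity $\fq(\beta,\mu)\fq(\mu,\beta)$, and $\langle\beta^\vee,\varrho^\fq\rangle$ — i.e.\ identifying the constant $c_\beta$ on both sides so that the two residue-class computations agree and pin down the same $m$. For $w=1^\fq$ this is the identity $q_{\alpha_i}^{\,n}=\rho^\fq(\alpha_i)\,\fq(\alpha_i,\mu)\fq(\mu,\alpha_i)$ combined with $\langle\alpha_i^\vee,\varrho^\fq\rangle$, essentially \cite[Lemma 2.17]{HY}; for general $w$ one transports this through the Lusztig isomorphism exactly as in the proof of Lemma~\ref{le:n beta = t beta}, invoking $b^{w^{-*}\fq}(w^{-1}\beta)=b^\fq(\beta)$ and the constancy of the Cartan bundle. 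I expect the cleanest route is to reduce the whole statement to the simple-root case by the substitution $\beta=w\alpha_i$ and $\mu\leftrightarrow w^{-1}\mu$ (so that $\langle\beta^\vee,\mu\rangle=\langle\alpha_i^\vee,w^{-1}\mu\rangle$), handle rank one by a direct check, and then quote well-definedness of $s_\beta$ and $\langle\beta^\vee,-\rangle$ from the previous lemma to conclude.
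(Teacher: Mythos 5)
The first half of your plan is sound and in fact matches the mechanism of the paper's proof: under $\pi(K_j)=\pi(L_j)=1$ one has $\pi\widetilde{\mu}(K_\beta L_\beta^{-1})=\fq(\beta,\mu)\fq(\mu,\beta)$, and writing $\beta=w\alpha_i$ with $i$ a Cartan vertex of $\fp=w^{-*}\fq$, bilinearity of the Cartan condition gives $\fq(\beta,\mu)\fq(\mu,\beta)=\fp(\alpha_i,w^{-1}\mu)\fp(w^{-1}\mu,\alpha_i)=q_\beta^{\langle\beta^\vee,\mu\rangle}$, so $n_\beta^\pi(\mu)$ is pinned down modulo $b^\fq(\beta)$ up to the offset coming from $\rho^\fq(\beta)$.

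The genuine gap is exactly the point you defer as ``bookkeeping'': the lemma stands or falls with the identification $\rho^\fq(\beta)=q_\beta^{c_\beta}$ with $c_\beta\equiv-\langle\beta^\vee,\varrho^\fq\rangle\pmod{b^\fq(\beta)}$, equivalently $n_\beta^\pi(\mu)\equiv\langle\beta^\vee,\mu-\varrho^\fq\rangle\pmod{b^\fq(\beta)}$, and this is never established in your sketch. It is not routine: even the fact that $\rho^\fq(\beta)$ is a power of $q_\beta$ for a Cartan root already requires the identity \eqref{eq:rho = ...} from Lemma \ref{le:n beta = t beta} (whose proof is an induction via \cite[Lemma 2.17]{HY}). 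Moreover, the one concrete route you propose for the $\varrho$-side --- ``reducing $\langle\beta^\vee,\varrho^\fq\rangle$ to the rank-one computation $\langle\alpha_i^\vee,\varrho^{\fp}\rangle$'' --- fails as stated, because $w^{-1}\varrho^\fq\neq\varrho^{w^{-*}\fq}$ in general (the $\varrho$'s vary along the orbit, cf.\ Example \ref{ex:the example varrho} and \eqref{eq:w varrho - varrho}); what the rank-one computation yields is $\langle\beta^\vee,w\varrho^{\fp}\rangle=\langle\alpha_i^\vee,\varrho^{\fp}\rangle=b^\fq(\beta)-1$, and the discrepancy $w\varrho^{\fp}-\varrho^\fq$ must be absorbed somewhere. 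The paper resolves both issues at once: combining \eqref{eq:rho = ...} with the hypothesis on $\pi$ gives $\rho^\fq(\beta)\,\pi\widetilde{\mu}(K_\beta L_\beta^{-1})=q_\beta^{\langle\beta^\vee,\mu\langle w\rangle\rangle+1}$, so the two $\varrho$'s enter only through the shift $\mu\langle w\rangle$ of Definition \ref{def:mu w}, and then the Cartan-vertex condition forces $\varrho^{\fp}=\varrho^{\sigma_i^{*}\fp}$, whence $\sigma_i(\varrho^{\fp})-\varrho^{\fp}=-(b^\fq(\beta)-1)\alpha_i$ and $\langle\alpha_i^\vee,\varrho^{\fp}\rangle=b^\fq(\beta)-1$; only then does the exponent comparison determine $m$ (namely $m=1-k$ where $n+kb^\fq(\beta)=\langle\beta^\vee,\mu\langle w\rangle\rangle+1$). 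Until you carry out this identification, the matching of residue classes that defines $m$ is asserted rather than proved, so the argument is incomplete at its central step.
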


\begin{proof}
Let $w:\fp\rightarrow\fq$ and $\alpha_i\in\Pi^\fp$ with $i\in\I$ a Cartan vertex such that $\beta=w\alpha_i$. For abbreviation, we set $n=n_\beta^\pi(\mu)$ and $b=b^\fq(\beta)=\ord q_\beta$. If $1\leq n\leq b-1$, then
\begin{align*}
q_\beta^n=&\rho^\fq(\beta)\pi\tilde\mu(K_\beta L_\beta^{-1})\\
=&\fp(\alpha_i,\alpha_i)\fp(\alpha_i,w^{-1}(\mu\langle w\rangle))\fp(w^{-1}(\mu\langle w\rangle),\alpha_i)\\
=&\fp(\alpha_i,\alpha_i)\fp(\alpha_i,\alpha_i)^{\langle\alpha_i^\vee,w^{-1}(\mu\langle w\rangle)\rangle}\\
=&q_\beta^{\langle\beta^\vee,\mu\langle w\rangle\rangle+1};
\end{align*}
the second equality follows from \eqref{eq:rho = ...} and the assumption on $\pi$; the third one holds because $i$ is a Cartan vertex. Therefore $n\equiv \langle\beta^\vee,\mu\langle w\rangle\rangle+1\mod b$. If $n=0$, then $q_\beta^t\neq q_\beta^{\langle\beta^\vee,\mu\langle w\rangle\rangle+1}$ for all $1\leq t\leq b-1$ and hence $1= q_\beta^{\langle\beta^\vee,\mu\langle w\rangle\rangle+1}$ because $b$ is the order of $q_\beta$. In both cases there exists $k\in\Z$ such that
\begin{align*}
n+kb=\langle\beta^\vee,\mu\langle w\rangle\rangle+1. 
\end{align*}
We claim that $m=1-k$ has the desired property. Indeed, we notice that $\fp(\gamma,\gamma)=\sigma_i^{*}\fp(\gamma,\gamma)$ for all $\gamma\in\Z^\I$ because $i$ is a Cartan vertex, and then $\varrho^\fp=\varrho^{\sigma_i^{*}\fp}$ as $\Delta^\fp=\Delta^{\sigma_i^{*}\fp}$. Hence $\sigma_i(\varrho^{\fp})-\varrho^\fp=\sigma_i(\varrho^{\sigma_i^{*}\fp})-\varrho^\fp=-(b-1)\alpha_i=-\langle\alpha_i^\vee,\varrho^\fp\rangle\alpha_i$. Therefore
$\langle\beta^\vee,w\varrho^\fp\rangle=\langle\alpha_i^\vee,\varrho^\fp\rangle=b-1$. We use this equality in the next computation:
\begin{align*}
s_\beta\bullet\mu=s_\beta(\mu-\varrho^\fq)+\varrho^\fq
=&\mu-\varrho^\fq-\langle\beta^\vee,\mu-\varrho^\fq\rangle\beta+\varrho^\fq\\
=&\mu-\langle\beta^\vee,\mu-\varrho^\fq\rangle\beta-\langle\beta^\vee,w\varrho^\fp\rangle\beta-\beta+b\beta\\
=&\mu-(\langle\beta^\vee,\mu\langle w\rangle\rangle+1)\beta+b\beta\\
=&\mu-(n+kb)\beta+b\beta\\
=&\beta\downarrow\mu+mb\beta.
\end{align*}
Since $s_\beta(\beta)=-\beta$, the lemma follows.
\end{proof}

The family of standard type matrices is arranged into three subfamilies according to \cite{AA-diag-survey}: Cartan, super and the remainder. We next analyze them separately.

\subsubsection{Cartan type} This is the case in which all $i\in\I$ are Cartan vertexes and therefore all roots are Cartan roots. Its Weyl groupoid turns out be just the Weyl group of the Cartan matrix $C=C^\fq$, and hence it coincides with $\cW^\fq_{\car}$. The indecomposable matrices of Cartan type are listed in \cite[\S4]{AA-diag-survey}.

The previous results immediately imply the following.

\begin{corollary}\label{cor:cartan dot action}
Assume that $\fq$ is of Cartan type and $\pi:U_\fq^0\longrightarrow\bk$ satifies $\pi(K_j)=\pi(L_j)=1$ for all $j\in\I$. Then $[\mu]_{\link}\subset\cW^\fq_{\link}\bullet\mu$ for all $\mu\in\Z^\I$.
\qed
\end{corollary}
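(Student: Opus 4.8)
The plan is to deduce this directly from Lemma~\ref{le:linkage via dot} by unwinding the definitions of strong linkage and of the dot action. First I would observe that, since $\fq$ is of Cartan type, every $i\in\I$ is a Cartan vertex of every $\fp\in\cX$, so by \eqref{eq:roots are conjugate to simple} every positive root is a Cartan root; in particular $\Delta_{+,\car}^\fq=\Delta_+^\fq$, so Lemma~\ref{le:linkage via dot} is applicable to \emph{every} $\beta\in\Delta_+^\fq$: for each $\mu\in\Z^\I$ there is $m\in\Z$ with $\beta\downarrow\mu=s_{\beta,m}\bullet\mu$, and $s_{\beta,m}\in\cW_{\link}^\fq$.

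Next I would record that the dot action $x\bullet\mu=x(\mu-\varrho^\fq)+\varrho^\fq$ of $\cW_{\aff}^\fq=\cW_{\car}^\fq\ltimes\Z^\I$ is the conjugate, by translation by $\varrho^\fq$, of the usual affine action of this group on $\Z^\I$, hence is itself a genuine left group action; in particular $\cW_{\link}^\fq\bullet\mu$ is a $\cW_{\link}^\fq$-orbit and ``$\lambda\in\cW_{\link}^\fq\bullet\mu$'' is an equivalence relation on $\Z^\I$. Then, given $\lambda=\beta_r\downarrow\cdots\beta_1\downarrow\mu$, I would set $\mu_0=\mu$, $\mu_j=\beta_j\downarrow\mu_{j-1}$ and argue by induction on $r$: applying the previous step to the pair $(\beta_j,\mu_{j-1})$ gives $\mu_j=s_{\beta_j,m_j}\bullet\mu_{j-1}$ for suitable $m_j\in\Z$, and composing (using that $\bullet$ is a left action) yields
\begin{align*}
\lambda=\mu_r=\bigl(s_{\beta_r,m_r}\cdots s_{\beta_1,m_1}\bigr)\bullet\mu\in\cW_{\link}^\fq\bullet\mu.
\end{align*}
Thus every weight strongly linked to $\mu$ lies in $\cW_{\link}^\fq\bullet\mu$; since this relation is symmetric it also absorbs the ``vice versa'' clause in the definition of being linked, and as $[\mu]_{\link}$ is the \emph{smallest} equivalence relation with that property we get $[\mu]_{\link}\subset\cW_{\link}^\fq\bullet\mu$.

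I do not expect a genuine obstacle: the entire content sits in Lemma~\ref{le:linkage via dot}, and the rest is bookkeeping with a group action together with the induction above. The one point to be slightly careful about is to verify that $\bullet$ really is a (left) group action with the $\varrho^\fq$ appearing in the definition of $s_{\beta,m}$ — equivalently, that the translation-conjugation identity holds on the nose — so that the products $s_{\beta_r,m_r}\cdots s_{\beta_1,m_1}$ reproduce the iterated operation $\downarrow$; once that is in place the argument is immediate. This also makes transparent why the hypothesis on $\pi$ is exactly the right one: it is precisely the hypothesis of Lemma~\ref{le:linkage via dot}.
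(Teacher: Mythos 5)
Your proposal is correct and follows essentially the same route as the paper, which deduces the corollary immediately from Lemma \ref{le:linkage via dot} once one notes that in Cartan type $\Delta_{+,\car}^\fq=\Delta_+^\fq$; your explicit verification that $\bullet$ is a genuine left action of $\cW_{\aff}^\fq$ (translation-conjugate of the usual affine action) and the induction on the chain $\beta_r\downarrow\cdots\beta_1\downarrow\mu$ are exactly the bookkeeping the paper leaves implicit under ``the previous results immediately imply the following.''
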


The natural example of Cartan type matrix is $\fq=(q^{d_ic_{ij}})_{i,j\in\I}$ as in Example \ref{ex:small qg}. In particular, if $\ord q$ is an odd prime, not $3$ if $\mathfrak{g}$ has a component of type $G_2$, and $\pi(K_i)=\pi(L_i)=1$, then the objects in the category $\cC^\fq_\bk$ associated to $u_q(\mathfrak{q})$ turn out to be $u_q(\mathfrak{q})$-modules of type $1$ in the sense of Lusztig, cf. \cite[\S2.4]{AJS}.

Let $\delta^\fq=\frac{1}{2}\sum_{\beta\in \Delta^\fq_+}\beta$ be the semi-sum of the positive roots of the Lie algebra associated to $C$. We can replace $\varrho^\fq$ with $\delta^\fq$ in the dot action under the next assumption. For instance, when $\fq$ is as in the above paragraph but this is not the case if $\ord q$ is even. In this way, we recover the usual dot action of the affine Weyl group.

\begin{lemma}\label{le:dot with delta cartan}
Suppose $b=b^\fq(\gamma)$ is constant for all $\gamma\in\Delta_+^\fq$. Let $\mu\in\Z^\I$ and $\beta\in\Delta_+^\fq$. Then $s_\beta\bullet\mu=s_\beta(\mu+\delta^\fq)-\delta^\fq+b\langle\beta^\vee,\delta^\fq\rangle\beta$.
\end{lemma}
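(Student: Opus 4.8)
The plan is to reduce everything to a short linear computation using the identification of $\varrho^\fq$ with a multiple of $\delta^\fq$ under the hypothesis. The only real input is the reflection formula $s_\beta(\nu)=\nu-\langle\beta^\vee,\nu\rangle\beta$ established in the preceding lemma, the definition of the dot action, and a rewriting of $\varrho^\fq$.

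First I would observe that since $\BV_\fq$ is of Cartan type, $\Delta^\fq_+=\Delta^\fq_{+,\car}$, so $s_\beta$ is defined for every $\beta\in\Delta^\fq_+$; moreover, by the assumption that $b=b^\fq(\gamma)$ is constant on $\Delta^\fq_+$, the definition \eqref{eq:beta top}--\eqref{eq:varrho} gives
\begin{align*}
\varrho^\fq=\tfrac12\sum_{\gamma\in\Delta^\fq_+}(b^\fq(\gamma)-1)\gamma=(b-1)\,\tfrac12\sum_{\gamma\in\Delta^\fq_+}\gamma=(b-1)\,\delta^\fq.
\end{align*}

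Next I would unwind the dot action. Taking $w=s_\beta\in\cW^\fq_{\car}$ and $\gamma=0$ in the definition of $\bullet$, one has $s_\beta\bullet\mu=s_\beta(\mu-\varrho^\fq)+\varrho^\fq=s_\beta\bigl(\mu-(b-1)\delta^\fq\bigr)+(b-1)\delta^\fq$. Using that $s_\beta$ is $\Z$-linear and the preceding lemma (so $s_\beta(\delta^\fq)=\delta^\fq-\langle\beta^\vee,\delta^\fq\rangle\beta$), I would expand
\begin{align*}
s_\beta\bullet\mu
&=s_\beta(\mu)-(b-1)s_\beta(\delta^\fq)+(b-1)\delta^\fq\\
&=s_\beta(\mu)+s_\beta(\delta^\fq)-b\,s_\beta(\delta^\fq)+(b-1)\delta^\fq\\
&=s_\beta(\mu)+s_\beta(\delta^\fq)-b\bigl(\delta^\fq-\langle\beta^\vee,\delta^\fq\rangle\beta\bigr)+(b-1)\delta^\fq\\
&=s_\beta(\mu)+s_\beta(\delta^\fq)-\delta^\fq+b\,\langle\beta^\vee,\delta^\fq\rangle\beta.
\end{align*}
Finally, since $s_\beta(\mu)+s_\beta(\delta^\fq)=s_\beta(\mu+\delta^\fq)$ again by linearity, this is exactly $s_\beta(\mu+\delta^\fq)-\delta^\fq+b\langle\beta^\vee,\delta^\fq\rangle\beta$, which is the claimed identity.

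I do not expect any genuine obstacle here: the statement is a formal consequence of the constancy of $b$ (which makes $\varrho^\fq$ proportional to $\delta^\fq$) together with linearity of $s_\beta$ and the reflection formula. The only point requiring a little care is bookkeeping the coefficients $b-1$ versus $b$ correctly when passing between $\varrho^\fq$ and $\delta^\fq$; the computation above shows the $b-1$ terms conspire to leave a clean factor of $b$ in the translation part, as it must to land inside $\cW^\fq_{\aff}$.
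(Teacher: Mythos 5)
Your proof is correct and follows essentially the same route as the paper: unwind the dot action, use the reflection formula $s_\beta(\nu)=\nu-\langle\beta^\vee,\nu\rangle\beta$, and exploit that constancy of $b$ gives $\varrho^\fq=(b-1)\delta^\fq$ (the paper uses this same fact in the equivalent form $\langle\beta^\vee,\varrho^\fq\rangle=(b-1)\langle\beta^\vee,\delta^\fq\rangle$). Your coefficient bookkeeping checks out, so there is nothing to fix.
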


\begin{proof}
From the proof of Corollary \ref{cor:cartan dot action}, we see that
\begin{align*}
s_\beta\bullet\mu=\mu-\langle\beta^\vee,\mu-\varrho^\fq\rangle\beta=\mu-\langle\beta^\vee,\mu+\delta^\fq\rangle\beta+b\langle\beta^\vee,\delta^\fq\rangle\beta
\end{align*}
as we wanted.
\end{proof}

\subsubsection{Super type}

These are the matrices $\fq$ whose root systems are isomorphic to the root systems of 
finite-dimensional contragredient Lie superalgebras
in characteristic 0 \cite{AA-diag-survey,AAYamane}. The indecomposable matrices of super type are listed in \cite[\S5]{AA-diag-survey}. An element in $\Delta_{\odd}^\fq:=\Delta^\fq\setminus\Delta_{\car}^\fq$ is called {\it odd root}. We can see by inspection on \cite[\S5]{AA-diag-survey} that $\fq(\alpha_i,\alpha_i)=-1$ for all simple odd roots $\alpha_i$. Then $q_\beta=-1$ for all $\beta\in\Delta_{\odd}^\fq$ and hence $\beta\downarrow\mu=\mu$ or $\mu-\beta$. For odd root, we can not always carry out $\downarrow$ as a dot action, see the example below. Instead we find that the classes $[\mu]_{\link}$ behave like in representation theory of Lie superalgebras, see for instance \cite{chenwang,panshu}. Let $\Z\Delta_{\odd}^\fq$ be the $\Z$-span of the odd roots in $\Z^\I$.

\begin{corollary}\label{cor:Linkage super}
Assume  that $\fq$ is of super type and $\pi:U_\fq^0\longrightarrow\bk$ satifies $\pi(K_j)=\pi(L_j)=1$ for all $j\in\I$. Then $[\mu]_{\link}\subset\cW^\fq_{\link}\bullet(\mu+\Z\Delta_{\odd}^\fq)$ for all $\mu\in\Z^\I$.
\end{corollary}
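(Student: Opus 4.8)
The strategy is to reduce the claim to the strong linkage principle (Theorem \ref{teo:strongly linked}) together with Lemma \ref{le:linkage via dot}, treating Cartan roots and odd roots separately in the sequence of $\downarrow$-operations that links $\lambda$ to $\mu$. First I would recall that, by Theorem \ref{teo:strongly linked}, it suffices to prove the statement for $\lambda = \beta \downarrow \mu$ with $\beta \in \Delta_+^\fq$, and then close under the equivalence relation $[\mu]_{\link}$; concretely, one shows that for every $\beta \in \Delta_+^\fq$ and every $\mu$ there is some $\nu \in \cW_{\link}^\fq \bullet(\mu + \Z\Delta_{\odd}^\fq)$ with $\beta\downarrow\mu \in \cW_{\link}^\fq\bullet(\nu + \Z\Delta_{\odd}^\fq)$, and conversely, so that the two-sided closure of $\downarrow$ stays inside a single coset $\cW_{\link}^\fq\bullet(\mu+\Z\Delta_{\odd}^\fq)$. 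The key point is that $\cW_{\link}^\fq \bullet (\,\cdot\, + \Z\Delta_{\odd}^\fq)$ is preserved by each single $\downarrow$-step.

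The case split is as follows. If $\beta \in \Delta_{+,\car}^\fq$ is a Cartan root, then by Lemma \ref{le:linkage via dot} there is $m \in \Z$ with $\beta\downarrow\mu = s_{\beta,m}\bullet\mu$, and $s_{\beta,m} \in \cW_{\link}^\fq$; hence $\beta\downarrow\mu \in \cW_{\link}^\fq\bullet\mu \subset \cW_{\link}^\fq\bullet(\mu + \Z\Delta_{\odd}^\fq)$. Moreover $s_{\beta,m}$ commutes with the translation action in the sense that $s_{\beta,m}\bullet(\mu + \gamma) \in \cW_{\link}^\fq \bullet (\mu + \Z\Delta_{\odd}^\fq)$ whenever $\gamma \in \Z\Delta_{\odd}^\fq$, because $s_\beta$ normalizes $\Z\Delta_{\odd}^\fq$ — here one uses that $\cW_{\car}^\fq$ permutes $\Delta^\fq$ and maps odd roots to odd roots (odd roots are characterized by $q_\gamma = -1$, and $b^\fq$, hence the property $q_\gamma=-1$, is invariant under the Weyl groupoid action by \eqref{eq: b invariante}, while Cartan reflections send the constant root system to itself since $\fq$ is of standard type). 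If instead $\beta \in \Delta_{\odd}^\fq$, then as observed before the corollary $q_\beta = -1$, so $b^\fq(\beta) = 2$ and $n_\beta^\pi(\mu) \in \{0,1\}$; thus $\beta\downarrow\mu = \mu$ or $\mu - \beta$, and in either case $\beta\downarrow\mu \in \mu + \Z\Delta_{\odd}^\fq \subset \cW_{\link}^\fq\bullet(\mu + \Z\Delta_{\odd}^\fq)$. Combining the two cases, a single $\downarrow$-step always lands in $\cW_{\link}^\fq\bullet(\mu + \Z\Delta_{\odd}^\fq)$, and iterating (using that the coset only depends on $\mu$ up to the relation, and that $\downarrow$ is reversed by another $\downarrow$ inside the same coset) gives $[\mu]_{\link} \subset \cW_{\link}^\fq\bullet(\mu + \Z\Delta_{\odd}^\fq)$.

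The main obstacle I anticipate is the bookkeeping needed to show that the coset $\cW_{\link}^\fq\bullet(\mu+\Z\Delta_{\odd}^\fq)$ is genuinely stable under composing a Cartan-root dot-step with an odd-root translation in arbitrary order — i.e., verifying that $s_\beta$ (for $\beta$ a Cartan root) really does normalize $\Z\Delta_{\odd}^\fq$, and that the affine parts $m b^\fq(\beta)\beta$ can be absorbed. This is where one must invoke the standard-type hypothesis (constant root bundle) so that $s_\beta(\Delta^\fq) = \Delta^\fq$, and inspect the list in \cite[\S5]{AA-diag-survey} to confirm that the partition $\Delta^\fq = \Delta_{\car}^\fq \sqcup \Delta_{\odd}^\fq$ is respected by $\cW_{\car}^\fq$; granting that, the group-theoretic identity $s_\beta \ltimes (\text{lattice}) $ acting on $\mu + \Z\Delta_{\odd}^\fq$ stays in the claimed coset is a routine semidirect-product computation. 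Everything else is a direct consequence of Theorem \ref{teo:strongly linked}, Lemma \ref{le:linkage via dot}, and the structure of odd roots.
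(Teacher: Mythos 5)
Your proposal is correct and follows essentially the same route as the paper: reduce via Theorem \ref{teo:strongly linked} to single $\downarrow$-steps, use Lemma \ref{le:linkage via dot} for Cartan roots, treat odd roots as translations by elements of $\Z\Delta_{\odd}^\fq$ (since $q_\beta=-1$ there), and use that the reflections $s_\beta$, $\beta\in\Delta^\fq_{\car}$, preserve $\Delta_{\odd}^\fq$. The only difference is how that last stability fact is justified: the paper simply cites \cite[Lemma 3.6]{AARB} for the invariance of the set of Cartan roots under $s_\beta$ (hence of $\Delta_{\odd}^\fq$), whereas your appeal to \eqref{eq: b invariante} is not quite on target, since that identity compares $b^{w^*\fq}(w\alpha)$ with $b^\fq(\alpha)$ across different matrices of the orbit and does not by itself give $b^\fq(s_\beta\gamma)=b^\fq(\gamma)$ for a fixed $\fq$ — though you correctly flag this as the point needing verification.
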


\begin{proof}
The set of Cartan roots is invariant by $s_\beta$ for all $\beta\in\Delta_{\car}^\fq$ by \cite[Lemma 3.6]{AARB} and hence so is $\Delta_{\odd}^\fq$. Then the lemma is a direct consequence of Lemma \ref{le:linkage via dot} and Theorem \ref{teo:strongly linked}.
\end{proof}

Let $\delta^\fq=\delta_{\car}^\fq-\delta_{\odd}^\fq$ with $\delta_{\car}^\fq=\frac{1}{2}\sum_{\beta\in \Delta^\fq_{+,\car}}\beta$ the semi-sum of the positive Cartan roots and $\delta_{\odd}^\fq=\frac{1}{2}\sum_{\beta\in \Delta^\fq_{+,\odd}}\beta$. The following is analogous to Lemma \ref{le:dot with delta cartan}.

\begin{lemma}
Suppose $b=b^\fq(\gamma)$ is constant for all $\gamma\in\Delta_{+,\car}^\fq$. Let $\mu\in\Z^\I$ and $\beta\in\Delta_{+,\car}^\fq$. Then $s_\beta\bullet\mu=s_\beta(\mu+\delta^\fq)-\delta^\fq+b\langle\beta^\vee,\delta_{\car}^\fq\rangle\beta$.
\qed
\end{lemma}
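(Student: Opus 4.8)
The plan is to follow the proof of Lemma~\ref{le:dot with delta cartan} almost verbatim, the only new input being the exact value of the bound function $b^\fq$ on the two types of positive roots. First I would recall, as in the proof of Lemma~\ref{le:linkage via dot} (and Corollary~\ref{cor:cartan dot action}), that for a Cartan root $\beta\in\Delta_{+,\car}^\fq$ one has
\begin{align*}
s_\beta\bullet\mu=s_\beta(\mu-\varrho^\fq)+\varrho^\fq=\mu-\langle\beta^\vee,\mu-\varrho^\fq\rangle\,\beta,
\end{align*}
using that $s_\beta$ acts on $\Z^\I$ by $\nu\mapsto\nu-\langle\beta^\vee,\nu\rangle\beta$ and that $\langle\beta^\vee,-\rangle$ is well defined.

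Next I would rewrite $\varrho^\fq$ in terms of $\delta_{\car}^\fq$ and $\delta_{\odd}^\fq$. By \eqref{eq:varrho} and \eqref{eq:beta top} we have $\varrho^\fq=\frac12\sum_{\gamma\in\Delta_+^\fq}(b^\fq(\gamma)-1)\gamma$. Splitting $\Delta_+^\fq=\Delta_{+,\car}^\fq\cup\Delta_{+,\odd}^\fq$ and using the hypothesis $b^\fq(\gamma)=b$ for every Cartan root together with $b^\fq(\gamma)=\ord q_\gamma=\ord(-1)=2$ for every odd root (recall from §\ref{subsec:linkage via dot} that $q_\gamma=-1$ whenever $\gamma\in\Delta_{\odd}^\fq$), this yields
\begin{align*}
\varrho^\fq=(b-1)\,\delta_{\car}^\fq+\delta_{\odd}^\fq.
\end{align*}

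Finally the argument is linear algebra. Writing $\delta^\fq=\delta_{\car}^\fq-\delta_{\odd}^\fq$ and applying $\langle\beta^\vee,-\rangle$, one checks
\begin{align*}
\langle\beta^\vee,\mu-\varrho^\fq\rangle=\langle\beta^\vee,\mu\rangle-(b-1)\langle\beta^\vee,\delta_{\car}^\fq\rangle-\langle\beta^\vee,\delta_{\odd}^\fq\rangle=\langle\beta^\vee,\mu+\delta^\fq\rangle-b\,\langle\beta^\vee,\delta_{\car}^\fq\rangle.
\end{align*}
Substituting this into the formula from the first step and using $s_\beta(\mu+\delta^\fq)-\delta^\fq=\mu-\langle\beta^\vee,\mu+\delta^\fq\rangle\beta$ gives $s_\beta\bullet\mu=s_\beta(\mu+\delta^\fq)-\delta^\fq+b\langle\beta^\vee,\delta_{\car}^\fq\rangle\beta$, as claimed. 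I do not expect any genuine obstacle here; the only point requiring care — and the only place the super-type hypothesis is actually used — is the identification $b^\fq(\gamma)=2$ on odd roots, which makes the odd-root contribution to $\varrho^\fq$ exactly $\delta_{\odd}^\fq$ (rather than a $b$-scaled multiple), and is precisely why $\delta^\fq$ must be taken as the difference $\delta_{\car}^\fq-\delta_{\odd}^\fq$.
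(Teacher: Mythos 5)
Your proposal is correct and follows exactly the route the paper intends: the paper omits the proof, noting only that it is "analogous" to the Cartan-type case, and your argument is precisely that analogy spelled out, with the one necessary new observation being that $b^\fq(\gamma)=\ord(q_\gamma)=2$ on odd roots, so that $\varrho^\fq=(b-1)\delta_{\car}^\fq+\delta_{\odd}^\fq$, after which the computation $\langle\beta^\vee,\mu-\varrho^\fq\rangle=\langle\beta^\vee,\mu+\delta^\fq\rangle-b\langle\beta^\vee,\delta_{\car}^\fq\rangle$ gives the claimed identity.
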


\begin{example}
Let $\fq$ and $\fp$ be as in Example \ref{ex:the example} and Example \ref{ex:the example groupoid}, respectively. Recall their root system in Example \ref{ex:the example root system}. Then $\pm\alpha_1$ and $\pm\alpha_2$ are odd roots and $\pm(\alpha_1+\alpha_2)=\sigma_1^\fp(\alpha_2)$ is a Cartan root because $2$ is Cartan vertex of $\fp$.

Let $\mu=\mu_1\alpha_1+\mu_2\alpha_2\in\Z^2$. Then
\begin{align*}
s_{\alpha_1}\bullet\mu=s_{\alpha_1}(\mu-\varrho^\fq)+\varrho^\fq=\mu-(2\mu_1-\mu_2-1)\alpha_1,
\end{align*}
and
\begin{align*}
n_{\alpha_1}^\pi(\mu)=\begin{cases}
                     1&\mbox{if }q^{\mu_2}=1,\\
                     0&\mbox{otherwise.}
                     \end{cases}
\end{align*}
Thus, $\alpha_1\downarrow\mu\neq s_{\alpha_1}\bullet\mu+mb^\fq(\alpha_1)\alpha_1$ for all $m\in\Z$ if $\ord q\neq\mu_2\in2\Z$ as $b^\fq(\alpha_1)=2$.
\end{example}

\subsubsection{} Besides the matrices of Cartan and super type there exists an infinite family of indecomposable matrices and one $2\times2$-matrix whose root systems are constant \cite[\S6]{AA-diag-survey}. For a matrix $\fq$ in this class and $\beta\in\Delta^\fq\setminus\Delta^\fq_{\car}$, the order of $q_\beta$ belongs to $\{2, 3,4\}$. Corollary \ref{cor:Linkage super} can be stated in this case but replacing $\Delta^\fq_{\odd}$ with $\Delta^\fq\setminus\Delta^\fq_{\car}$.

\subsection{Proof of Theorem \ref{teo:main teo} and Corollary \ref{cor:main cor}}

\begin{corollary}
Let $u_\fq$ be a small quantum group in the sense of Definition \ref{def:small quantum group}. {\it Mutatis mutandis}, all the results of this section hold for $u_\fq$ instead of $U_\fq$.
\end{corollary}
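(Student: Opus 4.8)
The plan is to reduce every assertion of Section \ref{sec:linkage} to the corresponding one for $U_\fq$ by means of the quotient formalism of \S\ref{subsec:quotients}. Fix a $\Z^\I$-graded algebra projection $p:U_\fq\longrightarrow u_\fq$ as in Definition \ref{def:small quantum group} and an algebra map $\overline\pi:u_\fq^0\longrightarrow\bk$, and put $\pi=\overline\pi\circ p:U_\fq^0\longrightarrow\bk$. By definition $u_\fq$ has a triangular decomposition with $u_\fq^\pm\simeq U_\fq^\pm$, and it satisfies \eqref{eq:property 0}--\eqref{property D} provided $\widetilde\mu$ descends to $u_\fq^0$; when it does not, one replaces $\overline{\cC}_\bk$ by ${}_{u_\fq}\cG$ throughout, exactly as in the Remark closing \S\ref{subsec:quotients}. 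In either case the hypotheses of \S\ref{subsec:quotients} hold, so the functor $\Inf_{u_\fq}^{U_\fq}$ identifies Verma and simple modules and, by \eqref{eq:quotients},
\begin{align*}
[Z_\bk^{u_\fq}(\mu):L_\bk^{u_\fq}(\lambda)]=[Z_\bk^{U_\fq}(\mu):L_\bk^{U_\fq}(\lambda)]\qquad\forall\,\mu,\lambda\in\Z^\I .
\end{align*}
Moreover, all the combinatorial data entering Section \ref{sec:linkage} — the orders $b^\fq(\beta)$, the root sets $\Delta_+^\fq$ and $\Delta_{\car}^\fq$, the weight $\beta_{top}^\fq$, the numbers $n_\beta^\pi(\mu)$ of Definition \ref{def:n beta} (which depend only on $\pi\widetilde\mu(K_\beta L_\beta^{-1})\in\bk$, and this scalar is unchanged upon passing to $u_\fq^0$), the operation $\downarrow$, the Weyl groupoid, and the dot action of $\cW^\fq_{\link}$ — are intrinsic to $\fq$ and hence insensitive to whether one works over $U_\fq$ or over $u_\fq$.

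Combining these two facts transfers every statement of Section \ref{sec:linkage}. Theorem \ref{teo:strongly linked} and Lemma \ref{le:Linkage} for $u_\fq$ follow at once from the displayed equality of multiplicities together with the coincidence of the sets ${}^{\downarrow}\mu$; the characterisation of blocks follows because the relation $\sim_b$ is preserved by $\Inf_{u_\fq}^{U_\fq}$; Corollary \ref{cor:Linkage} and the $1$-atypical character formula follow from their $U_\fq$-counterparts since both $\ch$ and $\mathfrak{P}_\bk^\fq$ are unchanged; Example \ref{ex:the example a simple} is unaffected; and, when $\fq$ is of standard type, Lemma \ref{le:linkage via dot} and Corollaries \ref{cor:cartan dot action} and \ref{cor:Linkage super}, together with the accompanying lemmas, hold verbatim under the same hypothesis, now reading $\overline\pi(K_j)=\overline\pi(L_j)=1$.

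In fact the entire development of Sections \ref{sec:vermas}--\ref{sec:linkage} — the $w$-Verma modules, the parabolic subalgebras, the morphisms $\varphi,\varphi',\psi,\psi'$, and the diagram of Figure \ref{fig:linked} — descends to $u_\fq$, and this is where the only real work lies. Indeed, each Lusztig isomorphism $T_i$ of \eqref{eq:Ti} is given by explicit formulas in the generators $K_j^{\pm1},L_j^{\pm1},E_j,F_j$, so, setting $u_{(\sigma_i^\fq)^*\fq}:=U_{(\sigma_i^\fq)^*\fq}/T_i^{-1}(\Ker p)$ — a $\Z^\I$-graded ideal since $T_i$ permutes weight spaces — one gets an induced isomorphism $T_i:u_{(\sigma_i^\fq)^*\fq}\longrightarrow u_\fq$, and iterating along a reduced expression produces $T_w$ for every $w\in{}^\fq\cW$ that is used; the properties \eqref{eq:Ti}--\eqref{eq:tau} and the PBW bases \eqref{eq:PBW basis} then descend as images of identities valid in $U_\fq$. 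The point demanding care — and the main obstacle — is the coherence of this family along the Weyl groupoid, namely that $T_w^{-1}(\Ker p)$ is independent of the chosen reduced expression of $w$. This holds because two reduced expressions differ by a rescaling automorphism $\varphi_{\underline a}$ as in \eqref{eq:Tw different presentation}, because $\Ker p$ equals the two-sided ideal of $U_\fq$ generated by $\Ker(p|_{U_\fq^0})\subseteq U_\fq^0$ (a consequence of the triangular decompositions and $u_\fq^\pm\simeq U_\fq^\pm$), and because $\varphi_{\underline a}$ restricts, via \eqref{eq:Tw restricted to U0}, to the identity on the middle subalgebra. With this compatibility established, the arguments of Section \ref{sec:linkage} go through line by line over $u_\fq$.
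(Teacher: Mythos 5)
Your first two paragraphs are, in substance, exactly the paper's proof: the projection $p$ preserves the triangular decomposition, so the formalism of \S\ref{subsec:quotients} applies, and since every statement of Section \ref{sec:linkage} is about characters, composition multiplicities and blocks of Verma modules, it transfers through $\Inf_{u_\fq}^{U_\fq}$ and \eqref{eq:quotients} (or through ${}_{u_\fq}\cG$ when $\widetilde\mu$ does not descend), while the combinatorial ingredients $b^\fq$, $\Delta_+^\fq$, $n_\beta^\pi(\mu)$, $\downarrow$ and $\cW_{\link}^\fq$ depend only on $\fq$ and on the scalars $\pi\widetilde{\mu}(K_\beta L_\beta^{-1})$. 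Where you diverge is the last paragraph: the assertion that ``the only real work'' is descending the Lusztig isomorphisms and the whole apparatus of Sections \ref{sec:vermas}--\ref{sec:morphisms} to $u_\fq$ mislocates the argument. No such descent is required: the twisted Verma modules, the morphisms $\varphi,\psi$ and Figure \ref{fig:linked} are used only over $U_\fq$ to establish the multiplicity statements there, and the quotient formalism then transports the conclusions, not the machinery; the paper offers the descent merely as an optional alternative, and only for the concrete quotients of Example \ref{ex:uq}. Moreover, for an arbitrary projection as in Definition \ref{def:small quantum group} your descent sketch is not complete as written: besides independence of the reduced expression, you would need to verify that each quotient $U_{w^{-*}\fq}/T_w^{-1}(\Ker p)$ is again a small quantum group (graded triangular decomposition with $u^\pm\simeq U^\pm$) and that $\widetilde\mu$ acts on its degree-zero part, points you do not address. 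Since your reduction via \eqref{eq:quotients} already yields the corollary, these omissions do not affect correctness; they only make the final paragraph superfluous.
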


\begin{proof}
By definition there is a projection $U_\fq\longrightarrow u_\fq$ preserving the triangular decomposition as in \S\ref{subsec:quotients} and then all the results of this section can be restated for $u_\fq$ thanks to \eqref{eq:quotients}.  
\end{proof}

In particular, the above corollary applies to small quantum groups as in Figure \ref{fig:uq}, recall Example \ref{ex:uq}, and hence Theorem \ref{teo:main teo} and Corollary \ref{cor:main cor} follow. Alternatively, 
it is not difficult to see that the Lusztig isomorphisms descend to isomorphisms between these small quantum groups, and  $\widetilde{\mu}$ induces an algebra automorphism in $u_\fq^0$ for all $\mu\in\Z^\I$. Thus, one could repeat all the treatment of Sections \ref{sec:vermas} and \ref{sec:morphisms} for $u_\fq$, and give a direct proof of Theorem \ref{teo:main teo} and Corollary \ref{cor:main cor}.

\end{document}